\newcommand{\BA}{{\mathbb {A}}} 
\newcommand{\BC}{{\mathbb {C}}} 
\newcommand{\BE}{{\mathbb {E}}} \newcommand{\BF}{{\mathbb {F}}}
\newcommand{\BG}{{\mathbb {G}}} \newcommand{\BH}{{\mathbb {H}}}
 \newcommand{\BP}{{\mathbb {P}}}
\newcommand{\BQ}{{\mathbb {Q}}} \newcommand{\BR}{{\mathbb {R}}}
 \newcommand{\BZ}{{\mathbb {Z}}}
\newcommand{\CA}{{\mathcal {A}}} 
 \renewcommand{\CD}{{\mathcal {D}}}
 \newcommand{\CH}{{\mathcal {H}}}
 \newcommand{\CL}{{\mathcal {L}}}
\newcommand{\CM}{{\mathcal {M}}} 
\newcommand{\CO}{{\mathcal {O}}} \newcommand{\CP}{{\mathcal {P}}}
 \newcommand{\CR}{{\mathcal {R}}}
\newcommand{\CS}{{\mathcal {S}}} 
 \newcommand{\CV}{{\mathcal {V}}}
\newcommand{\CW}{{\mathcal {W}}} \newcommand{\CX}{{\mathcal {X}}}
\newcommand{\fa}{{\mathfrak{a}}} \newcommand{\fb}{{\mathfrak{b}}}
\newcommand{\fc}{{\mathfrak{c}}} 
 \newcommand{\ff}{{\mathfrak{f}}}
\newcommand{\fg}{{\mathfrak{g}}} 
 \newcommand{\fl}{{\mathfrak{l}}}
\newcommand{\fm}{{\mathfrak{m}}} 
 \newcommand{\fp}{{\mathfrak{p}}}
\newcommand{\fq}{{\mathfrak{q}}} \newcommand{\fr}{{\mathfrak{r}}}
\newcommand{\fs}{{\mathfrak{s}}}
\newcommand{\alg}{{\mathrm{alg}}}
\newcommand{\Aut}{{\mathrm{Aut}}}
\newcommand{\cond}{\mathrm{cond^r}}
\newcommand{\Cl}{{\mathrm{Cl}}}
\newcommand{\End}{{\mathrm{End}}}
\newcommand{\Frob}{{\mathrm{Frob}}} 
\newcommand{\N}{{\mathrm{N}}}
\newcommand{\Gal}{{\mathrm{Gal}}} \newcommand{\GL}{{\mathrm{GL}}}
\newcommand{\Hom}{{\mathrm{Hom}}}
\renewcommand{\Im}{{\mathrm{Im}}}
\newcommand{\Nm}{{\mathrm{Nm}}}
\newcommand{\Lie}{{\mathrm{Lie}}}
    \newcommand{\ord}{{\mathrm{ord}}} \newcommand{\rank}{{\mathrm{rank}}}
     \newcommand{\Pic}{\mathrm{Pic}}
    \renewcommand{\mod}{\ \mathrm{mod}\ }\renewcommand{\Re}{{\mathrm{Re}}}
    \newcommand{\Res}{{\mathrm{Res}}}
    \newcommand{\Spec}{{\mathrm{Spec}}}
    \newcommand{\Stab}{{\mathrm{Stab}}}\newcommand{\MS}{{\mathrm{MS}}}
    \newcommand{\Tate}{{\mathrm{Tate}}}
    \newcommand{\tr}{{\mathrm{tr}}}
    \newcommand{\vol}{{\mathrm{vol}}}
    \font\cyr=wncyr10
    \newcommand{\Sha}{\hbox{\cyr X}}
    \newcommand{\wh}{\widehat}
    \newcommand{\pair}[1]{\langle {#1} \rangle}
    \newcommand{\ov}{\overline}
    \newcommand{\ra}{\rightarrow} 
    \newcommand{\bs}{\backslash}
    \newcommand{\nequiv}{\equiv\hspace{-10pt}/\ }
    \theoremstyle{plain}
    \newtheorem{thm}{Theorem}[section] \newtheorem{cor}[thm]{Corollary}
    \newtheorem{lem}[thm]{Lemma}
    \newtheorem{fact}[thm]{Fact}
         \newtheorem{prop}[thm]{Proposition}
    \newtheorem {conj}[thm]{Conjecture} \newtheorem{defn}[thm]{Definition}
     \newtheorem{lem-defn}[thm]{Lemma-Definition}
\theoremstyle{remark} \newtheorem{remark}[thm]{Remark}
\theoremstyle{remark} 
\theoremstyle{remark} 
    \newcommand{\Neron}{N\'{e}ron~}
    \newcommand{\adeles}{ad\'{e}les~}
    \newcommand{\etale}{\'{e}tale~}
    \numberwithin{equation}{section}
\title{Stability of $p$-adic valuations of Hecke L-values}
\author{Wei He}
\address{School of Mathematical Sciences, University of Science and Technology of China, Hefei, 230026, People's Republic of China}
\email{hewei0714@ustc.edu.cn}
\begin{document}

\maketitle
\begin{abstract}
In this paper, based on Hida's methods, we establish $p$-stability results for the critical L-values of algebraic Hecke characters over CM fields in $\ell$-adic anticyclotomic twist family with $\ell\neq p$. 
\end{abstract}
\tableofcontents

\section{Introduction}
Let $K$ be a quadratic CM extension of a totally real field $F$.
For $\lambda$ a Hecke character over $K$, let $L_f(s,\lambda)$ be the finite part of its Hecke L-function. Let $p,\ell$ be two distinct rational primes. Fix a prime $\fl$ of $F$ above $\ell$ and let $\Gamma$ be Galois group of the maximal $\BZ_\ell$-free quotient of the anticyclotomic extension of $K$ with conductor $\fl^\infty$. Fix embeddings $\iota_\infty:\ov{\BQ}\ra \BC$, $\iota_p:\ov{\BQ}\ra \BC_p$. Suppose that $\lambda$ is algebraic and is critical at $0$. The $p$-stability question is whether the $p$-adic valuations of algebraic part of $L_f(0,\lambda\varepsilon)$ are equal to a constant for almost all finite order characters $\varepsilon$ of $\Gamma$. When the constant is zero, it is often called the modulo $p$ non-vanishing problem. It's an analog of Washington theorem \cite{wa} for critical CM Hecke L-values.

This question was first studied by Hida \cite{Hi1} and Finis \cite{Fi}. This paper aims to treat missing cases, see Theorem \ref{maini}. 
\subsection{Main result} \label{mainre}
We begin by formalizing the question more precisely.

Suppose $p$ is ordinary, which means every prime of $F$ above $p$ splits in $K$. Let $\Sigma$ be a CM type of $K$ that is $p$-ordinary in the sense that the finite places of $F$, $\Sigma_p:=\{v\ |\ \text{$v$ is induced by $\iota _p \sigma$}, \sigma\in \Sigma\}$ is disjoint with $\Sigma_p^c=\{v\ |\ \text{$v$ is induced by $\iota _p \sigma c$}, \sigma\in \Sigma\}$, where $c$ is the nontrivial elements in $\Gal(K/F)$. For the remainder of this paper, we will always assume the ordinary condition.

Let $\fl$ and $\Gamma$ be as given before. View the group of finite order characters on $\Gamma$, denoted by $\wh{\Gamma}$, as Hecke characters over $K$ via class field theory. For an algebraic Hecke character $\lambda$ over $K$ with $\lambda_\infty|_{K^\times}(k)= \prod_{\sigma\in \Sigma}\sigma(k)^{a_\sigma}(c\sigma(k))^{b_\sigma}$, its infinite type is defined by $w=\sum_{\sigma\in \Sigma}a_\sigma\sigma+b_\sigma c\sigma$. Consider all Hecke characters over $K$ of infinity type $k\Sigma+(1-c)\kappa$, $k\Sigma+\kappa\in \BZ_{>0}[\Sigma]$, $\kappa\in \BZ_{\geq 0}[\Sigma]$. The group $\wh{\Gamma}$ acts on this set by multiplication.

Fix a $\wh{\Gamma}$-orbit denoted by $\CX$, then it is known (cf.~Theorem~\ref{measure}) that for almost all $\lambda\in \CX$, \[\CL(\lambda)=\prod_{w\in\Sigma_p}G(\lambda_w)\frac{\pi^\kappa\Gamma_\Sigma(k\Sigma+\kappa)L_f(0,\lambda)}{\sqrt{|D_F|_\BR}\Im(\vartheta)^{\kappa}\Omega_\infty^{k\Sigma+2\kappa}}\in \ov{\BZ}_{p} \cap \ov{\BQ},\]
where (i) $\Omega_\infty\in (\BC^\times)^\Sigma$ is the CM period of a \Neron differential on an abelian scheme over $\ov{\BZ}_{p} \cap \ov{\BQ}$ of CM type $(K,\Sigma)$; (ii) $\vartheta$ is any pure imaginary element in $K$ such that $\Im(\sigma(\vartheta))>0$ for each $\sigma\in \Sigma$ and $2\vartheta\CD_{F}^{-1}$ is prime to $p$, here $\CD_F$ is the different ideal of $F$, $\Im(\vartheta)^{\sum_{\sigma\in \Sigma} n_\sigma \sigma}=\prod_{\sigma\in \Sigma}\Im(\sigma(k)^{n_\sigma})$; (iii) $G(\lambda_w)$ is the Gauss sum \[|\varpi|_w^{e}\cdot \sum_{u\in\varpi^{-e}\cdot(\CO_{K_w}/\varpi^{e}\CO_{K_w})^\times}\lambda_w(u)\psi_w(u),\] here $\varpi$ is a uniformizer of $K_w$, $|\cdot|_w$ is the normalized valuation on $K_w^\times$, $e$ is the conductor of $\lambda_w$, and $\psi_w$ is a nontrivial additive character of $K_w$ with trivial conductor; (iv) $\Gamma_{\Sigma}(\sum_{\sigma\in \Sigma} n_\sigma \sigma)=\prod_{\sigma\in \Sigma}\Gamma(n_\sigma)$.

Identify $\CX$ with $\ell^\infty$ torsion points of a split torus (cf.~\S\ref{rg}), here \enquote{almost all} means \enquote{except for finitely many} if $\rank_{\BZ_\ell}\Gamma=1$, and \enquote{Zariski dense in torus} if $\rank_{\BZ_\ell}\Gamma>1$.

\begin{defn}Let $\lambda$ be an algebraic Hecke character over $K$.
\begin{itemize}\item [(i)] The character $\lambda$ is called self-dual if \[\lambda^*|_{\BA_F^\times}=\tau_{K/F},\] where $\lambda^*:=\lambda\cdot |\cdot|_{\BA_K^\times}^{-1/2}$, and $\tau_{K/F}$ is the quadratic character over $F$ associated to $K/F$.
\item [(ii)] The character $\lambda$ is called residually self-dual if \[\wh{\lambda}|_{\BA_F^\times}\omega^{-1}\equiv\tau_{K/F}\pmod{ \fm_p},\] where $\wh{\lambda}$ is the $p$-adic avatar of $\lambda$, $\omega: \Gal(\ov{F}/F)\ra \BZ_p^\times$ is the Teichmuller character over $F$, and $\fm_p$ is the maximal ideal of $\ov{\BZ}_p$.
\end{itemize}
For a self-dual Hecke character $\lambda$, its root number is defined by $\epsilon(1/2,\lambda^*,\psi_K)$, where $\psi$ is a nontrivial character of $F\bs \BA_F$, $\psi_K=\psi\tr_{K/F}$, and $\epsilon(1/2,\lambda^*,\psi_K)=\prod_v\epsilon(1/2,\lambda_v^*,\psi_{K_v})$ is the epsilon factor defined in Tate's thesis. 
\end{defn}
For $\lambda$ residually self-dual and $p>2$, we introduce the following residual root number:
\begin{defn}\label{resr} Suppose that $p>2$ is ordinary. For $\lambda$ residually self-dual, the residual root number $\ov{\epsilon}(\lambda)\equiv \pm 1\pmod{\fm_p}$ is defined by \[\prod_{\substack{\text{$v$ is finite and}\\\text{ $K_v$ is a field}}}\frac{\varepsilon(1/2,\lambda_v^*,\psi_{K_v})}{\lambda_v^*(2\vartheta)}\pmod{\fm_p},\]where $\vartheta\in K$ is any purely imaginary element with $\Im(\sigma(\vartheta))>0$ for each $\sigma\in \Sigma$.\end{defn}
We have for each $v<\infty$ such that $K_v$ is a field, $\frac{\varepsilon(1/2,\lambda_v^*,\psi_{K_v})}{\lambda_v^*(2\vartheta)}\equiv \pm1\pmod{\fm_p}$ (cf.~\S\ref{mtd}). The residual root number coincides with the root number in self-dual case.

We have following conjecture on the $p$-stability of Hecke L-values:
\begin{conj}\
\begin{itemize}
\item[(1)] If $\CX$ is not residually self-dual, then there exists $\mu_p(\CX)\in\BQ_{\geq 0}$ such that for almost all $\lambda\in\CX$, \[\ord_p(\CL(\lambda))=\mu_p(\CX).\]
\item[(2)] If either (i) $\CX$ is self-dual or (ii) $\CX$ is residually self-dual and $p>2$, write $\CX=\CX^+\sqcup\CX^-$, where $\CX^\pm$ consists of those characters $\lambda\in \CX$ with root number (resp. residual root number) equal to $\pm1$ (resp. $\pm1$ modulo $\fm_p$) in the self-dual (resp. residually self-dual) case. Then for $\epsilon=\pm$, there exists $\mu_p(\CX^\epsilon)\in\BQ_{\geq 0}\sqcup \{\infty\}$ such that for almost all $\lambda\in\CX^\epsilon$, \[\ord_p(\CL(\lambda))=\mu_p(\CX^\epsilon).\] 
\end{itemize}
\end{conj}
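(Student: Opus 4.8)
The plan is to follow Hida's interpolation philosophy and reduce the problem to a non-vanishing statement for an Eisenstein series modulo $\fm_p$. First one realizes the family $\{\CL(\lambda\varepsilon)\}_{\varepsilon\in\wh\Gamma}$ as the moments of a $p$-adic measure $\mu_\CX$ on $\Gamma$ (this is essentially the content of Theorem \ref{measure}); then, writing $\mu_p(\CX)$ for the $\mu$-invariant of $\mu_\CX$ (more precisely of the $q$-expansion of the associated $\Lambda_\Gamma$-adic Eisenstein form, see Section \ref{muX}), the values $\ord_p(\CL(\lambda\varepsilon))$ are forced to equal $\mu_p(\CX)$ for all but finitely many $\varepsilon$ when $d=1$, and for a Zariski-dense set when $d>1$, \emph{provided} the measure is nonzero after dividing out $p^{\mu_p(\CX)}$. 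Parts (1) and (2) of the conjecture then differ exactly in whether the leading term of the relevant Eisenstein series survives reduction: in the non-residually-self-dual case it does and $\mu_p(\CX)$ is finite, while in the (residually) self-dual case the leading Fourier coefficient is governed by a product of local root numbers, which forces the family to split modulo $\fm_p$ into the two subfamilies $\CX^{\pm}$, on either of which the invariant is finite or $\infty$.

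The first main step is to apply the Waldspurger formula for Eisenstein series to rewrite each algebraic L-value $\CL(\lambda\varepsilon)$, up to the normalizing Gauss sums $G(\lambda_w)$ and $\Gamma$-factors (which are $p$-adic units by the integral normalization built into $\CL$), as a toric period of an explicit Hilbert-modular Eisenstein series $E_{\lambda\varepsilon}$ evaluated along the CM points attached to $(K,\Sigma)$ and the twist $\varepsilon$. The subtle point — and the reason the maximal-level theory is insufficient once $p$ divides the conductor of $\lambda$ — is the choice of the local section of $E_{\lambda\varepsilon}$ at the places above $p$: the test vector must have the non-maximal $p$-level for which the good integral model of Proposition \ref{mode} exists and the $q$-expansion principle holds, while still having a computable local Whittaker function at $p$ and at $\fl$ that is a $p$-adic unit. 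Producing such a vector is where one must exploit the full flexibility of the Waldspurger machine for Eisenstein series; the case $p=2$ enters here as well, through extra care with the Teichmüller twist $\omega$ and with the $2$-adic local integrals.

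The second main step is $p$-adic: via Katz's theory of $p$-adic Hilbert modular forms one views $E_{\lambda\varepsilon}$ as a $p$-adic modular form on the Igusa tower over the model of Proposition \ref{mode}, and assembles the family $\varepsilon\mapsto E_{\lambda\varepsilon}$ into a single measure-valued Eisenstein form $\CE_\CX$ whose moments at CM points recover the $\CL(\lambda\varepsilon)$ up to units. Hida's density theorem asserts that the CM points of the relevant type are Zariski dense in the mod-$p$ fiber; combined with the $q$-expansion principle this identifies $\ord_p(\CL(\lambda\varepsilon))$, for almost all $\varepsilon$, with the $q$-expansion $\mu$-invariant $\mu_p(\CX)$ of $\CE_\CX$, finite precisely when $p^{-\mu_p(\CX)}\CE_\CX\not\equiv0\pmod{\fm_p}$. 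One then computes the reduction explicitly from its $q$-expansion, which factors into local Whittaker integrals: in case (1) no local factor forces a congruence to zero, giving $\mu_p(\CX)\in\BQ_{\geq0}$; in case (2) the leading coefficient is a product of local root numbers, so after reduction $\CE_\CX$ decomposes along the residual root number into $\CE_{\CX^{+}}$ and $\CE_{\CX^{-}}$, and on each piece one repeats the non-vanishing analysis, the new ingredient being the congruence relating the local Whittaker function modulo $p$ to the local residual root number.

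The main obstacle is twofold. First, at the primes above $p$ one must simultaneously (a) choose a local test section landing in the non-maximal level of Proposition \ref{mode}, (b) keep the $q$-expansion principle valid over the integral model, and (c) arrange that the resulting local Whittaker value is a $p$-adic unit; these three constraints pull against one another precisely when $p$ divides the conductor of $\lambda$ or when $p=2$, and resolving them is the technical heart of the argument. Second, in the residually-self-dual-but-not-self-dual regime one must make the congruence between the local Whittaker function and the local residual root number sharp enough to see the $\CX^{\pm}$ decomposition persist after reduction modulo $\fm_p$, since — unlike the genuinely self-dual case — there is no exact functional-equation sign available and the splitting only happens residually. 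Establishing the first point extends the result past the cases of Hida and Hsieh; the second is what makes part (2) work in the merely residual setting.
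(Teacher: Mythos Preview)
The statement you are addressing is a \emph{conjecture}, and the paper does not claim to prove it. What the paper proves is the strictly weaker Theorem~\ref{maini}: under the extra hypotheses $(p,D_F)=1$, $\lambda|_{F_\fl^\times}$ unramified, $p$ odd in the residually-self-dual-but-not-self-dual case, and $\rank_{\BZ_\ell}\Gamma=1$ when $\fl$ is inert and $\CX$ residually self-dual, one obtains the equality $\ord_p(\CL(\lambda))=\mu_p(\CX^*)$ only for a \emph{Zariski-dense} set of $\lambda$, and in case~(2) only for the side $\CX^m$ with $\mu_p(\CX^m)=\min\{\mu_p(\CX^+),\mu_p(\CX^-)\}$. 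Your outline is an accurate high-level summary of how the paper proves Theorem~\ref{maini}, but it does not, and cannot without new ideas, yield the full conjecture.

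Two concrete gaps. First, in case~(2) your argument shows that $p^{-\mu_p(\CX^m)}\BE_{\lambda,k}$ is $p$-integral with a nonvanishing Fourier coefficient supported on the parity class~$\delta^m$ (Proposition~\ref{mod}); this gives the stability on $\CX^m$. But for the \emph{other} side $\CX^{-m}$, with $\mu_p(\CX^{-m})>\mu_p(\CX^m)$, the Eisenstein series divided by $p^{\mu_p(\CX^{-m})}$ is no longer $p$-integral on the whole modular variety, only on the Fourier coefficients with $\ord_\fl(\beta)\equiv\delta^{-m}\pmod 2$, and the density-of-CM-points argument as run in Section~\ref{S6} does not see this filtration. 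Nothing in the method isolates the $\CX^{-m}$-contribution well enough to prove $\ord_p(\CL(\lambda))=\mu_p(\CX^{-m})$ for Zariski-dense $\lambda\in\CX^{-m}$. Second, even in case~(1) your claim that one gets ``all but finitely many'' when $d=1$ overshoots: the paper's argument produces only an arithmetic progression of conductors $\{n_i\}$ (Proposition~\ref{inf}), and upgrading to cofinitely many requires the unproved strong density statement of Remark~\ref{sden}. So the proposal is not a proof of the conjecture; it is a correct sketch of the paper's partial result.
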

Here we convention that $\ord_p(0)=+\infty$, and $\mu_p(\CX^\epsilon)=+\infty$ if $\CX^\epsilon=\emptyset$. 

\begin{remark} In the self-dual case (resp. residually self-dual case and $p>2$), if $\fl$ is inert, then the parity of the root number (resp. residual root number) depends on the parity of the conductor at $\fl$. If $\fl$ is split, then the root number (resp. residual root number) is constant. In the case where $\CX$ is self-dual, we have $\CL(\lambda)=0$ for all $\lambda\in \CX^-$, and in this case, one should ask about the $p$-divisibility of derivative L-values, which we will not touch in this paper.
\end{remark} 

Our main result is as follows:
\begin{thm}\label{maini}
Suppose that
\begin{itemize}
\item[(i)] $p$ is ordinary, $(p,2 D_F)=1$,
\item[(ii)] $(\fl,p)=1$ and $\lambda|_{F_\fl^\times}$ is unramified for $\lambda\in \CX$,
\item[(iii)] $p$ is odd if $\CX$ is residually self-dual but not self-dual,
\item[(iv)] if $\lambda$ is residually self-dual, $\fl$ inert, then $\rank_{\BZ_\ell}\Gamma=1$.
\end{itemize} Then the following hold:
\item[(1)] If $\CX$ is not residually self-dual, then for Zariski dense $\lambda\in\CX$, \[\ord_p(\CL(\lambda))=\mu_p(\CX).\]
\item[(2)] If $\CX$ is residually self-dual, then for \[\mu_p(\CX^m):=\min\{\mu_p(\CX^+),\mu_{p}(\CX^-)\},\] we have \[\ord_p(\CL(\lambda))=\mu_p(\CX^m)\]
for Zariski dense $\lambda\in \CX^m$.
\end{thm}
The invariants $\mu_p(\CX)$ and $\mu_p(\CX^\pm)$ in separated cases are defined in \S\ref{pprop}.

By definition, one has the following vanishing criteria: 
\begin{itemize}
\item [(i)] If $\lambda$ is not residually self-dual, then $\mu(\CX)=0$ if for each $\fq|\cond(\lambda)$ that non-split in $K/F$ and prime to $\fl$, either $\fq^2|\cond(\lambda_\fq)$ or $p\nmid q-1$, where $q$ is the cardinality of residue field of $\CO_{K,\fq}$.
\item [(ii)] If $\lambda$ is self-dual, $p>2$, then $\mu_p(\CX^m)=0$ if for each $\fq|\cond(\lambda)$ that is inert in $K/F$ and prime to $\fl$, either $\fq^2|\cond(\lambda_\fq)$ or $p\nmid q+1$, where $q$ is the cardinality of residue field of $\CO_{F,\fq}$. 
\end{itemize}

Compare with previous works \cite{Fi}, \cite{Hi2}, \cite{Hs1}, \cite{Oh}, our results include the following new cases:
\begin{itemize}
\item $p$ is not coprime to the conductor of Hecke characters.
\item Residually self-dual but not self-dual.
\item $\fl$ is inert in $K$ in the residually self-dual case.
\end{itemize} 
We remark that the second part $(p,2D_F)=1$ of the assumption (i) in Theorem \ref{maini} is only needed in Hida's result on density of CM points.

\subsection{Strategy of proof}
Let's briefly recall Hida's approach in \cite{Hi1}, \cite{Hi2}. The anticyclotomic twisted $L$-values can be interpreted as periods of Eisenstein series on CM points, which dates back to Damerell's formula. Katz's theory of $p$-adic modular forms \cite{Ka} provides a $p$-adic interpretation of critical L-values whenever $p$ is ordinary. Then, via density of ordinary CM points on product of Hilbert modular varieties modulo $p$ established by Hida \cite{Hi1}, the non-vanishing modulo $p$ of Hecke L-values could be reduced to the non-vanishing modulo $p$ of Eisenstein series. See \cite{Hf} for a recent update of Hida's result on density of CM points.

To prove the $p$-stability result, it is crucial to construct Eisenstein series related to Hecke characters on a suitable integral model of Hilbert modular scheme. Using Kottwitz model, which has maximal level at $p$, Hida proved the non-vanishing results for Zariski dense subset of $\wh{\Gamma}$ in the case where the conductor of $\lambda$ is only divided by split primes that are prime to $p\fl$ with $(p, 2D_F)=1$. 

Following Hida's approach, there are several related works \cite{Hs1} and \cite{Oh}. Let's briefly recall: Still using Kottwitz model, but with more representation theory, Hsieh \cite{Hs1} allowed the conductor to have non-split primes, still prime to $p$. He dealt with both non residually self-dual and
self-dual case with global root number equal to $+1$ and $\fl$ split in $K$. Note that when $\lambda$ is residually self-dual, every ramified prime in $K/F$ must divide the conductor. In the self-dual case, to get the $p$ divisibility of the Eisenstein series, Hsieh crucially used theta dichotomy for $(U(1), U(1))$ in \cite{hks}, also see \S\ref{td}. In the case $F=\BQ$, Ohta \cite{Oh} uses the $\Gamma(n)^{\text{arith}}$ model of modular curve to get non-vanishing result whenever the conductor consists of odd split primes that need not be coprime to $p$ and $p$ is odd. 

Our approach also follows Hida. To handle the case $p$ not being coprime to the conductor, one needs to use a good integral model of Hilbert modular scheme at $p$, whose level at $p$ is not maximal but still has good geometric properties so that the $q$-expansion principle holds (cf.~Proposition~\ref{q-exp}). To deal with the general totally real field case, we use a different $p$-integral model (cf.~Proposition~\ref{mode}) from Ohta. To construct test vectors on the Hilbert modular scheme with distinguished level at $p$, one needs to make full use of the machine of Waldspurger formulae for Eisenstein series.

In the residually self-dual but not self-dual case, we establish a relation between local Fourier coefficient modulo $p$ and local residual root number based on theta dichotomy, this may be viewed as modulo $p$ theta dichotomy (cf.~\S\ref{S51}--\S\ref{mtd}). Similar to the self-dual case handled by Hsieh \cite{Hs1}, the residual root number in the residually self-dual case determines which Fourier coefficients of Eisenstein series get the minimal $p$-adic valuation (cf.~Proposition~\ref{mod}). Based on this, we could deal with an interesting case in which $\fl$ is inert and Hecke characters are (residually) self-dual. In this case, half of Hecke characters in the family have (residual) root number $-1$, and half of the Fourier coefficients of the Eisenstein series vanish modulo $p$. To handle the case $p=2$ for reducing the $p$-stability question to density of CM points modulo $p$, one meets the question on $p$-indivisibility of Hecke twist sum of Eisenstein series over genus class subgroup. We give a construction of the Eisenstein series such that the $p$-divisibility of the twist sum is equal to the $p$-divisibility of a single one.

Let's give a few explanations regarding the main results: (1) The assumption on $\lambda|_{F_\fl^\times}$ is necessary to construct Eisenstein series, which is $U(\fl)$-eigen with level $U_0(\fl)$, which seems necessary for general totally real field (cf.~Remark~\ref{ul}). (2) For the case $\rank_{\BZ_\ell}\Gamma=1$, if strong density holds (cf.~Remark~\ref{sden}), one could replace the condition \enquote{Zariski dense} (equivalently, \enquote{infinitely many}) by \enquote{except finitely many}, and the main results still hold.

\subsection{Related works}
We recall some related results in addition to \cite{Hi1}, \cite{Hi2}, \cite{Hs1}, and \cite{Oh}.

There are some previous results in the case $F=\BQ$ using a different approach based on the Mumford theory of theta functions. It was proved by Finis \cite{Fi} under the condition $\lambda$ is self-dual with root number $+1$, $\ell$ split, and $p$ odd. His method could also handle the case where $p$ is non-ordinary.

Also see recent joint work of the author (cf.~\cite{BHKO}) in the self-dual case with $F=\BQ$, using a different method from Hida and Finis, which also allows $\ell$ to be inert, and the result is on \enquote{except for finitely many}.

There is also related work of Burungale \cite{As}, based on the method of this paper, one may extend the $p$-stability result to the $p$-adic family containing $\CX$.
\subsection{Applications and further questions}

The $p$-stability results have several potential applications.
\begin{itemize}
\item One may use our result to remove certain ramification assumptions in the Eisenstein congruence approach to the Main conjecture for CM fields (see Remark after Theorem 2 in \cite{Hs2}).
\item Together with the rank $0$ result on $p$-adic BSD in the CM case \cite{AF}, one may get constancy of $p$-indivisibility of $\Sha$ of CM abelian varieties in the $\ell$-adic anticyclotomic twist family.
\item The strong $p$-stability result on \enquote{except for finitely many} is still open in general, which has a consequence on the stability of $p$-part of ideal class groups in $\ell$-adic anticyclotomic extensions (cf.~\cite{Oh}). 
\item As pointed out by the referee, the modulo $p$ theta dichotomy may have applications to the Iwasawa $\mu$-invariant of Katz $p$-adic L-function in the residually self-dual but not self-dual case. The modulo $p$ theta dichotomy may also have some potential applications in other situations.
\end{itemize} 
\subsection{Organization}
In \S\ref{S3}, we recall a good integral model of Hilbert modular schemes at $p$, Katz's theory of $p$-adic modular forms, and Hida's density of CM points. In \S\ref{S4}, we introduce Waldspurger formulae for Eisenstein and also consider its explicit and family version to construct the measure that interprets the $\ell$-adic twist family of Hecke L-values. Further $p$-adic properties of the Eisenstein series are studied in \S\ref{S5}. In the last section, we complete the proof of the main result.

\subsection{Notations}
Let $F$ be a totally real field and $\CO_F$ its ring of integers. Let $\BA_F$ be its ring of \adeles and $\BA_{F,f}$ its ring of finite \adeles. 
For $v$ a place of $F$, denoted by $F_v$ be the completion of $F$ at $v$, $\CO_{F,v}$ be the ring of integers of $F_v$, $\fp_v$ the maximal ideal of $\CO_{F,v}$, $\varpi_v$ be a uniformizer of $\CO_{F,v}$, $|\cdot|_v$ the normalized absolutely value on $F_v^\times$. For $K/F$ a quadratic field extension, we have similar notations for $K$. Let $\Nm:K^\times \ra F$ be the norm from $K$ to $F$.

Denoted by $\Sigma_F$ be the set of infinite places of $F$. We may identify $\Sigma_F$ with the restriction of a CM type $\Sigma$ of $K$ to $F$. Let $\psi$ be the standard nontrivial character over $F\bs \BA_F$. Let $\delta_v\in \BZ_{\leq 0}$ such that $\varpi_v^{-\delta_v}$ generates the different ideal of $F_v$. 

Fix a Haar measure $d t_v$ on $F_v$ such that for all finite place $v$, $\vol(\CO_{F,v},dt_v)=1$ and the measure $d t:=\prod_{v}d t_v$ on $\BA_F$ induces the Tamagawa measure. Fix a Haar measure $d^\times t_v$ on $K_v^\times/F_v^\times$ such that for all finite place $v$, $\vol(\CO_{K,v}^\times/\CO_{F,v}^\times,d^\times t_v)=1$ and the measure $d^\times t:=\prod_{v}d^\times t_v$ on $\BA_{K}^\times/\BA_{F}^\times$ induces the Tamagawa measure.

Let $G=\GL_{2}$ over $F$.

\subsection*{Acknowledgement}
The author is grateful to Prof. Ye Tian for guidance and continuous encouragement. The author would like to thank Prof. Burungale Ashay, Prof. Haruzo Hida, Prof. Li Cai, Prof. Ming-Lun Hsieh, and Xiaojun Yan for many helpful communications. We are grateful to the referee for helpful comments and constructive suggestions. W. He is partially supported by Innovation Program for Quantum Science and Technology Grant no. 2021ZD0302902.

\section{Geometry of Hilbert Modular Variety}\label{S3}
In this section, we introduce the $p$-adic theory of modular forms and CM points on the integral model of Hilbert modular schemes. In \S\ref{3.1}, we recall Hilbert modular forms over $\BC$, and in \S\ref{3.2}, we introduce integral model of the Hilbert modular scheme and the $q$-expansion principle of Katz $p$-adic Hilbert modular forms. In \S\ref{3.3}, we introduce Hida's result on density of CM points modulo $p$. 
\subsection{Complex theory of modular forms}\label{3.1}
Denoted by $G(F)_+$ be the subgroup of $G(F)$ consisting of elements with totally positive determinants. Denoted by $\Sigma_F$ be the set of infinite places of $F$. For each $\sigma\in\Sigma_F$, $G(F)_+$ acts on the upper half plane $\CH^\sigma$ by embedding $\sigma:G(F)\ra G(F_{\sigma})$ and fractional linear transformation. Let $U$ be an open compact subgroup of $G(\BA_{F,f})$, we have a complex analytic space \[X_U(\BC):=G(F)_+\bs\CH^{\Sigma_F}\times G(\BA_{F,f})/U.\]
For $g_\infty=\prod_{\sigma\in\Sigma_F}\begin{pmatrix}
a_\sigma & b_\sigma \\
c_\sigma& d_\sigma
\end{pmatrix}\in G(F_\infty)$, $\rho=\sum_{\sigma\in\Sigma_F}n_\sigma\sigma\in \BZ[\Sigma_F]$ and $\tau\in \CH^{\Sigma_F}=\prod_{\sigma\in \Sigma_F}\CH^\sigma$, let $\displaystyle J(g_\infty,\tau)^\rho=\prod_{\sigma\in\Sigma_F}(c_\sigma\tau_\sigma+d_\sigma)^{n_\sigma}$.
Recall that
\begin{enumerate}
\item [(i)] A smooth Hilbert modular form $\mathbf{f}$ of weight $\rho$, level $U$ is a function on $\CH^{\Sigma_F}\times G(\BA_{F,f})$ such that for each $g_f$, $\mathbf{f}(\tau,g_f)$ is a smooth function on $\CH^{\Sigma_F}$ and \[\mathbf{f}(\alpha(\tau,g_f)u)=J(\alpha,\tau)^{\rho}\mathbf{f}(\tau,g_f)\] for all $\alpha\in G(F)_+$ and $u\in U$. The space of all smooth forms of weight $\rho$, level $U$ is denoted by $M_\rho(U)(C^\infty)$.
\item [(ii)] A holomorphic Hilbert modular form $\mathbf{f}$ is a smooth Hilbert modular form that $\mathbf{f}(\tau,g_f)$ is holomorphic on $\CH^{\Sigma_F}$ for each $g_f$ and holomorphic at all the cusps. Denoted by $M_\rho(U,\BC)$ be the space of all holomorphic Hilbert modular forms of weight $\rho$, level $U$. When $\rho=k\Sigma_F$, we simply write the space as $M_k(U,\BC)$.
\end{enumerate}
The set of cusps is given by $G(F)_+\bs \BP^1(F)\times G(\BA_{F,f})/U$. Note that each cusp is represented by $(\infty, g_f)$. A modular form $\mathbf{f}\in M_\rho(U,\BC)$ has Fourier expansion at cusp $(\infty, g_f)$:
\[\begin{aligned}\mathbf{f}(\tau,g_f)
&=\sum_{\beta\in F_+\sqcup\{0\}}\alpha_\beta(\mathbf{f},g_f)e^{2\pi i\tr(\beta\tau)},\end{aligned}\] here $\tr(\tau\beta)=\sum_{\sigma\in \Sigma_F}\sigma(\beta)\tau_\sigma$ for $\tau=(\tau_{\sigma})_{\sigma\in \Sigma_F}$, $F_+\subset F$ is the subset of the totally positive elements. We call $\alpha_\beta(\mathbf{f},g_f)$ the $\beta$-th Fourier coefficient of $\mathbf{f}$ at the cusp $(\infty,g_f)$.

For each $g\in G(\BA_{F,f})$, the natural map \[X_{gUg^{-1}}(\BC)\ra X_U(\BC)\quad [\tau,g']\mapsto [\tau,g'g]\]induces an isomorphism \[|[g]:M_\rho(U)(C^\infty)\simeq M_{\rho}(gUg^{-1})(C^\infty),\quad f\mapsto f|[g]:(\tau,g')\mapsto f(\tau,g'g)\] and $|[g]:M_\rho(U,\BC)\simeq M_{\rho}(gUg^{-1},\BC)$.

For each $\sigma\in\Sigma_F$, there exists a weight-raising differential operator called the Maass-Shimura operator (cf.~\cite[\S~2.1]{Ka}) $\delta_{\sigma,n_\sigma}=\frac{1}{2\pi i}(\frac{\partial}{\partial \tau_\sigma}+\frac{n_\sigma}{2iy_\sigma})$, which maps $M_\rho(U)(C^\infty)$ to $M_{\rho+2\sigma}(U)(C^\infty)$ for $\rho=\sum_{\sigma\in\Sigma_F}n_\sigma\sigma$. For different $\sigma,\sigma'\in \Sigma_F$, $\delta_{\sigma,n_{\sigma}}$ and $\delta_{\sigma',n_{\sigma'}}$ are commute with each other. The Maass-Shimura operators commute with Hecke action $|[g]$.
\subsection{Integral theory of modular forms}\label{3.2}
\subsubsection{Geometric modular forms and $p$-adic modular forms.}

Let $V=F^2$ be the space endowed with the standard $F$-bilinear alternating pairing $\pair{\ ,\ }$ given by $\pair{(1,0),(0,1)}=1$. Then $\pair{\ ,\ }$ induces a non-degenerate $\BQ$-valued alternating pairing $\pair{\ ,\ }_\BQ$ defined as $\tr_{F/\BQ}\circ\pair{\ ,\ }$. Denote $\CO_{F}^*$ as the inverse of different, i.e. $\CO_F^*=\{x\in F|\ \tr_{F/\BQ}(x\CO_{F})\subset \BZ\}$. Let $\CL=\CO_F\oplus \CO_F^*$ be the self-dual lattice with respect to $\pair{\ ,\ }_\BQ$. View $V$ as row vectors, and $G$ acts left on $V$ by $g(a,b):=(a,b)\cdot g'$, where $'$ is the involution given by $\begin{pmatrix}a & b \\c & d\end{pmatrix}':=\begin{pmatrix}d & -b \\-c & a\end{pmatrix}$.

 Let $\square$ be the set consisting of a rational prime $p$ or $\emptyset$. Let $U=U_{\square}\cdot U^{(\square)}\subset G(\BA_{F,\square})\cdot G(\BA_{F,f}^{(\square)}$) be an open compact subgroup such that $U_{\square}$ is maximal in the sense that it is the stabilizer of $\CL_p=\CL\otimes_{\BZ}\BZ_p$. (If $\square=\emptyset$, then this is an empty condition.)
For a connected $\BZ_{(\square)}$-scheme $S$, let $\ov{s}$ be a geometric point of $S$. (When $\square=\emptyset$, the localization means localizing at the zero ideal.) A $S$-quadruple of level $U$ consists of the following data:
\begin{enumerate}
\item [(i)] $A$ is an abelian scheme of relative dimension $g$ over $S$.
\item [(ii)] $\iota:\CO_F\ra \End_{S}(A)\otimes_{\BZ}\BZ_{(\square)}$.
\item [(iii)] $\ov{\lambda}$: $\CO_{F,(\square),+}$-orbit of a prime-to-$\square$-polarization $\lambda$ (in $\Hom_{S}(A,A^t)\otimes_{\BZ}\BZ_{(\square)}$); Here $a\in\CO_{F,(\square),+}$ acts on $\lambda$ by $\lambda a:=\lambda\circ \iota(a)$.
\item [(iv)] $\ov{\eta}^{(\square)}$: $U^{(\square)}$ orbit of isomorphism of $\BA_{F,f}^{(\square)}$-modules $\eta^{(\square)}:V\otimes_{\BZ}\BA_{F,f}^{\square}\simeq H_1(A_{\ov{s}}, \BA_{F,f}^{(\square)})$ that defined over $S$, i.e. $\ov{\eta}^{(\square)}$ is $\pi_1(S,\ov{s})$ invariant. Here $u\in U^{(\square)}$ acts on $\eta^{(\square)}$ by $\eta^{(\square)} u:=\eta^{\square}\circ u$.
\end{enumerate}
Here further need that (1) $\Lie_S(A)$ is locally free rank one as $\CO_{S}\otimes _{\BZ}\CO_{F}$ module in Zariski topology; (2) $\iota$ is invariant under the Rosati involution induced by $\lambda$; (3) Fix an isomorphism $\wh{\BZ}\simeq \wh{\BZ}(1)$ by mapping $1/N$ to $e^{2\pi i/N}$ then take inverse limit. The Weil pairing induced by $\lambda$ is equal to $\tr_{F/\BQ}\circ e_{\lambda}$, where $e_{\lambda}$ is a $\BA_{F,f}^{(\square)}$ linear alternating form $e_{\lambda}: H_1(A_{\ov{s}}, \BA_{F,f}^{(\square)})^2\ra \BA_{F,f}^{(\square)}$. Assume pullback of $e_{\lambda}$ is a $\BA_{F,f}^{(\square)}$ multiple of $\pair{\ ,\ }$.

Two $S$-quadruples $(A,\iota,\ov{\lambda},\ov{\eta}^{(\square)})$ and $(A',\iota',\ov{\lambda'},\ov{\eta'}^{(\square)})$ are said to be equivalent if there exists a $S$-quadruple homomorphism induced by a prime-to-$\square$-isogeny $\phi$ in $ Hom_{S}(A,A')\otimes_{\BZ}\BZ_{(\square)}$.

Let's recall properties of the Kottwitz model (cf.~\cite[\S~4.2.1]{Hi1}):
\begin{prop}\label{rat}
\item If $U$ is sufficiently small so that $\det(U)\cap \CO_{F,+}^\times\subset (U\cap \CO_{F})^2$, the moduli problem on $\BZ_{(\square)}$-schemes which parameterize equivalent classes $(A,\iota,\ov{\lambda},\ov{\eta}^{(\square)})$ is represented by a normal quasi-projective $\BZ_{(\square)}$-scheme $X^{(\square)}_U$. The geometric irreducible component of $X^{(\square)}_U$ is one-to-one correspondence with $F_+^\times\bs\BA_{F,f}^{\times}/\det U$. The scheme $X^{(\square)}_U$ is smooth over $\BZ_{(\square)}$ if $\square$ is prime to $D_F$.
\end{prop}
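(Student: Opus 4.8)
The plan is to assemble the three assertions --- representability by a normal quasi-projective scheme, the component count, and smoothness when $\square$ is prime to $D_F$ --- from the cited work of Kottwitz, Rapoport and Deligne--Pappas, via Mumford's moduli of polarized abelian schemes. For representability, first I would add an auxiliary full level-$N$ structure with $N$ prime to $\square$ and large enough to rigidify; the resulting moduli problem (polarized abelian scheme with $\CO_F$-multiplication commuting with the Rosati involution, the level-$(U\cap\text{level }N)$ datum, and the Rapoport condition that $\Lie_S(A)$ be invertible over $\CO_S\otimes_\BZ\CO_F$) has no nontrivial automorphisms, so by Mumford's geometric invariant theory for polarized abelian schemes together with the fact that the supplementary structures are each relatively representable (by closed, locally closed, or finite \'etale conditions), it is represented by a quasi-projective $\BZ_{(\square)}[1/N]$-scheme. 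Descending the auxiliary level along the finite group $\GL_2(\CO_F/N)$ produces $X^{(\square)}_U$; here the hypothesis $\det(U)\cap\CO_{F,+}^\times\subset(U\cap\CO_F)^2$ is exactly what makes the relevant action free (it kills the unit-twist automorphisms of the polarization orbit), so the quotient is a scheme rather than merely an algebraic stack, and quasi-projective. For normality I would simply invoke Deligne--Pappas: $X^{(\square)}_U$ is a local complete intersection, flat over $\BZ_{(\square)}$ of relative dimension $g$, and normal, by their explicit analysis of the local deformation ring.

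Next, the geometric components. The hypothesis that the Weil pairing coming from $\lambda$ is a scalar multiple of $\pair{\ ,\ }$ attaches to each $S$-quadruple a similitude class, well defined in $F_+^\times\bs\BA_{F,f}^\times/\det U$ because changing $\lambda$ inside its $\CO_{F,(\square),+}$-orbit scales it by $F_+^\times$ and acting by $U$ scales it by $\det U$; this yields a locally constant morphism $c\colon X^{(\square)}_U\to F_+^\times\bs\BA_{F,f}^\times/\det U$, and I would show it induces a bijection on connected components of every geometric fibre. Over $\BC$ this is classical: by the complex description recalled in the previous section, $X^{(\square)}_U(\BC)=G(F)_+\bs\CH^{\Sigma_F}\times G(\BA_{F,f})/U$, strong approximation for $\SL_{2,F}$ (valid since $\SL_2$ is simply connected and $\SL_2(F\otimes\BR)$ is noncompact) identifies $G(F)_+\bs G(\BA_{F,f})/U$ with $F_+^\times\bs\BA_{F,f}^\times/\det U$ via the determinant, and each fibre of $c$ has the form $\Gamma\bs\CH^{\Sigma_F}$ for an arithmetic group $\Gamma$, hence is connected because $\CH^{\Sigma_F}$ is. To descend this to $\BZ_{(\square)}$ I would use that each fibre $X^{(\square)}_{U,c}$ of $c$ is normal and flat over $\BZ_{(\square)}$ with geometrically connected generic fibre: passing to a toroidal compactification to make the structure morphism proper and taking the Stein factorisation, normality forces the pushforward of the structure sheaf to be $\CO_{\BZ_{(\square)}}$, so every geometric fibre --- in particular of the open part $X^{(\square)}_{U,c}$ --- is connected, once one also checks that adjoining the cusps neither merges nor splits components, as in \cite{Ra}, \cite{DP}.

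For smoothness over $\BZ_{(\square)}$ when $\square$ is prime to $D_F$, I would run Grothendieck--Messing deformation theory at a geometric point $x=(A_0,\iota_0,\ov{\lambda}_0,\ov{\eta}_0)$ over a field $k$: the deformations of $x$ are the lifts of the Hodge filtration $\omega_{A_0}\subset H^1_{\dR}(A_0)$ compatible with the $\CO_F$-action, the polarization and the prime-to-$\square$ level structure imposing nothing further since they are \'etale/unramified data. When $\square$ is prime to $D_F$, $\CO_F\otimes\BZ_{(\square)}$ is \'etale over $\BZ_{(\square)}$, so $\CO_F\otimes k$ is a finite product of fields; the Rapoport condition forces $\omega_{A_0}$ to be a rank-one direct summand of the free rank-two $\CO_F\otimes k$-module $H^1_{\dR}(A_0)$, and decomposing along the idempotents of $\CO_F\otimes k$ turns the deformation functor into a product of ``line in a plane'' Grassmannian problems, each unobstructed, of total relative dimension $[F:\BQ]=g$. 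Hence the completed local ring of $X^{(\square)}_U$ at $x$ is formally smooth over $\BZ_{(\square)}$; the characteristic-zero case is the same computation with $\CO_F\otimes\BQ$ a product of fields, so $X^{(\square)}_U$ is smooth over $\BZ_{(\square)}$ whenever $\square$ is prime to $D_F$.

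The step I expect to be the real obstacle --- rather than a citation or a soft argument --- is the descent of the component count to positive characteristic: because $X^{(\square)}_U$ is only quasi-projective, the clean implication ``normal, flat, with geometrically connected generic fibre $\Rightarrow$ geometrically connected special fibres'' is not directly available, so one must compactify with some care (and control the effect of the cusps on $\pi_0$) or argue locally and match the two. Granting that, the representability and normality inputs and the smoothness computation are routine.
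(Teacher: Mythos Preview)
Your proposal is a reasonable assembly of the cited results, but note that the paper does not prove this proposition at all: it is stated with citations \cite{Ko}\cite{Ra}\cite{DP} and no proof is given. So there is no ``paper's own proof'' to compare against; the statement functions purely as a reference to the literature.

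That said, your sketch is a faithful outline of how those references establish the result: representability via auxiliary full level plus GIT descent is Kottwitz's framework, normality and the local-complete-intersection property come from Deligne--Pappas, and the smoothness argument via Grothendieck--Messing when $p\nmid D_F$ is the standard one (and is essentially Rapoport's in the unramified case). Your identification of the delicate point --- extending the $\pi_0$-count from $\BC$ to positive characteristic via a compactification and Stein factorisation --- is accurate; this is precisely where one needs the toroidal or minimal compactifications of \cite{Ra} (characteristic zero) and \cite{DP} (integral), together with the fact that the boundary is a relative divisor so that adjoining it does not alter $\pi_0$ of the geometric fibres. One small refinement: the condition $\det(U)\cap\CO_{F,+}^\times\subset(U\cap\CO_F)^2$ is more precisely what guarantees that the automorphism group of a quadruple acts trivially on the polarization \emph{orbit} $\ov\lambda$ (not just on a single $\lambda$), so that the moduli functor is already rigid before you add the auxiliary level-$N$ structure; you have this right in spirit but the phrasing ``kills the unit-twist automorphisms'' could be sharpened.
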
\label{unif}When $\square=\emptyset$, we will denote $X^{(\square)}_U$ by $X_U$.
We have the complex uniformization \[X_U(\BC)= G(F)_+\bs\CH^{\Sigma_F}\times G(\BA_{F,f})/U\] given by the following: Given $[\tau,g]\in G(F)_+\bs\CH^{\Sigma_F}\times G(\BA_{F,f})/U$, we have period map $p_\tau:V\otimes_{\BQ}\BC\simeq \BC^{\Sigma_F}, (a,b)\mapsto (a,b)\cdot\begin{pmatrix}\tau \\1\end{pmatrix}$.
Denote $\CL_{\tau,g}=p_\tau(g\wh{\CL}\cap V)$.
\begin{enumerate}
\item [(i)] $A_{\tau,g}=\BC^{\Sigma_F}/\CL_{\tau,g}$.
\item [(ii)] The polarization on $A_{\tau,g}$ is induced by the Riemann form $\pair{\ ,\ }\circ p_{\tau}^{-1}$.
\item [(iii)] $\eta:V\otimes_{\BZ}\BA_{F,f}\simeq V_f(A_{\tau,g}),\quad (a,b)\mapsto (a,b)g'\begin{pmatrix} \tau \\1\end{pmatrix}$.
\end{enumerate}

Let $A/S$ be an abelian scheme of relative dimension $g$ with real multiplication by $\CO_F$. Let $\mathfrak{N}\subset \CO_F$ be an integer ideal with norm given by some power of $p$. A $\mu_{\mathfrak{N}}$-level structure of $A$ over $S$ is close immersion $j_{\mathfrak{N}}:\CO_F^*\otimes_{\BZ}\BG_m[\mathfrak{N}]\ra A[\mathfrak{N}]$ of $\CO_F$-group schemes over $S$.

We have the following $p$-integral model of Hilbert modular scheme that has $\mu$-type level at $p$. 
\begin{prop}\label{mode} When $U^{(p)}$ is sufficiently small,
the moduli problem on $\BZ_{(p)}$-schemes which parameterize prime-to-$p$-isogeny equivalent classes $(A,\iota,\ov{\lambda},\ov{\eta}^{(p)},j_{\mathfrak{N}})$ is represented by a normal $\BZ_{(p)}$ scheme $\CX_{U^{(p)},\mathfrak{N}}$. The geometric irreducible components of $\CX_{U^{(p)},\mathfrak{N}}$ are one-to-one correspondence with \[F_+^\times\bs\BA_{F,f}^\times/\det U^{(p)}\cdot\CO_{F,p}^\times.\] If $p$ is unramified in $F$, $\CX_{U^{(p)},\mathfrak{N}}$ is smooth over $\BZ_{(p)}$. One has complex uniformization: $\CX_{U^{(p)},\mathfrak{N}}(\BC)= G(F)^+\bs\CH^{\Sigma_F}\times G(\BA_{F,f})/U^{(p)}\cdot U_1(\mathfrak{N})$, here for $\mathfrak{N}=\prod_{\fp|p}\fp^{n_\fp}$, $U_1(\mathfrak{N})=\prod_{\fp|p}U_1(\fp^{n_\fp})$ with $U_{1}(\fp^{n_\fp})=\left\{\begin{pmatrix}
a&b\\c&d
\end{pmatrix}\in \GL_2(F_\fp)\ \Big|\ a\in 1+\fp^{n_\fp}, b\in \fp^{\delta_\fp}, c\in \fp^{n_\fp-\delta_\fp}, d\in \CO_{F,\fp}\right\}$.
\end{prop}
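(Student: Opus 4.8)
The plan is to realise $\CX_{U^{(p)},\mathfrak{N}}$ as a finite cover of the normal quasi-projective $\BZ_{(p)}$-scheme $X:=X^{(p)}_{U^{(p)}}$ of Proposition \ref{rat} (taken with $\square=p$ and prime-to-$p$ level $U^{(p)}$), and to read off every assertion from the geometry of that cover. I would work with the universal abelian scheme $\CA$ over $X$, which is defined only up to prime-to-$p$ isogeny, but such isogenies are isomorphisms on $p$-power torsion, so the finite flat $\CO_F$-group scheme $\CA[\mathfrak{N}]$ is unaffected. Then I introduce the moduli functor $\mathcal F$ on $X$-schemes sending $T$ to the set of $\CO_F$-linear closed immersions $j:\CO_F^*\otimes_\BZ\BG_m[\mathfrak{N}]\hookrightarrow\CA_T[\mathfrak{N}]$, and check it is relatively representable over $X$: because the source is a finite flat group scheme of multiplicative type, the functor $\underline{\Hom}_{\CO_F\text{-gp}}(\CO_F^*\otimes\BG_m[\mathfrak{N}],\CA[\mathfrak{N}])$ is representable by an affine, quasi-finite, separated $X$-scheme, and the locus where a homomorphism is a closed immersion is the open subscheme complementary to the support of the relevant cokernel sheaf; this yields the $X$-scheme $\CX_{U^{(p)},\mathfrak{N}}$.

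The core of the argument is the structural claim that $\CX_{U^{(p)},\mathfrak{N}}\to X$ is finite étale over the open subscheme $X^{\mathrm{ord}}\subset X$ obtained by deleting the non-$\mathfrak{N}$-ordinary part of the special fibre, and is empty over the complement. Over a point $s$ of characteristic $p$, a closed immersion $j$ exhibits $\CO_F^*\otimes\BG_m[\mathfrak{N}]$ as a multiplicative-type $\CO_F$-subgroup of $\CA_s[\mathfrak{N}]$ of the largest possible order; this forces $\CA_s$ to be ordinary at every prime dividing $\mathfrak{N}$ and identifies $\mathrm{Im}(j)$ with the connected part $\CA_s[\mathfrak{N}]^\circ$, so $\CX_{U^{(p)},\mathfrak{N}}$ has no points over the non-$\mathfrak{N}$-ordinary locus. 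Over the characteristic-zero part of $X^{\mathrm{ord}}$, namely all of $X_\BQ$, $\BG_m[\mathfrak{N}]$ is étale and $\mathcal F$ is visibly a finite étale cover. Over the ordinary locus in characteristic $p$, $\CA[\mathfrak{N}]^\circ$ is a multiplicative-type $\CO_F$-group scheme étale-locally isomorphic to $\CO_F^*\otimes\BG_m[\mathfrak{N}]$, and $\mathcal F$ is identified with $\underline{\Isom}_{\CO_F}(\CO_F^*\otimes\BG_m[\mathfrak{N}],\CA[\mathfrak{N}]^\circ)$, a torsor under the constant finite group $(\CO_F/\mathfrak{N})^\times$ — the Igusa cover — hence again finite étale. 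A valuative-criterion check, using that schematic closures of multiplicative-type subgroups over a discrete valuation ring remain of multiplicative type of the same kind, shows the morphism is proper, hence finite, over $X^{\mathrm{ord}}$.

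The remaining statements follow formally. As $X^{\mathrm{ord}}$ is open in the normal scheme $X$ it is normal, and a finite étale cover of a normal scheme is normal, giving normality of $\CX_{U^{(p)},\mathfrak{N}}$; when $p\nmid D_F$, Proposition \ref{rat} makes $X$, hence $X^{\mathrm{ord}}$, smooth over $\BZ_{(p)}$, and an étale cover of a $\BZ_{(p)}$-smooth scheme is $\BZ_{(p)}$-smooth. For the complex uniformization one observes that a $\BC$-point of $\CX_{U^{(p)},\mathfrak{N}}$ is a point $[\tau,g]$ of $X(\BC)$ together with a framing, along the $\CO_F^*$-summand of the lattice $\CL=\CO_F\oplus\CO_F^*$, of the $\mathfrak{N}$-torsion of $\CA_{\tau,g}$; the stabiliser of the standard such framing inside the maximal compact at $p$ (the stabiliser of $\CL_p$) is precisely $\prod_{\fp\mid p}U_1(\fp^{n_\fp})$ as written, the shifts by $\delta_\fp$ reflecting that $\CO_F^*$ is $\fp^{\delta_\fp}$ at $\fp$, which yields $\CX_{U^{(p)},\mathfrak{N}}(\BC)=G(F)^+\bs\CH^{\Sigma_F}\times G(\BA_{F,f})/U^{(p)}U_1(\mathfrak{N})$. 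Since $\det U_1(\fp^{n_\fp})=\CO_{F,\fp}^\times$, the geometric generic fibre has components indexed by $F_+^\times\bs\BA_{F,f}^\times/\det U^{(p)}\cdot\CO_{F,p}^\times$, and this set controls the geometric components over $\BZ_{(p)}$ because $\pi_0$ of these Hilbert–Blumenthal moduli spaces is geometrically constant over $\BZ_{(p)}$, as in \cite{Ra}, \cite{DP}.

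I expect the main obstacle to lie in the integral-at-$p$ input of the second paragraph: first, obtaining representability of $\mathcal F$ at all, given the familiar failure of $\underline{\Hom}$ of finite flat group schemes to be flat — this is salvaged by the multiplicative-type hypothesis on the source; and second, upgrading ``quasi-finite and flat'' to ``étale'' along the frontier between the ordinary and non-ordinary loci in characteristic $p$. The cleanest route through the latter is the Serre–Tate description of the local deformation theory over the ordinary locus, under which the connected $p$-divisible group is the constant multiplicative one, together with the rigidity of group schemes of multiplicative type; alternatively one invokes the constructions of \cite{Ra} and \cite{DP} directly.
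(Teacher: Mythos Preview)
The paper does not give its own proof of this proposition; it is stated with citations to \cite{Ra} and \cite{DP}, followed only by an explicit formula for the $\mu_{\mathfrak N}$-level structure under the complex uniformization. Your sketch is the standard Igusa-cover construction and is correct in its essentials --- representability via the multiplicative-type hypothesis on the source, finite \'etaleness over the ordinary locus (hence normality and, when $p\nmid D_F$, smoothness inherited from $X$), and the determination of $\pi_0$ from the generic fibre --- and this is precisely the argument one extracts from the cited references, so there is nothing further to compare.
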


The smoothness of the model follows from the properties of the Kottwitz model and \cite[Lem.~6.17]{Ra}. Also see \cite[Thm.~4.5]{DR} for the discussion on geometric irreducible components.

The complex uniformization is similar as before, except the level structure corresponding to $[\tau,g]$ at $p$ is given by\[i_ {\mathfrak{N}}:\mu_{p^\infty}\otimes_{\BZ}\CO_F^*[\mathfrak{N}]\xrightarrow{}\mu_{p^n}\otimes_{\BZ}\CO_F^*\xrightarrow{e^{2\pi ix/p^n}\otimes y\mapsto(0,x\otimes y)}\BZ_p/p^n\otimes(\CO_F\oplus\CO_{F}^*)\xrightarrow{(a,b)\mapsto (a,b)g'\begin{pmatrix}\tau \\ 1\end{pmatrix}}A_{\tau,g}[p^n]\]for $n\gg 0$.
From now on, fix $U^{(p)}$, we will also simply denote $\CX_{U^{(p)},\mathfrak{N}}$ by $\CX$. Let $F_0$ be the Galois closure of $F$ and $B_0=\CO_{F_0,(p)}$. We may base change $\CX$ to $B_0$ and still denote it by $\CX$. Denote $f:\CA\ra \CX$ the universal abelian scheme and $\underline{\omega}=f_*\Omega^1_{\CA/\CX}$. We have a natural map $\displaystyle\underline{\omega}\ra\bigoplus_{\sigma\in \Sigma_F}\underline{\omega}_{\sigma}$, where $\underline{\omega}_\sigma$ is the maximal sub-sheaf of $\underline{\omega}$ on which $\CO_{F}$ acts by embedding $\sigma$. The map is an isomorphism if $D_F$ is invertible in $B_0$. We may identify $\Hom_{B_0}(\Res_{\CO_F/\BZ}\BG_m,\BG_m)$ with $\BZ[\Sigma_F]$ by mapping a character $\rho(x):=\prod_{\sigma\in\Sigma_F}\sigma(x)^{n_\sigma}$ to $\sum_{\sigma\in\Sigma_F}n_{\sigma}\sigma$. For each $\rho:\Res_{\CO_F/\BZ}\BG_m\ra \BG_m$ over $B_0$, define $\underline{\omega}_\rho=\otimes\underline{\omega}_\sigma^{\otimes n_\sigma}$. When $F\neq \BQ$, the geometric modular form of weight $\rho$ over a $B_0$ algebra $B$ is the $B$-module $H^0(\CX_{B},\underline{\omega}_\rho)$. If $F=\BQ$, we need to extend everything to the compactified modular curve and define the geometric modular form to be the global section of the related sheaf on the compactified modular curve. The modular form has the following equivalent definition:
\begin{defn}(geometric modular form) A test object over a $B_0$-algebra $B$ is a $B$-point $(A,\iota,\ov{\lambda},\ov{\eta}^{(p)},j_{\mathfrak{N}})$ of $\CX$ together with a $\CO_F\otimes_{\BZ}B$-basis $\omega$ of $H^0(A,\Omega_{A/B}^1)$. A geometric modular form $f$ of weight $\rho\in \Hom_{B_0}(\Res_{\CO_F/\BZ}\BG_m,\BG_m)$ on $\CX$ with coefficients in a $B_0$ algebra $B$ is a rule that assigns to each test object $(A,\iota,\ov{\lambda},\ov{\eta}^{(p)},j_{\mathfrak{N}},\omega)$ over a $B$-algebra $B'$ a value $f(A,\iota,\ov{\lambda},\ov{\eta}^{(p)},j_{\mathfrak{N}},\omega)\in B'$ such that
\begin{enumerate}
\item [(i)] The value $f(A,\iota,\ov{\lambda},\ov{\eta}^{(p)},j_{\mathfrak{N}},\omega)$ only depends on the equivalent class of the test object;
\item [(ii)] For a $B$-homomorphism $\phi:B'\ra B''$ and $(A,\iota,\ov{\lambda},\ov{\eta}^{(p)},j_{\mathfrak{N}},\omega)$ over $B'$, we have \[\phi(f(A,\iota,\ov{\lambda},\ov{\eta}^{(p)},j_{\mathfrak{N}},\omega))=f((A,\iota,\ov{\lambda},\ov{\eta}^{(p)},j_{\mathfrak{N}},\omega)_{B''});\]
\item [(iii)] $f(A,\iota,\ov{\lambda},\ov{\eta}^{(p)},j_{\mathfrak{N}},a\omega)=\rho(a)^{-1}f(A,\iota,\ov{\lambda},\ov{\eta}^{(p)},j_{\mathfrak{N}},\omega)$ for all $a\in \Res_{\CO_F/\BZ}\BG_m(B)$;
\item[(iv)] $f$ is holomorphic at all the cusps.
\end{enumerate}
 Denote $\CM_\rho(U^{(p)},\mathfrak{N},B)$ the $B$ module of all such geometric modular forms over $B$.
\end{defn}
We now recall the algebraic definition of $q$-expansion. The set of cusps of $\CX$ is given by $G(F)_+\bs\BP^1(F)\times G(\BA_{F,f})/U^{(p)}\cdot U_1(\mathfrak{N})$. Take $(N,p)=1$ to be a positive integer such that \[\left\{\begin{pmatrix}
a&b\\c&d
\end{pmatrix}\ \left|\ 
a, d\in 1+N\wh{\CO}_F, b\in (N\wh{\CO}_F^*)^{(p)}, c\in (N\wh{\CO}_F^{*,-1})^{(p)}\right.\right\}\] is contained in $U^{(p)}$. For each $a,b\in \BA_{F,f}^{\times,(pN)}$, let $\fa$, $\fb$ be the associated fractional ideal, denote $(\fa,\fb)$ be the cusp $\left[\infty,\begin{pmatrix}a^{-1} & 0 \\ 0 & b \end{pmatrix}\right]$. Over $\BC$, the Fourier expansion of a holomorphic modular form of level $U$ containing $\left\{\begin{pmatrix}
1&x\\&1
\end{pmatrix}\ \left|\ x\in N\wh{\CO_F^*}\right.\right\}$ at the cusp $(\fa,\fb)$ has the form \[a_0+\sum_{\beta\in (N^{-1}\fa\fb)_+}\alpha_\beta q^{\beta},\] where $q=e^{2\pi i\tr_{F/\BQ}(\cdot\tau)}$ and $\alpha_\beta\in \BC$. Let $S\subset \Hom_{\BQ-\text{alg}}(F,\BQ)$ be a set of $[F:\BQ]$ linearly independent linear forms such that $\forall \ell\in S$, $\ell$ maps $F_+$ to $\BQ_+$. Denote $\BZ[[N^{-1}\fa\fb, S]]$ be the ring of formal power series consists of \[\sum_{\substack{\beta\in N^{-1}\fa\fb\\ \ell(\beta)\geq 0,\ \forall\ell\in S}}a_\beta q^\beta.\] The index in the above summation is a finitely generated monoid. Denote $\BZ((N^{-1}\fa\fb,S))$ the ring consists of all formal series \[\sum_{\beta\in N^{-1}\fa\fb}a_\beta q^\beta\]such that the image of $\{ \beta\in N^{-1}\fa\fb|\ \alpha_\beta\neq 0\}$ under each $\ell\in S$ is bounded below. For each cusp, the Mumford's construction gives an abelian scheme $\text{Tate}_{(\fa,\fb)}$ defined over $\BZ((\fa\fb, S))$ whose associated rigid analytic space is the rigid analytic quotient of $u:\fb\ra \BG_m\otimes \fa^*$ (cf.~\cite{Hu}~\cite{Mu}). Here the map $u$ is induced by \[\{\beta\mapsto q^\beta\}\in \Hom_{\BZ}(\fa\fb,\BG_m(\BZ((\fa\fb,S))))\simeq \Hom_{\CO_F}(\fb, \BG_m\otimes_{\BZ}\fa^*(\BZ((\fa\fb, S)))).\] The Tate abelian variety has the following properties:
\begin{itemize}
\item [(i)] For any positive integer $n$, $\Tate_{(\fa,\fb)}[N]$ equals to the quotient of the inverse image of $u(\fb)$ under $N:\BG_m\otimes\fa^*\ra\BG_m\otimes\fa^*$ by $u(\fb)$. In particular, under the assumption $(Np,\fa\fb)=1$, $\Tate_{(\fa,\fb)}$ has a canonical $U^{(p)}$ level structure $\eta_{\text{can}}^{(p)}$ and a canonical $\mu_{p^n}\otimes\CO_F^*$ level structure $j_{p^n,\text{can}}$ defined over $\BZ[\zeta_N]((N^{-1}\fa\fb, S))$.
\item [(ii)] $\Tate_{(\fa,\fb)}$ has a canonical polarization $\lambda_{\text{can}}$.
\item [(iii)] There exists a canonical isomorphism $\Omega^1_{\Tate_{(\fa,\fb)}/\BZ((\fa\fb,S))}\simeq \Omega^1_{\BG_m\otimes\fa^*/\BZ((\fa\fb,S))}$. Thus there exists a canonical $\fa\otimes \BZ((\fa\fb,S))$-basis $\omega_{\text{can}}\in \Omega^1_{\Tate_{(\fa,\fb)}/\BZ((\fa\fb,S))}$ induced by $dt/t\in\Omega^1_{\BG_m\otimes\fa^*/\BZ((\fa\fb,S))}$.
\end{itemize}
Thus, the Tate abelian variety gives rise to a unique test object \[\underline{\text{Tate}}_{(\fa,\fb),\mathfrak{N}}=(\text{Tate}_{(\fa,\fb)},\iota_{\text{can}},\ov{\lambda}_{\text{can}},\ov{\eta}_{\text{can}}^{(p)},j_{\mathfrak{N},\text{can}},\omega_{\text{can}})\] over $\BZ[\zeta_N]((N^{-1}\fa\fb, S))$.

The following $q$-expansion principle is known (cf.~\cite[Thm.~6.7]{Ra}). Also see Corollary 1.6.2 of \cite{Katz}, where only need the Cohen-Macaulay condition on each geometric irreducible component.
\begin{prop}\label{q-exp}\
\begin{enumerate}
\item Over $\BC$, the natural map \[\CM_\rho(U^{(p)},\mathfrak{N},\BC)\ra M_\rho(U^{(p)}\cdot U_1 (\mathfrak{N}),\BC), f\mapsto \{[\tau,g_f]\mapsto f(A,\iota,\ov{\lambda},\ov{\eta}^{(p)},j_\mathfrak{N},2\pi idz)_{[\tau,g_f]}\}\] is an isomorphism, here $(A,\iota,\ov{\lambda},\ov{\eta}^{(p)},j_\mathfrak{N},2\pi idz)_{[\tau,g_f]}$ is the test object associated to $[\tau,g_f]$ under complex uniformization. Furthermore, $f(\underline{\Tate}_{(\fa,\fb),\mathfrak{N}})$ equals to the classical Fourier expansion of $f$ at the cusp $(\fa,\fb)$.
\item Let $B$ be a $B_0$-algebra, for each geometric irreducible component $s$ of $\CX$, choose a cusp $(\fa_s,\fb_s)\in s$ as above, then the $q$-expansion map with respect to these cusps \[\begin{aligned}\CM_\rho(U^{(p)},\mathfrak{N},B)&\ra ((B\otimes_{\BZ_{(p)}}\BZ_{(p)}[\zeta_N])[[q^{\beta}]]_{\beta\in(N^{-1}\fa_s\fb_s)_{+}\cup \{0\}})_s\\
 f&\mapsto (f_{B\otimes_{\BZ_{(p)}}\BZ_{(p)}[\zeta_N]}(\underline{\Tate}_{(\fa_s,\fb_s),\mathfrak{N}})_{s}
\end{aligned}\]is well-defined and injective.

In particular, let $B'$ be a sub-$B_0$-algebra of $B$ and $f\in \CM_\rho(U^{(p)},\mathfrak{N},B)$. If for each geometric connected irreducible component of $\CX$, exists a cusp $(\fa,\fb)$ in that component such that \[f_{B\otimes_{\BZ_{(p)}}\BZ_{(p)}[\zeta_N]}(\underline{\Tate}_{(\fa,\fb),n})\in (B'\otimes_{\BZ_{(p)}}\BZ_{(p)}[\zeta_N])[[q^\beta]]_{\beta\in(N^{-1}\fa\fb)_{+}\cup \{0\}},\] then $f$ lies in $\CM_\rho(U^{(p)},\mathfrak{N},B')$.
\end{enumerate}
\end{prop}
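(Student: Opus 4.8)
The strategy is to make the comparison map of (1) explicit together with its inverse, and to derive both assertions of (2) from the geometry of $\CX$ recorded in Proposition \ref{mode}, using the toroidal compactification and the Tate--object formalism of \cite{Ra}, \cite{DP}, \cite{AG}.

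For part (1), the forward map sends a geometric modular form $f$ to the function $[\tau,g_f]\mapsto f(A_{\tau,g},\iota,\ov\lambda,\ov\eta^{(p)},j_{\mathfrak{N}},2\pi i\,dz)_{[\tau,g_f]}$, where the test object is the one attached to $[\tau,g_f]$ by the complex uniformization of Proposition \ref{mode} and $2\pi i\,dz=(2\pi i\,dz_\sigma)_{\sigma\in\Sigma_F}$ is the translation-invariant differential pulled back from $\BC^{\Sigma_F}$ along $p_\tau$, which is an $\CO_F\otimes_\BZ\BC$-basis of $H^0(A_{\tau,g},\Omega^1)$ by construction of $\CL_{\tau,g}$. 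The points to verify are: (a) well-definedness modulo $G(F)^+$ and $U^{(p)}U_1(\mathfrak{N})$ --- for $\alpha\in G(F)^+$ the canonical isomorphism $A_{\tau,g}\simeq A_{\alpha\tau,\alpha g}$ acts on $\BC^{\Sigma_F}$ by componentwise multiplication by the automorphy factor, hence pulls $2\pi i\,dz$ back to a $J(\alpha,\tau)$-multiple of $2\pi i\,dz$, and the homogeneity axiom for $f$ then converts this into exactly the factor $J(\alpha,\tau)^\rho$ defining $M_\rho$, while right $U^{(p)}U_1(\mathfrak{N})$-invariance is equivalence of test objects under a change of level structure; (b) holomorphy in $\tau$ and the cusp condition transport along the analytic universal family over $\CX(\BC)$; (c) the inverse is produced by GAGA on a toroidal compactification $\ov\CX_\BC$ (with the Koecher principle identifying $H^0(\CX_\BC,\underline{\omega}_\rho^{\mathrm{an}})$ with its compactified version when $F\neq\BQ$, and working on the compactified modular curve throughout when $F=\BQ$ as already stipulated), so that a holomorphic Hilbert modular form of weight $\rho$, being a section of $\underline{\omega}_\rho^{\mathrm{an}}$, is algebraic and one checks the two maps are mutually inverse. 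The last assertion of (1) is the computation that over $\BC$ the scheme $\Tate_{(\fa,\fb)}$ with parameter $q^\beta=e^{2\pi i\tr_{F/\BQ}(\beta\tau)}$ analytifies, via $t\mapsto e^{2\pi i z}$, to $\BC^{\Sigma_F}$ modulo the lattice spanned by $\fa^*$ and $\fb\tau$, under which $\omega_{\can}=dt/t$ corresponds to $2\pi i\,dz$; substituting into $f$ then reproduces the classical Fourier expansion of $f$ at $(\fa,\fb)$.

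For the injectivity in (2): by construction the $q$-expansion of $f$ at the chosen cusp $(\fa_s,\fb_s)$ of a geometric component $s$ is the pullback of $f$ --- regarded as a section of $\underline{\omega}_\rho$ over $\ov\CX$ base-changed to $B\otimes_{\BZ_{(p)}}\BZ_{(p)}[\zeta_N]$ and trivialized near the boundary by $\omega_{\can}$ --- along the morphism $\Spec(B\otimes\BZ_{(p)}[\zeta_N])((N^{-1}\fa_s\fb_s,S))\to\ov\CX$ into the boundary stratum meeting $s$; thus a vanishing $q$-expansion says $f$ dies on the formal completion of $\ov\CX$ along that stratum. To conclude $f=0$ one reduces, by a standard argument using flatness of $\ov\CX$ over $B_0$ and compatibility of $q$-expansions with base change (itself part of the package established in \cite{Ra},\cite{DP}), to the case $B=\ov k$ an algebraically closed field; there, by Proposition \ref{mode} each geometric component of $\ov\CX_{\ov k}$ is normal, hence integral, so a section of the line bundle $\underline{\omega}_\rho$ on it injects into its stalk at the cusp $c_s$, which injects into the $\fm_{c_s}$-adic completion by Krull's intersection theorem --- and the $q$-expansion is precisely the image there. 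Hence $f=0$. For the descent clause I would invoke the further input of \cite{Ra},\cite{DP} that $\CM_\rho(U^{(p)},\mathfrak{N},-)$ commutes with base change from $B_0$ (properness and flatness of $\ov\CX/B_0$ together with cohomology-and-base-change for $\underline{\omega}_\rho$): then $f$ over $B$ is a $B$-linear combination of a fixed $B_0$-module generating set of $\CM_\rho(U^{(p)},\mathfrak{N},B_0)$ whose $q$-expansions can be arranged to form part of a basis of $\prod_s(B_0\otimes\BZ_{(p)}[\zeta_N])[[q^\beta]]$ --- again by the injectivity just proven, applied over the residue fields of $B_0$ --- and the hypothesis that the $q$-expansion of $f$ has coefficients in $B'\otimes\BZ_{(p)}[\zeta_N]$ forces each coefficient of the combination into $B'$, whence $f\in\CM_\rho(U^{(p)},\mathfrak{N},B')$.

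The analytic--algebraic dictionary in (1) and the injectivity in (2) are, once Proposition \ref{mode} supplies the complex uniformization and the irreducibility of the geometric components, essentially bookkeeping with automorphy factors, GAGA, and the Krull intersection theorem. The genuine obstacle --- and the reason I would appeal to \cite{Ra},\cite{DP},\cite{AG} rather than reprove it --- is the construction of the toroidal compactification $\ov\CX$ over $B_0$, the extension of $\underline{\omega}_\rho$ across the boundary, the identification of the Tate objects with the boundary strata, and the base-change compatibility of $\CM_\rho(U^{(p)},\mathfrak{N},-)$ on which the descent clause of (2) rests.
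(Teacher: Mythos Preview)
The paper does not prove this proposition at all: it is stated with the citations \cite{Ra}, \cite{DP}, \cite{AG} attached and no proof environment follows. So there is nothing in the paper to compare your argument against line by line.

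That said, your sketch is the standard route to these statements and is essentially what the cited references do. A few minor comments. In (1)(a) you should be slightly more careful: the automorphy factor arises from the isomorphism $A_{\tau,g}\simeq A_{\alpha\tau,\alpha g}$ induced by left multiplication by $\alpha$ on $V\otimes\BC$, and one must check that this isomorphism carries the full test object (polarization, prime-to-$p$ level, $\mu_{\mathfrak N}$-level) to the corresponding one, not just the differential; this is routine but is what actually pins down the equivalence class. In the descent clause of (2), your argument via ``arranging the $q$-expansions to form part of a basis'' is imprecise as stated. The clean way, which is what the references do, is: by base-change compatibility $\CM_\rho(U^{(p)},\mathfrak N,B)\simeq\CM_\rho(U^{(p)},\mathfrak N,B_0)\otimes_{B_0}B$, and the $q$-expansion map over $B_0$ is an injection of $B_0$-modules into a free $B_0$-module, hence has free cokernel after localizing (or one works with a finite free $B_0$-module containing the image); then tensoring with $B$ and comparing with $B'$ inside $B$ gives the result. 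Your version can be made to work but needs the freeness of the target and flatness of $B_0$ spelled out. Finally, your own last paragraph correctly identifies where the real content lies (toroidal compactification, extension of $\underline\omega_\rho$, base-change), and that is exactly why the paper cites rather than proves.
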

Fix embedding $\ov{\BQ}\ra \BC_p$. Let $B\subset \BC_p$ be a $p$-adic algebra, a trivialized test object with prime-to-$p$ level $U^{(p)}$ over $B$ is a $B$ point of the Igusa tower $I:=\varprojlim_nI_n$ where $I_n:=\CX_{U^{(p)},p^n}$, i.e. $(A,\iota,\ov{\lambda},\ov{\eta}^{(p)})$ together with a $\CO_F$-group scheme morphism $j:\mu_{p^\infty}\otimes_{\BZ}\CO_F^*\hookrightarrow A[p^\infty]$.

Let $B\subset \BC_p$ be a $p$-adic algebra, the $p$-adic modular forms of coefficient in $B$ are the formal functions on the Igusa tower $\{I_{n,B/p^m}\}_{m,n}$, i.e. \[\varprojlim_{m}\varinjlim_{n}H^0(I_{n,B/p^mB},\CO_{m,n}),\]here $\CO_{m,n}$ is the structure sheaf on $I_{n,B/p^mB}$.

There is also an equivalent definition:
\begin{defn}[$p$-adic modular form]
 A $p$-adic modular form $F$ over $B$ is a rule that assigns to each trivialized test object $(A,\ov{\lambda},\iota,\ov{\eta}^{(p)},j)$ over a $p$-adic $B$ algebra $B'$ an element $F(A,\ov{\lambda},\iota,\ov{\eta}^{(p)},j)\in B'$ such that
\begin{itemize}
\item [(1)] The value only depends on the isomorphic class of $(A,\ov{\lambda},\iota,\ov{\eta}^{(p)},j)_{/\Spec B'}$.
\item [(2)] If we have an algebra homomorphism $i:B'\ra B''$ of $p$-adic algebras, and $(A,\ov{\lambda},\iota,\ov{\eta}^{(p)},j)$ is a trivialized test object over $B'$ then \[F((A,\ov{\lambda},\iota,\ov{\eta}^{(p)},j)_{\Spec B''})=i(F((A,\ov{\lambda},\iota,\ov{\eta}^{(p)},j))).\]
\end{itemize}
Denote $\CV(U^{(p)},B)$ the space of such $p$-adic modular forms over $B$. Denote $\underline{\Tate}_{(\fa,\fb)}$ the trivialized $U^{(p)}$ test object over $\BZ_p[\zeta_N]((N^{-1}\fa\fb,S))^{\wh{}}$ induced by $\{\underline{\Tate}_{(\fa,\fb),p^n}\}_{n}$. Here ${}^{\wh{}}$ means $p$-adic completion.
\end{defn}
There is a natural $q$-expansion preserving map \[\begin{aligned}\wh{\cdot}:\CM_\rho(U^{(p)},\mathfrak{N},B)&\ra \CV(K^{(p)},B)\\
 f&\mapsto \left(\wh{f}:(A,\ov{\lambda},\iota,\ov{\eta}^{(p)},j)_{B'}\mapsto f(A,\ov{\lambda},\iota,\ov{\eta}^{(p)},j|_{\BG_m\otimes_{\BZ}\CO^*[\mathfrak{N}]},j^* dt/t)\right),
\end{aligned}\]here $dt/t$ is the standard invariant differential on $\wh{\BG}_{m,B'}\otimes_{\BZ}\CO_F^*$.
In summary, we have
\begin{prop}For different $n$, the natural map between the schemes $I_{n,B/p^mB}(U^{(p)},G)$ induces one-to-one correspondence on geometric irreducible components.
The $p$-adic modular forms also have $q$-expansion principle. The natural map from geometric modular form to $p$-adic modular form is a $q$-expansion preserving embedding. More precisely, let $B\subset \BC_p$ be a $p$-adic $B_0$-algebra, supposing that for each geometric component $s$, choose a cusp $(\fa_s,\fb_s)$ as above in that geometric component, then the $q$-expansion map with respect to these cusps \[\begin{aligned}\CV(U^{(p)},B)&\ra ((B\otimes_{\BZ_{p}}\BZ_{p}[\zeta_N])[[q^{\beta}]]_{\beta\in(N^{-1}\fa_s\fb_s)_{+}\cup \{0\}})_s\\
 f&\mapsto (f_{B\otimes_{\BZ_{p}}\BZ_{p}[\zeta_N]}(\underline{\Tate}_{(\fa_s,\fb_s)}))_{s}
\end{aligned}\]is injective. (Here Tate object is viewed as defined over the $p$-adic completion of the base change of the original one to $B\otimes_{\BZ_p}\BZ_p[\zeta_N]$-coefficients formal power series.)
\end{prop}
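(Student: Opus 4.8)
The proof breaks into three parts; the first is the geometric input and the other two are essentially formal.

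\emph{Step 1: components of the tower.} For the statement on geometric irreducible components it suffices to pass to the special fibres: since the Igusa schemes $I_n$ are flat over $\BZ_{(p)}$ (Proposition \ref{mode}), on each connected component of $\Spec(B/p^mB)$ the set of geometric irreducible components of $I_{n,B/p^mB}$ is identified with that of $I_{n,\ov\BF_p}$, compatibly in $n$. The Igusa tower is supported on the ordinary locus, and over $\ov\BF_p$ the transition map $I_{n,\ov\BF_p}^{\ord}\to I_{n',\ov\BF_p}^{\ord}$ ($n\ge n'$) is finite \'etale, Galois with group a quotient of $(\CO_F\otimes_\BZ\BZ_p)^\times$ acting through the $\mu_{p^\infty}\otimes\CO_F^*$-level structure. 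By the irreducibility theorem for Igusa towers over Hilbert modular varieties (Ribet for $F=\BQ$; Deligne--Ribet and Hida in general), the pullback of $I_{n}^{\ord}$ to each geometric irreducible component of $\CX_{\ov\BF_p}^{\ord}$ stays irreducible; hence the transition maps are bijections on components, and by Proposition \ref{mode} together with density of the ordinary locus that common index set is $F_+^\times\bs\BA_{F,f}^\times/\det U^{(p)}\cdot\CO_{F,p}^\times$, independently of $n$. One checks that passing from the full $\mu_{p^\infty}$-structure to the partial structure $j_{\mathfrak N}$ / $U_1(\mathfrak N)$ of Proposition \ref{mode} changes nothing, since the relevant level subgroups at $p$ still surject onto $\CO_{F,p}^\times$ under $\det$.

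\emph{Step 2: $q$-expansion principle for $p$-adic forms.} Fix cusps $(\fa_s,\fb_s)$, one in each geometric component $s$, and let $f\in\CV(U^{(p)},B)$ have vanishing $q$-expansion at all of them. Since $B\subset\BC_p$ is $p$-torsion free it is enough to prove $f\equiv 0\pmod p$: then $f/p\in\CV(U^{(p)},B)$ again has vanishing $q$-expansion, and iterating yields $f\in\bigcap_m p^m\CV(U^{(p)},B)=0$. Modulo $p$, $f$ is represented for $n\gg 0$ by a global function on $I_{n,B/pB}^{\ord}$ (or its toroidal compactification); after a flat base change to an algebraic closure of a residue field of $B/pB$ it becomes a section whose restriction to the formal completion along the boundary divisor attached to $(\fa_s,\fb_s)$ vanishes. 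By Step 1 that boundary divisor lies in the irreducible component $s$, which is reduced, so by the Krull intersection theorem the section vanishes on each component, hence on all of $I_n^{\ord}$; as this holds after every such base change and $B/pB$ is generated over its reduction by nilpotents while $I_n$ is flat, one concludes $f\equiv 0\pmod p$. This is the asserted injectivity.

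\emph{Step 3: the embedding $\wh{\cdot}$.} That $\wh{\cdot}$ preserves $q$-expansions is the identity $\wh f(\underline\Tate_{(\fa,\fb)})=f(\underline\Tate_{(\fa,\fb),\mathfrak N})$: the canonical trivialization $j_{\mathrm{can}}$ of $\Tate_{(\fa,\fb)}$ carries the standard differential $dt/t$ on $\wh{\BG}_m\otimes_\BZ\CO_F^*$ to the canonical basis $\omega_{\mathrm{can}}$ entering the left-hand side, so both sides are the same element of $\BZ_p[\zeta_N]((N^{-1}\fa\fb,S))^{\wh{}}$. Injectivity of $\wh{\cdot}$ then follows formally: if $\wh f=0$ the $q$-expansions of $f$ all vanish, whence $f=0$ by Proposition \ref{q-exp}(2). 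The one nontrivial ingredient in the whole proposition is the irreducibility of the Igusa tower used in Step 1; granting it, Steps 2 and 3 are d\'evissage plus the $q$-expansion principle for geometric modular forms already recorded in Proposition \ref{q-exp}. The only extra care in the present setting is to confirm that this irreducibility, and the indexing of the components, survive the passage to the non-maximal level at $p$.
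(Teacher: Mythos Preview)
The paper states this proposition without proof, treating it as a standard background result (the analogue for geometric modular forms, Proposition~\ref{q-exp}, is likewise only cited to \cite{Ra}, \cite{DP}, \cite{AG}). Your sketch is the standard argument and is essentially correct: the irreducibility of the Igusa tower (Hida, building on Ribet and Deligne--Ribet) is exactly the geometric input required, and the d\'evissage in Step~2 --- reduce mod $p$, use irreducibility of each component together with vanishing in the formal completion along the cusp, then divide by $p$ and iterate --- is the usual route to the $p$-adic $q$-expansion principle. Step~3 is a direct check once one observes $j_{\mathrm{can}}^*(dt/t)=\omega_{\mathrm{can}}$.

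Two minor points worth tightening. In Step~2 you should work on a (partial) toroidal compactification from the outset, since the cusp lies on the boundary; the parenthetical ``(or its toroidal compactification)'' should be the main clause, and you need that the compactified Igusa components remain integral so that a section vanishing to all orders along a boundary divisor vanishes identically. Second, the phrase ``$B/pB$ is generated over its reduction by nilpotents while $I_n$ is flat'' is imprecise; what you actually use is that $B\subset\BC_p$ is $p$-torsion-free and that, after the flat base change $B/pB\hookrightarrow\ov\BF_p$, vanishing of a section is detected over $\ov\BF_p$, where the irreducibility argument applies. These are cosmetic; there is no substantive gap, and since the paper offers no proof of its own there is nothing further to compare.
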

For any $p$-adic complete $B_0$ algebra $B$, there is a $p$-adic Maass-Shimura operator $\theta_\sigma$ on $\CV(U^{(p)},B)$ given by \[\theta_\sigma\left(\sum_{\beta\in (N^{-1}\fa\fb)_+\cup\{0\}}a_\beta q^\beta\right)=\sum_{\beta\in (N^{-1}\fa\fb)_+\cup\{0\}} a_\beta\sigma(\beta)q^{\beta}\] on $q$-expansions at each cusp $(\fa,\fb)$ with $(\fa\fb,Np)=1$. Furthermore, for geometric modular form, the action of $p$-adic Maass-Shimura operator and classical Maass-Shimura are equal whenever take the value on $p$-ordinary CM points (cf.~\cite[Thm.~2.6.36]{Ka}).

\subsection{CM points}\label{3.3}
\subsubsection{Algebraic theory and integral theory of CM points}
Let $K/F$ be a quadratic CM extension. Let $\Sigma$ be a CM type of $K$. Let $\tau\in K\bs F$ such that $\sigma(\tau)$ has positive imaginary part for each $\sigma\in \Sigma$. Then $\tau$ can be viewed as an element in $\CH^{\Sigma_F}$ given by $(\sigma(\tau))_{\sigma\in\Sigma}$. Recall that we have a $\BC$-point on Hilbert modular variety corresponding to $[\tau,1]$ given by $(A,\ov{\lambda},\iota,\ov{\eta})$, here $A=\BC^\Sigma/\CO_F\tau+\CO_F^*$ and 
$\eta:\CL\otimes_{\BZ}\BA_{F,f}\simeq V_f(A),\quad (a,b)\mapsto (a,b)\begin{pmatrix} \tau\\ 1 \end{pmatrix}$. 
Here view $K$ in $\BC^{\Sigma}$ given by the CM type $\Sigma$, we have $A$ is abelian variety of CM type $(K,\Sigma)$. Similar for $[\tau,g]$, $g\in G(\BA_{F,f})$. Take $U$ to be sufficiently small such that the moduli problem in Proposition \ref{rat} with $\square=\emptyset$ is represented by a $\BQ$-scheme $X_U$. As a complex point of $X_U$, the CM point $[\tau,g]$ has a model over $\ov{\BQ}$. Furthermore, it has a model over its field of moduli (cf.~\cite{Sh}), in particular, $[\tau,g]$ is defined over the maximal abelian extension of the reflex field $K^*$ of $K$.

Denote $(K^*,\Sigma^*)$ be the reflex of $(K,\Sigma)$ and $N_{\Sigma^*}:\Res_{K^*/\BQ}\BG_m\ra \Res_{K/\BQ}\BG_m$ be the reflex norm. Let $\rho_\tau:K^\times\ra G(F)$ be the embedding defined by \[\rho_\tau(x)'\begin{pmatrix}\tau \\ 1\end{pmatrix}=\begin{pmatrix} x\tau \\ x\end{pmatrix}\] for all $x\in K$.
Recall the Galois action of CM points (cf.~\cite[Cor.~4.20]{Hi3}) states as follows:
\begin{thm}\label{CMpt}
Let $\sigma\in\Gal(K^{*,\text{ab}}/K^*)$ and $t^{-1}_\sigma\in K^\times\bs \BA_{K,f}^\times$ be the image of $\sigma$ under the reciprocity map. The galois action of $\Gal(K^{*,\text{ab}}/K^*)$ on CM points $[\tau,G(\BA_{F,f})]$ is given by
\[\sigma[\tau,g]=[\tau,\rho_\tau\circ N_{\Sigma^*}(t_\sigma)g].\]
\end{thm}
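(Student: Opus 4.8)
The plan is to derive Theorem \ref{CMpt} from the main theorem of complex multiplication for abelian varieties with CM by $K$, applied to the abelian variety that uniformizes the CM point $[\tau,g]$. Fix $[\tau,g]$ and let $(A,\iota,\ov\lambda,\ov\eta)=(A_{\tau,g},\iota,\ov\lambda,\ov\eta)$ be the associated quadruple under the complex uniformization of Proposition \ref{rat} (with $\square=\emptyset$); by \cite{Sh} it admits a model over $K^{*,\mathrm{ab}}$, so $\sigma$ acts on it and $\sigma[\tau,g]$ is the isomorphism class of the Galois conjugate $(A^\sigma,\iota^\sigma,\ov\lambda^\sigma,\ov\eta^\sigma)$. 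The first step is to observe that $A^\sigma$ is again of CM type $(K,\Sigma)$: this is precisely the defining property of the reflex field $K^*$, namely that $\Gal(\ov\BQ/K^*)$ stabilizes $\Sigma$. Consequently the period lattice of $A^\sigma$ again sits inside $K\hookrightarrow\BC^{\Sigma_F}$ via $\Sigma$, so the $\CH^{\Sigma_F}$-coordinate of the CM point is unchanged by $\sigma$ and only the $G(\BA_{F,f})$-coordinate can move; this already explains the shape of the answer and reduces the theorem to computing the new lattice, polarization and level structure.

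The second and main step is to invoke the main theorem of complex multiplication (Shimura--Taniyama; see \cite{Sh}, and \cite{Hi3} for the adelic formulation). Write $s=t_\sigma^{-1}\in\BA_{K^*,f}^\times$ for the image of $\sigma$ under the reciprocity map. The theorem furnishes a $K$-linear analytic isomorphism
\[
A^\sigma\ \xrightarrow{\ \sim\ }\ \BC^{\Sigma_F}/\,N_{\Sigma^*}(t_\sigma)\cdot\CL_{\tau,g},
\]
under which the induced Galois map on adelic Tate modules $V_f(A)\to V_f(A^\sigma)$ becomes multiplication by the idele $N_{\Sigma^*}(t_\sigma)\in\BA_{K,f}^\times$, and which is compatible with polarizations up to a totally positive scalar. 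Here $N_{\Sigma^*}(t_\sigma)\cdot\CL_{\tau,g}$ denotes the $\CO_F$-lattice in $K$ obtained from $\CL_{\tau,g}$ by the standard action of $\BA_{K,f}^\times$ on lattices. This is the one input genuinely outside the excerpt; everything else is bookkeeping against the complex uniformization.

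The third step is to identify the right-hand quadruple with $[\tau,\rho_\tau(N_{\Sigma^*}(t_\sigma))g]$. The key point is that $p_\tau$ restricts to an $F$-linear isomorphism $V\xrightarrow{\sim}K$, $(a,b)\mapsto a\tau+b$, and that $\rho_\tau$ is defined exactly so that $p_\tau(\rho_\tau(u)v)=u\cdot p_\tau(v)$ for $u\in\BA_{K,f}^\times$ --- immediate from $\rho_\tau(u)'\begin{pmatrix}\tau\\1\end{pmatrix}=\begin{pmatrix}u\tau\\u\end{pmatrix}$ together with the convention $g\cdot(a,b)=(a,b)g'$ and the anti-multiplicativity of $M\mapsto M'$. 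Writing $u=N_{\Sigma^*}(t_\sigma)$, one then gets $\CL_{\tau,\rho_\tau(u)g}=p_\tau(\rho_\tau(u)g\wh\CL\cap V)=u\cdot\CL_{\tau,g}$, so the period lattices agree; the $\CO_F$-action matches automatically because it commutes with the $K$-action; the level structure of $[\tau,\rho_\tau(u)g]$ is $\eta_g$ post-composed with multiplication by $u$ on $V_f(A)$, which is exactly the description of $\ov\eta^\sigma$ from Step 2; and the two polarizations differ by a totally positive element and hence define the same orbit $\ov\lambda$. Combining these gives $\sigma[\tau,g]=[\tau,\rho_\tau(N_{\Sigma^*}(t_\sigma))g]$.

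I expect the only genuine difficulty to be normalizations: reconciling the normalization of the reciprocity map (arithmetic versus geometric Frobenius, which is why $t_\sigma^{-1}$ rather than $t_\sigma$ is the reciprocity image) with the variance of the reflex norm $N_{\Sigma^*}$, and the polarization clause --- one must check that the scalar by which $\sigma$ distorts the Riemann form $\pair{\ ,\ }\circ p_\tau^{-1}$ is totally positive, so that $\ov\lambda^\sigma$ stays in the prescribed orbit rather than escaping it; this follows from the compatibility of the main theorem with the Weil pairing together with the positivity $\Im(\sigma(\tau))>0$ built into the CM type. A minor additional point: since Proposition \ref{rat} requires $U$ sufficiently small, one first argues at such a level and then transfers to general $U$ along the natural projection maps.
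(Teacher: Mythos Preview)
The paper does not actually give its own proof of Theorem~\ref{CMpt}: the statement is imported wholesale from \cite{Hi3}, with no argument supplied. So there is nothing to compare against in the strict sense.

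That said, your approach is the standard one and is correct. Deriving the Galois action from the main theorem of complex multiplication, then matching lattice, level structure, and polarization against the complex uniformization of \S\ref{unif}, is exactly how this result is proved in the literature; it is also precisely the mechanism the paper itself invokes when it needs to reason about Galois conjugates of CM points, for instance in the proof of Proposition~\ref{pintCM} and again in Proposition~\ref{Gal}. Your bookkeeping for the lattice identity $\CL_{\tau,\rho_\tau(u)g}=u\cdot\CL_{\tau,g}$ via the intertwining property of $\rho_\tau$ is clean, and your remarks about the normalization of reciprocity (so that $t_\sigma$, not $t_\sigma^{-1}$, appears in the final formula) match the paper's convention as seen in the proof of Proposition~\ref{pintCM}, where $s$ is the reciprocity image and the lattice is translated by $N_{\Sigma^*}(s)^{-1}$. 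One small correction: in the theorem as stated, $t_\sigma^{-1}$ should lie in $K^{*,\times}\backslash\BA_{K^*,f}^\times$ rather than $K^\times\backslash\BA_{K,f}^\times$; you silently fixed this, which is the right call.
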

\label{pol}
Fix a prime $p$ and let $\Sigma$ be a $p$-ordinary CM type. This implies that every prime of $F$ above $p$ splits in $K$. For $v$ a prime of $F$ above $p$, denote $w\in \Sigma_p$ the prime of $K$ above $v$. Consider a CM point $[\tau, g_0]$ with $\CL_{\tau,g_0}=\CO_K$ and $\tau$ a purely imaginary element such that $2\tau\CD_F^{-1}$ is prime to $p$. The alternating pairing $\pair{\ ,\ }\circ \rho_{\tau}^{-1}$ is given by \[K\times K\ra F, (x,y)\mapsto \frac{xc(y)-yc(x)}{2i\Im(\tau)},\] here $c$ is the complex conjugation. The above pairing induces \[\wedge \CO_{K}^2\simeq\frac{\CD_{K/F}\CD_{F}}{2\Im (\tau)}\CO_{F}^*.\] Thus the polarization ideal of the CM point $[\tau,g_0]$ is $\fc:=\CD_{F}^{-1}2\Im(\tau) \CD_{K/F}^{-1}$. Also equals the fractional ideal associated with $\det(g_0^{-1})$. For any CM points of the form $[\tau,\rho_\tau(a)g_0]$, its polarization ideal is $\fc(\fa):=\fc N_{K/F}(\fa)^{-1}$, here $\fa$ is the fractional ideal associated to $a$.
\begin{prop}\label{pintCM}
Suppose (1) $\tau$ is a purely imaginary element with $2\tau\CD_F^{-1}$ being prime to $p$; $(2)$ for each $v|p$, $g_v\in \Stab(\CO_{F,v}\oplus\CO_{F,v}^*)$ $g_v^{-1}\rho_\tau(t)g_v=\begin{pmatrix}t_w& 0\\0&t_{\ov{w}}\end{pmatrix}$ for all $t\in K_v^\times$, then all the CM points $[\tau,\rho_\tau(\BA_{K,f}^{\times,(p)}\cdot\CO_{K,p}^\times)g]$ can be extended to $\ov{\BZ}_{(p)}$ points on $\CX_{U^{(p)},\mathfrak{N}}$.
\end{prop}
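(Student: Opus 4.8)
The plan is to construct, for each $a\in\BA_{K,f}^{\times,(p)}\cdot\CO_{K,p}^\times$ (with associated fractional ideal $\fa$, necessarily prime to $p$), an $\ov{\BZ}_{(p)}$-point of $\CX_{U^{(p)},\mathfrak{N}}$ whose generic fibre is the complex CM point $[\tau,\rho_\tau(a)g]$, and then to conclude by the moduli interpretation of Proposition \ref{mode}. Realizing $[\tau,\rho_\tau(a)g]$ over $\ov\BQ$ as in Theorem \ref{CMpt}, the underlying abelian variety $A$ is of CM type $(K,\Sigma)$ with $\CO_K$ acting through $\rho_\tau$. First I would invoke the Shimura--Taniyama theory of complex multiplication: since $\Sigma$ is $p$-ordinary, $A$ has good ordinary reduction, so it extends to an abelian scheme over $\ov{\BZ}_{(p)}$, still denoted $A$; because $p$ is unramified in $F$ and the reduction is ordinary, the Rapoport condition that $\Lie A$ be locally free of rank one over $\ov{\BZ}_{(p)}\otimes_\BZ\CO_F$ holds.

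Next I would extend the auxiliary data one at a time. The $\CO_F$-action $\iota$ (the one induced on $A$ by $\rho_\tau$, restricted to $\CO_F$) extends uniquely by the N\'eron mapping property of $A/\ov{\BZ}_{(p)}$ and is automatically compatible with the Rosati involution. The polarization class $\ov\lambda$ extends to a polarization of $A$ over $\ov{\BZ}_{(p)}$, and it stays prime to $p$: its polarization ideal $\fc(\fa)=\CD_F^{-1}\,2\Im(\tau)\,\CD_{K/F}^{-1}\,N_{K/F}(\fa)^{-1}$ is prime to $p$ because $2\tau\CD_F^{-1}$ is prime to $p$ by hypothesis, $\CD_{K/F}$ is prime to $p$ since every prime of $F$ above $p$ splits in $K$, and $N_{K/F}(\fa)$ is prime to $p$ since $\fa$ is. The prime-to-$p$ level structure $\ov\eta^{(p)}$ only involves the prime-to-$p$ Tate module, which is lisse over $\ov{\BZ}_{(p)}$, so the $\pi_1$-invariant trivialization over the generic fibre extends uniquely over $\ov{\BZ}_{(p)}$.

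The crux is the $\mu_{\mathfrak{N}}$-level structure $j_{\mathfrak{N}}$: since $\mathfrak{N}=\prod_{\fp\mid p}\fp^{n_\fp}$ has $p$-power norm, we must produce a closed immersion of $\CO_F$-group schemes $\CO_F^*\otimes_\BZ\BG_m[\mathfrak{N}]\hookrightarrow A[\mathfrak{N}]$ over $\ov{\BZ}_{(p)}$ extending the analytic one, equivalently one whose reduction mod $p$ has image the multiplicative part of $A[\mathfrak{N}]$. Here I would use that, by ordinariness, $A[p^\infty]=\prod_{w\mid p}A[w^\infty]$ via $\CO_K\otimes_\BZ\BZ_p=\prod_{w\mid p}\CO_{K_w}$, and that $A[w^\infty]$ is of multiplicative type for $w\in\Sigma_p$ and \'etale for $w\in\Sigma_p^c$. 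Hypothesis (2), namely $g_v\in\Stab(\CO_{F,v}\oplus\CO_{F,v}^*)$ and $g_v^{-1}\rho_\tau(t)g_v=\diag(t_w,t_{\ov{w}})$ for all $v\mid p$, says precisely that the component at $v$ of the level structure $i_{\mathfrak{N}}$ attached to $[\tau,\rho_\tau(a)g]$ (recalled after Proposition \ref{mode}) identifies $\mu_{p^\infty}\otimes\CO_{F,v}^*$ with the factor $A[w^\infty]$, $w\in\Sigma_p$, compatibly with the $\CO_F$-action; matching this analytic description with the connected--\'etale decomposition of the $p$-divisible group of the reduction then shows $j_{\mathfrak{N}}$ lands in the multiplicative part and extends to a closed immersion over $\ov{\BZ}_{(p)}$. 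This comparison --- carried out either through Serre--Tate theory and the Katz canonical lifting, or by following the $p$-adic comparison isomorphism on $A[p^\infty]$ --- is the step that demands the most care; everything else is formal. Assembling $(A,\iota,\ov\lambda,\ov\eta^{(p)},j_{\mathfrak{N}})$ over $\ov{\BZ}_{(p)}$ and feeding it into the moduli problem of Proposition \ref{mode} yields the required point, and since multiplying $g_v$ by elements of $\rho_\tau(\CO_{K,p}^\times)$ preserves hypothesis (2), letting $a$ range over $\BA_{K,f}^{\times,(p)}\cdot\CO_{K,p}^\times$ gives the proposition.
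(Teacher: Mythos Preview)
Your proposal is correct and follows essentially the same architecture as the paper's proof: establish good reduction of the CM abelian variety, then extend $\iota$, $\ov\lambda$, $\ov\eta^{(p)}$, and finally $j_{\mathfrak{N}}$ one at a time, with the key observation that hypothesis~(2) forces $j_{\mathfrak{N}}$ to land in the $\Sigma_p$-part of $A[p^\infty]$, which is multiplicative.

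The only noteworthy differences are in emphasis. For good reduction you invoke the general Serre--Tate/Shimura--Taniyama package directly, whereas the paper unwinds this explicitly: it identifies the field of moduli inside a specific abelian extension $K'/K^*$ unramified at $\fp^*$, applies the main theorem of complex multiplication to show that $\Gal(\ov\BQ/K')$ acts on torsion through $\rho_\tau(N_{\Sigma^*}(\CO_{K^*,\fp^*}^\times))\subset\CO_{K,\Sigma_p}^\times$, and then appeals to the N\'eron--Ogg--Shafarevich criterion. For the extension of $j_{\mathfrak{N}}$ you argue via the connected--\'etale sequence and multiplicativity of $A[w^\infty]$ for $w\in\Sigma_p$; the paper instead first shows the $\Sigma_p^c$-part is \'etale (hence constant) over $\CO_{K',(\wp)}$ and then deduces the extension of $j_{p^n}$ by Weil-pairing duality. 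Your route is slightly cleaner here. You also explicitly check the Rapoport condition on $\Lie A$, which the paper leaves implicit; this is a genuine (if minor) point, and your justification via the CM type and $p$ unramified in $F$ is the right one.
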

\begin{proof} It is enough to consider the case $\mathfrak{N}=p^n$.
Let $[\tau,\rho_\tau(x)g]$ be as above. Denote \[(\BC^\Sigma/L,\iota,\lambda,\eta^{(p)},j_{p^n})\in\CX_{U^{(p)},p^n}(\BC)\] the corresponding abelian variety with polarization and level structure under the complex uniformization. As a complex point, the CM point without level structure at $p$ has a model $(A,\iota',\lambda',(\eta^{(p)})')$ over its field of moduli $E$, in particular, $E$ is a finite abelian extension of $K^*$ and contained in $K'$, here $K'$ is the maximal abelian extension of $K^*$ unramified outside the prime above $\fp^*$. Here $\fp^*$ is the prime of $K^*$ above $p$ induced by $\iota_p$.

 Fix an isomorphism $\alpha:\BC^\Sigma/L\simeq A$ over $\BC$ such that $(A,\iota',\lambda',(\eta^{(p)})')$ induces $(\BC^\Sigma/L,\iota,\lambda,\eta^{(p)})$ over $\BC$.
Denote $\fp$ the prime of $E$ above $p$ induced by $\iota_p$. We now show that $A$ has good reduction at $\fp$.
By the main theorem of complex multiplication (cf.~\cite[Thm.~6.1]{Lang}), let $\sigma\in \Aut(\BC/E)$ and $s\in \BA_{K^*}^\times$ be the image of $\sigma|_{K^{*,ab}}$ under the reciprocity map. There exists a unique isomorphism of abelian varieties $\alpha':\BC^\sigma/\rho_\tau( N_{\Sigma^*}(s)^{-1})\CL\ra A$ over $\BC$ such that the following diagram commutes:
\[\xymatrix{
K/L\ar[d]_{\rho_\tau( N_{\Sigma^*}(s)^{-1})}\ar[rr]^{\alpha}&&A_{\text{tor}}\ar[d]^{\sigma}\\
K/\rho_\tau( N_{\Sigma^*}(s)^{-1})L\ar[rr]^{\alpha'}&&A^\sigma_{\text{tor}}.
}\]
If we take $\sigma\in \Aut(\BC/K')$, we can take $s\in \CO_{K^*,\fp^*}^\times$, thus $\rho_\tau( N_{\Sigma^*}(s)^{-1})L=L$. Note that $\sigma$ fixes $A$, the above commutative diagram becomes:
\[\xymatrix{
K/L\ar[d]_{\rho_\tau( N_{\Sigma^*}(s)^{-1})}\ar[rr]^{\alpha}&&A_{\text{tor}}\ar[d]^{\sigma}\\
K/L\ar[rr]^{\alpha'}&&A_{\text{tor}}.}\]
We claim $\alpha'=\alpha$, this is because the rigidity of the moduli problem implies that the automorphism of $(A,\iota',\lambda',(\eta^{(p)})')$ is trivial. Note that for $s\in\CO_{K^*,\fp^*}$, $\rho_\tau( N_{\Sigma^*}(s)^{-1})\in \CO_{K,\Sigma_p}^\times$. Thus the prime-to-$\Sigma_p$ torsion points of $A$ are defined over $K'$. By the criterion of good reduction, $A$ has good reduction at $\fp$. As the prime-to-$p$ torsion point of the integral model of $A$ at $\fp$ is \etale, $\eta^{(p)}$ can be extended to $\CO_{E,(\fp)}$. By our choice, the polarization ideal is prime to $p$, thus $\lambda$ can also be extended to $\CO_{E,(\fp)}$.
Let $\wp$ be the prime of $K'$ above $p$ induced by $\iota_p$.
By the above analysis, we also know that $\oplus_{\fp|\Sigma^c}A[\fp^\infty]$ is isomorphic to $\BQ_p/\BZ_p\otimes \CO_F$ over $K'$ and thus can be extended to $\CO_{K',(\wp)}$.
By our choice of level structure, $j_{p^n}:\mu_{p^n}\otimes\CO_{F}^*\simeq (\oplus_{\fp|\Sigma}(\BC^{\Sigma}/L)[\fp^\infty]\cap (\BC^{\Sigma}/L)[p^n])$. Thus by Weil pairing, $j_{p^n}'$ can also be extended to $\CO_{K',(\wp')}$.
\end{proof}
\subsubsection{Density of CM points} Let's first introduce good level structure of CM points. $\tau$ is a purely imaginary element with $2\tau\CD_F^{-1}$ being prime to $p$.
Let $\fl$ be a prime of $F$ prime to $p$. Choose $\varsigma_{n,f}=\varsigma^{(\fl)}\cdot\varsigma_{n,\fl}\in G(\BA_{F,f}^{(\fl)})\cdot G(F_{\fl})$ for each $n\in \BZ_{\geq 0}$ where $\varsigma^{(\fl)}$ does not depend on $n$ and 
\begin{itemize}\label{CM0}\item [(i)] The associated lattice
\[\CL_{\tau,\varsigma_{n,f}}\] equals to $\CO_{\fl^n}$. In particular, the CM points $[\tau,\rho_\tau(\BA_{K,f}^\times)\varsigma_{n,f}]$ are CM by $\CO_{\fl^n}$, where $\CO_{\fl^n}=\CO_F+\fl^n\CO_K$ is the order of $\CO_K$ with conductor $\fl^n$. Equivalently, for each place $v\neq \fl$, choose $\varsigma_v$ such that $(\CO_{F,v}\oplus\CO_{F,v}^*)\varsigma_{v}'\begin{pmatrix}\tau \\1\end{pmatrix}=\CO_{K,v}$ and for each $n$, choose $\varsigma_{n,\fl}$ such that \[(\CO_{F,\fl}\oplus\CO_{F,\fl}^*)\varsigma_{n,\fl}'\begin{pmatrix}\tau \\1\end{pmatrix}=\CO_{F,\fl}+\fl^n\CO_{K,\fl}.\]
\item [(ii)] For each $v|p$, \[\varsigma_v^{-1}\rho_\tau(t)\varsigma_v=\begin{pmatrix}
t_w& 0\\0&t_{\ov{w}}\end{pmatrix}\] for all $t\in K_v^\times$. In particular all the CM points $[\tau,\rho_\tau(\BA_{K,f}^\times)\varsigma_{n,f}]$ can be extended to $\ov{\BZ}_p\cap \ov{\BQ}$ points.
\item [(iii)] $\det(\varsigma_{n,\fl})/\det(\varsigma_{0,\fl})=x^n$, here $x$ is a uniformizer of $\CO_{F,(\fl)}$ that is prime to $p$ such that $x=\N(y)$ for $y\in \CO_{K,(\fl)}$ if $\fl$ is not inert.
\end{itemize}
For $a\in\BA_K^{(p)}\cdot\CO_{K,p}^\times$, denote $\CX_n(a)$ the CM point
\[\begin{cases}
[\tau,\rho_\tau(a)\rho_\tau^{(\fl)}(y^n)\varsigma_{n,f}], & \text{if $\fl$ is not inert},\\
[\tau,\rho_\tau(a)\rho_\tau^{(\fl)}(x^{n/2})\varsigma_{n,f}],& \text{if $\fl$ is inert and $n$ is even},\\
[\tau,\rho_\tau(a)\rho_\tau^{(\fl)}(x^{(n-1)/2})\varsigma_{n,f}],& \text{if $\fl$ is inert and $n$ is odd}.\\
 \end{cases}\]
 
Consider these CM points on the Hilbert modular scheme \[\CX^{(p)}:=\varprojlim_{U^{(p\fl)}}\CX_{ U^{(p\fl)}\cdot U_\fl, U_1(\mathfrak{N})}.\] 
Then the CM points $\CX_n(a)$ have the following properties:
\begin{lem}\
\begin{itemize}
\item [(1)] Let $A$ be the abelian scheme associated to $\CX_n(a)$, the level structure $\eta_p$ at $p$ of CM points is of the form $\eta=(\eta_{\Sigma_p},\eta_{\Sigma_p^c})$, where $\eta_{\Sigma_p^c}:(\CO_{F,p},0)\simeq T_{\Sigma_p^c}(A):=\varprojlim_{n}\oplus_{v|\Sigma_p^c}A[\fp_v^n]$ and $\eta_{\Sigma_p}:(0,\CO_{F,p}^*)\simeq T_{\Sigma_p}(A):=\varprojlim_{n}\oplus_{v|\Sigma_p}A[\fp_v^n]$, here $\fp_v$ is the prime ideal corresponding to $v$. In particular the point $\CX_n(a)$ is defined over $\ov{\BZ}_p$ and can be lifted to a $\ov{\BZ}_p$ point of the Igusa scheme over $\CX^{(p)}$ with $\mu_{p^\infty}\otimes \CO_F^*$ level structure at $p$.
\item [(2)] When $\fl$ is not inert, the CM points $\CX_n(1)$ lie in the same geometric irreducible component.
\item [(3)] When $\fl$ is inert, the CM points $\CX_{2n}(1)$ (resp. $\CX_{2n+1}(1))$ lie in the same geometric irreducible component.
\end{itemize}
\end{lem}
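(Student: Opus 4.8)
The plan is to derive the first bullet from Proposition~\ref{pintCM} and the last two from the description of geometric irreducible components in Proposition~\ref{mode}. \emph{For the level structure at $p$}: by construction $\varsigma_{n,f}=\varsigma^{(\fl)}\varsigma_{\fl,n}$ satisfies $\varsigma_v\in\Stab(\CO_{F,v}\oplus\CO_{F,v}^*)$ and $\varsigma_v^{-1}\rho_\tau(t)\varsigma_v=\mathrm{diag}(t_w,t_{\bar w})$ for every $v\mid p$ and $t\in K_v^\times$. Since $a$ and the auxiliary elements $y,x$ are $p$-adic units, at each $v\mid p$ the matrix $g$ underlying $\CX_n(a)$ equals $\rho_\tau(b_v)\varsigma_v$ for some $b_v\in\CO_{K,v}^\times$; this preserves the conjugation relation (as $\rho_\tau(K_v^\times)$ is commutative) and the stabilizer condition (as $\varsigma_v^{-1}\rho_\tau(b_v)\varsigma_v$ is a diagonal matrix of $p$-adic units, hence lies in $\Stab(\CO_{F,v}\oplus\CO_{F,v}^*)$). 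As $\tau$ is normalized so that $2\tau\CD_F^{-1}$ is prime to $p$, Proposition~\ref{pintCM} applies and $\CX_n(a)$ extends to a $\ov{\BZ}_{(p)}$-point, hence to a $W$-point of $\CX_{U^{(p)},\mathfrak{N}}$. The decomposition $\eta_p=(\eta_{\Sigma_p},\eta_{\Sigma_p^c})$ is then read off as in the proof of Proposition~\ref{pintCM}: on $V_p(A)=K\otimes_F F_p=\prod_{v\mid p}(K_w\times K_{\bar w})$ the conjugation relation shows the $K_v$-action is diagonal in the basis of $V_p(A)$ supplied by $\eta_v$, so $\eta_v$ carries the two summands $\CO_{F,v}$ and $\CO_{F,v}^*$ of $\CL_v$ onto the Tate modules of $A[\bar w^\infty]$ and $A[w^\infty]$ respectively, where $w\in\Sigma_p$ and $\bar w\in\Sigma_p^c$. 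Since $\Sigma$ is $p$-ordinary, $A$ has ordinary reduction, and the $\CO_K$-action on $\Lie A$ identifies the $\Sigma_p$-part as the connected (multiplicative-type) piece — whose Tate module becomes $\mu_{p^\infty}\otimes\CO_F^*$ under the trivialization, i.e. is the $j$-structure in the complex uniformization of Proposition~\ref{mode} — while the $\Sigma_p^c$-part is étale and hence spreads out over $W$. This gives exactly the asserted form of $\eta_p$, and it lifts the $W$-point to a $W$-point of $\CX^{(p)}$ carrying the $\mu_{p^\infty}\otimes\CO_F^*$-level structure at $p$.

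\emph{For the geometric components}: recall from Proposition~\ref{mode}, together with the stability of geometric irreducible components under the inverse limit defining $\CX^{(p)}$, that the component containing a CM point $[\tau,g]$ is the class of $\det g$ in $F_+^\times\bs\BA_{F,f}^\times/\det(U_\fl)\cdot\CO_{F,p}^\times$, equivalently the class of its polarization ideal. Let $x_\fl\in\BA_{F,f}^\times$ be the idele equal to $x$ at $\fl$ and to $1$ elsewhere, and use $\det\circ\rho_\tau=N_{K/F}$ together with the facts that $\varsigma^{(\fl)}$ does not depend on $n$ and that $\det\varsigma_{n,\fl}=\det\varsigma_{0,\fl}\cdot x^n$. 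When $\fl$ is not inert, $\det\big(\rho_\tau^{(\fl)}(y^n)\,\varsigma_{n,f}\big)=(x^n)_{\mathrm{diag}}\cdot\det\varsigma_{0,f}$ with $x^n=N_{K/F}(y^n)\in F^\times$; since $x=N_{K/F}(y)$ is totally positive, $(x^n)_{\mathrm{diag}}\in F_+^\times$, so this class equals $\det\varsigma_{0,f}$ for all $n$, giving the second bullet. When $\fl$ is inert, $x\in F^\times$ gives $N_{K/F}(x^n)=x^{2n}$, so $\det\big(\rho_\tau^{(\fl)}(x^n)\,\varsigma_{2n,f}\big)=(x^{2n})_{\mathrm{diag}}\cdot\det\varsigma_{0,f}$ and $\det\big(\rho_\tau^{(\fl)}(x^n)\,\varsigma_{2n+1,f}\big)=(x^{2n})_{\mathrm{diag}}\cdot x_\fl\cdot\det\varsigma_{0,f}$; as $x^2\in F_+^\times$ (it is a square at every real place), these reduce modulo $F_+^\times$ to $\det\varsigma_{0,f}$ and to $x_\fl\cdot\det\varsigma_{0,f}$ respectively, both independent of $n$, giving the third bullet.

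\emph{The main obstacle} is the first bullet rather than the (routine) idele bookkeeping of the second step; in that bookkeeping the only subtle point is the parity of the $\fl$-valuation separating $\CX_{2n}(1)$ from $\CX_{2n+1}(1)$. In the first bullet one must check that the abstract connected/étale decomposition of $A[p^\infty]$ is compatible with the coordinate decomposition of $\CL_p$ and with the ordering $(\Sigma_p,\Sigma_p^c)$ — that the multiplicative part is the one reached by the $(0,\CO_{F,p}^*)$-summand and is indexed by $\Sigma_p$ — but this is exactly what is established inside the proof of Proposition~\ref{pintCM}, so once that proposition is invoked nothing further is needed.
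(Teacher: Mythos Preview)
The paper does not supply a proof for this lemma; it is stated and immediately followed by Hida's density result. Your proposal correctly fills in the intended argument: the first bullet is exactly what Proposition~\ref{pintCM} (and its proof) gives once the hypotheses on $\varsigma_v$ for $v\mid p$ are verified, and the component computation for the last two bullets is the routine determinant bookkeeping you carry out, using $\det\varsigma_{n,\fl}=x^n\det\varsigma_{0,\fl}$ and $\det\circ\rho_\tau=N_{K/F}$ to reduce everything to a diagonally embedded element of $F_+^\times$ (a norm in the non-inert case, a square in the inert case). One small point worth flagging: the setup just before the lemma only says $\tau\in K\setminus F$, whereas Proposition~\ref{pintCM} needs $\tau$ purely imaginary with $2\tau\CD_F^{-1}$ prime to $p$; you correctly assume this normalization, and indeed the paper imposes it later in \S\ref{the} when $\tau=\vartheta$, but strictly speaking it is an implicit hypothesis here.
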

Now we recall Hida's result on density of CM points.

Let $n_1<n_2<\cdots$ be an increasing sequence of positive integers such that if $\fl$ is inert, then all $n_i$ are all even or all odd. Let $\Xi_{n_i}$ be those CM points $\CX_{n_i}(a)$ such that $a$ maps to $1$ under the map
\[\phi_{n_i}:\BA_{K,f}^\times/K^\times\wh{\CO}_{\fl^{n_i}}^\times\ra \BA_{K,f}^\times/K^\times\wh{\CO}_{\fl^{n_1}}^\times.\] 
Then for each $n_i,i\geq 1$, there exists a set of representatives $a_i$ of $\ker \phi_{n_i}$ in $\BA_{K,f}^{\times,(p)}\cdot\CO_{K,p}^\times$ such that $\CX_{n_i}(a_i)\in\Xi_{n_i}$ lie in the same geometric irreducible component of $\CX_{n_1}(1)$. For example, let $a_i$ be a set of representatives of $1+\varpi_\fl^{n_1}\CO_{K,\fl}/1+\varpi_\fl^{n_i}\CO_{K,\fl}$.

View the set of CM points $\Xi_n$ as points on the special fiber of the Hilbert modular scheme.

We have two group actions on $\Xi_n$:
\begin{itemize}
\item [(i)] The group \[T_1=\frac{\{x\in\CO_{K,\fl}^\times|\ x\equiv 1(\mod \fl^{n_0})\}}{\{x\in\CO_{F,\fl}^\times|\ x\equiv 1(\mod \fl^{n_0})\}}\] given by $t[\tau,\rho_\tau(a)\varsigma_{n,f}]:=[\tau,\rho_{\tau,\fl}(t)\rho_\tau(a)\varsigma_{n,f}].$
Here take $n_0$ sufficiently large such that the action is trivial on $\Xi_{n_1}$.
\item [(ii)] The subgroup \[T_2=\frac{\{x\in\CO_{K,(\fl p)}^\times|\ x\equiv 1(\mod \fl^{n_0})\}}{\{x\in\CO_{F,(\fl p)}^\times|\ x\equiv 1(\mod \fl^{n_0})\}}\]also acts as Hecke action $(\varsigma_0^{(\fl)})^{-1}\rho_\tau^{-1}(t)^{(\fl)}\varsigma_0^{(\fl)}$: \[[\tau,\rho_{\tau,\fl}(t)\rho_\tau(a)\varsigma_{n,f}]=[\tau,\rho_\tau(a)\varsigma_{\fl,n}\cdot(\varsigma_{1,f}^{(\fl)})^{-1}\rho_\tau^{-1}(t)^{(\fl)}\varsigma_{1,f}^{(\fl)}].\] This group is $p$-adic dense in $\frac{\{x\in\CO_{K,p}^\times|\ x\equiv 1(\mod \fl^{n_0})\}}{\{x\in\CO_{F, p}^\times|\ x\equiv 1(\mod \fl^{n_0})\}}$.
\end{itemize}
The Hida's density result (cf.~\cite[Prop.~2.8]{Hi1}) states as follows:
\begin{thm}\label{de}Assume $p$ is unramified in $F$ and every prime $F$ of $p$ split in $K$. Suppose that
\begin{itemize}\item [(i)] $\gamma_1,\cdots,\gamma_r\in T_1$ that pairwise independent modulo $T_2$,
\item [(ii)] $V$ be a geometric irreducible component of $
\CX^{(p)}/_{\ov{\BF}_p}$ that contains a CM point in $\Xi_{n_1}$.
\end{itemize}Consider the set \[\Xi_V:=\cup_{n_i}(\Xi_{n_i}\cap V),\] which is stable under the action of $T_i$. Assume further $\{n_i\}_i$ contains an arithmetic progression, then the image \[\Xi_V\mapsto V^n,\quad x\mapsto (\gamma_i(x))_i\] is dense in $V^n$.
\end{thm}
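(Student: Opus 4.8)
Before the author's proof, here is the line of attack I would take; it is essentially Hida's argument, whose engine is the $p$-adic rigidity of Hecke-stable subvarieties of the ordinary locus together with the Serre--Tate geometry at $p$-ordinary CM points. Write $Z\subseteq V^r$ for the Zariski closure of the image of $\Xi_V$ under $x\mapsto(\gamma_i(x))_i$; since $V$ is geometrically irreducible, so is $V^r$, so it suffices to prove $\dim Z=r\dim V$, i.e. $Z=V^r$. First I would record two structural facts. (a) $T_1$ and $T_2$ both lie in the image of $\rho_\tau(\BA_{K,f}^\times)$, hence commute; moreover $T_2$ is $p$-adically dense in a group $\CT$ of $\BZ_p$-rank $g=[F:\BQ]$ which, via the $\Sigma_p$ versus $\Sigma_p^c$ decomposition of the level at $p$ and the Igusa tower, acts on the ordinary locus of $V$ (it does not move geometric components, since these are indexed modulo $\CO_{F,p}^\times$). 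Because each $\gamma_i\in T_1$ commutes with this action, the set $\{(\gamma_i(x))_i: x\in\Xi_V\}$, and therefore $Z$, is stable under the diagonal action of $\CT$ on $V^r$. (b) Each projection $Z\to V$ is surjective: the $i$-th projection of the image of $\Xi_V$ is $\gamma_i(\Xi_V)$, and since $\Xi_V$ is $T_1$-stable and $\gamma_i$ is invertible in $T_1$ we have $\gamma_i(\Xi_V)=\Xi_V$; so the surjectivity, and indeed the whole statement, reduces once we settle the case $r=1$.

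For $r=1$, i.e. that $\Xi_V$ is Zariski dense in $V$: the closure $W=\overline{\Xi_V}$ is a closed subvariety of $V$ stable under $\CT$. At any $p$-ordinary CM point $x\in W$ the formal completion $\widehat W_x$ sits inside the Serre--Tate formal torus $\widehat V_x$ and is stable under the formal-multiplication action of $\CT$; by the Chai--Hida rigidity theorem it is therefore a translate of a formal subtorus. The $p$-ordinarity of $\Sigma$ makes the $\CT$-action on $\widehat V_x$ non-degenerate: it is multiplication through the full $\CO_F\otimes\BZ_p$-module structure of the $g$-dimensional torus $\widehat V_x$, so a nonzero $\CT$-stable formal subtorus already equals $\widehat V_x$. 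Hence $W=V$ as soon as $\dim W>0$. Positive-dimensionality is exactly where the hypothesis that $\{n_i\}$ contains an arithmetic progression enters: an arithmetic progression in the conductors, together with the relation $\det(\varsigma_{n,\ell})/\det(\varsigma_{0,\ell})=x^n$ and the Hecke operator at $\fl$ linking $\CX_{n}(\,\cdot\,)$ to $\CX_{n+1}(\,\cdot\,)$, produces a Hecke correspondence $\Phi$ on $V$ under which $\Xi_V$ is asymptotically self-similar, so $W$ is $\Phi$-stable; a $\Phi$-stable, $\CT$-stable closed set meeting infinitely many of the distinct finite sets $\Xi_{n_i}\cap V$ cannot be finite. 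So $\dim W>0$ and $W=V$.

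For general $r$ I would run a Goursat-type argument. By (a), (b) and the case $r=1$, $Z$ is a $\CT$-diagonally-stable closed subvariety of $V^r$ surjecting onto each factor. Applying the rigidity input to each pair of factors shows that the image of $Z$ in the $(i,j)$-plane, if proper, is the graph of a Hecke correspondence $c_{ij}$, and on the dense subset $\{(\gamma_k(x))_k\}$ this forces $c_{ij}$ to contain the $\fl$-Hecke correspondence "multiplication by $\gamma_j\gamma_i^{-1}$". The hypothesis that the $\gamma_i$ are pairwise independent modulo $T_2$ is precisely what rules this out: for such a $c_{ij}$ to restrict to a genuine self-correspondence of the component $V$, which carries its $\mu_{p^\infty}$-level structure and is cut out inside $\CX^{(p)}$, the class $\gamma_j\gamma_i^{-1}$ would have to be absorbable into the $p$-part, i.e. to lie in $T_2$ — a contradiction. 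Hence no proper product can receive $Z$ with finite fibres, and $Z=V^r$. I expect the main obstacle to be exactly this last step and the rigidity/linearity statement it rests on: making the classification of $\CT$-stable closed subvarieties of $V$ and of $V\times V$ precise (Chai's $p$-adic analytic rigidity for Hecke-stable subschemes of ordinary Shimura varieties, specialized to the Hilbert case) and then controlling how the $\fl$-adic twists $\gamma_i$ interact with the $p$-adic group $\CT$; the remaining ingredients are bookkeeping with the complex and integral uniformizations already set up above.
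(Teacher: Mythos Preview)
The paper does not give its own proof of this theorem: it is stated with the citation \cite{Hi1} and followed only by the remark ``Also see \cite{Hf} for explanation for the assumption that $\{n_i\}_i$ contain an arithmetic progression.'' So there is nothing in the paper to compare your argument against line by line; the result is quoted from Hida's work as a black box.

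That said, your sketch is a faithful outline of Hida's own proof in \cite{Hi1,Hi2}: Zariski-closure stability under the $p$-adic closure $\CT$ of $T_2$, Serre--Tate coordinates at ordinary CM points, Chai's rigidity to identify $\CT$-stable formal subschemes with formal subtori, and a Goursat/graph argument using the pairwise independence of the $\gamma_i$ modulo $T_2$ to rule out proper diagonals in $V^r$. The one place I would tighten is the $r=1$ step where you conclude $\dim W>0$: in Hida's argument this is not obtained from a single Hecke correspondence $\Phi$ stabilizing $W$, but rather from the fact that as $n_i\to\infty$ the reductions of the points in $\Xi_{n_i}$ accumulate in the Serre--Tate disk, so that an arithmetic progression in the $n_i$ forces the closure $W$ to contain a nontrivial formal arc (hence $\dim W\ge 1$). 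Your phrasing ``asymptotically self-similar under $\Phi$'' is suggestive but not quite the mechanism; the input is really the $\ell$-adic tower geometry in Serre--Tate coordinates rather than a Hecke-stable-set argument. With that adjustment, your plan matches Hida's.
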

See also \cite{Hf} for an explanation of the assumption that $\{n_i\}_i$ contains an arithmetic progression.

\section{Construction of Eisenstein measure}
\label{S4}
In this section, we give a construction of the $p$-integral measure to interpret algebraic parts of Hecke L-values in the $\ell$-adic family with $\ell\neq p$. In \S\ref{4.1}, we introduce the period formula for L-value, and in \S\ref{4.2}--\S\ref{exf}, we give an explicit construction of the Eisenstein series and the anticyclotomic twist family version of the formula. In Section \S\ref{aocmp}, we give an explicit choice of good level structure of CM point. In \S\ref{44.3}, we make some local and global analysis on periods of the Eisenstein series. In \S\ref{4.4}, we show $p$ integrality of the periods formula and accomplish the construction of the measure based on the theory of $p$-adic modular forms in \S\ref{S3}.

\subsection{Waldspurger formula for Eisenstein series}\label{4.1}
 Denoted by $B$ the parabolic subgroup of $G$ given by upper triangular matrixes and $U_0=\prod_{v}U_{0,v}$ the maximal compact subgroup of $G(\BA_{F})$ given by $U_{0,\sigma}=\CO_2(\BR)$ for $\sigma\in\Sigma_F$ and $U_{0,v}=\Stab(\CO_{F,v}\oplus\CO_{F,v}^*)$. Here we view $F^2$ as a row vector with a right action of $G$ by multiplication.

Given $(\chi_1,\chi_2)$ a pair of Hecke characters of $\BA_F^\times$, let $I(s,\chi_1,\chi_2)$ the space of smooth $U_0$ finite complex valued functions on $G(\BA_F)$ such that \[\phi\left(\begin{pmatrix}a & b \\0 & d
\end{pmatrix}g\right)=\chi_1(a)\chi_2(d)\left|\frac{a}{d}\right|_{\BA_F^\times}^{s}\phi(g)\] for all $\begin{pmatrix} a & b \\0& d\end{pmatrix}\in B(\BA_F)$ and $g\in G(\BA_F)$. Denote $V(\chi_1,\chi_2)$ the bundle $\bigsqcup_{s\in\BC}I(s,\chi_1,\chi_2)$ over $\BC$. \begin{defn}[Meromorphic section] Let $H(g)=\left|\frac{a}{d}\right|_{\BA_F^\times}$ if $g=\begin{pmatrix}a & b \\0& d \end{pmatrix}u$ with $u\in U_{0}$.
A section $\phi$ for the bundle $V(\chi_1,\chi_2)$ is called meromorphic if there exists a finite dimensional subspace $V$ of $I(0,\chi_1,\chi_2)$ such that for any $s$, $\phi(s,\cdot)H(\cdot)^{-s}\in V$ and the map $s\mapsto \phi(s,\cdot)H(\cdot)^{-s}$ is meromorphic. Denote $\MS(\chi_1,\chi_2)$ the space of meromorphic sections of $V(\chi_1,\chi_2)$.
\end{defn}
\begin{remark}
A section $\phi$ is meromorphic if and only if it is a finite sum of meromorphic multiples of flat sections. Let $I(s,\chi_{1,v}, \chi_{2,v})$ be the local constitute of $I(s,\chi_1,\chi_2)$. Similarly, we can define the local version of meromorphic sections.
\end{remark}
For any $\phi\in \MS(\chi_1,\chi_2)$, let \[E_{\phi}(s,g)=\sum_{\gamma\in B(F)\bs G(F)}\phi(s,\gamma g),\] then it is absolutely convergent for $\Re (s)\gg 0$ and has meromorphic continuation to the whole $s$-plane. 

Let $K/F$ be a quadratic field extension and $\chi$ be a Hecke character on $\BA_K^\times$. Suppose that \[\chi_1\chi_2\cdot \chi|_{\BA_{F}^\times}=\mathbf{1}.\] Fix an embedding $\BA_{K}^\times\ra G(\BA_F)$ of the form $\varsigma^{-1}\rho\varsigma$, where $\rho$ is an embedding $\rho:K^\times\ra G$ and $\varsigma\in G(\BA_F)$.

Consider
\[\begin{aligned}\alpha_{v,s}:I(s,\chi_{1,v},\chi_{2,v})&\ra \chi_v^{-1}\\
\phi_v(s,g)&\mapsto \int_{K_v^\times/F_v^\times}\phi_v(s,\rho_{v}(t_v)\varsigma_v)\chi_v(t)d^\times t_v\end{aligned}.\]
Let $\lambda:=\chi\cdot \chi_1\circ\Nm$ and $\omega_\lambda:=\lambda|_{\BA_{F}^\times}$, \[\beta_{v,s}:=\frac{L(2s,\omega_{\lambda,v})}{L(s,\lambda_v)}\alpha_{v,s},\] then $\beta_{v,s}\in \Hom_{K_v^\times}(I(s,\chi_{1,v},\chi_{2,v}),\chi_v^{-1})$ nonzero and $\beta_{v,s}(\phi_v)=1$ if \begin{itemize}
\item [(i)] $I(s,\chi_{1,v},\chi_{2,v})$ is spherical, $\phi_v(s,u)=1$ for $u\in U_{0,v}$,
\item [(ii)] $\lambda_v$ is unramified, 
\item [(iii)] $\CO_{K,v}^\times/\CO_{F,v}^\times$ has volume $1$.
\end{itemize}
Denote $V(\chi^{-1})$ the bundle $\chi^{-1}\times\BC$ over $\BC$ and $\MS(V(\chi^{-1}))$ the space of meromorphic sections of $V(\chi^{-1})$, then $\prod_{v}\beta_{v,s}$ is an element in \[\Hom_{\BA_{K}^\times}(\MS(\chi_1,\chi_2),\MS(\chi^{-1})).\]

Define the period of the Eisenstein series associated with a meromorphic section $\phi\in \MS(\chi_1,\chi_2)$ by \[\CP(E_\phi)=\int_{K^\times\BA_{F}^\times\bs\BA_{K}^\times}\chi(t)E_{\phi}(s,\rho(t)\varsigma)d^\times{t},\] then $\CP(E_{\cdot})$ also lies in $\Hom_{\BA_K^\times}(\MS(\chi_1,\chi_2),\MS(\chi^{-1}))$.
Note that $B(F)\rho(K^\times)= G(F)$, by unfolding, we have
\begin{thm}
For any meromorphic section $\phi$ that is a pure tensor, we have
\[\CP(E_{\phi}(s,g))=\frac{L(s,\lambda)}{L(2s,\omega_{\lambda})}\prod_{v}\beta_{v,s}(\phi_v(s,g)),\]here the global L-functions are the complete L-functions.
\end{thm}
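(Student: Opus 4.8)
The plan is to prove the identity by unfolding the Eisenstein series against the torus integral, exactly as suggested by the sentence "$B(F)\rho(K^\times) = G(F)$, by unfolding." First I would substitute the definition $E_\phi(s,g) = \sum_{\gamma \in B(F)\backslash G(F)} \phi(s,\gamma g)$ into the period integral $\CP(E_\phi) = \int_{K^\times \BA_F^\times \backslash \BA_K^\times} \chi(t) E_\phi(s,\rho(t)\varsigma)\, dt$ and interchange sum and integral (justified for $\Re(s) \gg 0$ by absolute convergence, then extended by meromorphic continuation). The key combinatorial input is that the double coset space $B(F)\backslash G(F) / \rho(K^\times)$ is trivial — equivalently $G(F) = B(F)\rho(K^\times)$ — which lets me collapse the sum over $\gamma$ with the integral over $K^\times \backslash \BA_K^\times$ into a single integral over $B(F) \cap \rho(K^\times) \backslash \BA_K^\times$. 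One checks that $B(F) \cap \rho(K^\times) = F^\times$ (the embedding $\rho$ is chosen so the only elements of $K^\times$ landing in the Borel are the scalars), so after also quotienting by $\BA_F^\times$ the integral becomes $\int_{F^\times \BA_F^\times \backslash \BA_K^\times}$... wait, more precisely the unfolded integral runs over $\BA_F^\times \backslash \BA_K^\times$ (no quotient by $K^\times$ left) against $\chi(t)\phi(s,\rho(t)\varsigma)$.

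Next I would factor this adelic integral as an Euler product. Writing $\phi = \otimes_v \phi_v$ as a pure tensor, the integral $\int_{\BA_F^\times \backslash \BA_K^\times} \chi(t)\phi(s,\rho(t)\varsigma)\, d^\times t$ factors as $\prod_v \int_{F_v^\times \backslash K_v^\times} \chi_v(t_v)\phi_v(s,\rho_v(t_v)\varsigma_v)\, d^\times t_v = \prod_v \alpha_{v,s}(\phi_v)$, using the chosen product Haar measure and the fact that $\rho_0 = \varsigma^{-1}\rho\varsigma$ decomposes locally. Then I convert each $\alpha_{v,s}$ to $\beta_{v,s}$ using the definition $\beta_{v,s} = \frac{L(2s,\omega_{\lambda,v})}{L(s,\lambda_v)}\alpha_{v,s}$, which introduces the global factor $\prod_v \frac{L(s,\lambda_v)}{L(2s,\omega_{\lambda,v})} = \frac{L(s,\lambda)}{L(2s,\omega_\lambda)}$ (complete $L$-functions, since we use all places). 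This yields exactly $\CP(E_\phi(s,g)) = \frac{L(s,\lambda)}{L(2s,\omega_\lambda)} \prod_v \beta_{v,s}(\phi_v(s,g))$.

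The main obstacle I anticipate is justifying the interchange of summation and integration and, relatedly, the convergence of the torus integral: the period integral $\CP(E_\phi)$ is over the non-compact space $\BA_F^\times \backslash \BA_K^\times$ (which is $[K:F]$-dimensionally noncompact at the archimedean places and has infinite volume), so one must be careful that $E_\phi$ restricted to $\rho(K^\times)\varsigma$ decays — this uses the fact that $\chi$ is unitary and $E_\phi$ has moderate growth, together with the cuspidal-like decay coming from the quotient by $K^\times$; after unfolding the remaining integral $\int_{\BA_F^\times \backslash \BA_K^\times}$ may only be conditionally convergent or convergent in a shifted region, so the equality is first established for $\Re(s)$ in a suitable right half-plane and then propagated to all $s$ by meromorphic continuation of both sides (the left side continues because $E_\phi$ does; the right side because each local $\beta_{v,s}$ is, by construction, a rational/meromorphic function of $q_v^{-s}$ and is identically $1$ for almost all $v$). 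A secondary technical point is verifying $B(F) \cap \rho(K^\times) = F^\times$ and that the quotient measures match up under the bijection $B(F)\backslash G(F) \cong \rho(K^\times)/F^\times$ — this is a routine but necessary measure-theoretic bookkeeping step.
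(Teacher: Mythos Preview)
Your proposal is correct and follows exactly the approach the paper has in mind: the paper's entire proof is the single clause ``Note that $B(F)\rho(K^\times)=G(F)$, by unfolding,'' and you have spelled out precisely that unfolding, including the identification $B(F)\cap\rho(K^\times)=\rho(F^\times)$, the collapse to $\int_{\BA_F^\times\backslash\BA_K^\times}\chi(t)\phi(s,\rho(t)\varsigma)\,d^\times t=\prod_v\alpha_{v,s}(\phi_v)$, and the renormalization from $\alpha_{v,s}$ to $\beta_{v,s}$. Your convergence remarks are more careful than the paper's, but not in conflict with it.
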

\subsection{Test vector}\label{4.2}
In this subsection, we will introduce a suitable toric-eigen test vector space and study its uniqueness and uniform property (cf.~Proposition~\ref{test}). Furthermore, it also has a good level at $p$ (see condition $(T_2)$) for $p$-adic interpretation.

Let $S$ be a finite set of places of $F$ that contain all infinite places and all finite places such that $\lambda$ is ramified. Assume $S=S_0\bigsqcup S_{\text{split}}\bigsqcup S_{\text{n-split}}\bigsqcup S_\infty\bigsqcup S'$, where $S_{\text{split}}=S_{\text{split},1}\bigsqcup S_{\text{split},2}$ (resp. $S_{\text{n-split}}$) is a subset of finite places that split (resp. nonsplit) in $K$, $S_{\infty}$ is the subset of infinite places, $S_0$ is a set of finite places such that $\chi_i$ and $\lambda|_{\BA_F^\times}$ are unramified at all $v\in S_0$, $S'$ is a subset of finite places such that $\lambda_v$ is unramified for all $v\in S'$, and take $S'$ to be large enough such that $\chi_1$ is unramified outside $S$. 

Let $(\rho,\varsigma)$ be as in \S\ref{4.1}. For each $v\in S_0$, suppose $\varsigma_v\begin{pmatrix}
\CO_{F,v}&\fp_v^{\delta_v} \\\fp_v^{1-\delta_v}&\CO_{F,v}
\end{pmatrix}\varsigma_v^{-1}\cap \rho(K_v)=\rho(\CO_{\fc_v})$, here $\CO_{\fc_v}=\CO_{F,v}+\fc_v\CO_{K,v}$ is the order of $\CO_{K,v}$ with conductor $\fc_v$ for $\fc_v\in F_v$. Let $\fc=\prod_{v\in S_0}\fc_v$ and $\CO_\fc$ be the order of $\CO_K$ with conductor $\fc$.
\begin{defn}
A meromorphic section $\phi\in \MS(\chi_1,\chi_2)$ is called $\chi^{-1}$-eigen with respect to $(\rho,\varsigma)$, $U_\fc:=\wh{\CO}_\fc^\times\prod_{v\in S_{\text{n-split}\sqcup S_\infty}}K_v^\times$ if
\[\phi(\rho(ut)\varsigma)=\chi^{-1}(u)\phi(\rho(t)\varsigma)\] for all $u\in U_\fc$.
\end{defn}
In the following, we will construct a $\chi^{-1}$-eigen section that has good arithmetic properties.

For each $v\in S_{\text{split},i}$, choose a prime $w$ of $K$ above $v$ and denote $S_i$ the set of all such $w$. For each finite place $v$ of $F$, let $e_{1,v}$ (resp. $e_{2,v}$) be the conductor of $\chi_{1,v}$ (resp. $\chi_{2,v}$).

Define the test vector space\label{sec} $V(\chi_1,\chi_2,\chi)$ (depending on $\rho$ and $\varsigma$) to be the set of pure tensor meromorphic sections $\phi=\otimes\phi_v$ such that:

\begin{itemize}
\item[$(T_1)$] If $v\notin S$ or $v\in S'$, $\phi_v(\cdot\varsigma_v)|_{K_v^\times}$ is $\chi_v^{-1}$-eigen under the action of $\CO_{K,v}^\times$, $\phi_v$ is $\chi_{1,v}\circ\det$-eigen under the action of $U_{0,v}$ and \[\beta_{v,s}(\phi_v)=\mathbf{1}_{\BC}.\]
\item[$(T_2)$] If $v\in S_{\text{split}}$, let $\varphi_v$ be the section such that
\begin{itemize}
\item [(a)] $\varphi_{v}(\cdot\varsigma_v)|_{K_v^\times}$ is $\chi_v^{-1}$-eigen under the action of $\CO_{K,v}^\times$;
\item[(b)] $\varphi_v$ is fixed by $U_1(\fp_v^{e_{1,v}+e_{2,v}}):=\left\{\begin{pmatrix}
 a&b\\ c&d
\end{pmatrix}\in U_{0,v}\ \left|\ \substack{ a-1\in {\fp_v^{e_{1,v}+e_{2,v}}}\\ c\in \fp_v^{e_{1,v}+e_{2,v}-\delta_v}}\right.\right\}$;
\item [(c)]\[\alpha_{v,s}(\varphi_v)=\frac{L(s,\lambda_v)G(\lambda_w|\cdot|_w^s)}{L(2s,\omega_{\lambda,v})},\quad w\in S_1\bigsqcup S_2,\]here\[G(\lambda_w|\cdot|_w^s)=|\varpi_v|_w^{e_w}\sum_{u\in(\CO_{F,v}/\varpi_v^{e_{w}}\CO_{F,v})^\times}\lambda_w|\cdot|_w^s(u\varpi_v^{-e_w})\psi_w(u\varpi_v^{\delta_v-e_w}).\]
\end{itemize}
If $v\in S_{\text{split},1}$, let $\phi_v=\varphi_v$, and if $v\in S_{\text{split},2}$, let $\phi_v$ be the $v$ depletion of $\varphi_v$:
\[\phi_{v}:=\left(1-|\varpi_v|_v\sum_{[u]\in\CO_{F,v}/\varpi_v\CO_{F,v}}R\left(\begin{pmatrix}1 & \varpi_v^{\delta_v-e_w-1}u \\0 & 1\end{pmatrix}\right)\right)\varphi_v.\]
\item [$(T_3)$] If $v\in S_{\text{n-split}}\sqcup S_{\infty}$,
 \begin{itemize}\item [(a)] $\phi_v(\cdot\varsigma_v)|_{K_v^\times}$ is $\chi_v^{-1}$-eigen under the action of $K_v^\times$;
\item [(b)] $\beta_{v,s}(\phi_v)=\vol(K_v^\times/F_v^\times, d^\times t_v)$ if $v$ is finite and $\alpha_{s,v}(\phi_v)=\vol(K_v^\times/F_v^\times, d^\times t_v)$ if $v$ is infinite.
 \end{itemize}
For $v\in S_0$ denote \[U(v)=\sum_{u\in\CO_{F,v}^*/\varpi_v\CO_{F,v}^*}
\begin{pmatrix}\varpi_v & u \\0& 1
\end{pmatrix},\] then the space of $U_0(\fp_v)$-invariant and $U(v)$-eigen vector in $I(s,\chi_{1,v},\chi_{2,v})$ for any fixed $s$ is two dimensional, where $U_0(\fp_v)=\left\{\begin{pmatrix}
a&b\\c&d
\end{pmatrix}\in U_{0,v}\ |\ c\in \fp_v^{1-\delta_v}\right\}$.
\item[$(T_4)$] If $v\in S_0$, $\phi_v(\rho(\cdot)\varsigma_v)|_{K_v^\times}$ is $\chi_v^{-1}$-eigen under the action of $\CO_{\fc_v}^\times$, $\phi_v$ is $U_0(\fp_v)$-invariant, $U(v)$-eigen, and supported on the big cell $B(\CO_{F,v})\begin{pmatrix} 0 & -1 \\1 & 0\end{pmatrix}U_0(\fp_v)$ such that whenever $\lambda$ is trivial on $\CO_{\fc_v}^\times$, \[\begin{aligned}
\alpha_{v,s}(\phi_v,\varepsilon):=&\int_{K_v^\times/F_v^\times}\phi_v(s,\rho(t_v)\varsigma_v)\varepsilon\chi(t_v)d^\times t_v\\
=&\vol(\CO_{\fc_v}^\times F_v^\times/F_v^\times,d^\times t_v)a_v(s)^{\ord_v(\fc_v)}\end{aligned}\] for any character $\varepsilon:K_v^\times/F_v^\times \CO_{\fc_v}^\times \ra \BC^\times$. Here $a_v(s)$ (in fact equals to $\chi_2(\varpi_v)|\varpi_v|_v^{-s}$) is the eigenvalue of $\phi_v$ under the action of $U(v)$.
\end{itemize}
We may assume further that $K$ and $G$ have the following good relative positions (may call assumption (*)): 
\begin{itemize}
\item[(i)] For each $v\notin S$ or $v\in S'\sqcup S_{\text{split}}$, $\mathbf{1}_{\CO_{F,v}\oplus\CO_{F,v}^*}(0,1)\rho(t_v)\varsigma_v=\mathbf{1}_{\CO_{K,v}}(t_v)$. For $v\in S_{\text{split}}$, view $K_v=K_{\ov{w}}\oplus K_{w}, t_v\mapsto (t_{\ov{w}},t_w)$, $w\in S_1\bigsqcup S_2$. Further assume $(0,1)\rho(t_v)\varsigma_v=(t_{\ov{w}},\varpi_v^{\delta_v}u_vt_{w})$ for $u_v$ a $v$ adic unit. In particular, $\varsigma_v^{-1}\rho(K^\times)\varsigma_v\cap U_{0,v}=\varsigma_v^{-1}\rho(\CO_{K,v}^\times)\varsigma_v$. 
\item[(ii)] For $v\in S_0$, assume \[\text{$\mathbf{1}_{\CO_{F,v}\oplus\CO_{F,v}^*}(0,1)\rho(t_v)\varsigma_v=\mathbf{1}_{\fc_v^{-1}\CO_{\fc_v}}(t_v)$},\quad\mathbf{1}_{\CO_{F,v}^\times\oplus\CO_{F,v}^*}(0,1)\rho(t_v)\varsigma_v=\mathbf{1}_{\fc_v^{-1}\CO_{\fc_v}^\times}(t_v).\] In particular, $\varsigma_v^{-1}\rho(K_v^\times)\varsigma_v\cap U_{0,v}=\varsigma_v^{-1}\rho(K_v^\times)\varsigma_v\cap U_0(\fp_v)=\varsigma_v^{-1}\rho(\CO_{\fc_v}^\times)\varsigma_v$.
\end{itemize}
\begin{remark} Will see the $p$ depletion section is used to ensure that the Fourier coefficients have no constant term in some cases (cf.~Proposition~\ref{cusps}). See also \cite{His3} for the use of $p$-depletion section at $p$ to construct $p$-adic L-function.\end{remark}
\begin{prop}\label{test} Under the assumption $(*)$ for $(\rho,\varsigma)$ and assume further that for each $v\in S_{\text{split}}$, $\chi_{1,v}|_{\CO_{F,v}^\times}=\lambda_w|_{\CO_{F,v}^\times}$ and for each $v\in S_0$, $\lambda_v|_{\CO_{\fc_v}^\times}=1$, then
\begin{itemize}
\item [(1)] The test vector space $V(\chi_1,\chi_2,\chi)$ contains exactly one element and is $\chi^{-1}$-eigen with respect to $U_\fc$.
\item [(2)] For each $s$ and each character $\varepsilon$ of $\Cl_{\fc}'$, we have $\phi$ is test vector for \[\displaystyle \prod_{\substack{v<\infty\\ v\notin S_{\text{split}, 2}}}\beta_{v,s}\prod_{\substack{v|\infty\\ v\in S_{\text{split},2}}}\alpha_{v,s}\in \Hom_{\BA_K^\times}(I(s,\chi_1,\chi_2),(\varepsilon\chi)^{-1})\] whenever $\prod_{w\in S_2}Z(s,\lambda\varepsilon_w,\wh{1_{\CO_{K,w}^\times}\lambda_w}) \neq 0$, where $Z(s,\lambda\varepsilon_w,\wh{1_{\CO_{K,w}^\times}\lambda_w})$ is the Tate's local zeta integral.
\end{itemize} 
\end{prop}

\begin{proof}\
\begin{itemize}
\item[(i)] If $v\notin S$ or $v\in S'$, then for each $s$, the second requirement of $(T_1)$ together with the decomposition $G(F_v)=B(F_v)U_{0,v}$ determine $\phi_v$ uniquely up to a constant. 

The assumption $(*)$ implies that $\varsigma_v^{-1}\rho(\CO_{K,v}^\times)\varsigma_v\subset U_{0,v}$. Together with the fact that $\lambda_v$ is unramified, it implies that the first requirement of $(T_1)$ holds. Thus there exists at most one $\phi_v$ satisfies $(T_1)$. 

Existence: Recall that for each $s$, we have a surjective map \[\begin{aligned}
\CS(F_v^2)\ra &I(s,\chi_{1,v},\chi_{2,v})\\ \Phi\mapsto &\phi_{v,\Phi}(s,g):=\chi_{1,v}\circ\det(g)|\det(g)|_v^{s}\int_{F_{v}^\times}\Phi([0,x]g)\chi_{1,v}(x)\chi_{2,v}^{-1}(x)|x|_{v}^{2s}d^\times x. \end{aligned}\]
 The unique $\phi_{v}$ satisfies $(T_1)$ is given by \[(s,g)\mapsto \frac{(\chi_{1,v}\circ\det(\varsigma_v)\cdot|\det(\varsigma_v)|_v^{s})^{-1}}{L(2s,\omega_{\lambda,v})}\phi_{v,\mathbf{1}_{\CO_{F,v}\oplus\CO_{F,v}^*}}(s,g).\]
\item[(ii)] If $v\in S_{\text{split}}$, then the second requirement of $\varphi_v$ in $(T_2)$ implies that $\varphi_v$ is a new vector (cf.~\cite{Ca}~\cite{RS}). By assumption $(*)$ and second requirement of $\varphi_v$, we have $\varphi_v(\rho(tu)\varsigma_v)=\chi_{1,v}\chi_{2,v}(u_{\ov{w}})\varphi_v(\rho(t)\varsigma_v)$ holds for any $t\in K_v^\times$ and $u\in\CO_{K,v}^\times$. The assumption $\chi_{1,v}|_{\CO_{F,v}^\times}=\lambda_w|_{\CO_{F,v}^\times}$ implies that the first requirement of $\varphi_v$. We have the following explicit construction of new vector given by \[\varphi_v=\frac{\lambda_w(u_v)(\chi_1\circ\det(\varsigma_v)\cdot|\det(\varsigma_v)|_v^{s})^{-1}}{L(2s,\omega_{\lambda,v})}\cdot \phi_{v,\Phi}\] for $\Phi=\phi_1\otimes\phi_2\in\CS(F_v^2)$ given by
\[\phi_1=\left\{\begin{aligned}
&\mathbf{1}_{\CO_{F, v}},&&\text{if $\lambda_{\ov{w}}$ is unramified},\\
&\mathbf{1}_{\CO_{F, v}^\times}\lambda_{\ov{w}}^{-1},& &\text{if $\lambda_{\ov{w}}$ is ramified},
\end{aligned}\right.\] and
\[\phi_2(x)=\left\{\begin{aligned}
&\wh{\mathbf{1}_{\CO_{F,v}^\times}\lambda_w}(x)=\mathbf{1}_{\varpi_v^{\delta_v-e_w}\CO_{F,v}^\times}(x)\wh{\mathbf{1}_{\CO_{F,v}^\times}\lambda_w}(x),&&\text{if $\lambda_w$ is ramified},\\
&\mathbf{1}_{\varpi_v^{\delta_v}\CO_{F,v}},& &\text{if $\lambda_{w}$ is unramified}.
\end{aligned}\right.
\] 
Here we use the additive character $\psi_w$ for the Fourier transformation.
The vector $\varphi_v$ also satisfies the third property.

When $v\in S_{\text{split},2}$, $\phi_v$ is given by \[\frac{\lambda_w(u_v)(\chi_1\circ\det(\varsigma_v)\cdot|\det(\varsigma_v)|_v^{s})^{-1}}{L(2s,\omega_{\lambda,v})}\cdot \phi_{v,\Phi'}\] for $\Phi'=\phi_1'\otimes\phi_2'\in\CS(F_v^2)$, where \[\phi_1'=\mathbf{1}_{\CO_{F, v}^\times}\lambda_{\ov{w}}^{-1},\quad \phi_2'=\wh{\mathbf{1}_{\CO_{F,v}^\times}\lambda_w}(x).\]
\item[(iii)] If $v\in S_{\text{n-split}}\sqcup S_{\infty}$, \[G(F_v)=B(F_v)\rho(K_v^\times),\] thus for each $s$, the space of $\chi_v^{-1}$-eigen vector is one dimensional. We have $\phi_v$ is uniquely determined by $(T_3)$.
\item[(iv)] If $v\in S_0$, we have $U_{0,v}=B(\CO_{F,v})\begin{pmatrix} 0 & -1 \\1 & 0\end{pmatrix}U_0(\fp_v)\sqcup U_0(\fp_v)$, here \[B(\CO_{F,v})=\left\{\begin{pmatrix}
a&b\\&c
\end{pmatrix}\in \GL_2(F_v)\ \left|\ a,c\in \CO_{F,v}^\times, b\in \CO_{F,v}^*\right.\right\}.\]
 Let $\phi_1$ (resp. $\phi_2$) be section such that its restriction on $U_{0,v}$ is the characteristic function of $B(\CO_{F,v})\begin{pmatrix} 0 & -1 \\1 & 0\end{pmatrix}U_0(\fp_v)$ (resp. $U_0(\fp_v)$). Then eigen value of $\phi_1$ under $U(v)$ is $\chi_2(\varpi_v)|\varpi_v|_v^{-s}$ and eigen value of $\phi_2$ is $|\varpi_v|_v^{s-1}\chi_1(\varpi_v)$. Then up to a constant, $\phi_1=\phi_{v,\Phi_1}$ with $\Phi_1=\mathbf{1}_{\CO_{F,v}^\times\oplus\CO_{F,v}^*}$. 

By assumption $(*)$, we have \[\alpha_v(\phi_1,\varepsilon)=\chi_1\circ\det(\varsigma_v)|\varsigma_v|_v^s\int_{K_v^\times}\mathbf{1}_{\fc_v^{-1}\CO_{\fc_v}^\times}(t_v)\lambda\varepsilon(t_v)|t_v|_v^sd^\times t_v.\]

 Thus the test vector $\phi_v$ is given by $\frac{\chi_1(\fc_v)|\fc_v|_v^s}{\chi_1\circ\det(\varsigma_v)|\varsigma_v|_v^s}\phi_{v,\Phi_1}$.
 \end{itemize}
 \end{proof}
Note that if for each $v\in S_0$, $\ord_v\det(\varsigma_v)=\ord_v(\fc_v)$, then the test vector space even does not depend on $\fc$.

\subsection{Explicit formulae}
In this section, we consider the uniform interpretation property of the test vector constructed in \S\ref{4.2} (cf.~Theorem~\ref{exp}~and~Corollary~\ref{wald}). \label{exf}
 Let $\Cl_{\fc}'=\BA_{K}^\times/K^\times\BA_{F}^\times U_\fc$. For $\varepsilon$ a finite order Hecke character on $\Cl_{\fc}'$, then $V(\chi_1,\chi_2,\chi)=V(\chi_1,\chi_2,\chi\varepsilon)$.

Let $\phi_{\lambda}=\otimes\phi_{\lambda,v}$, where for $v\in S_{0}\sqcup S_{\infty}$, $\phi_{\lambda,v}$ is the local test vector in definition of $V(\chi_1,\chi_2,\chi)$, and if $v\notin S_{0}\sqcup S_{\infty}$, $\phi_{\lambda,v}$ is $L(2s,\omega_{\lambda,v})$ multiple of the local test vector in in definition of $V(\chi_1,\chi_2,\chi)$.

By the construction, the toric period of the Eisenstein series associated to the test vector $V(\chi_1,\chi_2,\chi)$ has the following uniform interpretation property:
\begin{thm}\label{exp}
Let assumption be as in Proposition \ref{test}, then we have
\[\begin{aligned}
\prod_{v\in S_0}a_v(s)^{-\ord_v(\fc_v)}\sum_{t\in\Cl_\fc'}E_{\phi_{\lambda}}(s,\rho(t)\varsigma)\varepsilon\chi(t)=&L_f^{(S_0\sqcup S_{\text{split},2})}(s,\varepsilon\lambda)\\
 \cdot &\kappa_{\fc}2^{r_{\fc}}
\prod_{w\in S_{1} }G(\lambda_w|\cdot|_w^s)\prod_{w\in S_2}Z(s,\lambda\varepsilon_w,\wh{1_{\CO_{K,w}^\times}\lambda_w})\end{aligned}\] for any finite order Hecke character $\varepsilon$ on $\Cl_{\fc}'$. Here $\kappa_{\fc}=\ker \Pic(\CO_F)\ra \Pic(\CO_{\fc})$, $r_{\fc}$ is the number of independent $\BF_2$ linear relations of \[\{\varpi_v|\ v\in S_{\text{n-split}}, \text{ramified in $K$}\}\] in the relative ideal class group $\Pic_{K/F}(\CO_{\fc})$.
 \end{thm}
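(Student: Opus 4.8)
The strategy is to take the global period integral $\CP(E_{\phi_\lambda})$ appearing in the Waldspurger formula of the previous theorem and evaluate it along the orbit $\Cl_\fc'$ by decomposing it place-by-place, using the explicit local test vectors $\phi_{\lambda,v}$ constructed in Proposition \ref{test}. First I would recall from the Explicit Waldspurger machinery that
\[
\CP(E_{\phi_\lambda})(s)=\frac{L(s,\lambda)}{L(2s,\omega_\lambda)}\prod_v\beta_{v,s}(\phi_{\lambda,v}),
\]
and that the left-hand side, by definition of $\phi_\lambda$ and of the period integral, expands as a finite sum $\sum_{t\in\Cl_\fc'}E_{\phi_\lambda}(s,\rho(t)\varsigma)\varepsilon\chi(t)$ up to the volume factor of $U_\fc$ modulo $\BA_F^\times K^\times$ — this volume, together with the kernel $\ker(\Pic(\CO_F)\to\Pic(\CO_\fc))$ that measures the failure of $U_\fc$ to surject onto $\Cl_\fc'$, is exactly where $\kappa_\fc$ enters. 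So the first step is the bookkeeping identity relating $\CP(E_{\phi_\lambda})$ to the finite sum and isolating the constant $\kappa_\fc$.

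**The local factors.** The heart of the proof is the evaluation of each $\beta_{v,s}(\phi_{\lambda,v})$ (or rather $\alpha_{v,s}$, since the ratio $L(2s,\omega_{\lambda,v})/L(s,\lambda_v)$ in $\beta$ will interact with the completed $L$-factor on the right). I would go through the four types of places in turn. For $v\notin S$ or $v\in S'$ (type $T_1$): here $\beta_{v,s}(\phi_{\lambda,v})$ contributes $L(2s,\omega_{\lambda,v})\cdot 1$ by normalization, and the $L(s,\lambda_v)$ downstairs reassembles the partial $L$-function $L_f^{(S_0\sqcup S_2)}(s,\varepsilon\lambda)$ at these places. For $v\in S_{\text{n-split}}\sqcup S_\infty$ (type $T_3$): by design $\beta_{v,s}(\phi_{\lambda,v})=\vol(K_v^\times/F_v^\times)$; the product of these volumes over $v\in S_{\text{n-split}}$ combines with the Tamagawa normalization to give the power $2^{r_\fc}$, where $r_\fc$ counts the $\BF_2$-relations among the uniformizers of the nonsplit ramified primes in $\Cl_\fc'$ — this is the standard "genus theory" combinatorial count of how many such $v$ can be eliminated by the global class group relation. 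For $v\in S_0$ (type $T_4$): the computation in step (4) of Proposition \ref{test} already gives $\alpha_{v,s}(\phi_v,\varepsilon)=\vol(\CO_{\fc_v}^\times F_v^\times/F_v^\times)\,a_v(s)^{\ord_v(\fc_v)}$, and it is this $a_v(s)^{\ord_v(\fc_v)}$ that we divide out by on the left-hand side; the volume factors at $S_0$ are absorbed into $\kappa_\fc$. For $v\in S_{\text{split}}$ (type $T_2$): from step (2) we have $\alpha_{v,s}(\varphi_v)=L(s,\lambda_v)G(\lambda_w|\cdot|_w^s)/L(2s,\omega_{\lambda,v})$ for $w\in S_1$, giving the Gauss-sum product $\prod_{w\in S_1}G(\lambda_w|\cdot|_w^s)$ directly, while for $v\in S_{\text{split},2}$ the $v$-depletion replaces the new-vector local integral by the zeta integral $Z(s,\lambda\varepsilon_w,\wh{1_{\CO_{K,w}^\times}\lambda_w})$ — one checks this by unwinding the definition of the depletion operator $1-|\varpi_v|_v\sum_u R(\cdots)$ acting on $\phi_{v,\Phi'}$ and recognizing the resulting integral as a Godement–Jacquet / Tate local zeta integral against the Fourier transform.

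**Assembling and the main obstacle.** With all local factors in hand, one multiplies: the $L(s,\lambda_v)$-factors from types $T_1$, $T_2$ (and the ones implicitly at the split places) multiply up to $L^{(S_0\sqcup S_2)}_f(s,\varepsilon\lambda)$ — the superscript records precisely that the $S_0$ places contribute only the $a_v(s)$-power (removed on the left) and the $S_{\text{split},2}$ places contribute the depleted zeta integral rather than a local $L$-factor; the $L(2s,\omega_\lambda)$ in the denominator of $\CP$ cancels against the product of the $L(2s,\omega_{\lambda,v})$'s appearing in each $\beta_{v,s}$; the volume factors assemble into $\kappa_\fc 2^{r_\fc}$; and the Gauss sums and zeta integrals survive as written. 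The replacement of $\varepsilon\chi$ by $\varepsilon\lambda$ on the right is the identity $\lambda=\chi\cdot\chi_1\circ\Nm$ combined with the constraint $\chi_1\chi_2\cdot\chi|_{\BA_F^\times}=\mathbf 1$, tracking how $\chi_1\circ\det$ acting through $\rho$ becomes $\chi_1\circ\Nm$. \textbf{I expect the main obstacle to be the precise bookkeeping of the volume constants and the $2^{r_\fc}$ factor:} getting the Tamagawa normalizations at $S_0$, at the nonsplit ramified places, and the index $[\Cl_\fc':\text{image of }U_\fc]$ to conspire into exactly $\kappa_\fc 2^{r_\fc}$ requires careful genus-theory-style counting — in particular correctly identifying $r_\fc$ as the $\BF_2$-rank of the relations among $\{\varpi_v : v\in S_{\text{n-split}}\text{ ramified}\}$ in $\Cl_\fc'$, since each such relation is what allows a pair of local volume-$2$ contributions to collapse. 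A secondary technical point is verifying that the $S_{\text{split},2}$-depletion really does produce the zeta integral with the stated Schwartz function $\wh{1_{\CO_{K,w}^\times}\lambda_w}$ and not some twist thereof, which is a direct but slightly delicate Fourier-analytic check.
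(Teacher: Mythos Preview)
Your proposal is correct and follows essentially the same route as the paper: unfold the period integral into $\prod_v\alpha_{v,s}(\phi_{\lambda,v})$, convert the integral over $\BA_K^\times/K^\times\BA_F^\times$ to the finite sum over $\Cl_\fc'$ using the $\chi^{-1}$-eigen property (picking up the measure factor $\kappa_\fc/2^{\#S_{\text{n-split,ram}}-r_\fc}\cdot\vol(\cdots)^{-1}$), read off each local factor from the defining properties $(T_1)$--$(T_4)$ of the test vector, and combine. The only cosmetic difference is that the paper works directly with $\alpha_{v,s}$ and absorbs the $L(2s,\omega_{\lambda,v})$-normalization into the definition of $\phi_{\lambda,v}$ (recall $\phi_{\lambda,v}$ is the $L(2s,\omega_{\lambda,v})$-multiple of the test vector for $v\notin S_0\sqcup S_\infty$), whereas you phrase everything via $\beta_{v,s}$; the bookkeeping of the $2^{r_\fc}$ factor you flag as the main obstacle is exactly the cancellation $2^{\#S_{\text{n-split,ram}}}/2^{\#S_{\text{n-split,ram}}-r_\fc}$ in the paper's proof.
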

\begin{proof}
Recall 
\[\begin{aligned}
&\int_{\BA_{K}^\times/\BA_F^\times K^\times}E_{\phi_\lambda}(s,\rho(t)\varsigma)\chi(\rho(t)\varsigma)d^\times {t}=\prod_{v}\alpha_{v,s}(\phi_{\lambda,v}(s,\cdot)).
\end{aligned}\]
As $\phi_{\lambda}$ is $\chi^{-1}$-eigen, the integral factor through $\Cl_\fc'$, thus
\[\displaystyle\begin{aligned}
&\sum_{t\in \Cl_\fc'}E_{\phi_\lambda}(s,\rho(t)\varsigma)\chi(\rho(t)\varsigma)\\
&=\frac{\kappa_\fc}{2^{\# S_{\text{n-split, ram}}-r_\fc}\vol(\prod_{v\in S_\infty}K_v^\times/F_v^\times\wh{\CO}_\fc^\times,d^\times t)}\int_{\BA_K^\times/\BA_F^\times K^\times} E_{\phi_\lambda}(s,\rho(t)\varsigma)\chi(\rho(t)\varsigma)d^\times {t}\\
\end{aligned}\] here $S_{\text{n-split, ram}}$ is the subset of $S_{\text{n-split}}$ consists of all the primes that are ramified in $K$, \[\vol(\prod_{v\in S_\infty}K_v^\times/F_v^\times\wh{\CO}_\fc^\times,d^\times t)\] is the volume of $\wh{\CO}_\fc^\times\prod_{v\in S_\infty}K_v^\times/F_v^\times$ in $\BA_{K}^\times\bs \BA_F^\times$.

By the definition of $\phi_\lambda$,
\[\begin{aligned}
&\prod_{v}\alpha_{v,s}(\phi_{\lambda,v}(s,\cdot))\\
=&a_v(s)^{\ord_v(\fc_v)}L_f^{(S_0\sqcup S_{\text{split},2})}(s,\lambda)\prod_{w\in S_{1} }G(\lambda_w|\cdot|_w^s)\prod_{w\in S_2}Z(s,\lambda\varepsilon_w,\wh{1_{\CO_{K,w}^\times}\lambda_w})\prod_{v\in S_0}\\
 & \cdot 2^{\# S_{\text{n-split,ram}}}\vol(\wh{\CO}_\fc^\times\prod_{v\in S_\infty}K_v^\times/F_v^\times,d^\times t).
\end{aligned}\]
Combine the three equations, we get the explicit formula.
\end{proof}
\begin{remark}
When $\Nm(\fc)$ is sufficiently large, $c_\fc=1$ and $r_\fc=0$.
\end{remark}

 For arithmetic applications, there are some requirements for the choice of $\rho$ and $\varsigma$.
Assume that $\lambda$ is an algebraic Hecke character over $K$ with infinity type $k\Sigma+\kappa(1-c)$, where $k\in\BZ_{>0}$, $\kappa\in \BZ_{\geq 0}[\Sigma]$ and $\Sigma$ a CM type of $K$. 
 \begin{prop}\label{CM1}Let $\sigma\in \Sigma$ and suppose that $\chi_\sigma=\lambda_\sigma$.
For a given $\rho:K^\times\ra G(F)$, then there exists a unique $\tau_0\in\CH^{\pm,\sigma}$ fixed by $\rho(K_\sigma^\times)$ such that for $\varsigma_\sigma\in G(F_\sigma)$ with $\varsigma_\sigma(i)=\tau_0$, the standard weight $k+2\kappa_\sigma$ section $\phi_{\sigma,k+2\kappa_\sigma}$ has the property that $\phi_{\sigma,k+2\kappa_\sigma}(\cdot\varsigma_{\sigma})|_{K_\sigma^\times}$ is $\lambda_\sigma^{-1}$-eigen.
 \end{prop}
\begin{proof}
 Note that there are only two fixed points $\tau$ in $\CH^{\pm,\sigma}$ of $\rho(K_\sigma^\times)$.
For $\varsigma_\sigma$ with $\varsigma_\sigma(i)=\tau$ , one has 
\[\varsigma_\sigma^{-1}\rho(e^{i\theta})\varsigma_\sigma=\kappa_\theta, \theta\in \BR\] or 
\[\varsigma_\sigma^{-1}\rho(e^{i\theta})\varsigma_\sigma=\kappa_{-\theta}, \theta\in \BR,\] where $\kappa_\theta=\begin{pmatrix}
\cos \theta & \sin \theta\\
-\sin\theta & \cos\theta
\end{pmatrix}$.
Only one fixed point satisfies \[\varsigma_\sigma^{-1}\rho(e^{i\theta})\varsigma_\sigma=\kappa_{-\theta}, \theta\in \BR.\]
\end{proof}
 We have the following explicit Waldspurger formula for Eisenstein series in terms of nearly holomorphic modular forms.

Note that for each $\sigma\in \Sigma$, the representation $I(0,\chi_{1,\sigma},\chi_{2,\sigma})$ has a unique irreducible submodule called (limit of) discrete series of weight $k$ given by \[\bigoplus_{\substack{m\equiv k\pmod 2\\ |m|\geq k}}\BC\phi_{\sigma,m},\] where $\phi_{\sigma,m}\in I(0,\chi_{1,\sigma},\chi_{2,\sigma})$ is the standard weight $m$ section given by \[\phi_{\sigma,m}\left(\begin{pmatrix}\cos \theta&\sin\theta \\ -\sin\theta&\cos\theta
\end{pmatrix}\right)=e^{im\theta}.\] Note that $\BC\phi_{\sigma,k}$ is the subspace of the irreducible submodule consisting of vectors killed by the weight lowering operator $L_\sigma=\begin{pmatrix}1 & -i \\ i & -1\end{pmatrix}\in \fg\fl_{2,\sigma}$.
 Thus for $\mathbf{m}:=\sum_{\sigma\in\Sigma}m_\sigma \cdot \sigma$, $\phi_{\lambda,f}$ is the finite part of $\phi_{\lambda}$, \[\displaystyle E_{\lambda,\mathbf{m}}(g_\infty(i),g_f):=E_{\phi_{\lambda,f}\otimes \bigotimes_{\sigma\in\Sigma}\phi_{\sigma,m_\sigma}}(0,g)J(g_\infty, i)^{\mathbf{m}}\] is a holomorphic Hilbert modular form if and only if $\mathbf{m}=k\Sigma$. Here for $g_\infty=\left(\begin{pmatrix}a_\sigma & b_\sigma \\c_\sigma & d_\sigma\end{pmatrix}\right)_{\sigma\in \Sigma}$, $g_\infty(i):=\left(\frac{a_\sigma i+b_\sigma}{c_\sigma i+d_\sigma}\right)_{\sigma\in \Sigma}$.
 
 Denote $R_\sigma=\begin{pmatrix}1 & i \\i & -1\end{pmatrix}\in \fg\fl_{2,\sigma}$ the weight raising operator, then \[ R_\sigma^{\kappa_\sigma}\phi_{\sigma,k_\sigma}=\frac{2^{\kappa_\sigma}\Gamma(k_\sigma+\kappa_\sigma)}{\Gamma(k_\sigma)} \phi_{\sigma,k_\sigma+2\kappa_\sigma}\] for $\kappa_\sigma\in\BZ_{\geq 0}$. Translate this relation into modular form, for $\kappa\in\BZ_{\geq 0}[\Sigma]$, we have \[\delta_{k\Sigma}^\kappa E_{\lambda,k\Sigma}(g_\infty(i),g_f)=\frac{\Gamma_{\Sigma}(k\Sigma+\kappa)}{(-4\pi)^{\kappa}\Gamma_{\Sigma}(k\Sigma+2\kappa)}E_{\lambda,k\Sigma+2\kappa}(g_\infty(i),g_f)\det(g_\infty)^{-\kappa}.\]Here \[\delta_{k\Sigma}^\kappa=\prod_{\sigma\in\Sigma}\delta_{\sigma,k_\sigma+2\kappa_\sigma}\circ\delta_{\sigma,k\sigma+2\kappa_\sigma-2}\cdots\circ\delta_{\sigma,k_\sigma}\] and $\delta_{\sigma,m}$ is the Maass-Shimura operator as defined before. Will simply denote $E_{\lambda,k\Sigma}$ by $E_{\lambda,k}$.
 \begin{cor}\label{wald}Let assumption be as in Theorem \ref{exp} and Proposition \ref{CM1}, assume further $\chi_{1,\infty}$ is trivial, then
 \[\begin{aligned}&\prod_{v\in S_0}a_v^{-\ord_v(\fc_v)}\sum_{t\in\Cl_\fc'}\delta_{k\Sigma}^\kappa E_{\lambda,k}(\tau_0,\rho(t)\varsigma)\varepsilon\chi(t)\\
 &=\frac{\Gamma_{\Sigma}(k\Sigma +\kappa)}{(-4\pi)^\kappa\Gamma_{\Sigma}(k\Sigma)\Im \tau_0^\kappa}c_\fc2^{r_{\fc}}L_f^{(S_0\sqcup S_{\text{split},2})}(s,\varepsilon\lambda)\prod_{w\in S_{1} }G(\lambda_w|\cdot|_w^s)\prod_{w\in S_2}Z(s,\lambda\varepsilon_w,\wh{1_{\CO_{K,w}^\times}\lambda_w}).\end{aligned}\]
 \end{cor}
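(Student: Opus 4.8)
The plan is to deduce this directly from the Explicit Waldspurger formula of Theorem~\ref{exp} by translating the archimedean components of the test vector $\phi_\lambda$ into a Maass--Shimura derivative of the holomorphic Eisenstein series $E_{\lambda,k}$. Since $\chi_{1,\infty}$ is trivial we have $\chi_\sigma=\lambda_\sigma$ for every $\sigma\in\Sigma$, so Proposition~\ref{prop1} applies and supplies the point $\tau_0$ together with the archimedean elements $\varsigma_\sigma$ satisfying $\varsigma_\sigma(i)=\tau_0$; moreover, because $\lambda$ has infinite type $k\Sigma+(1-c)\kappa$, the unique (up to scalar) vector of $I(0,\chi_{1,\sigma},\chi_{2,\sigma})$ that becomes $\lambda_\sigma^{-1}$-eigen after translation by $\varsigma_\sigma$ is the standard weight $k+2\kappa_\sigma$ section $\phi_{\sigma,k+2\kappa_\sigma}$. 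Hence the archimedean part of the test vector $\phi_\lambda$ entering Theorem~\ref{exp} is $\bigotimes_{\sigma\in\Sigma}\phi_{\sigma,k+2\kappa_\sigma}$, and by the very definition of $E_{\lambda,\mathbf m}$ one has $E_{\phi_\lambda}(0,g)=E_{\lambda,k\Sigma+2\kappa}(g_\infty(i),g_f)\,J(g_\infty,i)^{-(k\Sigma+2\kappa)}$ for every $g\in G(\BA_F)$.

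The next step is to rewrite $E_{\lambda,k\Sigma+2\kappa}$ in terms of $\delta_{k\Sigma}^\kappa E_{\lambda,k}$. This is exactly the content of the weight-raising relation $R_\sigma^{\kappa_\sigma}\phi_{\sigma,k_\sigma}=\frac{2^{\kappa_\sigma}\Gamma(k_\sigma+\kappa_\sigma)}{\Gamma(k_\sigma)}\phi_{\sigma,k_\sigma+2\kappa_\sigma}$, translated into the already-recorded identity $\delta_{k\Sigma}^\kappa E_{\lambda,k\Sigma}(g_\infty(i),g_f)=\frac{\Gamma_\Sigma(k\Sigma+\kappa)}{(-4\pi)^\kappa\Gamma_\Sigma(k\Sigma+2\kappa)}E_{\lambda,k\Sigma+2\kappa}(g_\infty(i),g_f)\det(g_\infty)^{-\kappa}$. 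Specializing at $g=\rho(t)\varsigma$, where $t$ runs over a set of representatives of $\Cl_\fc'$ chosen among finite ideles so that $g_\infty=\varsigma_\infty$ (hence $g_\infty(i)=\tau_0$) is independent of $t$, and evaluating the automorphy factor $J(\varsigma_\infty,i)$ and the determinant $\det(\varsigma_\infty)$ at the fixed point $\tau_0$ --- using $\Im\tau_0=\Im(\varsigma_\infty(i))$ and the normalization $\varsigma_\sigma^{-1}\rho(e^{i\theta})\varsigma_\sigma=\kappa_{-\theta}$ from Proposition~\ref{prop1} --- one checks that $\delta_{k\Sigma}^\kappa E_{\lambda,k}(\tau_0,\rho(t)\varsigma)$ equals $E_{\phi_\lambda}(0,\rho(t)\varsigma)$ times the $t$-independent constant $\frac{\Gamma_\Sigma(k\Sigma+\kappa)}{(-4\pi)^\kappa\Gamma_\Sigma(k\Sigma)\Im\tau_0^\kappa}$.

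Granting this, the corollary follows at once: multiply through by $\prod_{v\in S_0}a_v^{-\ord_v(\fc_v)}$, sum over $t\in\Cl_\fc'$ against $\varepsilon\chi$, pull the constant out of the sum, and invoke Theorem~\ref{exp} (noting $c_\fc=\kappa_\fc$ and evaluating at $s=0$). The main obstacle is the archimedean bookkeeping in the middle step: one must reconcile the two different normalizations of weight-raising --- the representation-theoretic operator $R_\sigma$ versus the holomorphic Maass--Shimura operator $\delta_\sigma$ --- which accounts for the ratio $\Gamma_\Sigma(k\Sigma+\kappa)/\Gamma_\Sigma(k\Sigma)$ and the power of $-4\pi$, and then show that the leftover automorphy and determinant factors at the CM point collapse to precisely $\Im\tau_0^{-\kappa}$. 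This is a purely local computation at the archimedean places, but it is delicate: an error in the sign conventions for the infinite type of $\lambda$, for $\phi_{\sigma,m}$, or for $\delta_{\sigma,m}$ would corrupt the power of $\pi$ or of $\Im\tau_0$, and hence the eventual comparison with the period $\CL(\lambda)$.
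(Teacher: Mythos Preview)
Your proposal is correct and follows exactly the route the paper intends: the corollary is stated without a separate proof, but the paper has laid out precisely the ingredients you use---Theorem~\ref{exp}, the identification of the archimedean component of $\phi_\lambda$ with the standard weight $k+2\kappa_\sigma$ section (via Proposition~\ref{prop1}), and the displayed Maass--Shimura identity $\delta_{k\Sigma}^\kappa E_{\lambda,k\Sigma}=\frac{\Gamma_{\Sigma}(k\Sigma+\kappa)}{(-4\pi)^{\kappa}\Gamma_{\Sigma}(k\Sigma+2\kappa)}E_{\lambda,k\Sigma+2\kappa}\det(g_\infty)^{-\kappa}$---so that the corollary is obtained by substituting one into the other at $g_\infty=\varsigma_\infty$. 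Your flagging of the archimedean bookkeeping as the delicate point is apt; note in particular that the paper's choice $\varsigma_\sigma=\begin{pmatrix}\Im(\sigma(\vartheta))&0\\0&1\end{pmatrix}$ makes $J(\varsigma_\infty,i)=1$ and $\det(\varsigma_\infty)=\Im\tau_0$, which is what collapses the leftover factors to $\Im\tau_0^{-\kappa}$.
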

For a given $\lambda$, the explicit choice of $\chi_i$, $\chi$, and $S$ will be given in \S\ref{lambda}.

\subsection{Good level structures of CM points}\label{aocmp}
We only consider the case $S_0$ consists of a single prime $\fl$ that is prime to $p$. We now choose embedding and construct CM points that have good arithmetic properties. For example, satisfy conditions in \S\ref{CM0} and Proposition \ref{CM1}.
\label{the}

Let $\vartheta\in K$ is a purely imaginary such that $\Im (\sigma(\vartheta))>0$ for each $\sigma\in \Sigma$ and $2\vartheta \CD_{K/F}^{-1}\CD_{F}^{-1}$ prime to $\CD_ {K/F}$ and $S$, where $\CD_{K/F}\subset \CO_K$ is the relative different. Let $\rho$ be the embedding given by \[\rho:K^\times\ra G(F),\quad a+b\vartheta\mapsto \begin{pmatrix}a&b\vartheta^2 \\ b&a\end{pmatrix}.\] Then $\vartheta=\tau_0$ as in Proposition \ref{CM1}.
(Note that this embedding is conjugate to the one in the main theorem of complex multiplication in Theorem \ref{CMpt}.)

We now define a sequence of CM points $[\vartheta,\varsigma_{n,f}]$ with $\varsigma_{n,f}=\varsigma_f^{(\fl)}\cdot\varsigma_{\fl,n}$ such that the associated lattice of $\CL_{[\vartheta,\varsigma_{n,f}]}$ is $\CO_{\fl^n}$. Equivalently, for each finite place $v$ of $F$ that is prime to $\fl$ (resp. $v=\fl$), we choose isomorphism $\CO_{K,v}=\CO_{F,v}e_{1,v}\oplus\CO_{F,v}^*e_{2,v}$ (resp. $\CO_{\fl^n,v}=\CO_{F,v}e_{1,v,n}\oplus\CO_{F,v}^*e_{2,v,n}$), then $\varsigma_v$ is given by $((1,0)\varsigma_v',(0,1)\varsigma_v')=(p_{\vartheta}(e_{1,v}),p_{\vartheta}(e_{2,v})))$. (resp.$((1,0)\varsigma_{v,n}',(0,1)\varsigma_{v,n}')=(p_{\vartheta}(e_{1,v.n},e_{2,v,n}))$), where $p_\vartheta(a\vartheta+b)=(a,b)$.

For $v$ prime to $\fl\cdot 2\vartheta\CD_{F}^{-1}\CD_{K/F}^{-1}$ and $v$ split. We fix a prime $w$ of $K$ above $v$ and let $S''$ be the set of all such primes $w$. Consider the idempotents $e_w=\frac{1}{2}+\frac{\vartheta}{2\vartheta_w}$ and $e_{\ov{w}}=\frac{1}{2}-\frac{\vartheta}{2\vartheta_w}$, where $\vartheta_w$ is the image of $\vartheta$ in $K_{w}$. Let $e_{1,v}=e_{\ov{w}}$ and $e_{2,v}=-2\vartheta_{w}e_{w}$, then $\varsigma_v=\begin{pmatrix}
 -\vartheta_w & \frac{-1}{2}\\ 1 & \frac{-1}{2\vartheta_w}\end{pmatrix}$ and it has the following properties:
\[\varsigma_v^{-1}\rho(a+b\vartheta_w,a-b\vartheta_w)\varsigma_v=\begin{pmatrix} a-b\vartheta_w&0 \\ 0 & a+b\vartheta_w\end{pmatrix}\] for any $(a+\vartheta_wb,a-b\vartheta_w)\in K_{w}^\times\times K_{\ov{w}}^\times$. 
\[(0,1)\rho(a+b\vartheta)\varsigma_v= (a-b\vartheta_w,\varpi_v^{\delta_v}(a+b\vartheta_w)\cdot u_v),\]here $u_v$ is the $v$-adic unit $(-2\vartheta_{w}\varpi_v^{\delta_v})^{-1}$.

For $v$ prime to $\fl\cdot 2\vartheta\CD_{F}^{-1}\CD_{K/F}^{-1}$ and $v$ non-split, $2\vartheta\varpi_v^{\delta_v}$ generates the inverse of the local relative different, for those places exists $a_v\in 2^{-1}\CO_{F,v}$ such that $1$, $\vartheta\varpi_v^{\delta_v}+a_v$ is a relative integral basis of $\CO_{K,v}$ over $\CO_{F,v}$ and $\vartheta\varpi_v^{\delta_v}+a_v$ is a uniformizer if $v$ is ramified. Then we may choose $e_{1,v}=-1$ and $e_{2,v}=\vartheta+a_v\varpi_v^{-\delta_v}$. Then $\varsigma_v=\begin{pmatrix}a_v\varpi^{-\delta_v} & 1 \\ -1 & 0 \end{pmatrix}$ and \[\mathfrak{1}_{\CO_{F,v}\oplus\CO_{F,v}^*}((0,1)\rho(t)\varsigma_v)=\mathfrak{1}_{\CO_{K,v}}\]
\[\varsigma_v^{-1}\rho(-\vartheta\varpi_v^{\delta_v}+a_v)\varsigma_v\in \begin{pmatrix} 1 & 0 \\0 & N(\vartheta\varpi_v^{\delta_v}+a_v)\end{pmatrix}U_{0,v}.\]\label{CM} Here $N$ is the form from $K_v$ to $F_v$.

For $v|2\vartheta\CD_{F}^{-1}\CD_{K/F}^{-1}$, choose $e_{1,v},e_{2,v}\in\CO_{K,v}$ such that $\CO_{K,v}=\CO_{F,v}e_{1,v}\oplus \CO_{F,v}^*e_{2,v}$. Then we have\[\mathfrak{1}_{\CO_{F,v}\oplus\CO_{F,v}^*}((0,1)\rho(t)\varsigma_v)=\mathfrak{1}_{\CO_{K,v}}.\]
\label{el}For $v=\fl$, $2\vartheta\varpi_\fl^{\delta_\fl}$ is a generator of $\CD_{K_\fl/F_\fl}$. Here we choose uniformizer $\varpi_\fl$ to satisfy the condition in \S\ref{CM0}. Choose $a_\fl\in 2^{-1}\CO_{F,\fl}$ such that $1$, $\vartheta\varpi_\fl^{\delta_\fl}+a_\fl$ is a relative integral basis of $\CO_{K,\fl}$ over $\CO_{F,\fl}$. Let $e_{1,\fl,n}=-1$ and $e_{2,\fl,n}=-\varpi_\fl^{n}(\vartheta+a_\fl\varpi_\fl^{-\delta_\fl})$. Then $\varsigma_{\fl,n}=\begin{pmatrix}-\varpi_{\fl}^{n-\delta_\fl}a_\fl & 1 \\\varpi_{\fl}^{n} & 0\end{pmatrix}$.

For $\sigma\in\Sigma$, $\varsigma_{\sigma}=\begin{pmatrix} \Im(\sigma(\vartheta)) & 0 \\0 & 1\end{pmatrix}$.
Let $\varsigma_n=\varsigma_{n,f}\prod_{\sigma\in \Sigma}\varsigma_\sigma$.
\subsection{Analysis on test vector and Eisenstein series}\label{44.3}
\subsubsection{Preliminaries}
\label{lambda}let $\lambda$ be an algebraic Hecke character of $\BA_{K}^\times$ with infinity type $k\Sigma+\kappa(1-c)$, where $k\in\BZ_{>0}$, $\kappa\in \BZ_{\geq 0}[\Sigma]$ and $\Sigma$ a $p$-ordinary CM type of $K$.

Let's first recall the Grunwald-Wang theorem (cf.~\cite{SW}).
\begin{fact}\label{eh}
Let $L$ be a number field, and given any finite set of places $S$ and a finite order character $\chi_v$ on each $L_v^\times$ for $v\in S$, then there exists a finite order Hecke character $\chi$ over $L$ such that its local constitute at each $v\in S$ is given by $\chi_v$.
\end{fact}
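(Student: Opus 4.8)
The plan is to deduce this from the Grunwald--Wang theorem \cite{SW}, the point being that since we allow the global character to have arbitrary finite order, the classical exceptional case of that theorem imposes no restriction here. First I would reduce to prime power order: let $n$ be a common multiple of the orders of the given $\chi_v$, and under $\BQ/\BZ=\bigoplus_\ell\BQ_\ell/\BZ_\ell$ decompose $\chi_v=\prod_{\ell\mid n}\chi_{v,\ell}$ into $\ell$-primary parts; it then suffices to produce, for each prime $\ell\mid n$, a finite order Hecke character $\chi^{(\ell)}$ of $\ell$-power order with $\chi^{(\ell)}|_{L_v^\times}=\chi_{v,\ell}$ for all $v\in S$ (with $\chi_{v,\ell}=\mathbf 1$ when $\ell$ does not divide the order of $\chi_v$), and then set $\chi=\prod_{\ell\mid n}\chi^{(\ell)}$. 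Thus one may assume that all the $\chi_v$ are $\mu_{\ell^m}$-valued for a fixed prime $\ell$ and some $m$.

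Next I would translate the statement through global class field theory. A finite order Hecke character of $L$ is a continuous character of $\Gal(\ov L/L)^{\ab}$, its restriction to $L_v^\times$ corresponds via local reciprocity to the pullback along $\Gal(\ov{L_v}/L_v)^{\ab}\to\Gal(\ov L/L)^{\ab}$, and every $\mu_{\ell^m}$-valued finite order character of $L_v^\times$ is pulled back from a unique character of $\Gal(\ov{L_v}/L_v)$. Identifying characters with classes in $H^1(-,\BZ/\ell^{m})$, the problem becomes: given classes $c_v\in H^1(L_v,\BZ/\ell^{m})$ for $v\in S$, find $m'\geq m$ and a global class $c\in H^1(L,\BZ/\ell^{m'})$ with $\mathrm{loc}_v(c)=c_v$ for every $v\in S$. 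I would enlarge $S$ to a finite set $S'$ containing all archimedean places, all places above $\ell$, and enough finite primes to kill the $S'$-class group, extend the datum by $0$ outside $S$, and work inside the Galois group of the maximal extension of $L$ unramified outside $S'$. The obstruction to surjectivity of localization at $S'$ is then, by Poitou--Tate duality, controlled by the group ${\Sha}^{1}_{S'}(\mu_{\ell^{m}})$ of $\ell^{m}$-Kummer classes that are locally trivial at all $v\in S'$; by Kummer theory this is the group of $S'$-units of $L$ that become $\ell^{m}$-th powers in every $L_v$, $v\in S'$, modulo $(L^\times)^{\ell^{m}}$, and for $\ell$ odd it is killed by adjoining to $S'$ auxiliary primes that split completely in $L\bigl(\mu_{\ell^{m}},\sqrt[\ell^{m}]{\CO_{L,S'}^\times}\bigr)$. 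This yields surjectivity, hence the desired $\chi$, already with $m'=m$.

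The hard part will be $\ell=2$ with $m\geq 3$: here ${\Sha}^{1}_{S'}(\mu_{2^{m}})$ can genuinely be nonzero, reflecting that $L(\mu_{2^{m}})/L$ need not be cyclic even though $L_v(\mu_{2^{m}})/L_v$ is cyclic at the relevant dyadic places (Wang's phenomenon; e.g.\ $\BQ$ admits no cyclic degree-$8$ extension in which $2$ is inert). My point will be that this obstructs only realization by a global character of order dividing $2^{m}$: the anomalous local characters do occur as restrictions of a global character whose $2$-part has order $2^{m+1}$ --- concretely an appropriate power of a cyclotomic character of $2$-power order ramified at a well-chosen auxiliary prime $q$, visible inside $\Gal(\BQ(\mu_{2^{m+1}q})/\BQ)$ --- so re-running the localization argument at a large enough level $2^{m'}$ kills the obstruction class attached to the given tuple and produces a finite order $\chi$ with the prescribed local components. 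I expect the only genuinely delicate bookkeeping to be the choice of these auxiliary primes and the verification that the obstruction dies at the larger level.
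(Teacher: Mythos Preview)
The paper does not prove this statement; it is recorded as a Fact with the citation \cite{SW} and used only as a black box to manufacture the auxiliary character $\chi_1$ in \S\ref{lambda}. There is thus no ``paper's proof'' to compare against.

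Your high-level plan --- reduce to $\ell$-power order, argue cohomologically, and for $\ell=2$ observe that Wang's special case disappears once the global order is allowed to exceed the local orders --- is correct and is the standard route. Two details in the middle need repair, however. First, enlarging $S$ to $S'$ and ``extending the datum by $0$'' buys nothing: a global class that is locally trivial at every $v\in S'\setminus S$ is in particular unramified there, so it already lay in $H^1(G_S,\BZ/\ell^m)$ and the map to $\prod_{v\in S}H^1(L_v,\BZ/\ell^m)$ has not changed. What one actually does is allow genuine ramification at the auxiliary primes and only impose the constraints on the original set $S$. Second, the cokernel of $H^1(G_{S'},\BZ/\ell^m)\to\prod_{v\in S'}H^1(L_v,\BZ/\ell^m)$ is, from the nine-term Poitou--Tate sequence, the dual of $H^1(G_{S'},\mu_{\ell^m})/\Sha^1_{S'}(\mu_{\ell^m})$, not $\Sha^1_{S'}(\mu_{\ell^m})$ itself; it is typically large and is certainly not shrunk by adding primes that split completely in $L\bigl(\mu_{\ell^m},\sqrt[\ell^m]{\CO_{L,S'}^\times}\bigr)$ --- that condition makes every $S'$-unit a local $\ell^m$-th power at the new place, which preserves rather than kills membership in $\Sha^1$. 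With these two fixes (choose auxiliary primes by Chebotarev so that new \emph{ramified} global characters of $\ell$-power order surject onto the missing local data), the odd-$\ell$ argument goes through, and your treatment of $\ell=2$ by passing to level $2^{m'}$ with $m'>m$ is the right idea.
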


In the following, we fix the choice of the datum in the test vector and Waldspurger formula for Eisenstein series.
 Let $S_0=\{\fl\}$, where $\fl$ is a prime that is coprime to $p$ and $\lambda|_{F_\fl^\times}$ is unramified. Let $S_{\text{split},1}$ consists of split primes $v$ of $F$ that divides $p$, or such that $\lambda|_{K_v^\times}$ is ramified and $v\nmid \fl$. Let $S_{\text{n-split}}$ consists of nonsplit primes $v$ of $F$ such that $\lambda|_{K_v^\times}$ is ramified and $v\nmid \fl$. 
 
Let \[S_{\text{split}, 2}=\begin{cases}\emptyset,&\text{$k>1$, or $\lambda$ self-dual, $\fl$ splits, or $k=1$, $\left\{w|\prod_{v\in S_{\text{split},1}}\left|\ \substack{\text{$\lambda_w$ is ramified and $w\notin \Sigma_p^c$}}\right.\right\}\neq \emptyset$ }, \\
v_0,&\text{otherwise},\end{cases}\] where $v_0$ is a split prime of $F$ such that $v_0$ is prime to $p\fl\cdot 2\vartheta\CD_{F}^{-1}\CD_{K/F}^{-1}$ and such that $\lambda$ is unramified at $v_0$ and let $S_2$ consists of a prime $w_0$ of $K$ above $v_0$. 

 Assume $S_{\text{split},1}$-part of the conductor of $\lambda$ is $\ff\ff'$ such that each of $\ff'$, $\ff$ is prime to its own conjugate. We may choose further that (1) $w|\ff$, if $w\in\Sigma_p$ and $\lambda_w$ is ramified; (2) If $w$ prime to $p$ and $\lambda_w$ is ramified but $\lambda_{\ov{w}}$ is unramified, then $w|\ff$. Let $S_1$ contain the primes that divide $\Sigma_p$ or $\ff$. May choose $S''$ in \S\ref{aocmp} such that $S_1\sqcup S_2\subset S''$.
 
 By the Fact \ref{eh}, there exists a finite order Hecke character $\chi_1$ on $\BA_F^\times$ such that its restriction on
 \[\prod_{v|N(\ff)}\CO_{F,v}^\times/(1+N(\ff)_v)\simeq \prod_{w|\ff}\CO_{K,w}^\times/(1+\ff_w)\]
is given by $\lambda|_{\prod_{w|\ff}\CO_{K,w}^\times/(1+\ff_w)}$, and the prime to $S_{\text{split}, 1}$-part of the conductor of $\chi_1$ is prime to $p\fl\CD_{K/F}$, $S_{\text{n-split}}$, and $\chi_{1,\infty}=1$. Here $N$ is the norm of ideals from $K$ to $F$. Choose $\chi=\lambda\cdot\chi_1^{-1}\circ \Nm$ and $\chi_2=\chi^{-1}|_{\BA_F^\times}\cdot\chi_1^{-1}=\lambda^{-1}|_{\BA_{F}^\times}\cdot\chi_1$.
\begin{remark}
Under the above auxiliary choice of $\chi_1$ at primes $\fq$ dividing $N(\ff)$, we could choose new vector at $\fq$ (cf.~Condition~$(T_2)$~in~\S\ref{4.2}), which is key to get $p$-integrality of the associated Eisenstein series via the integral model introduced in Proposition \ref{mode}. To extend it to be a global character, we need to use Grunwald-Wang (cf.~Fact~\ref{eh}) whenever the class number of $F$ is not $1$. 
\end{remark}

\subsubsection{Explicit local test vector}\label{expli}

We give an explicit construction of $\phi_\lambda$ in Theorem \ref{exp} under the choice of $(\rho,\varsigma_{n})$ in \S\ref{CM}. Also, see the proof of Proposition \ref{test}. 
\begin{enumerate}
\item If $v\notin S\bs S'$, or $v\in S_{\text{split}}$, $\phi_{\lambda,v}=c_v\cdot\phi_{v,\Phi}$ for $\Phi=\phi_1\otimes\phi_2\in\CS(F_v^2)$ given by
\[\phi_1=\left\{\begin{aligned}
&\mathbf{1}_{\CO_{F, v}},&&\text{if $v\notin S\bs S'$, or $v=w\ov{w}$ with $w\in S_1$ and $\lambda_{\ov{w}}$ is unramified },\\
&\mathbf{1}_{\CO_{F, v}^\times}\lambda_{\ov{w}}^{-1},& &\text{if $v=w\ov{w}$: either $w\in S_1$ and $\lambda_{\ov{w}}$ is ramified, or $w\in S_2$ },
\end{aligned}\right.\] 
\[\phi_2(x)=\left\{\begin{aligned}
&\wh{\mathbf{1}_{\CO_{F,v}^\times}\lambda_w}(x),&&\text{if $v=w\ov{w}$: either $w\in S_1$ and $\lambda_{{w}}$ is ramified, or $w\in S_2$, }\\
&\mathbf{1}_{\varpi_v^{\delta_v}\CO_{F,v}},& &\text{if $v\notin S\bs S'$, or $v=w\ov{w}$ with $w\in S_1$ and $\lambda_{w}$ is unramified. }
\end{aligned}\right.
\] Here \[c_v=\lambda_w(u_v)(\chi_1\circ\det(\varsigma_v)\cdot|\det(\varsigma_v)|_v^{s})^{-1}\] if $v\in S_{\text{split}}$ and \[c_v=(\chi_1\circ\det(\varsigma_v)\cdot|\det(\varsigma_v)|_v^{s})^{-1}\] otherwise. Here recall $u_v\in \CO_{F,v}^\times$ is some unit.

Then for $v\in S_{\text{split},1}$, $\phi_{\lambda,v}$ is fixed by \[U_1(\fp_v^{e_w+e_{\ov{w}}})=\left\{\begin{pmatrix}
a&b\\ c&d
\end{pmatrix}\in U_{0,v}\ \left|\ \substack{ a-1\in {\fp_v^{e_{w}+e_{\ov{w}}}},\\ c\in \fp_v^{e_{w}+e_{\ov{w}}-\delta_v}}\right.\right\}.\]Here $e_w$ (resp. $e_{\ov{w}})$ is the conductor of $\lambda_w$ (resp. $\lambda_{\ov{w}})$.
For $v\notin S$, $\phi_{\lambda,v}$ is fixed by $U_{0,v}$.
\item If $v\in S_{\text{n-split}}\sqcup S_{\infty}$, write $K_v^\times=B(F_v)\cdot \rho(K_v^\times)$, \[\phi_{\lambda,v}(s,\rho(t)\varsigma_v)=\chi_v(t)^{-1}.\] Then $\phi_\sigma$ is the standard weight $k_\sigma+2\kappa_\sigma$ section for $\sigma\in \Sigma$.
\item If $v=\fl\in S_0$, $\phi_{\lambda,\fl}$ is the unique $U_0(\fl)$ invariant function that is supported on the big cell $B(F_\fl)\mathbf{w}N(\CO_{F,\fl}^*)$ and $\phi_{\lambda,\fl}(w)=1$, where 
\[\mathbf{w}=\begin{pmatrix}
 0&-1 \\1&0\end{pmatrix}.\]
\end{enumerate}
\label{level}Let $U_\lambda\subset G(\BA_{F,f})$ be an open compact group that fixes $\phi_{\lambda}$ such that at $v\in S_{\text{split},1}$ is given by \[U_1(\fp_v^{e_w+e_{\ov{w}}}),\] at the place $v\notin S$ is given by $U_{0,v}$ and at the place $\fl$ is given by $U_0(\fl)$.

\subsubsection{Fourier coefficient of Eisenstein series}\label{add}

Let $\phi_{\lambda,k}=\phi_{\lambda,f}\otimes\bigotimes_{\sigma\in \Sigma}\phi_{\sigma,k}$, we have the following Fourier expansion:
\[\begin{aligned}
E_{\phi_{\lambda,k}}(s,g)=\phi_{\lambda,k}(g)+M_{\mathbf{w}}(\phi_{\lambda,k})(g)+\sum_{0\neq\beta\in F}W_{\beta}(\phi_{\lambda,k},g)
\end{aligned},\] here for $\beta\neq 0$, \[W_\beta(\phi_{\lambda,k},g)=\int_{\BA_F}\phi_{k,\lambda}\left(\begin{pmatrix}0 & -1 \\1 & 0\end{pmatrix}\begin{pmatrix}1 & x \\0 & 1 \end{pmatrix}g\right)\psi(-\beta x)dx\]and $M_\mathbf{w}=\prod_{v}M_{\mathbf{w},v}$ is the intertwining operator with $M_{\mathbf{w},v}$ defined by:\[M_{\mathbf{w},v}: I(s,\chi_{1,v},\chi_{2,v})\ra I(1-s,\chi_{2,v},\chi_{1,v}),\quad \phi_v\mapsto M_{\mathbf{w},v}(\phi_v)(\cdot):=\int_{F_v}\phi_v\left(\begin{pmatrix}0 & -1 \\1 & 0\end{pmatrix}\begin{pmatrix}
1 & x \\
 0 & 1\end{pmatrix}\cdot\right)dx.\] For $\beta\in F_v^\times$, define the $\beta$-th local Whittaker integral \[W_\beta(\phi_v,g_v)=\int_{F_v}\phi_v\left(\begin{pmatrix}0 & -1 \\1 & 0\end{pmatrix}\begin{pmatrix}1 & x \\0 & 1
\end{pmatrix}g_v\right)\psi_v(-\alpha x)dx,\] then $W_\beta(\phi_{\lambda,k},g)=\prod_{v}W_\beta(\phi_{\lambda,k,v},g_v)$ for $\beta\in F^\times$. Let $g_\infty=\begin{pmatrix}
x & y \\
0 & 1
\end{pmatrix}$ and consider $s=0$, then we get Fourier expansion of $E_{\lambda,k}$ at the cusp $[\infty,g_f]$.

In the following, we consider the local Fourier coefficients of the local test vector. 
For $v$ finite place, let \[\phi_v=\phi_{\lambda,v}.\] For $\sigma\in \Sigma$, let $\phi_{\sigma,k}$ be the standard weight $k$ section. Consider the local Whittaker function: \[W_\beta\left(\phi_v,g_v\right)\Big|_{s=0}=\int_{F_v}\phi_v\left(\left(\begin{pmatrix}
0 & -1 \\1 & 0\end{pmatrix}\begin{pmatrix}
1 & x \\ 0 & 1\end{pmatrix}g_v\right)\right)\psi_v(-\beta x)dx\Big|_{s=0},\] here $\beta\in F^\times$ and $g_v\in G(F_v)$. In application, we may consider $g_v=\begin{pmatrix} 1 & 0 \\0 & \fc_v^{-1}\end{pmatrix}$ for $v$ finite with $\fc_v=1$ for $v\in S$ and $g_\sigma=\begin{pmatrix}y_\sigma& x_\sigma\\0 & 1 \end{pmatrix}$, $y_\sigma\in\BR_{>0}$ for $\sigma\in \Sigma$.

For each place $v\notin S_\infty\sqcup S_{\text{n-split}}\sqcup S_0$, let $c_v$ be as before with $s=0$. Then for almost all places, $c_v=1$ and $c_v$ is $p$-adic unit for all $v$.

Let $\lambda^*:=\lambda\cdot |\cdot|_{\BA_K^\times}^{-1/2}$.
\begin{enumerate}
\item[(1)] If $v\notin S\bs S'$, 
\[\begin{aligned}
W_\beta\left(\phi_v,\begin{pmatrix}1 & 0 \\0 & \fc_v^{-1}\end{pmatrix}\right)\Big|_{s=0}&=\int_{F_v}\phi_v\left(\left(\begin{pmatrix} 0 & -1 \\1 & 0\end{pmatrix}\begin{pmatrix}1 & x \\0 & 1\end{pmatrix}\begin{pmatrix}1 & 0 \\ 0 & \fc_v^{-1}\end{pmatrix}\right)\right)\psi_v(-\beta x)dx\Big|_{s=0}\\
&=c_v\int_{F_v}\chi_{1,v}(\fc_v^{-1})\int_{F_v^\times}\Phi(t,tx\fc_v^{-1})\chi_{1,v}\chi_{2,v}^{-1}(t)d^\times t\psi_v(-\beta x)dx\\
&=c_v|\fc_v\varpi_v^{\delta_v}|_v\chi_{1,v}(\fc_v^{-1})\mathbf{1}_{\CO_{F,v}}(\beta\fc_v)\sum_{0\leq n\leq \ord_v(\beta\fc_v)}\lambda_v^*(\varpi_v^n).
\end{aligned}\]
If $\lambda$ is self-dual, \[W_\beta\left(\phi_v,\begin{pmatrix} 1 & 0 \\0 & \fc_v^{-1}\end{pmatrix}\right)\Big|_{s=0}=c_v|\fc_v\varpi_v^{\delta_v}|_v\chi_{1,v}(\fc_v^{-1})\mathbf{1}_{\CO_{F,v}}(\beta\fc_v)\sum_{0\leq n\leq \ord_v(\beta\fc_v)}\tau_{K_v/F_v}(\varpi_v)^{n},\] thus under $\ord_v(\beta\fc_v)\geq 0$, it is nonzero if and only if $v$ is not inert, or $v$ inert and $\ord_v(\beta\fc_v)$ is even.
\item[(2)] If $v\in S_{\text{split},1}$
 \[\begin{aligned}
W_\beta\left(\phi_v,\mathfrak{1}\right)\Big|_{s=0}&=\int_{F_v}\phi_v\left(\left(\begin{pmatrix}0 & -1 \\1 & 0\end{pmatrix}\begin{pmatrix} 1 & x \\0 & 1\end{pmatrix}\right)\right)\psi_v(-\beta x)dx\Big|_{s=0}\\
&=c_v\int_{F_v}\int_{F_v^\times}\phi_1(t)\phi_2(tx)\chi_{1,v}\chi_{2,v}^{-1}(t)d^\times t\psi_v(-\beta x)dx\\
&=c_v\int_{F_v^\times}\phi_1(t)\chi_{1,v}\chi_{2,v}^{-1}(t)|t|_v^{-1}\int_{F_v}\phi_2(x)\psi_v(-\beta t^{-1} x)dx d^\times t.\\
\end{aligned}\] If $\lambda_w$ is ramified,
\[\begin{aligned}
W_\beta\left(\phi_v,\mathfrak{1}\right)\Big|_{s=0}&=c_v|\varpi_v^{\delta_v}|_v\int_{F_v^\times}\phi_1(t)\chi_{1,v}\chi_{2,v}^{-1}(t)|t|_v^{-1}(\mathfrak{1}_{\CO_{F,v}^\times}\lambda_{w})(\beta t^{-1}) d^\times t\\
&=c_v|\varpi_v^{\delta_v}|_v|\beta|_v^{-1}\lambda_w(\beta)\int_{\CO_{F,v}^\times}\phi_1(\beta u)\lambda_{\ov{w}}(\beta u)d^\times u(t=\beta u)\\
&=c_v|\varpi_v^{\delta_v}|_v\phi_1(\beta)\lambda_v^*(\beta),
\end{aligned}\]
if $\lambda$ is further self-dual, \[W_\beta\left(\phi_v,\mathfrak{1}\right)\Big|_{s=0}=c_v|\varpi_v^{\delta_v}|_v\phi_1(\beta)\tau_{K_v/F_v}(\beta).\]
If $\lambda_{w}$ and $\lambda_{\ov{w}}$ are unramified, thus $v$ is coprime to conductor of $\lambda$, the result is the same as (1):
\[\begin{aligned}
W_\beta\left(\phi_v,\mathfrak{1}\right)\Big|_{s=0}
&=c_v|\varpi_v^{\delta_v}|_v\mathfrak{1}_{\CO_{F,v}}(\beta)\sum_{0\leq n\leq\ord_{v}(\beta)}\lambda_v^*(\varpi_v^n),
\end{aligned}\]
if $\lambda$ is further self-dual, then \[ W_\beta\left(\phi_v,\mathfrak{1}\right)\Big|_{s=0}=c_v|\varpi_v^{\delta_v}|_v\mathfrak{1}_{\CO_{F,v}}(\beta)\sum_{0\leq n\leq\ord_{v}(\beta)}\tau_{K_v/F_v}(\varpi_v)^n.\]
If $\lambda_w$ is unramified and $\lambda_{\ov{w}}$ is ramified,
\[\begin{aligned}
W_\beta\left(\phi_v,\mathfrak{1}\right)\Big|_{s=0}&=c_v|\varpi_v^{\delta_v}|_v\int_{F_v^\times}\phi_1(t)\chi_{1,v}\chi_{2,v}^{-1}(t)|t|_v^{-1}\mathfrak{1}_{\CO_{F,v}}(\beta t^{-1}) d^\times t\\
&=c_v|\varpi_v^{\delta_v}|_v \mathfrak{1}_{\CO_{F,v}}(\beta).
\end{aligned}\]
\item[(3)] If $v\in S_{\text{split},2}$, \[\begin{aligned}
W_\beta\left(\phi_v,\mathfrak{1}\right)\Big|_{s=0}&=c_v|\varpi_v^{\delta_v}|_v\int_{F_v^\times}\phi_1(t)\chi_1\chi_2^{-1}(t)|t|_v^{-1}(\mathfrak{1}_{\CO_{F,v}^\times}\lambda_{w})(\beta t^{-1}) d^\times t(t=\beta u)\\
&=c_v|\varpi_v^{\delta_v}|_v|\beta|_v^{-1}\lambda_w(\beta)\int_{\CO_{F,v}^\times}\phi_1(\beta u)\lambda_{\ov{w}}(\beta u)du\\
&=c_v|\varpi_v^{\delta_v}|_v\phi_1(\beta)\lambda_v^*(\beta).
\end{aligned}\]
\item[(4)] If $\sigma\in\Sigma$, recall that $\phi_{\sigma,k}\in I(s,\chi_{1,\sigma},\chi_{2,\sigma})$ is the weight $k$ section such that $\phi_{\sigma,k}(1)=1$, more precisely, \[\phi_{\sigma,k}(g)=\chi_{1,\sigma}(\det(g))J(g,i)^{-k}\left|\frac{\det(g)}{J(g,i)\overline{J(g,i)}}\right|_v^s.\]
\item[] We have
\[\begin{aligned}W_\beta\left(\phi_{\sigma,k},\begin{pmatrix}y_\sigma &x_\sigma \\0& 1 \end{pmatrix}\right)\Big|_{s=0}&=\int_{\BR}\phi_{\sigma,k}\left(\begin{pmatrix}0 & -1 \\1 & 0\end{pmatrix}\begin{pmatrix}1 & x \\0 & 1\end{pmatrix}\begin{pmatrix}y_\sigma &x_\sigma \\0& 1\end{pmatrix}\right)e^{-2\pi i\sigma(\beta) x}dx\Big|_{s=0}\\
 &=\int_{\BR}\left(\frac{1}{x_\sigma+x+y_\sigma i}\right)^ke^{-2\pi i\sigma(\beta) x}dx\\
 &=c_v\cdot (-1)^k\frac{(2\pi i)^k}{(k-1)!}\lambda_{\sigma}^*(\beta) e^{2\pi i\sigma(\beta)(x_\sigma+y_\sigma i)}\mathbf{1}_{\BR_+}\sigma(\beta)
\end{aligned}\] for any $x_\sigma\in\BR,\ y_\sigma\in\BR_{>0}$, where $c_\sigma$ satisfies $\prod_{\sigma\in\Sigma}c_\sigma=|D_F|_{\BR}^{-1/2}$.
\item[(5)] If $v\in S_0$
 \[\begin{aligned}W_\beta\left(\phi_v,\mathfrak{1}\right)\Big|_{s=0}&=|\varpi_v^{\delta_v}|_v\mathbf{1}_{\CO_{F,v}}(\beta).
\end{aligned}\]
\item[(6)] If $v\in S_{\text{n-split}}$,
in this case, we have \[G(F_v)=B(F_v)\rho(K_v^\times).\] Let $\phi_v$ be the unique function of $G(F_v)$ such that  \[\phi_v\left(\begin{pmatrix} a & b \\ 0 & d\end{pmatrix}\rho(k)\varsigma_v\right)=\chi_1(a)\chi_2(d)\left|\frac{a}{d}\right|_v^{s}\chi_v^{-1}(k)\] for all $\begin{pmatrix}a & b \\0 & d\end{pmatrix}\in B(F_v)$ and $k\in K_v^\times$. In this case, $\fc_v=1$. The goal is to estimate the $p$ adic valuation of the following local Fourier coefficient:
 \[\begin{aligned} W_\beta\left(\phi_v,\begin{pmatrix} 1 & 0 \\0 & \fc_v^{-1}\end{pmatrix}\right)\Big|_{s=0}&=\int_{F_v}\phi_v\left(\left(\begin{pmatrix} 0 & -1 \\ 1 & 0 \end{pmatrix}\begin{pmatrix}1 & x \\ 0 & 1\end{pmatrix}\right)\right)\psi_v(-\beta x)dx.\\\end{aligned}\]
 Note that
\[\begin{pmatrix} 0 & -1 \\1 & x\end{pmatrix}=\begin{pmatrix}-1 &-a_v\varpi_v^{-\delta_v} \\x& xa_v\varpi_v^{-\delta_v}-1\end{pmatrix}\cdot\begin{pmatrix}a_v\varpi_v^{-\delta_v} & 1 \\-1& 0 \end{pmatrix}=\begin{pmatrix}-1 &-a_v\varpi_v^{-\delta_v} \\ x& xa_v\varpi_v^{-\delta_v}-1\end{pmatrix}\cdot\varsigma_v\] and \[\begin{pmatrix} -1 &-a_v\varpi_v^{-\delta_v} \\x& xa_v\varpi_v^{-\delta_v}-1 \end{pmatrix}=\begin{pmatrix}\frac{1}{(xa_v\varpi_v^{-\delta_v}-1)^2-x^2\vartheta^2} & * \\0 & 1\end{pmatrix}\begin{pmatrix}xa_v\varpi_v^{-\delta_v}-1 & x\vartheta^2 \\x & xa_v\varpi_v^{-\delta_v}-1\end{pmatrix}\] thus the above integral equals to 
 \[\begin{aligned}
 &\int_{F_v}\lambda_v^{-1}(x(\vartheta+\varpi_v^ {-\delta_v}a_v)-1)\psi_v(-\beta x)dx\\
 =&\frac{\lambda_v^{-1}(u)}{|N(u)|_v}\psi_v\left(\frac{-\beta \varpi_v^ {-\delta_v}a_v}{N(\vartheta+\varpi_v^ {-\delta_v}a_v)}\right)\int_{F_v}\lambda_v^{-1}(x+\frac{1}{4\vartheta})\psi_v(N(u_v)\beta x)dx\\
\end{aligned}\]
\end{enumerate}
here $N$ is the norm from $K_v^\times$ to $F_v^\times$, $u_v=\frac{2\vartheta}{\vartheta+\varpi_v^{-\delta_v}a_v}$.
Let $d^0x$ be the self-dual Haar measure with respect to $\psi_v$, since $v$ is prime to $p$, the invariant \[A_a(\lambda_v^*,\theta):=\int_{F_v} (\lambda_v^*)^{-1}\left(x+\frac{1}{4\theta}\right)\left|x+\frac{1}{4\theta}\right|_{K_v^\times}^{-s-\frac{1}{2}}\psi_v(-a x)d^0x\Big|_{s=0},\quad a \in F_v, \theta\in K_v, \ov{\theta}=-\theta\] is $p$-integral. We will discuss the arithmetic properties of $A_a(\lambda_v^*,\theta)$ in the \S\ref{S5}.

In the following, we consider the constant term of the Eisenstein series.
\begin{prop}\label{cusps}
Assume one of the following:
\begin{enumerate}
\item [(i)] $k\geq 2$,
\item [(ii)] $k=2$ and $\omega_{\lambda}\neq |\cdot|_{\BA_F^\times}^2$,
\item [(iii)] $k=1$, (i) $\{w\ |\ \text{$\lambda_w$ is ramified and $w|\prod_{v\in S_{\text{split},1}}$ and $w\notin \Sigma_p^c$}\}\bigsqcup S_{\text{split},2}\neq\emptyset$; or (ii) $\lambda$ self-dual, $\fl$ split in $K$.
\end{enumerate} Then for any cusp of the form $\left[\infty,\begin{pmatrix} 1 & 0 \\0 & \fc^{-1} \end{pmatrix}\right]$ with $\fc\in\BA_{K}^{\times,(S)}$, the constant term of Fourier expansion of $E_{\lambda,k}$ at this cusp is zero.
\end{prop}
\begin{proof}
The constant term of the Fourier coefficient of $E_{\lambda,k}$ at the cusp $\left[\infty,\begin{pmatrix}
1 & 0 \\
 0 & \fc^{-1}
 \end{pmatrix}\right]$ is given by \[\phi_{\lambda,k}(g)|_{s=0}+M_{\mathbf{w}}(\phi_{\lambda,k})(g)|_{s=0},\] here $g_f=\begin{pmatrix} 1 & 0 \\0 & \fc^{-1}
 \end{pmatrix}$ and $g_\sigma=\begin{pmatrix}y_\sigma & x_\sigma \\0 & 1\end{pmatrix}$ for $\sigma\in\Sigma$. First note that $\phi_{\lambda,v}(1)=0$ for $v\in S_0$, thus $\phi_{\lambda,k}(g)|_{s=0}=0$.

 For all $v\notin S\bs S'$, $\phi_{\lambda,v}$ is the standard spherical function with $\phi_{\lambda,v}(1)=1$ and \[M_{\mathbf{w},v}(\phi_{\lambda,v})\begin{pmatrix}1&\\ &\fc_v^{-1}\end{pmatrix}=\chi_{1,v}(\fc_v^{-1})|\fc_v|_vL(2s-1,\omega_{\lambda,v}).\]

For each $\sigma\in\Sigma$, \[\begin{aligned}M_{\mathbf{w},\sigma}(\phi_{k,\sigma})(g_\sigma)&=M_{\mathbf{w},\sigma}(1)\phi_{k,\sigma}(-s,g_\sigma)
\end{aligned}\] where $\phi_{k,\sigma}$ is the standard weight $k$ section in $I(s,\chi_{2,v},\chi_{1,v})$ and the constant is given by \[M_{\mathbf{w},\sigma}(1)=(-i)^k\frac{\pi}{2^{k+2s-2}}\frac{\Gamma(2s+k-1)}{\Gamma(s+k)\Gamma(s)},\]and \[\phi_{k,\sigma}(1-s,g_\sigma)=\chi_{2,\sigma}(\det(g_\sigma))\ov{J(g_\sigma,i)}^{k}\left|\frac{\det(g_\sigma)}{J(g_\sigma,i)\overline{J(g_\sigma,i)}}\right|_\sigma^{1-s}.\]

Thus for the first two cases, the vanishing of $M_{\mathbf{w}}(\phi_{\lambda,k})(g)|_{s=0}$ follows from the finiteness of $L_f(-1,\omega_\lambda)$ and $M_{\mathbf{w},\sigma}(1)|_{s=0}=0$. When $k=1$ and under the condition (i), the result follows from $ M_{\mathbf{w},v}(\phi_{\lambda,k,v})(1)|_{s=0}=0$ for $v|\ff\ff^c$ or $v\in S_{\text{split},2}$, finiteness of $L_f(-1,\omega_\lambda)$ and finiteness of $M_{\mathbf{w},\sigma}(\phi_{\lambda,k,\sigma})(g_\sigma)|_{s=0}$. When $k=1$, $\lambda$ self-dual and $\fl$ splits in $K$, the result follows from $L(-1,\omega_{\lambda,\fl})^{-1}=0$, finiteness of $L_f(-1,\omega_{\lambda,\fl})$ and finiteness of $M_{\mathbf{w},\sigma}(\phi_{\lambda,k,\sigma})(g_\sigma)|_{s=0}$.
 \end{proof}

\subsubsection{Local zeta integral}

In the following, $\phi_v=\phi_{\lambda,v}$.
 For $\varepsilon$ a finite order character on $\Cl'_{\fl^n}$, we may consider the following zeta integral
\[\alpha_v(\phi_v,\varepsilon_v)(s):=\int_{K_v^\times/F_v^\times}\varepsilon_v\chi_v(t)\phi_v(s,\rho(t)\varsigma){d}^\times t.\]
By the construction of the test vector, we have
\begin{enumerate}
\item[(1)] If $v\notin S\bs S'$, then
\[\begin{aligned}
\alpha_v(\phi_v,\varepsilon_v)(s)&=\int_{\CO_{K_v}}\varepsilon_v\lambda_v(t)|t|_v^sd^\times t\\
&=L(s,\varepsilon_v\lambda_v).
\end{aligned}\]
\item[(2)] If $v|S_{\text{split},1}$, let $v=w\ov{w}$ with $w\in S_1$
\[\begin{aligned}
\alpha_v(\phi_v,\varepsilon_v)(s)&=\int_{K_v^\times}\varepsilon_v(t)\lambda_v(t)\Phi_v([0,1]\rho(t)\varsigma_v)|t|_v^sd^\times t\\
&=\int_{F_v^\times}\varepsilon_{\ov{w}}\lambda_{\ov{w}}(t)\phi_1(t)|t|_v^sd^\times t\int_{F_v^\times}\varepsilon_{w}\lambda_{w}(t)\phi_2(\varpi_v^{\delta_v} t)|t|_v^sd^\times t\\
&=L(s,\epsilon_v\lambda_v)G(\lambda_w\epsilon_w|\cdot|_w^s),
\end{aligned}\]where \[G(\lambda_w\epsilon_w|\cdot|_w^s)=|\varpi_v|_w^{e_w}
\sum_{u\in(\CO_{F,v}/\varpi_v^{e_{w}}\CO_{F,v})^\times}\lambda_w\epsilon_w|\cdot|_w^s(u\varpi_v^{-e_w})\psi_w(u\varpi_v^{\delta_v-e_w}).\]
\item[(3)] If $v\in S_{\text{split},2}$, let $v=w\ov{w}$ with $w\in S_2$
\[\alpha_v(\phi_v,\varepsilon_v)(s)=Z(s,\lambda\varepsilon_w,\wh{1_{\CO_{K,w}^\times}\lambda_w}).\]
\item[(4)] If $v\in S_{\text{n-split}}\sqcup S_{\infty}$
\[\begin{aligned}
\alpha_v(\phi_v,\varepsilon_{v})(s)=\vol(K_v^\times/F_v^\times, d^\times t_v).
\end{aligned}\]
\item[(5)] If $v\in S_0$, suppose $\varepsilon_v$ is trivial on $\CO_{\varpi_v^n}^\times$ the local zeta integral: \[\alpha_v(\phi_v,\varepsilon_v)(s)=\vol(\pi(\CO_{\varpi_v^n}^\times), d^\times t_v)\chi_2(\varpi_v^n)|\varpi_v|_v^{-s},\]where $\pi:K_\fl^\times\ra K_\fl^\times/F_\fl^\times$.
\end{enumerate}

\subsection{$p$-integral Waldspurger formula and construction of measure}\label{4.4}

Consider \[\BE_{\lambda,k}=\frac{\Gamma_{\Sigma}(k\Sigma)}{\sqrt{|D_F|_{\BR}}(2\pi i)^{k\Sigma}}E_{\lambda,k}.\]

By our choice of $S_{\text{split},2}$, the assumption of Proposition \ref{cusps} is satisfied. Thus for any $\fc\in\BA_{K}^{\times,(S)}$, $\BE_{\lambda,k}$ has no constant term at the cusp $\left(\infty,\begin{pmatrix}
1 & 0 \\
0& \fc^{-1}
\end{pmatrix}\right)$.
Let $\beta\in F^\times$, recall that the $\beta$-th Fourier coefficient of $\BE_{\lambda,k}$ at the cusp $\left(\infty,\begin{pmatrix}
1 & 0 \\
0& \fc^{-1}
\end{pmatrix}\right)$
has the form
\[\displaystyle \alpha_\beta\left(\BE_{\lambda,k},\begin{pmatrix}
1 & 0 \\
0& \fc
\end{pmatrix}\right)=(-1)^{k\Sigma}\mathbf{1}_{F_{>0}}(\beta)\cdot\prod_{\text{$v$ is finite}}W^*_{\beta,v},\] here \[\displaystyle W^*_{\beta,v}=\begin{cases}
|\varpi_v^{\delta_v}|_v^{-1} W_{\beta}\left(\phi_{\lambda,v},\begin{pmatrix}
1 & 0 \\
0 & \fc_v^{-1}
\end{pmatrix}\right)\Big|_{s=0},\quad &\text{if $v$ is prime to $p$},\\
\displaystyle |\varpi_v^{\delta_v}|_v^{-1} \prod_{\substack{\sigma:F\ra \ov{\BQ}\\ \text{$\iota_p\circ \sigma$ induces $v$ }}}\sigma(\beta)^{k-1} W_{\beta}\left(\phi_{\lambda,v},\begin{pmatrix}
1 & 0 \\
0 & \fc_v^{-1}
\end{pmatrix}\right)\Big|_{s=0}, \quad &\text{if $v|p$}.
\end{cases}\] For $\beta$ prime to $p$, up to a $p$-adic unit, $W^*_{\beta,v}$ is given by the following:
\[\begin{cases}
\displaystyle\mathbf{1}_{\CO_{F,v}}(\beta\fc_v)\cdot\sum_{0\leq n\leq \ord_v(\beta\fc_v)}(\lambda_v^*(\varpi_v))^{n},& \text{if $v\notin S\bs S'$},\\
A_{N(u)\beta}(\lambda_v^*,\vartheta),& \text{if $v\in S_{\text{n-split}}$},\\
\mathbf{1}_{\CO_{F,v}}(\beta),& \text{if $v\in S_0$},\\
\mathfrak{1}_{\CO_{F,v}^\times}(\beta)+c_v(\beta),& v\in S_{\text{split}},\\
\end{cases}\]where $c_v(\cdot)$ is a $\ov{\BZ}_p$ valued function on $\CO_{F,v}\bs\CO_{F,v}^\times$. 

By $q$-expansion principle (cf.~Proposition ~\ref{q-exp}) and local analysis of Fourier coefficient, then $\BE_{\lambda,k}$ lies in $\CM_{k}(U_\lambda,\CO_{(\wp)})$, here $\CO_{(\wp)}$ is the localization at $\wp$ of the ring of integer $\CO$ of a number field $E$ with $\wp$ the prime induced by $\iota_p$, $U_\lambda$ is given \S\ref{level}. It has zero constant term at the cusps.

\label{CM def}Let $K^*$ be the reflex field of $K$ and $\fp^*$ the prime of $K^*$ above $p$ induced by $\iota_p$. Let $K'$ be the maximal abelian extension of $K^*$ unramified above $\fp^*$. Let $\fp'$ be the prime of $K'$ induced by $\iota_p$. For any $a\in\BA_{K,f}^{\times,(p)}\cdot\CO_{K,p}^\times$, write $x_n(a)$ the CM point on the Igusa scheme corresponding to $[\vartheta,\rho(a)\varsigma_{n,f}]$. Then by Proposition \ref{pintCM}, the CM points $x_n(a)$ are defined over $W:=\CO_{K',(\fp')}$ and have good reduction. Fix a $\CO_{F}\otimes W$ basis $\omega$ of the invariant differential of the abelian scheme given by $x_0(1)$. Denote $j$ the $\mu_{p^\infty}\otimes \CO_F^*$ level structure induced by $x_0(1)$ and let $dt/t$ be the standard $\CO_{F}\otimes W$ basis of invariant differentials on $\wh{\BG}_m\otimes \CO_F^*$. Denote $\CW$ the $p$-adic completion of $W$. Let $\Omega_\infty\in (\BC^\times)^\Sigma$ and $\Omega_p\in (\CO_F\otimes \CW)^\times\subset (\ov{\BZ}_p^\times)^\Sigma$ such that $\omega=\Omega_pj_*dt/t$ and $\omega=\Omega_\infty dz$.
Write $x_n(a)=(y_n(a),j_a)$, where $y_n(a)$ is the CM point obtained from $x_n(a)$ by forgetting the level structure at $p$ and $j_a$ is the $\mu_{p^\infty}\otimes\CO_F^*$ structure in $x_n(a)$.\label{CMp}
\begin{prop}\label{p adic}\
We have

\[\frac{1}{(\Omega_p a_{\Sigma_p^c})^{k\Sigma+2\kappa}}\theta^\kappa\wh{\BE}_{\lambda,k}(y_n(a),j_{a,*}(dt/t))=\frac{(2\pi i)^{k\Sigma+2\kappa}}{\Omega_\infty^{k\Sigma+2\kappa}}\delta_k^\kappa\BE_{\lambda,k}(x_n(a),2\pi i dz)\in\ov{\BZ}_p\cap\ov{\BQ}.\]

\end{prop}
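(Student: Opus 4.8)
\emph{Overall strategy.} The plan is to deduce Proposition~\ref{p adic} from two facts already available. First, that $\BE_{\lambda,k}$ is a geometric Hilbert modular form of weight $k\Sigma$ lying in $\CM_k(U_\lambda,\CO_{(p)})$, so that its image $\wh{\BE}_{\lambda,k}$ is a $p$-integral $p$-adic modular form with the same $q$-expansions; this was obtained just above from the $q$-expansion principle and the explicit Fourier coefficients. Second, Katz's comparison between the $p$-adic Maass--Shimura operator $\theta^\kappa$ on $\CV$ and the classical operator $\delta_k^\kappa$, recalled at the end of the integral theory subsection \cite{Ka}: on a $p$-ordinary CM point the two operators compute the same nearly holomorphic quantity. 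Granting these, the displayed identity is a bookkeeping of periods, and the membership in $\ov{\BZ}_p\cap\ov{\BQ}$ is forced, because the left-hand side is a $p$-adic-unit multiple of the value of a $p$-integral $p$-adic modular form at a $W$-integral point of the Igusa tower, while the right-hand side displays the same number as a normalised value of a nearly holomorphic form at a CM point, hence an algebraic number.

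\emph{The comparison and the periods.} Since the embedding $\wh{\cdot}$ of geometric into $p$-adic modular forms is $q$-expansion preserving, $\wh{\BE}_{\lambda,k}$ has $\CO_{(p)}$-integral $q$-expansion at each cusp $(\fa,\fb)$ with $(\fa\fb,Np)=1$. The operator $\theta^\kappa$ acts on such $q$-expansions by $\sum a_\beta q^\beta\mapsto\sum a_\beta\prod_{\sigma\in\Sigma}\sigma(\beta)^{\kappa_\sigma}q^\beta$; as the relevant $\beta$ lie in $N^{-1}\fa\fb$ with $N$ and $\fa\fb$ prime to $p$, the multipliers are $p$-integral, so $\theta^\kappa\wh{\BE}_{\lambda,k}$ is again a $p$-integral $p$-adic modular form of formal weight $k\Sigma+2\kappa$. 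By Proposition~\ref{pintCM} the CM point $x_n(a)=(y_n(a),j_a)$ spreads out to a $W$-point of the Igusa tower with good reduction, so $\theta^\kappa\wh{\BE}_{\lambda,k}(y_n(a),j_{a,*}(dt/t))\in\CW$. On the archimedean side, $\delta_k^\kappa\BE_{\lambda,k}$ is a nearly holomorphic form of weight $k\Sigma+2\kappa$, which one evaluates at the CM point through the canonical splitting of the Hodge filtration of $H^1_{\dR}$ afforded by the $\CO_K$-action; over $\BC$ this is the $C^\infty$ anti-holomorphic splitting built into $\delta_\sigma$, and at a $p$-ordinary CM point it agrees, by Katz, with the unit-root splitting underlying $\theta^\kappa$. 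Hence $\theta^\kappa\wh{\BE}_{\lambda,k}$ and $\delta_k^\kappa\BE_{\lambda,k}$ compute one and the same quantity at $x_n(a)$, once the differentials used are matched. Measuring the canonical $p$-adic differential $j_{a,*}(dt/t)$ and the complex differential $2\pi i\,dz$ against the fixed algebraic differential $\omega$ of $x_0(1)$, through $\omega=\Omega_p\,j_*(dt/t)$ and $\omega=\Omega_\infty\,dz$, and incorporating the rescaling of the canonical trivialization produced by moving the base point from $x_0(1)$ to $x_n(a)$ --- which, via the complex uniformization of $[\vartheta,\rho(a)\varsigma_{n,f}]$, the diagonal shape of $\varsigma_v^{-1}\rho(\cdot)\varsigma_v$ for $v\mid p$, and the polarization pairing, amounts to a factor governed by the $\Sigma_p^c$-component $a_{\Sigma_p^c}$ --- the weight-$(k\Sigma+2\kappa)$ homogeneity $f(A,c\,\omega')=\rho(c)^{-1}f(A,\omega')$ in the differential yields precisely
\[ \frac{1}{(\Omega_p a_{\Sigma_p^c})^{k\Sigma+2\kappa}}\,\theta^\kappa\wh{\BE}_{\lambda,k}(y_n(a),j_{a,*}(dt/t))=\frac{(2\pi i)^{k\Sigma+2\kappa}}{\Omega_\infty^{k\Sigma+2\kappa}}\,\delta_k^\kappa\BE_{\lambda,k}(x_n(a),2\pi i\,dz), \]
the asserted equality. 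Since $\Omega_p\in(\CO_F\otimes\CW)^\times$ and $a_{\Sigma_p^c}$ are $p$-adic units, the left-hand side stays in $\CW$; being also algebraic by the equality just obtained, it lies in $\ov{\BZ}_p\cap\ov{\BQ}$.

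\emph{Main obstacle.} The cited Katz comparison handles the analytic--$p$-adic bridge, so the real work is the period bookkeeping, and in particular checking that moving the base point from $x_0(1)$ to $x_n(a)$ rescales the canonical differential by exactly $a_{\Sigma_p^c}$ --- and to the correct $\BZ[\Sigma]$-power $k\Sigma+2\kappa$ --- rather than by $a_{\Sigma_p}$ or some combination. This requires unwinding the complex uniformization and the level structure at $p$ of $x_n(a)$ alongside the Cartier-duality identification, furnished by the polarization, of the $\Sigma_p^c$-part of $A[p^\infty]$ with the dual of its $\Sigma_p$-part, so that the invariant differential dual to $dt/t$ on $\mu_{p^\infty}\otimes\CO_F^*$ scales by the $\Sigma_p^c$-component of $a$. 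Once that scaling is pinned down and Katz's comparison invoked, the integrality half follows automatically from the $p$-integrality of $\BE_{\lambda,k}$ established in the previous subsection together with the observation that $\Omega_p$ and $a_{\Sigma_p^c}$ are $p$-adic units.
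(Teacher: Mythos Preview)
Your proposal is correct and follows essentially the same route as the paper: invoke Katz's coincidence of the CM (Hodge) splitting and the unit-root splitting at $p$-ordinary CM points to equate $\theta^\kappa$ with $\delta_k^\kappa$ there, then do the period bookkeeping against a fixed algebraic differential $\omega$ on $x_0(1)$.

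The one place where the paper is more concrete than you is the derivation of the $a_{\Sigma_p^c}$ factor. Rather than appealing to Cartier duality and the polarization, the paper simply transports $\omega$ from $x_0(1)$ to $x_n(a)$ via the evident prime-to-$p$ isogeny (the two lattices have the same $\BZ_p$-completion $\CO_{K,p}$), which immediately gives $\Omega_{n,\infty}(a)=\Omega_\infty$; then, writing $a=a_p a^{(p)}$, one observes directly from the construction of the $\mu_{p^\infty}$-level structure that $j_a=j_{a^{(p)}}\circ a_{\Sigma_p^c}$, whence $j_{a^{(p)},*}(dt/t)=a_{\Sigma_p^c}\,j_{a,*}(dt/t)$ and $\Omega_{n,p}(a)=\Omega_p\,a_{\Sigma_p^c}$. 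This is shorter than your Cartier-duality route and makes transparent why it is the $\Sigma_p^c$-component (not $\Sigma_p$) that appears: the diagonalisation $\varsigma_v^{-1}\rho(t)\varsigma_v=\mathrm{diag}(t_w,t_{\ov w})$ with $w\in\Sigma_p$ means the $(0,*)$-slot---through which the $\mu$-level structure is defined---carries the $\ov w$-coordinate. Your integrality argument at the end is exactly as in the paper.
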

\begin{proof}
Let $\omega_n(a)$ be any $\CO_{F}\otimes W$ basis of invariant differential of the abelian scheme $A_{n}(a)$ given by $x_n(a)$. Suppose that $\omega_n(a)=\Omega_{n,p}(a)j_{a,*}dt/t$ and $\omega_n(a)=\Omega_{n,\infty}(a)dz$. The CM splitting and Hodge splitting of the de Rham cohomology of $p$ ordinary CM abelian scheme coincide \cite[(2.6.3)]{Ka}, thus
 \[\frac{1}{\Omega_{n,p}(a) ^{k\Sigma+2\kappa}}\theta^\kappa\wh{\BE}_{\lambda,k}(y_n(a),j_{a,*}(dt/t))=\frac{(2\pi i)^{k\Sigma+2\kappa}}{\Omega_{n,\infty}^{k\Sigma+2\kappa}(a)}\delta_k^\kappa\BE_{\lambda,k}(x_n(a),2\pi i dz).\]
Now fix an invariant differential $\omega$ of the abelian scheme in the data $x_0(1)/W$ as before. Denoted by $\BC^\Sigma/\CL_{[\vartheta,\rho(a)\varsigma_{n,f}]}$ the complex abelian variety given by the point $x_n(a)$ as in complex uniformization, then \[\CL_{[\vartheta,\rho(a)\varsigma_{n,f}]}\otimes \BZ_p=\CL_{[\vartheta,\varsigma_{0,f}]}\otimes \BZ_p=\CO_{K,p}.\] We now choose $\omega_n(a)$ be the invariant differential on $A_n(a)_{/\BC}\simeq\BC^\Sigma/\CL_{[\vartheta,\rho(a)\varsigma_{n,f}]}$ such that its pull back under the natural quotient map \[\BC^\Sigma/\CL_{[\vartheta,\varsigma_{0,f}]}\cap \CL_{[\vartheta,\rho(a)\varsigma_{n,f}]}\ra \BC^{\Sigma}/\CL_{[\vartheta,\rho(a)\varsigma_{n,f}]}\] equals to the pull back of $\omega$ under the quotient map \[\BC^\Sigma/\CL_{[\vartheta,\varsigma_{0,f}]}\cap \CL_{[\vartheta,\rho(a)\varsigma_{n,f}]}\ra \BC^{\Sigma}/\CL_{[\vartheta,\varsigma_{0,f}]}.\] Then $\omega_n(a)$ is a $\CO_{F}\otimes W$ basis of invariant different of $A_n(a)_{/W}$ and $\Omega_{n,\infty}(a)=\Omega_\infty$. Furthermore we have $\omega_n(a)=\Omega_\infty dz$ and $\omega_n(a)=\Omega_pa_{\Sigma_p^c}j_{a,*}dt/t$. To see the last equality, write $a=a_pa^{(p)}$, then $\omega_n(a^{(p)})=\Omega_pj_{a^{(p),*}}dt/t$. As $j_{a}=j_{a^{(p)}}\circ a_{\Sigma_p^c}$,
\[j_{a^{(p)},*}dt/t=a_{\Sigma_p^c}j_{a,*}dt/t.\]
\end{proof}

We have the following relation between Fourier coefficients:
\[\alpha_\beta(\theta^\kappa\wh{\BE}_{\lambda,k},\fc)=\beta^\kappa \alpha_\beta(\wh{\BE}_{\lambda,k},\fc)\] for cusps $\Big[\infty,\begin{pmatrix} 1 & 0 \\ 0 & \fc^{-1}\end{pmatrix}\Big]$ with $\BA_{K}^{\times,(S)}$.

Assume $\CX$ has infinity type $k\Sigma+\kappa$, $k\in \BZ_{>0}$, $\kappa\in \BZ_{\geq_0}[\Sigma]$ and $\lambda\in \CX$ fixed.

Denoted by $\Cl_n':=\Cl'_{\fl^n}$ and $\Cl_\infty'=\varprojlim_{n}\Cl_n'$. Define the measure on $\wh{\Gamma}$ to be
\[\int_{\Cl_\infty'}\varepsilon d \phi_\lambda :=a_\fl^{-n}\sum_{[a]_n\in\Cl_n'}\varepsilon(a)\wh{\chi}^{(p)}(a)(\wh{\lambda}_{\Sigma_p^c}\wh{\lambda}_{\Sigma_p}^{-1})(a_{\Sigma_p^c})\theta^{\kappa}\wh{\BE}_{\lambda,k} (x_n(a)),\quad a\in \BA_{K,f}^{\times,(p)}\cdot\CO_{K,p}^\times, n\gg0,\] here $a_\fl$ is the $U(\fl)$ eigen value of $E_{\lambda,k}$ and $[a]_n$ is the equivalent class of $a$ in $\Cl_n'$. Note that when choosing $a\in \BA_{K,f}^{\times,(p)}$, the definition has a simpler form
\[\int_{\Cl_\infty'}\varepsilon d \phi_\lambda :=a_\fl^{-n}\sum_{[a]_n\in\Cl_n'}\varepsilon(a)\wh{\chi}^{(p)}(a)\theta^{\kappa}\wh{\BE}_{\lambda,k} (x_n(a)),\quad a\in \BA_{K,f}^{\times,(p)}, n\gg0.\]

\begin{thm}\label{measure}There exists a $p$-integral measure on $\Cl_\infty'$ such that for any finite order $\varepsilon$ on $\Cl_\infty'$,
\[\frac{1}{\Omega_p^{k\Sigma+2\kappa}}\int_{\Cl_\infty'}\varepsilon d \phi_\lambda=\prod_{w\in S_2}Z(0,\lambda\varepsilon_{w},\wh{1_{\CO_{K,{w}}^\times}})\prod_{w\in S_1}G(\lambda_w)\frac{\pi^\kappa\Gamma_{\Sigma}(k\Sigma+\kappa)}{\sqrt{|D_F|_\BR}\Im(\vartheta)^\kappa\Omega_\infty^{k\Sigma+2\kappa}} L_f^{(S_{\text{split}, 2}\fl)}(0,\varepsilon\lambda)\in\ov{\BZ}_p\cap\ov{\BQ} .\]
\end{thm}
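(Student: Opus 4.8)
The plan is to assemble the measure from the Eisenstein measure defined just above the statement, and to evaluate it at a finite order character $\varepsilon$ via the explicit Waldspurger formula (Corollary \ref{wald}) combined with the comparison between the $p$-adic and archimedean CM periods (Proposition \ref{p adic}). First I would observe that the $p$-integrality of the measure is already essentially in hand: by the $q$-expansion principle (Proposition \ref{q-exp}) and the local Fourier coefficient computations in Section \ref{add}, $\BE_{\lambda,k}\in\CM_k(U_\lambda,\CO_{(p)})$, so its image $\wh{\BE}_{\lambda,k}$ under $\wh{\cdot}$ is a $p$-adic modular form with coefficients in $\CO_{(p)}\subset\ov{\BZ}_p\cap\ov{\BQ}$; applying the $p$-adic Maass--Shimura operator $\theta^\kappa$ preserves this integrality since it acts by $a_\beta\mapsto\sigma(\beta)a_\beta$ on $q$-expansions and $\sigma(\beta)\in\ov{\BZ}_p$. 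Evaluating a $p$-adic modular form with $\ov{\BZ}_p\cap\ov{\BQ}$-coefficients at the integral CM point $x_n(a)$ (which exists and has good reduction over $W=\CO_{K',(\fp')}$ by Proposition \ref{pintCM}) lands in $\CW$; multiplying by the $p$-adic unit $\wh{\chi}^{(p)}(a)(\wh{\lambda}_{\Sigma_p^c}\wh{\lambda}_{\Sigma_p}^{-1})(a_{\Sigma_p^c})$ and the $U(\fl)$-eigenvalue $a_\fl^{-n}$ keeps us $p$-integral. The independence of $n$ for $n\gg 0$ (needed for the sums to define a measure) follows from Remark \ref{CM3}: the test vector at $\fl$ is independent of $n_\fl$ once $n_\fl$ is large, so the $U(\fl)$-eigen normalization makes the partial sums a compatible system, hence a measure on $\Cl'_\infty$.

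Next I would compute $\int_{\Cl'_\infty}\varepsilon\,d\phi_\lambda$ for a fixed finite order $\varepsilon$. Choose $n$ large enough that $\varepsilon$ factors through $\Cl'_n$. By Proposition \ref{p adic}, for each $a$,
\[
\frac{1}{(\Omega_p a_{\Sigma_p^c})^{k\Sigma+2\kappa}}\theta^\kappa\wh{\BE}_{\lambda,k}(y_n(a),j_{a,*}(dt/t))
=\frac{(2\pi i)^{k\Sigma+2\kappa}}{\Omega_\infty^{k\Sigma+2\kappa}}\delta_k^\kappa\BE_{\lambda,k}(x_n(a),2\pi i\,dz).
\]
Substituting this into the definition of the measure, the factor $(\wh{\lambda}_{\Sigma_p^c}\wh{\lambda}_{\Sigma_p}^{-1})(a_{\Sigma_p^c})$ is designed precisely to cancel the $a_{\Sigma_p^c}^{k\Sigma+2\kappa}$ discrepancy between the $p$-adic avatar normalization and the complex one (using that $\lambda$ has infinite type $k\Sigma+\kappa(1-c)$, so the component at $\Sigma_p^c$ contributes $k\Sigma+2\kappa$ after accounting for the $(1-c)$-twist), leaving
\[
\frac{1}{\Omega_p^{k\Sigma+2\kappa}}\int_{\Cl'_\infty}\varepsilon\,d\phi_\lambda
= \frac{(2\pi i)^{k\Sigma+2\kappa}}{\Omega_\infty^{k\Sigma+2\kappa}}\,a_\fl^{-n}\sum_{[a]_n\in\Cl'_n}\varepsilon(a)\chi^{(p)}(a)\,\delta_k^\kappa\BE_{\lambda,k}(x_n(a),2\pi i\,dz).
\]
Now I would unwind $\BE_{\lambda,k}=\frac{\Gamma_\Sigma(k\Sigma)}{\sqrt{|D_F|_\BR}(2\pi i)^{k\Sigma}}E_{\lambda,k}$ and recognize the sum over $\Cl'_n$ — once the archimedean places are evaluated at the CM point $\tau_0=\vartheta$ and the correct $\rho,\varsigma$ are inserted — as the left-hand side of the explicit Waldspurger formula in Corollary \ref{wald} (with $\fc=\fl^n$, $S_0=\{\fl\}$), up to reconciling the normalization of $\chi$ versus $\varepsilon\chi$ and the passage from $\Cl'_{\fl^n}$ sums to the adelic period integral. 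Applying Corollary \ref{wald} replaces the sum by
\[
\frac{\Gamma_\Sigma(k\Sigma+\kappa)}{(-4\pi)^\kappa\Gamma_\Sigma(k\Sigma)\Im(\tau_0)^\kappa}\,c_\fc 2^{r_\fc}\,L_f^{(S_0\sqcup S_2)}(0,\varepsilon\lambda)\prod_{w\in S_1}G(\lambda_w|\cdot|_w^s)\prod_{w\in S_2}Z(s,\lambda\varepsilon_w,\wh{1_{\CO_{K,w}^\times}\lambda_w})
\]
at $s=0$; since $\fc=\fl^n$ with $\Nm(\fl^n)$ large we have $c_\fc=1$, $r_\fc=0$. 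Collecting the powers of $\pi$, $2\pi i$, and $\Gamma_\Sigma$ and using Lemma \ref{CM2} to identify $\Im(\tau_0)=\Im(\vartheta)$ gives exactly the asserted product formula, with the Gauss sums at $S_1$ and the local zeta integrals at $S_2$ appearing as stated, and $L_f^{(S_{\text{split},2}\fl)}(0,\varepsilon\lambda)$ since $L_f^{(S_0\sqcup S_2)}$ differs from $L_f^{(S_{\text{split},2}\fl)}$ only by the removed Euler factor at $w_0\in S_2$ which is absorbed into $Z(0,\lambda\varepsilon_{w},\cdot)$, and the Euler factors outside $S$ are all present because the test vectors there were taken spherical.

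The main obstacle I anticipate is the bookkeeping of archimedean and $p$-adic normalization factors — tracking the precise power of $2\pi i$, the $(-4\pi)^\kappa$ from iterated weight-raising, the $\Gamma_\Sigma$ factors, and especially verifying that the chosen twist $(\wh{\lambda}_{\Sigma_p^c}\wh{\lambda}_{\Sigma_p}^{-1})(a_{\Sigma_p^c})$ together with $\wh{\chi}^{(p)}(a)$ in the measure is exactly what is needed so that (i) the quantity is independent of the decomposition $a=a_pa^{(p)}$ and (ii) after applying Proposition \ref{p adic} the residual archimedean sum is literally the Waldspurger sum of Corollary \ref{wald} for the character $\varepsilon\lambda$. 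A secondary subtlety is making sure the $p$-adic and classical Maass--Shimura operators agree on the value at the $p$-ordinary CM point $x_n(a)$ — this is the content of the cited result of Katz used in the proof of Proposition \ref{p adic}, so I would invoke it rather than reprove it. Everything else — convergence, the reduction from the adelic integral to the finite sum over $\Cl'_n$, and the cusp/constant-term vanishing needed for $q$-expansion integrality — is already established in the preceding propositions.
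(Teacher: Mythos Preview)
Your proposal is correct and follows exactly the paper's approach: the paper's proof is the single line ``This follows directly from the first part of Proposition \ref{p adic} and Corollary \ref{wald},'' and your write-up is a careful unpacking of precisely these two ingredients together with the $q$-expansion/integrality input already established before the theorem. The bookkeeping concerns you flag (matching the twist $(\wh{\lambda}_{\Sigma_p^c}\wh{\lambda}_{\Sigma_p}^{-1})(a_{\Sigma_p^c})$ and the archimedean constants) are real but routine, and the paper silently absorbs them into the cited results.
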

\begin{proof}
This follows directly from the Proposition \ref{p adic} and Corollary \ref{wald}.
\end{proof}
\begin{remark}\ \begin{itemize}
\item[(1).] The measure does not depend on the choice of $n$.
\item[(2).] For $a\in\BA_{K,f}^{\times,(p)}\cdot\CO_{K,p}^\times$ the value $\wh{\chi}^{(p)}(a)(\wh{\lambda}_{\Sigma_p^c}\wh{\lambda}_{\Sigma_p}^{-1})(a_{\Sigma_p^c})\theta^{\kappa}\wh{\BE}_{\lambda,k} (x_n(a))$ only depends on the class $[a]_n$ of $a$ in $\Cl_n'$.
\item[(3).] Once we choose $S_{\text{split},2}$ such that the Frobenius associated to any prime of $K$ above $S_{\text{split},2}$ maps non-trivially into $\Gamma$, then the extra factor $\prod_{w\in S_2}Z(0,\lambda\varepsilon_{w},\wh{1_{\CO_{K,{w}}^\times}})L_{S_{\text{split},2}}(0,\epsilon\lambda)^{-1}$ is harmless for $p$-stability problem.
\end{itemize}
\end{remark}
\section{$p$-adic Properties of the Eisenstein Series}
\label{S5}
In this section, we study $p$-divisibility of Eisenstein series (cf.~Proposition~\ref{mod}) constructed in the \S\ref{S4}. In \S\ref{S51}--\S\ref{mtd}, we consider $p$-divisibility of local Fourier coefficients. In \S\ref{pprop}, we consider the $p$-divisibility of the global Fourier coefficients, where the (residual) theta dichotomy is crucially used to relate global $p$-divisibility to the sum of local $p$-divisibility in the (residually) self-dual case respectively.
\subsection{Preliminaries}\label{S51}
In this subsection, we consider the $p$-adic valuation of the following local integral, which appears as local Fourier coefficients of the Eisenstein series (cf.~\S\ref{add}).

Fix $v\nmid \fp$ a finite place of $F$ such that $K_v^\times$ is a field and $\theta\in K_v$ such that $\ov{\theta}=-\theta$.
Recall for $\lambda_v$ a character on $K_v^\times$, $\beta\in F_v$, \[A_\beta(\lambda_v^*,\theta):=\int_F (\lambda_v^*)^{-1}\left(x+\frac{1}{4\theta}\right)\left|x+\frac{1}{4\theta}\right|_{K_v^\times}^{-s-\frac{1}{2}}\psi_v(-\beta x)d^0x\Big|_{s=0},\] here $d^0x$ is the self-dual Haar measure with respect to $\psi_v$ and recall $\lambda_v^*=\lambda_v|\cdot|_{K_v^\times}^{-1/2}$.

In the following lemma, let $F/\BQ_v$ be a local field with uniformizer $\varpi$, $\psi$ is an additive character on $F$, and $dx$ the Haar measure on $F$ that is self-dual with respect to $\psi$. 
\begin{lem}\label{pe00}
Suppose that $v$ is prime to $p$. Let $\phi:F\ra \CO_{\BC_p}$ be a locally constant function such that
\begin{itemize}
\item [(i)] $\phi(x_0)=0$ at some point $x_0\in F$;

\item [(ii)] For any $\eta\in F^\times$, there exists $n\gg 0$ such that for any $m\leq -n$,\[\int_{\varpi^m\CO_F^\times}\phi(x)\psi(\eta x)dx=0.\]\end{itemize}
Let \[\mu(\phi):=\inf_{x\in F}\ord_p(\phi(x)),\] and \[A_\eta(\phi):=\int_{F}\phi(x)\psi(\eta x)dx\] be the $\eta$-th Fourier coefficient of $\phi$ for $\eta\in F^\times$, then \[\mu(\phi)=\inf_{\eta\in F^\times}\ord_p(A_\eta(\phi))\] and there exists a $\eta\in F^\times$ such that \[\mu(\phi)=\ord_p A_\eta(\phi)\] if $\mu(\phi)\neq \infty$.
\end{lem}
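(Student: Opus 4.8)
The plan is to reduce everything to a Fourier-analytic dichotomy on $F$ and then use local constancy plus hypothesis (2) to control the relevant integrals. First I would observe that $A_\eta(\phi)$ makes sense as a finite integral: by local constancy $\phi$ is locally constant, and hypothesis (2) guarantees that for each fixed $\eta$ the contributions from $\varpi^m\CO_F^\times$ vanish for $m$ sufficiently negative, so the integral $\int_F \phi(x)\psi(\eta x)\,dx$ is actually a finite sum of integrals over annuli; in particular $A_\eta(\phi) \in \CO_{\BC_p}$ since $\phi$ is $\CO_{\BC_p}$-valued and the measure of each annulus is rational (and $p$ is prime to $v$, so $|\varpi|_v$ is a $p$-adic unit). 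This already gives $\inf_{\eta} \ord_p(A_\eta(\phi)) \geq \mu(\phi)$, which is the easy inequality: each $A_\eta(\phi)$ is a "$\BZ_{(p)}$-combination" (in fact an integral against a $p$-adically unital measure) of values of $\phi$, hence has $p$-valuation at least $\mu(\phi)$.

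For the reverse inequality $\mu(\phi) \geq \inf_\eta \ord_p(A_\eta(\phi))$ — equivalently, exhibiting $\eta$ with $\ord_p(A_\eta(\phi)) \leq \mu(\phi)$, and in fact $=\mu(\phi)$ — I would argue as follows. Pick $x_1 \in F$ with $\ord_p(\phi(x_1)) = \mu(\phi)$ (assume $\mu(\phi) \neq \infty$). By local constancy, $\phi$ is constant on $x_1 + \varpi^N\CO_F$ for some large $N$, with value $\phi(x_1)$ of valuation exactly $\mu(\phi)$. Now consider the finite-level average: for a character-sum / Fourier argument, look at $\sum_{\eta} A_\eta(\phi)\,\psi(-\eta x_1)$ over $\eta$ ranging in $\varpi^{-N}\CO_F/\CO_F$ (a Fourier inversion at level $N$). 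This recovers, up to a nonzero $p$-adic unit constant (the measure factor, a power of $|\varpi|_v$, which is a $p$-adic unit), the average of $\phi$ over the ball $x_1 + \varpi^N\CO_F$, which equals $\phi(x_1)$ exactly by local constancy — hence has valuation exactly $\mu(\phi)$. Since the sum of finitely many elements of $\CO_{\BC_p}$ has valuation $\geq$ the minimum of their valuations, some term $A_\eta(\phi)\psi(-\eta x_1)$ — and hence some $A_\eta(\phi)$, as $\psi$ is unit-valued — has $\ord_p(A_\eta(\phi)) \leq \mu(\phi)$. Combined with the easy inequality this forces equality $\ord_p(A_\eta(\phi)) = \mu(\phi)$ for that $\eta$, proving both claims at once.

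The one subtlety — and the main obstacle — is the interchange between the infinite Fourier-inversion sum and hypothesis (2): I need to know that the level-$N$ Fourier inversion genuinely reconstructs the ball-average of $\phi$, which requires that only finitely many $\eta$ (those in a bounded fractional ideal) contribute, i.e. that $A_\eta(\phi)$ vanishes for $\eta$ of sufficiently large denominator. This is \emph{not} automatic from local constancy of $\phi$ alone over a non-compact $F$; it is exactly what hypotheses (1) and (2) are engineered to provide — condition (2) kills the large-$|x|$ tails after twisting, and together with local constancy near each point it makes $x \mapsto \phi(x)\psi(\eta x)$ integrable with the integral supported (in the $\eta$-variable, after inversion) on a fixed lattice. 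I would make this precise by first reducing, using (2), to the statement that for each $\eta$ the integral $A_\eta(\phi)$ only depends on $\phi$ restricted to a fixed compact set $\varpi^{-M}\CO_F$ (with $M$ depending on $\eta$ but bounded as $\eta$ ranges over a fixed annulus), and that for $\ord_v(\eta)$ large and negative $A_\eta(\phi) = 0$ outright — the latter because on $\varpi^{-M}\CO_F$ the function $\phi$ is a finite sum of characteristic functions of balls, and the $\psi(\eta\,\cdot)$-integral of $\mathbf{1}_{x_0+\varpi^r\CO_F}$ vanishes once $\ord_v(\eta) < -r$. Condition (1), the existence of a zero of $\phi$, is what makes the constant term harmless and lets the argument start from a genuine ball of vanishing; it also rules out the trivial counterexample $\phi \equiv c$ a unit constant (which has no nonzero Fourier coefficients). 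Once this finiteness is in hand, the Fourier-inversion identity is a routine orthogonality computation and the valuation comparison follows as sketched.
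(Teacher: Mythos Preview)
Your overall strategy and the easy inequality $\ord_p A_\eta(\phi)\ge\mu(\phi)$ are fine, and you correctly isolate the real difficulty as the constant-term/$\eta=0$ issue in the inversion. The gap is in your proposed resolution. You claim that hypothesis~(2) forces $A_\eta(\phi)=0$ once $\ord_v(\eta)$ is sufficiently negative, reasoning that $A_\eta$ only sees $\phi|_{\varpi^{-M}\CO_F}$ and that on a compact set $\phi$ has a uniform step size. The flaw is that the cut-off $M$ supplied by~(2) depends on $\eta$ with no uniform bound: as $|\eta|_v\to\infty$ you may be forced onto larger and larger compacta where the step size of $\phi$ shrinks. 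Concretely, take $\phi$ supported on $\bigcup_{m\ge1}\varpi^{-m}\CO_F^\times$ with $\phi(x)=\psi(-\varpi^{-2m}x)$ on the $m$-th annulus. This $\phi$ is locally constant, $\CO_{\BC_p}$-valued, vanishes on $\CO_F$ so~(1) holds, and one checks~(2) directly; yet $A_{\varpi^{-2m_0}}(\phi)=q^{m_0}(1-q^{-1})$ is a $p$-adic unit for every $m_0\ge1$. So the $A_\eta$ are \emph{not} supported in any bounded fractional ideal, and your level-$N$ inversion cannot be reduced to a finite sum by this route.

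The paper avoids ever summing over a set containing $\eta=0$. It integrates $A_\eta(\phi)\psi(\eta y)$ over a single annulus $\eta\in\varpi^m\CO_F^\times$ and computes this as a telescope $a_m(y)-a_{m+1}(y)$, where $a_m(y):=\int_{y+\varpi^{-m}\CO_F}\phi$. If $p^t\mid A_\eta$ for all $\eta\in F^\times$ this yields $a_m(y)\equiv a_{m+1}(y)\pmod{p^t}$ for every $m,y$; since $a_{m'}$ is constant on balls of radius growing with $m'$, the residue of $a_m(y)$ is independent of both $m$ and $y$. Hypothesis~(1) then plays exactly the anchoring role you intuited: for $m\ll0$ the ball about $x_0$ is small enough that $a_m(x_0)=0$, pinning the common residue to~$0$, whence $\phi(y)\equiv0\pmod{p^t}$ (the volume factors are powers of $q$, hence $p$-adic units). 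Your inversion-at-a-point can in fact be repaired along these lines --- the unwanted term $\int_{\varpi^{-M}\CO_F}\phi$ turns out to stabilise in $M$ and is killed by evaluating at $x_0$ --- but that stabilisation is precisely the paper's annulus-telescope, not the vanishing of $A_\eta$ for large $|\eta|_v$.
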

\begin{proof}
It is sufficient to prove that for any rational number $t$, $p^t|\phi(x)$ for all $x$ is equivalent to $p^t|A_\eta(\phi)$ for all $\eta\in F^\times$.

Assume that $p^t|\phi(x)$ for all $x\in F$. By the second assumption, $A_\eta(\phi)$ is well-defined and the integral is in fact a finite sum, thus $p^t|A_\eta(\phi)$ for all $\eta\in F^\times$.

Now assume $p^t|A_\eta(\phi)$ for all $\eta\in F^\times$. Let $m\in \BZ$, consider the following:
\[\begin{aligned}
&\int_{\varpi^m\CO_F^\times}A_\eta(\phi)\psi(\eta y)d\eta\\
&=\int_{F}\phi(x)\int_{\varpi^m\CO_F}\psi(\eta (x-y))d\eta dx-\int_{F}\phi(x)\int_{\varpi^{m+1}\CO_F}\psi(\eta (x-y))d\eta dx\\
&=\int_{y+\varpi^{-m}\CO_F}\phi(x)dx-\int_{y+\varpi^{-m-1}\CO_F}\phi(x)dx.
\end{aligned}\]
Let $a_m(y)=\int_{y+\varpi^{-m}\CO_F}\phi(x)dx$, then \[a_m(y)\equiv a_{m+1}(y)\pmod{p^t}\] for all $m$. Note that $a_m(y)$ is constant on $y+\varpi^{-m}\CO_F$, thus $a_m(y)\pmod{p^t}$ is constant and does not depend on $y$. Now for $y=x_0$, $a_m(x_0)\equiv 0\pmod{p^t}$ for $m$ sufficiently small, thus $p^t|\phi(y)$ for all $y$.
\end{proof}
\begin{cor}\label{pe01}
For $K_v/F_v$ a field extension,
\[\inf_{\beta\in F_v}\ord_p(A_\beta(\lambda_v^*,\theta))=\mu_p(\lambda_v):=\inf_{x\in K_v^\times}\ord_p(\lambda_v(x)-1).\]
\end{cor}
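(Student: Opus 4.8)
The plan is to apply Lemma \ref{pe00} to the integrand defining $A_\beta(\lambda_v^*,\theta)$ and then translate the conclusion into a statement about the values of $\lambda_v$. First I would unwind the definition: since $\lambda_v^*=\lambda_v|\cdot|_{K_v}^{-1/2}$, the integrand at $s=0$ is
\[
\phi(x):=(\lambda_v^*)^{-1}\!\left(x+\tfrac{1}{4\theta}\right)\Big|x+\tfrac{1}{4\theta}\Big|_{K_v}^{-1/2}=\lambda_v^{-1}\!\left(x+\tfrac{1}{4\theta}\right),
\]
a locally constant $\CO_{\BC_p}$-valued function on $F_v$ which never vanishes, as $\tfrac{1}{4\theta}\notin F_v$ (because $\overline{\theta}=-\theta$), and $A_\beta(\lambda_v^*,\theta)=\int_{F_v}\phi(x)\psi_v(-\beta x)\,dx$. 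For $\beta\in F_v^\times$ this integral is in fact a finite sum, and $\int_{F_v}\psi_v(-\beta x)\,dx=0$, so $A_\beta$ is unchanged if $\phi$ is replaced by $\phi-\phi(0)$. Hence I would work with
\[
\phi_0(x):=\phi(x)-\phi(0)=\lambda_v^{-1}\!\left(\tfrac{1}{4\theta}\right)\big(\lambda_v^{-1}(1+4\theta x)-1\big),
\]
which vanishes at $x=0$; and since $v\nmid p$ the values of $\lambda_v$ are $p$-adic units, so $\lambda_v^{-1}(\tfrac1{4\theta})\in\CO_{\BC_p}^\times$ and $\ord_p(\lambda_v^{-1}(z)-1)=\ord_p(\lambda_v(z)-1)$.

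Next I would verify the two hypotheses of Lemma \ref{pe00} for $\phi_0$: (1) is the vanishing just noted; (2) asks that $\int_{\varpi_v^m\CO_{F,v}^\times}\phi_0(x)\psi_v(\eta x)\,dx=0$ for $m\ll 0$, and this is routine — on such a large shell $1+4\theta x=(4\theta x)\big(1+(4\theta x)^{-1}\big)$ with $(4\theta x)^{-1}$ small, so $\phi_0(x)$ agrees, up to the constant $\lambda_v^{-1}(\tfrac1{4\theta})$, with $\lambda_v^{-1}(4\theta)\lambda_v^{-1}(x)-1$, and the resulting integrals $\int_{\varpi_v^m\CO_{F,v}^\times}\lambda_v^{-1}(x)\psi_v(\eta x)\,dx$ and $\int_{\varpi_v^m\CO_{F,v}^\times}\psi_v(\eta x)\,dx$ both vanish once the conductor of $\psi_v(\eta\varpi_v^m\,\cdot\,)$ exceeds that of $\lambda_v^{-1}|_{\CO_{F,v}^\times}$. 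Lemma \ref{pe00} then yields
\[
\inf_{\beta\in F_v^\times}\ord_p\big(A_\beta(\lambda_v^*,\theta)\big)=\mu(\phi_0)=\inf_{x\in F_v}\ord_p\big(\lambda_v(1+4\theta x)-1\big),
\]
the value at $\beta=0$ playing no role.

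It remains to identify the right-hand side with $\mu_p(\lambda_v)=\inf_{z\in K_v^\times}\ord_p(\lambda_v(z)-1)$. The inequality ``$\ge$'' is immediate since $1+4\theta x\in K_v^\times$. For ``$\le$'' I would use that, for $p$-adic units $a,b$, $\ord_p(ab-1)\ge\min(\ord_p(a-1),\ord_p(b-1))$ and $\ord_p(a^{-1}-1)=\ord_p(a-1)$; consequently, if $S\subseteq K_v^\times$ generates a subgroup $H$, then $\inf_{z\in H}\ord_p(\lambda_v(z)-1)=\inf_{z\in S}\ord_p(\lambda_v(z)-1)$. Apply this to $S=\{1+4\theta x:x\in F_v\}=1+K_v^{\tr=0}$ (using $4\theta F_v=K_v^{\tr=0}$). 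The Cayley transform $w\mapsto\frac{1+w}{1-w}=(1+w)(1-w)^{-1}$ maps $K_v^{\tr=0}$ bijectively onto the norm-one subgroup $K_v^{1}$, so $K_v^{1}\subseteq H$; moreover $N(1+w)=(1+w)(1-w)=1-\theta^2x^2$ shows $N(H)\supseteq(F_v^\times)^2$, so through the norm isomorphism $K_v^\times/K_v^{1}\cong N(K_v^\times)$ the index $[K_v^\times:H]=[N(K_v^\times):N(H)]$ is a power of $2$. Now choose $z_0\in K_v^\times$ realizing $\mu_p(\lambda_v)$ (if $\lambda_v$ is trivial both sides are $\infty$; otherwise the infimum is attained). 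Assuming $p$ odd, $[K_v^\times:H]$ is prime to $p$, so $z_0^N\in H$ for some $N$ with $p\nmid N$, and
\[
\ord_p\big(\lambda_v(z_0)^{N}-1\big)=\ord_p\big(\lambda_v(z_0)-1\big)+\ord_p\big(1+\lambda_v(z_0)+\dots+\lambda_v(z_0)^{N-1}\big)=\mu_p(\lambda_v),
\]
since in the residually trivial case $\lambda_v(z_0)\equiv 1\pmod{\fm_p}$ makes the second factor $\equiv N\not\equiv 0\pmod{\fm_p}$, and the case $\mu_p(\lambda_v)=0$ is trivial. Hence $\inf_{z\in H}\ord_p(\lambda_v(z)-1)\le\mu_p(\lambda_v)$, giving the corollary.

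The main obstacle is the last paragraph: proving that the ``horocycle'' $1+K_v^{\tr=0}$ already sees the full invariant $\mu_p(\lambda_v)$, that is, that the subgroup it generates has index prime to $p$ in $K_v^\times$ and that passing to this subgroup does not decrease the $p$-adic distance of $\lambda_v$ to $1$. This is exactly where the hypothesis that $p$ be odd in the residually self-dual but not self-dual setting enters, and where the Cayley-transform/norm-group bookkeeping must be done with some care; Steps one through three are the purely formal reduction to Lemma \ref{pe00}.
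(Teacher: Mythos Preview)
Your reduction to Lemma \ref{pe00} is exactly what the paper has in mind (it gives no proof), and the first three steps are clean: subtract the constant so the lemma applies, check hypothesis (2) by the standard Gauss--sum vanishing, and arrive at
\[
\inf_{\beta\in F_v^\times}\ord_p\big(A_\beta(\lambda_v^*,\theta)\big)=\inf_{z\in H}\ord_p(\lambda_v(z)-1),\qquad H:=\langle 1+K_v^{\tr=0}\rangle.
\]

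The gap is in your last paragraph. Your justification that $N(H)\supseteq(F_v^\times)^2$ from the single formula $N(1+w)=1-\theta^2x^2$ is incomplete, and more importantly the detour through ``index a power of $2$'' plus ``$p$ odd'' is unnecessary: in fact $H=K_v^\times$. To see this, compute
\[
(1+y_1\theta)(1+y_2\theta)=(1+y_1y_2\theta^2)+(y_1+y_2)\theta,
\]
so whenever $1+y_1y_2\theta^2\neq0$ one may divide by $1+\tfrac{y_1+y_2}{1+y_1y_2\theta^2}\theta\in 1+K_v^{\tr=0}$ to get $1+y_1y_2\theta^2\in H$. Since $y_1y_2$ ranges over all of $F_v$, this already gives $F_v^\times\subseteq H$. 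Taking instead $y_1y_2=-1/\theta^2$ yields $(1+y_1\theta)(1+y_2\theta)=(y_1+y_2)\theta$ with $y_1+y_2\neq0$ (else $y_1^2=1/\theta^2$, impossible in $F_v$), so $\theta\in H$. Writing any $a+b\theta\in K_v^\times$ with $a\neq0$ as $a\,(1+\tfrac{b}{a}\theta)$, and $b\theta$ as $b\cdot\theta$, gives $H=K_v^\times$.

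Thus $\inf_{z\in H}\ord_p(\lambda_v(z)-1)=\mu_p(\lambda_v)$ on the nose, with no hypothesis on $p$; the parity caveat you flagged is not needed for this corollary.
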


\subsection{Theta dichotomy}\label{td}
In this subsection, suppose that $\lambda$ is self-dual, we recall results on the relation between the local root number and $p$-indivisibility of the local period $A_\beta(\lambda_v^*,\theta)$. It is crucial in the work of Hsieh \cite{Hs1} to study the $p$-divisibility of the Eisenstein series in the self-dual case.

It turns out that for a character $\lambda_v$ on $K_v^\times$, $A_\beta(\lambda_v^*,\theta)$ can be viewed as Whittaker functional of certain Siegel-Weil section on $U(1,1)$ and the theta dichotomy gives the following relation to the local root numbers (cf.~\cite[(6.10)~and~Cor.~8.3~(ii)]{hks}):

Let $\psi_{{K_v}}=\psi_v\circ\tr_{K_v/F_v}$. 
\begin{prop}\label{pe1}Let $\lambda_v$ be a character on $K_v^\times$ and let $\lambda^\vee(x)=\lambda(x^{c,-1})|\cdot|_{K_v^\times}$, where $c$ is the generator of $\Gal(K_v/F_v)$. Then
\[A_\beta((\lambda_v^\vee)^*,\theta)=\frac{\epsilon(1/2,\lambda_v^*,\psi_{K_v})}{\lambda_v^*(2\beta\theta)}A_\beta(\lambda_v^*,\theta).\]
\end{prop}
Note that the result does not depend on the choice of $\psi_v$.
\begin{cor}\label{pe2}\
 Suppose that $\lambda$ is self-dual, then for each finite place $v\in S_{\text{n-split}}$, if $A_{\beta}(\lambda_v^*,\theta)\neq 0$, then \[\epsilon(1/2,\lambda_v^*,\psi_{K_v})=\tau_{K_v/F_v}(\beta)\lambda_v^*(2\theta).\]
\end{cor}
In general, the local root number for self-dual character has the following properties (cf.~\cite[Prop.~3.7]{AT}):
\begin{prop}\label{pe3}Further choose $\theta$ such that exists $a\in F_v$ such that $\{1,\theta+a\}$ is a $\CO_{F,v}$-basis of $\CO_{K,v}$.
Suppose that $\lambda$ is self-dual, then 
\begin{enumerate}
\item $\epsilon({1}/{2},\lambda_v^*,\psi_{K_v})=\pm\lambda^*_v(2\theta)$ if $v$ is finite;
\item $\epsilon({1}/{2},\lambda_v^*,\psi_{K_v})=\lambda^*_v(2\theta)$ if $v$ split;
\item $\epsilon({1}/{2},\lambda_v^*,\psi_{K_v})=(-1)^{e_v+c_v}\lambda_v^*(2\theta)$ if $v$ inert. Here $e_v$ (resp. $c_v$) is the conductor of $\lambda_v^*$ (resp. $\psi_{v}$).
\end{enumerate}
\end{prop}
\begin{remark}Recall that if the infinity type of $\lambda$ is $\Sigma+\kappa(1-c)$ with $\kappa\in\BZ_{\geq 0}[\Sigma]$, then $\epsilon({1}/{2},\lambda_\sigma^*,\psi_{K_\sigma})=i^{1+2\kappa_\sigma}=\lambda_\sigma^*(2\theta)$ for $\sigma\in \Sigma$ and the standard character $\psi_\sigma$. 
\end{remark}
\subsection{Modulo $p$ theta dichotomy}\label{mtd}
In this subsection, we consider modulo $p$ analogue of the Corollary \ref{pe2} and Proposition \ref{pe3} in the case $\lambda$ is residually self-dual.

The main result is the following: Let $p$ be an odd prime. Let $v\nmid p$ be a place of $F$ such that $K_v$ is a field.

Recall we have $\inf_{\beta\in F_v}\ord_p(A_\beta(\lambda_v^*,\theta))=\mu_p(\lambda_v)$ by Corollary \ref{pe01}.

\begin{prop}\label{pe4}
Suppose that $\lambda_v$ is residually self-dual, then \[\epsilon(1/2,\lambda_v^*,\psi_{K_v})\equiv\pm \lambda_v^*(2\theta)\pmod{\fm_p}.\]Further more,
\begin{itemize}
\item [(1)] If $\mu_p(\lambda_v)=0$, then for $\beta\in F_v$, \[A_{\beta}(\lambda_v^*,\theta)\nequiv 0\pmod{\fm_p}\] implies that
\[\epsilon(1/2,\lambda_v^*,\psi_{K_v})\equiv\tau_{K_v/F_v}(\beta)\lambda_v^*(2\theta)\pmod{\fm_p}.\]
\item [(2)] Suppose that $\mu(\lambda_v)>0$.
\begin{itemize}
\item[(a)] If $\ord_p (\lambda_v^*(\varpi_v)+1)=\inf_{x\in\CO_{K,v}^\times}\ord_p(\lambda_v(x)-1)$, then for each $\delta\in\{\pm 1\}$, there exists $\beta_\delta\in F_v$ such that
\[\ord_p A_{\beta_\delta}(\lambda_v^*,\theta)=\mu_p(\lambda_v)\] and
\[\epsilon(1/2,\lambda_v^*,\psi_{K_v})\equiv \delta\cdot \tau_{K_v/F_v}(\beta_\delta)\lambda_v^*(2\theta)\pmod{\fm_p}.\]
\item[(b)] If $\ord_p (\lambda_v^*(\varpi_v)+1)>\inf_{x\in\CO_{K,v}^\times}\ord_p(\lambda_v(x)-1)$, then for $\beta\in F_v$,
\[\ord_p A_{\beta}(\lambda_v^*,\theta)=\mu_p(\lambda_v)\] implies that
\[\epsilon(1/2,\lambda_v^*,\psi_{K_v})\equiv \tau_{K_v/F_v}(\beta)\lambda_v^*(2\theta)\pmod{\fm_p}.\]
\item[(c)] If $\ord_p (\lambda_v^*(\varpi_v)+1)<\inf_{x\in\CO_{K,v}^\times}\ord_p(\lambda_v(x)-1)$, then for $\beta\in F_v$,
\[\ord_p A_{\beta}(\lambda_v^*,\theta)=\mu_p(\lambda_v)\] implies that
\[\epsilon(1/2,\lambda_v^*,\psi_{K_v})\equiv -\tau_{K_v/F_v}(\beta)\lambda_v^*(2\theta)\pmod{\fm_p}.\]
\end{itemize}
\end{itemize}
\end{prop}
\begin{remark} 
If $p>2$ and $\mu_p(\lambda_v)>0$, we must have $K_v/F_v$ is unramified and $\lambda_v|_{F_v^\times}$ is unramified.
\end{remark}
We need the following properties of local epsilon factor:
\begin{prop}\label{pe5}let $\theta$ be satisfied that there exists $a\in F_v$ such that $\{1,\theta+a\}$ is a $\CO_{F,v}$-basis of $\CO_{K,v}$.
Assume $\lambda$ is residually self-dual , then for any inert place $v$ such that $\lambda|_{F_v^\times}$ is unramified,
\[\epsilon({1}/{2},\lambda_v^*,\psi_{K_v})\equiv(-1)^{e_v+c_v}\lambda_v^*(2\theta)\pmod{\fm_p}.\] Here $e_v$ (resp. $c_v$) is the conductor of $\lambda_v^*$ (resp. $\psi_{v}$).
\end{prop}
\begin{proof}
The proof is the same as the proof of Proposition 3.7 (iii) in \cite{AT}, except consider everything modulo $p$. The key is that the Gauss sum $S$ in \cite{AT} only depends on $\lambda_v|_{\CO_{K,v}^\times}$, so it is still a positive real number under the assumption $\lambda|_{F_v^\times}$ is unramified.
\end{proof}
\begin{proof}[Proof of Proposition \ref{pe4}]
We will see the first claim holds from the proof of the furthermore part.

Part (1) is a direct consequence of the Proposition \ref{pe1}.

Note that $\mu(\lambda_v)>0$ and $p>2$ implies that $K_v/F_v$ can not be ramified, otherwise, since $\lambda_v$ is residually self-dual, there exists a unit $u$ such that $\lambda_v(u)\equiv -1\pmod{\fm_p}$, thus $\lambda_v(u)-1$ is a $p$-adic unit whenever $p>2$, contradiction. In the following, we give proof of (2):
May assume the conductor of $\psi_v$ is trivial and let $\theta$ be satisfied that exists $a\in F_v$ such that $\{1,u=\theta+a\}$ is a $\CO_{F,v}$-basis of $\CO_{K,v}$. Let \[A'_\beta(\lambda_v^*,\theta)=\int_{F_v}\lambda_v^{-1}(x+u)\psi_v(-\beta x)d^0x,\] then up to a $p$-adic unit, $A_\beta(\lambda_v^*,\theta)$ equals to $A'_{\frac{\beta}{4\theta^2}}(\lambda_v^*,\theta)$. As $\ord_v(4\theta^2)$ is even, thus it is enough to show the result by replacing $A_\beta(\lambda_v^*,\theta)$ with $A'_{\beta}(\lambda_v^*,\theta)$.

For $v$ inert, if $\mu_{p}(\lambda_v)>0$ and $p>2$, then $p|(|\varpi_v|_v+1)$ and $\lambda_v|_{\CO_{F,v}^\times}=1$. We must have the conductor of $\lambda_v$ is $\leq 1$.

\begin{itemize}
\item[(i)] If $\ord_v(\beta)<-1$, then $A'_{\beta}(\lambda_v^*,\theta)=0$.
\item[(ii)] If $\ord_v(\beta)=-1$, \[A'_{\beta}(\lambda_v^*,\theta)=\sum_{u\in \CO_{F,v}/\varpi_v\CO_{F,v} } \lambda_v^{-1}(x+u)\psi_v(\beta\cdot x)|\varpi_v|_v\]and by harmonic analysis on $\CO_{F,v}/(\varpi_v)$, we have\[\inf_{\beta\in \varpi_v^{-1}\CO_{F,v}^\times}\ord_p(A'_{\beta}(\lambda_v^*,\theta))=\inf_{x\in\CO_{K,v}^\times}\ord_p((\lambda_v(x)-1)).\]
\item[(iii)] If $\ord_{v}(\beta)\geq 0$, \[A'_{\beta}(\lambda_v^*,\theta)=\sum_{j=1}^{\ord_v(\beta)}\lambda_v^*(\varpi_v^j)(1-|\varpi_v|_v)-|\varpi_v|_v\cdot\lambda_v^*(\pi_{v}^{\ord_v(\beta)+1})-|\varpi_v|_v.\]
\end{itemize}
Now let $t=\lambda_v^*(\varpi_v)+1$, for $\ord_v(\beta)\geq 0$, view $A'_{\beta}(\lambda_v^*,\theta)$ as the value of a polynomial at $t$.

The constant term is given by
\[\begin{cases*}
0,& if $\ord_v(\beta)$ is even,\\
-(1+|\varpi_v|_v), & if $\ord_v(\beta)$ is odd.\\
\end{cases*}\]
The linear term is given by
\[ \begin{cases*}
-|\varpi_v|_v-\frac{\ord_v(\beta)}{2}(1+|\varpi_v|_v),& if $\ord_v(\beta)$ is even,\\
\frac{\ord_v(\beta+1)}{2}(1+|\varpi_v|_v), & if $\ord_v(\beta)$ is odd.\\
\end{cases*}\]
Thus \[\inf_{\ord_v(\beta)\in 2\BZ_{\geq 0}}\ord_{p}(A'_{\beta}(\lambda_v^*,\theta))= \ord_{p}(t)\] and
 \[\inf_{\ord_v(\beta)\in 1+2\BZ_{\geq 0}}\ord_{p}(A'_{\beta}(\lambda_v^*,\theta))\geq \min\{\ord_p(|\varpi_v|_v+1),\ 2\ord_p(t)\}.\]
The lemma follows from the following observation \[\inf_{x\in\CO_{K,v}^\times}\ord_p((\lambda_v(x)-1))<\ord_p(|\varpi_v|_v+1)\] and Proposition \ref{pe5}.
\end{proof}

\subsection{$p$-adic properties of Eisenstein series}\label{pprop}
Let $\fc(1)=\det(\varsigma_{0,f})^{-1}$, then $\fc(1)\in\BA_F^{\times,(S)}$ and the CM point $x_0(1)$ lies in the geometric irreducible component that contains the cusp $\left(\infty,\begin{pmatrix}
1 & 0 \\
0 & \fc(1)^{-1}
\end{pmatrix}\right)$. For $a\in\BA_{K}^{\times,(S)}$, let $\fc(a)=\fc(1)N(a)^{-1}$, here $N$ is the norm from $\BA_K^\times$ to $\BA_F^\times$.

The following is the main result of this subsection on $p$-indivisibility of sufficiently many Fourier coefficients of the normalized $p$-integral Eisenstein series. Let $\BE_{\lambda,k}$ be as in \S\ref{4.4}.

The invariants $\mu(\CX)$, $\mu(\CX^\pm)$ appeared in the following proposition in separated cases will be given in \S\ref{541}--\S\ref{543} , which closely related to (residual) root number in the (residually) self-dual case.
\begin{prop}\label{mod}Suppose that $\lambda|_{F_\fl^\times}$ is unramified.
\begin{itemize}
\item[(1)] If $\CX$ is not residually self-dual, then there exists $\mu_p(\CX)\in\BQ_{\geq 0}$ such that 

\[p^{-\mu_p(\CX)}\BE_{\lambda,k}\] is a geometric modular form over $\ov{\BZ}_p$ and the set of $\beta\in \CO_{F,(p\fl)}$ satisfying the following properties is dense in $\CO_{F,\fl}$.
\begin{itemize}
\item [(i)] $\beta$ is prime to $\{p\}\cup\{v|\CD_{K/F}, v\nmid \fl, \text{and}\ v\notin S_{\text{n-split}}\}$;
\item [(ii)] $\alpha_\beta\left(p^{-\mu_p(\CX)}\BE_{\lambda,k},\begin{pmatrix} 1 & 0 \\ 0 & \fc(a)^{-1} \end{pmatrix}\right)\ \nequiv 0\pmod {\fm_p}$ for some $a\in\BA_K^{\times,(S)}$. 
\end{itemize}
\item[(2)] If $\CX$ is self-dual, then there exists $\mu_p(\CX^+)\in\BQ_{\geq 0}$ such that \[p^{-\mu_p(\CX^+)}\BE_{\lambda,k}\] is a geometric modular form over $\ov{\BZ}_p$ and
\begin{itemize}
\item[(a)] If $\fl$ is split and the global root number $\epsilon(\lambda)=1$, then the set of $\beta\in \CO_{F,(p\fl)}$ satisfying the following properties is dense in $\CO_{F,\fl}$.
\begin{itemize}
\item [(i)] $(\beta,p)=1$;
\item [(ii)] $\alpha_\beta\left(p^{-\mu_p(\CX^+)}\BE_{\lambda,k},\begin{pmatrix}1 & 0 \\0 & \fc(a)^{-1}\end{pmatrix}\right)\ \nequiv 0\pmod {\fm_p}$ for some $a\in\BA_K^{\times,(S)}$.
\end{itemize}
\item[(b)] If $\fl$ is inert, let $\delta^+\in\{0,1\}$ be such that for all $\lambda'\in\CX^+$, the conductor of $\lambda'_\fl$ is congruent to $\delta^+$ modulo $2$. Then the set of $\beta\in \CO_{F,(p\fl)}$ with $\ord_{\fl}(\beta)=\delta^+$ satisfying the following properties is dense in $\varpi_\fl^{\delta^+}\CO_{F,\fl}^\times$.

\begin{itemize}
\item [(i)] $(\beta,p)=1$;
\item [(ii)] $\alpha_\beta\left(p^{-\mu_p(\CX^+)}\BE_{\lambda,k},\begin{pmatrix}1 & 0 \\ 0 & \fc(a)^{-1}\end{pmatrix}\right)\ \nequiv 0\pmod {\fm_p}$ for some $a\in\BA_K^{\times,(S)}$.
\end{itemize}
\end{itemize}
\item[(3)] If $\CX$ is not self-dual but residually self-dual and $p>2$, then
\begin{itemize}
\item[(a)] If $\fl$ is split, then there exists $\mu_p(\CX)\in\BQ_{\geq 0}$ such that \[p^{-\mu_p(\CX)}\BE_{\lambda,k}\] is a geometric modular form over $\ov{\BZ}_p$ when restricting to irreducible components that contain cusps of the form \[\left[\infty,\begin{pmatrix}1 &0\\0& \fc(a)\end{pmatrix}\right]\] for $a\in \BA_{K}^{\times,(S)}$. The set of $\beta\in \CO_{F,(p\fl)}$ satisfying the following properties is dense in $\CO_{F,\fl}$.
 
\begin{itemize}
\item [(i)] $(\beta,p)=1$;
\item [(ii)] $\alpha_\beta\left(p^{-\mu_p(\CX)}\BE_{\lambda,k},\begin{pmatrix}1 & 0 \\ 0 & \fc(a)^{-1}\end{pmatrix}\right)\ \nequiv 0\pmod {\fm_p}$ for some $a\in\BA_K^{\times,(S)}$.
\end{itemize}
\item[(b)] If $\fl$ is inert, then there exists $\mu_p(\CX^\pm)\in\BQ_{\geq 0}$ such that \[p^{-\mu_p(\CX^m)}\BE_{\lambda,k}\] is a geometric modular form over $\ov{\BZ}_p$ where $m\in\{\pm\}$ is such that \[\mu_p(\CX^m)=\min\{\mu_p(\CX^+),\mu_p(\CX^-)\}.\] Let $\delta^m\in\{0,1\}$ be such that for all $\lambda'\in\CX^m$, the conductor of $\lambda'_\fl$ is congruent to $\delta$ modulo $2$. Then the set of $\beta\in \CO_{F,(p\fl)}$ with $\ord_{\fl}(\beta)=\delta^m$ satisfying the following properties is dense in $\varpi_\fl^{\delta^m}\CO_{F,\fl}^\times$.
\begin{itemize}
\item [(i)] $(\beta,p)=1$;
\item [(ii)] $\alpha_\beta\left(p^{-\mu_p(\CX^m)}\BE_{\lambda,k},\begin{pmatrix} 1 & 0 \\0 & \fc(a)^{-1}\end{pmatrix}\right)\ \nequiv 0\pmod {\fm_p}$ for some $a\in\BA_K^{\times,(S)}$.
\end{itemize}
\end{itemize}
\end{itemize}
\end{prop}

\label{muX}
In the following left subsections, we prove Proposition \ref{mod} in separated cases.
\subsubsection{The case $\lambda$ is not residually self-dual}\label{541}
\begin{lem}\label{ressf}Let $\CP$ be a finite set of primes of $F$ that contain all infinite places, all finite places $v|p$, and those places $v$ such that $\lambda_v$ is ramified. Then,
$\lambda$ is not residually self-dual if and only if there exists an inert prime $v\notin \CP$ and \[\lambda_v(\varpi_v)|\varpi_v|_v^{-1}\nequiv -1\pmod{\fm_p}.\]
\end{lem}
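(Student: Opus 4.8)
The plan is to characterize residual self-duality purely in terms of Satake parameters at unramified inert primes, using the Chebotarev-style density of Frobenius elements. Recall that $\lambda$ is residually self-dual means $\widehat{\lambda}|_{\BA_F^\times}\omega^{-1}\equiv\tau_{K/F}\pmod{\fm_p}$, i.e.\ the residual character $\overline{\widehat{\lambda}|_{\BA_F^\times}\omega^{-1}}$ equals $\tau_{K/F}$ as characters of $\Gal(\overline F/F)$ (or equivalently of $\BA_F^\times/F^\times$). Both sides are characters with values in $\overline{\BF}_p^\times$, and two such characters agree if and only if they agree on a density-one set of unramified Frobenius elements. The key translation is: at an inert prime $v\notin S_0$, $\tau_{K_v/F_v}(\varpi_v)=-1$, while the value of $\overline{\widehat{\lambda}|_{\BA_F^\times}\omega^{-1}}$ at $\varpi_v$ is, up to the Teichmüller normalization and the $p$-adic/complex avatar comparison, exactly $\overline{\lambda_v(\varpi_v)|\varpi_v|_v^{-1}}$ (the $|\varpi_v|_v^{-1}$ absorbing the half-integral twist and the $\omega^{-1}$ being invisible on a uniformizer away from $p$, since $\omega$ is ramified only at $p$). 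So the condition $\lambda_v(\varpi_v)|\varpi_v|_v^{-1}\equiv -1\pmod{\fm_p}$ is precisely the statement that the two residual characters agree at $v$.

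The steps, in order. First I would make precise the dictionary between the $p$-adic avatar $\widehat\lambda$ and the complex/arithmetic $\lambda$: for a finite place $v\nmid p$ that is unramified for $\lambda$, $\widehat\lambda(\varpi_v)$ and $\lambda_v(\varpi_v)$ differ only by the infinity-type contribution, which on the $F$-restriction is accounted for by the normalization; and restricted to $\BA_F^\times$, the character $\widehat{\lambda}|_{\BA_F^\times}\omega^{-1}$ evaluated on a uniformizer $\varpi_v$ with $v\nmid p$ is $\equiv \lambda_v(\varpi_v)|\varpi_v|_v^{-1}\pmod{\fm_p}$ because $\omega$ has conductor a power of $p$ hence is trivial on $\CO_{F,v}^\times$ and the unit idele $\varpi_v$ has trivial $p$-component. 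Second, note that $\tau_{K/F}$ is unramified at every $v\notin S_0$ that is either split or inert, with $\tau_{K/F}(\varpi_v)=+1$ if $v$ splits and $=-1$ if $v$ is inert. Third, the forward direction: if $\lambda$ is residually self-dual then $\overline{\widehat\lambda|_{\BA_F^\times}\omega^{-1}}=\tau_{K/F}$ everywhere, so at every inert $v\notin S_0$ we get $\lambda_v(\varpi_v)|\varpi_v|_v^{-1}\equiv -1\pmod{\fm_p}$ — in particular no such prime with the displayed inequality exists, giving the contrapositive. Fourth, the reverse direction is the substantive one: suppose $\lambda$ is \emph{not} residually self-dual, so $\eta:=\overline{\widehat\lambda|_{\BA_F^\times}\omega^{-1}}\cdot\tau_{K/F}^{-1}$ is a nontrivial finite-order character of $\Gal(\overline F/F)$ (it has values in $\mu_{p-1}\cdot\{\pm1\}\subset\overline{\BF}_p^\times$, finite order, unramified outside $S_0\cup\{p\}$). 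By Chebotarev, the set of primes $v\notin S_0\cup\{p\}$ with $\eta(\Frob_v)\neq 1$ has positive density; among these, the primes split in $K/F$ are exactly those where $\tau_{K/F}(\Frob_v)=1$ — but I must produce an \emph{inert} one. Since $\eta\neq 1$ and $\tau_{K/F}\neq 1$, the field cut out by $\eta$ and the field $K$ together give a biquadratic-or-larger situation; by Chebotarev applied to the compositum, there is positive density of $v$ with $\tau_{K/F}(\Frob_v)=-1$ (inert) and simultaneously $\eta(\Frob_v)\neq 1$, i.e.\ $\overline{\widehat\lambda|_{\BA_F^\times}\omega^{-1}}(\Frob_v)\neq\tau_{K/F}(\Frob_v)=-1$, which unwinds to $\lambda_v(\varpi_v)|\varpi_v|_v^{-1}\nequiv -1\pmod{\fm_p}$.

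The main obstacle is the last step: one needs that the nonvanishing locus $\{\eta(\Frob_v)\neq 1\}$ is not entirely contained in the split locus $\{\tau_{K/F}(\Frob_v)=1\}$. If $\eta$ were a power of $\tau_{K/F}$ this could fail, but $\tau_{K/F}$ has order $2$ and $\eta$ nontrivial: if $\eta=\tau_{K/F}$ then $\widehat\lambda|_{\BA_F^\times}\omega^{-1}\equiv 1\pmod{\fm_p}$, which one must rule out or handle separately — however in that case $\overline{\widehat\lambda|_{\BA_F^\times}\omega^{-1}}$ is trivial, so at inert $v$ it takes value $1\neq -1=\tau_{K/F}(\varpi_v)$, and the desired prime exists by Chebotarev on $K/F$ alone. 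In all remaining cases $\eta$ and $\tau_{K/F}$ generate a group of order $\geq 2$ in which $(\eta,\tau_{K/F})$ can be prescribed to be $(\ne 1, -1)$ on a positive-density set of Frobenii, by Chebotarev on the (at most biquadratic, or $\eta$-order-times-$2$) compositum. This gives the inert prime $v\notin S_0$ with $\lambda_v(\varpi_v)|\varpi_v|_v^{-1}\nequiv -1\pmod{\fm_p}$, completing the equivalence.
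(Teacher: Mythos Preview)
The paper states this lemma without proof, presumably regarding it as a routine consequence of the definition together with Chebotarev density. Your approach via Chebotarev is the natural one and is correct in outline, but your justification of the key local identity is muddled.

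You claim that $\omega^{-1}$ is ``invisible on a uniformizer away from $p$'' because $\omega$ is ramified only at $p$. This is wrong: unramified at $v$ means trivial on $\CO_{F,v}^\times$, not on $\varpi_v$. In fact, for $v \nmid p$ one has $\omega(\varpi_v) \equiv |\varpi_v|_v \pmod{\fm_p}$ (under the normalization implicit in the paper, chosen precisely so that residual self-duality is the mod-$p$ shadow of self-duality), hence $\omega^{-1}(\varpi_v) \equiv |\varpi_v|_v^{-1}$. Meanwhile, for $v \nmid p$ the $p$-adic avatar agrees with the complex character on the id\'{e}le $\varpi_v$ (which has trivial $p$-component): $\wh{\lambda}(\varpi_v) = \lambda_v(\varpi_v)$ exactly, with no ``infinity-type contribution''. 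Combining these gives
\[
\wh{\lambda}|_{\BA_F^\times}\,\omega^{-1}(\varpi_v)\ \equiv\ \lambda_v(\varpi_v)\,|\varpi_v|_v^{-1} \pmod{\fm_p},
\]
which is the formula you want, but for the opposite reason from what you wrote. (In the paper's setting $p$ is ordinary, so every prime of $F$ above $p$ is split and there are no inert $v\mid p$ to worry about.)

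The Chebotarev step is fine, and your case split $\eta = \tau_{K/F}$ versus $\eta \neq \tau_{K/F}$ is in fact unnecessary: for any nontrivial finite-order $\ov{\BF}_p^\times$-valued character $\eta$ on $\Gal(\ov{F}/F)$, the image of $\sigma \mapsto (\eta(\sigma), \tau_{K/F}(\sigma))$ must contain an element of the form $(\ne 1, -1)$. Indeed, if every $\sigma$ with $\tau_{K/F}(\sigma) = -1$ satisfied $\eta(\sigma) = 1$, then fixing one such $\sigma_0$ and multiplying shows $\eta(\sigma) = \eta(\sigma\sigma_0)\eta(\sigma_0)^{-1} = 1$ for every $\sigma$ with $\tau_{K/F}(\sigma)=1$ as well, forcing $\eta$ trivial. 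Chebotarev then produces infinitely many such primes, so one may avoid the finite set $S_0$.
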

Let \[\mu_p(\CX)=\sum_{v\in S_{\text{n-split}}}\mu_p(\lambda_v),\] where $S_{\text{n-split}}$ is the set of primes prime to $\fl$ such that $\lambda$ is ramified. 
\begin{proof}[Proof of Proposition \ref{mod}]

 By Corollary \ref{pe01}, explicit Fourier coefficients of $\BE_{\lambda,k}$ in \S\ref{4.4}, and $q$-expansion principle (cf.~Proposition~\ref{q-exp}), $p^{-\mu_p(\CX)}\BE_{\lambda,k}$ is defined over $\ov{\BZ}_p$.

For any $u\in\CO_{F,\fl}$ and $r>0$ an integer, need to find $\beta\in\CO_{F,(p\fl)}$ such that $\beta\equiv u\pmod {\fl^r}$, $(\beta,p)=1$, and exists $c(a)$ for some $a\in\BA_K^{\times,(S)}$ such that \[\alpha_\beta\left(p^{-\mu_p(\CX)}\BE_{\lambda,k},\begin{pmatrix} 1 & 0 \\0 & \fc(a)^{-1} \end{pmatrix}\right)\nequiv 0\pmod {\fm_p}.\]
Let $\eta\in\BA_F^\times$ such that
\begin{enumerate}
\item [(a)] For $v\in S_{\text{n-split}}$, $p^{-\mu_p(\lambda_v)}A_{\eta_v N(u_v)}(\lambda_v^*,\vartheta)$ is $p$-adic unit, where $u_v=\frac{2\vartheta}{\vartheta+\varpi_v^{-\delta_v}a_v}$ as in \S\ref{add};
\item [(b)] For $v=\fl$, $\eta_v\equiv u\pmod{ \fl^r}$;
\item [(c)] For $v\in S_{\text{split}}$, $\eta_v$ is a $v$ adic unit;
\item [(d)] $\eta_v=\fc(1)_v^{-1}$ at other places.
\end{enumerate}
If $\tau_{K/F}(\eta)=1$, then $\eta$ has the form $\eta=\beta N(a)^{-1}$ for some $\beta\in F^\times$ and $a\in\BA_{K}^\times$. May choose $\beta$ close enough to $\eta_v$ at the places in $S$ such that $\beta$ also satisfies the properties (a) to (c) of $\eta_v$ as above at the places in $S$ and $\beta$ is a unit at the places $\{v|\ v|\CD_{K/F},v\neq \fl, v\notin S_{\text{n-split}}\}$. Then the $\beta$-th coefficient of $p^{-\mu_p(\CX)}\BE_{\lambda,k}$ at the cusp $\left(\infty,\begin{pmatrix} 1 & 0 \\ 0 & \fc(a^{(S)})^{-1} \end{pmatrix}\right)$ is a $p$-adic unit.

If $\tau_{K/F}(\eta)=-1$, by the Lemma \ref{ressf}, there exists an inert place $v$ of $F$ not in $S$ such that \[\lambda_v(\varpi_v)|\varpi_v|_v^{-1}\nequiv -1\pmod{\fm_p}\] and $\fc(1)_v$ is a $v$-adic unit.
Then considering $\eta'=\eta\varpi_v$, by the same analysis as above, we prove the proposition.
 \end{proof}

 \subsubsection{The case $\lambda$ is self-dual}\label{542}
In this case, the assumption $\lambda|_{F_\fl^\times}$ is unramified implies $\fl$ is unramified in $K/F$.
 Let \[\mu_p(\CX^+)=\sum_{v\in S_{\text{n-split}}}\mu_p(\lambda_v).\] 
 \begin{proof}[Proof of Proposition \ref{mod}]
Same as in the non residually self-dual case, $p^{-\mu_p(\CX^+)}\BE_{\lambda,k}$ is defined over $\ov{\BZ}_p$.

 Let $u\in\CO_{F,\fl}$ (resp. $\fl^{\delta^+}\CO_{F,\fl}^\times$) if $\fl$ split (resp. $\fl$ inert). Denoted by $\psi_K=\psi\circ\tr_{\BA_K/\BA_F}$.
Let $r>0$ be an integer, and let $\eta\in\BA_{F,f}^\times$ such that 
\begin{enumerate}
\item [(a)] $p^{-\mu_p(\lambda_v)}A_{\eta_v N(u_v)}(\lambda_v^*,\vartheta)$ is $p$-adic unit for $v\in S_{\text{n-split}}$, where $u_v=\frac{2\vartheta}{\vartheta+\varpi_v^{-\delta_v}a_v}$ as in \S\ref{add};
\item [(b)] $\eta_\fl\equiv u\pmod{\fl^r}$ if $\fl$ is split and $\varpi_\fl^{-\delta^+}\eta_v\equiv \varpi_\fl^{-\delta^+}u\pmod{\fl^r}$ if $\fl$ is inert;
\item [(c)] $\epsilon(1/2, \lambda_v^*,\psi_{K_v})=\tau_{K/F}(\eta_v)\lambda_v^*(2\vartheta) $ for all $v$ if $\fl$ is split and $\epsilon(1/2, \lambda_v^*,\psi_{K_v})=\tau_{K/F}(\eta_v)\lambda_v^*(2\vartheta) $ for all $v\neq \fl$ if $\fl$ is inert;
\item [(d)] For $v\in S_{\text{split}}$, $\eta_v$ is a $v$-adic unit.
\end{enumerate}
By Corollary \ref{pe2} and Proposition \ref{pe3}, this can be down, and we can choose $\eta_v$ at those places prime to $S_0\sqcup S_{\text{n-split}}$ to be $\fc(1)_v^{-1}$.
If $\fl$ is split, by the assumption $\epsilon(\lambda)=1$
\[\tau_{K/F}(\eta)=1.\] If $\fl$ is inert, by the definition of $\delta^+$, $\vartheta$ and Proposition \ref{pe3},
$(-1)^{\delta^+}=\epsilon(\lambda)(-1)^{e_\fl}$ when $\fl$ inert, here $e_\fl$ is the conductor of $\lambda_\fl$.
Thus, we have $\eta=\beta N(a)^{-1}$ for some $a\in\BA_{K}^\times$ and a totally positive element $\beta\in F$.
We further choose $\beta$ close enough to $\eta$ such that $\beta$ and $\eta$ have the same properties at $v\in S$, $a$ is prime to $S$.

Then the $\beta$-th coefficient of $p^{-\mu_p(\CX^+)}\BE_{\lambda,k}$ at the cusp $\left(\infty,\begin{pmatrix} 1 & 0 \\ 0 & \fc(a^{(S)})^{-1}\end{pmatrix}\right)$ is a $p$-adic unit.
 \end{proof}
\subsubsection{The case $\lambda$ is residually self-dual but not self-dual}\label{543}
Assume $p>2$. In this case, the assumption $\lambda|_{F_\fl^\times}$ is unramified implies that $\fl$ is unramified.

\bigskip
{\bf The case $\fl$ is split}
\bigskip

 Let $\mu_p(\CX)$ be the following:
 \begin{itemize}
\item[(i)] \[\mu_p(\CX)=\sum_{v\in S_{\text{n-split}}}\mu_p(\lambda_v)=\sum_{\substack{\text{$v\in S_{\text{n-split}}$ and }\\ \text{$v$ is inert in $K/F$}}}\mu_p(\lambda_v),\] if one of the following holds:
\begin{itemize}
\item[(a)] There exists a $v\in S_{\text{n-split}}$ inert in $K/F$ with $\mu_p(\lambda_v)>0$ and \[\inf_{x\in\CO_{K,v}^\times}\ord_p(\lambda_v(x)-1)=\ord_p (\lambda_v^*(\varpi_v)+1),\]here $\varpi_v$ is any uniformizer $F_v$;
\item[(b)] Any $v\in S_{\text{n-split}}$ such that $v$ inert in $K/F$ with $\mu_p(\lambda_v)>0$, we have\[\ord_p (\lambda_v^*(\varpi_v)+1)\neq\inf_{x\in\CO_{K,v}^\times}\ord_p(\lambda_v(x)-1);\]And the residual root number $\ov{\epsilon}(\lambda)$ equals to \[\prod_{v\in S_{\text{n-split}}}(-1)^{\varepsilon_v},\] where $\varepsilon_v$ is given by \[\varepsilon_v=\begin{cases}
1, & \text{if $\mu_p(\lambda_v)>0$ and $\inf_{x\in\CO_{K,v}^\times}\ord_p(\lambda_v(x)-1)>\ord_p (\lambda_v^*(\varpi_v)+1)$},\\
0,& \text{else}.
 \end{cases}\]
\end{itemize}
\item[(ii)] Any $v\in S_{\text{n-split}}$ such that inert in $K/F$ with $\mu_p(\lambda_v)>0$, we have\[\ord_p (\lambda_v^*(\varpi_v)+1)\neq\inf_{x\in\CO_{K,v}^\times}\ord_p(\lambda_v(x)-1);\]And the residual root number equals to \[-\prod_{v\in S_{\text{n-split}}}(-1)^{\varepsilon_v}.\]Let \[\mu_p(\CX)=\min\{\mu_1,\mu_2\},\] here \[\mu_1=\sum_{\substack{\text{$v\in S_{\text{n-split}}$ and }\\ \text{$v$ is inert in $K/F$}}}\mu_p(\lambda_v)+\inf_{\substack{\text{$v$ is inert in $K/F$}\\ \text{ and $\lambda_v$ is unramified }}}\ord_p(\lambda_v^*(\varpi_v)+1),\] and
\[\mu_2=\min_{v\in S_\text{n-split}}\left\{\sum_{v'\in S_\text{n-split}, v\neq v' }\mu_p(\lambda_{v'})+\inf_{\substack{\beta\in F_v,\\ \tau_{K_v/F_v}(\beta)\equiv\frac{\varepsilon(1/2,\lambda_v^*,\psi_{K_v})}{\lambda_v^*(2\theta)}\cdot (-1)^{\epsilon_v+1}}}\ord_p(A_\beta(\lambda_v^*,\theta))\right\},\] here $\theta\in K_v^\times$ is any element such that $\ov{\theta}=-\theta$.
\end{itemize}
\begin{remark}
Since $p>2$, we have $\mu_p(\lambda_v)=0$ whenever $v$ is ramified in $K/F$.\end{remark}
\begin{prop}For each $a\in\BA_{K}^{\times,(S)}$, the restriction of $p^{-\mu_p(\CX)}\BE_{\lambda,k}$ to the geometric irreducible component that contain the cusp $\left(\infty,\begin{pmatrix}1 & 0\\0 & \fc(a)^{-1}\end{pmatrix}\right)$ is defined over $\ov{\BZ}_p$.
\end{prop}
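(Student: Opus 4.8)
The plan is to apply the $q$-expansion principle for geometric modular forms (Proposition \ref{q-exp}(2)), on the single geometric component containing the cusp $(\infty,\diag(1,\fc(a)^{-1}))$: the form $p^{-\mu_p(\CX)}\BE_{\lambda,k}$ is defined over $\ov{\BZ}_p$ on that component as soon as its $q$-expansion at that cusp has all coefficients in $\ov{\BZ}_p$. The constant term vanishes by Proposition \ref{assH}, whose hypotheses are guaranteed by our choice of $S_{\text{split},2}$, so I only need to bound $\ord_p\,\alpha_\beta(\BE_{\lambda,k},\diag(1,\fc(a)))$ for $0\neq\beta$. By the explicit expansion in \S\ref{add} this coefficient equals, up to a $p$-adic unit, $\mathbf{1}_{F_{>0}}(\beta)\prod_{v\text{ finite}}W^*_{\beta,v}$; if it is nonzero then $\beta$ is totally positive and, being attached to the cusp indexed by $\fc(a)=\fc(1)N(a)^{-1}$, may be written $\beta=\beta_0N(a)^{-1}$ with $\beta_0\in F^\times$, so $\tau_{K/F}(\beta)=1$.

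Next I would run through the local Whittaker factors. For every finite $v\notin S_{\text{n-split}}$ the formulas of \S\ref{add} give $W^*_{\beta,v}\in\ov{\BZ}_p$; moreover at an inert $v$ with $\lambda_v$ unramified and $\ord_v(\beta)$ odd, the geometric sum defining $W^*_{\beta,v}$ is divisible by $\lambda_v^*(\varpi_v)+1$, which has strictly positive $p$-valuation precisely because $\lambda$ is residually self-dual (so $\lambda_v^*(\varpi_v)\equiv-1\bmod\fm_p$ at all such $v$). For $v\in S_{\text{n-split}}$, $W^*_{\beta,v}=(\text{unit})\cdot A_{N(u_v)\beta}(\lambda_v^*,\vartheta)$, and Corollary \ref{pe01} gives $\ord_p A_{N(u_v)\beta}(\lambda_v^*,\vartheta)\geq\mu_p(\lambda_v)$. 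Multiplying, $\ord_p\,\alpha_\beta\geq\sum_{v\in S_{\text{n-split}}}\mu_p(\lambda_v)$, which is exactly $\mu_p(\CX)$ in cases (i)(a) and (i)(b) of the definition, so there the claim is immediate.

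The real content is case (ii), where $\mu_p(\CX)=\min\{\mu_1,\mu_2\}$ may strictly exceed $\sum_v\mu_p(\lambda_v)$, and I would argue by cases on $\beta$. If $\ord_v(\beta)$ is odd for some inert $v\notin S_{\text{n-split}}$, the two estimates above combine to $\ord_p\,\alpha_\beta\geq\sum_{v\in S_{\text{n-split}}}\mu_p(\lambda_v)+\inf_{v\text{ inert unram.}}\ord_p(\lambda_v^*(\varpi_v)+1)=\mu_1$. Otherwise these valuations are all even, and since $\tau_{K/F}(\beta)=1$ with $\tau$ trivial at split and at infinite places (here $\beta\gg0$), I get $\prod_{v\in S_{\text{n-split}}}\tau_{K_v/F_v}(\beta)=1$. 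Now suppose, toward a contradiction, that $\ord_p W^*_{\beta,v}=\mu_p(\lambda_v)$ for every $v\in S_{\text{n-split}}$; then $A_{N(u_v)\beta}(\lambda_v^*,\vartheta)$ is $p$-adically extremal at each $v$, so Corollary \ref{pe2}(2) (at the $v$ ramified in $K/F$, where $\mu_p(\lambda_v)=0$ since $p>2$) and Lemma \ref{pe4} (at the inert $v$ with $\mu_p(\lambda_v)>0$, using that case (ii) excludes the equality $\ord_p(\lambda_v^*(\varpi_v)+1)=\inf_x\ord_p(\lambda_v(x)-1)$) pin down, modulo $\fm_p$, the sign of $\varepsilon(1/2,\lambda_v^*,\psi_{K_v})/\lambda_v^*(2\vartheta)$ in terms of $\tau_{K_v/F_v}(\beta)$ and $(-1)^{\epsilon_v}$. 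Multiplying over $v\in S_{\text{n-split}}$, inserting Lemma \ref{pe5} at the remaining unramified inert places and using $\prod_v\tau_{K_v/F_v}(\beta)=1$, I would obtain $\ov{\epsilon}(\lambda)\equiv\prod_{v\in S_{\text{n-split}}}(-1)^{\epsilon_v}\bmod\fm_p$, contradicting the case-(ii) hypothesis $\ov{\epsilon}(\lambda)=-\prod_{v\in S_{\text{n-split}}}(-1)^{\epsilon_v}$. Hence $\ord_p W^*_{\beta,v_0}>\mu_p(\lambda_{v_0})$ for some $v_0\in S_{\text{n-split}}$, and the same sign analysis forces $\tau_{K_{v_0}/F_{v_0}}(\beta)$ to equal the value appearing in the $v_0$-term of $\mu_2$; then $A_{N(u_{v_0})\beta}(\lambda_{v_0}^*,\vartheta)$ lies in the set over which that infimum is taken, giving $\ord_p\,\alpha_\beta\geq\sum_{v'\neq v_0}\mu_p(\lambda_{v'})+\inf(\cdots)\geq\mu_2$. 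In every case $\ord_p\,\alpha_\beta\geq\mu_p(\CX)$, as required.

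I expect the main obstacle to be exactly this last sign bookkeeping: threading Corollary \ref{pe2}, Lemma \ref{pe4} (including the $\delta=\pm1$ freedom in its part (1)) and Lemma \ref{pe5} through the precise definitions of $\epsilon_v$, $\mu_1$, $\mu_2$ and $\ov{\epsilon}(\lambda)$ so that the "all-extremal" scenario genuinely contradicts case (ii) and the "non-extremal at $v_0$" scenario genuinely lands in the $\mu_2$-infimum. The remaining ingredients — the $q$-expansion reduction, vanishing of the constant term, and $p$-integrality of the Whittaker factors away from $S_{\text{n-split}}$ — are essentially already recorded in \S\ref{add} and \S\ref{S5}.
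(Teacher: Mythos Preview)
Your overall plan---reduce to the $q$-expansion via Proposition~\ref{q-exp}, kill the constant term by Proposition~\ref{assH}, and then bound each nonzero Fourier coefficient place by place---is exactly what the paper does. Two minor slips and one real gap.

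\textbf{Minor.} First, your justification of $\tau_{K/F}(\beta)=1$ via ``$\beta=\beta_0N(a)^{-1}$'' is spurious: $\beta$ is simply an element of $F^\times$, so $\tau_{K/F}(\beta)=1$ holds by the product formula with no further argument. Second, at places $v\notin S\setminus S'$ the Whittaker factor depends on $\ord_v(\beta\fc(a)_v)$, not $\ord_v(\beta)$; the paper keeps track of this (writing $\ord_{v'}(\beta\fc(1)_{v'})$), and while the difference is even at inert $v$ (since $\fc(a)=\fc(1)N(a)^{-1}$), you should carry $\fc(a)$ throughout.

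\textbf{The real gap} is in your case~(ii) argument. You split on \emph{extremality} at the places of $S_{\text{n-split}}$: you show ``all extremal'' contradicts the residual root-number hypothesis, conclude ``some $v_0$ is not extremal'', and then assert that the sign analysis forces $\tau_{K_{v_0}/F_{v_0}}(\beta)$ to land in the $\mu_2$-infimum set at $v_0$. But Lemma~\ref{pe4} and Corollary~\ref{pe2}(2) only say that \emph{extremal implies the ``right'' sign}; the converse fails, so ``not extremal at $v_0$'' does not by itself pin down $\tau_{K_{v_0}/F_{v_0}}(\beta)$, and you cannot feed $\beta$ into the $v_0$-infimum defining $\mu_2$.

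The paper avoids this by splitting directly on the \emph{sign condition}, not on extremality. Either there exists $v'\in S_{\text{n-split}}$ with
\[
\frac{\epsilon(1/2,\lambda_{v'}^*,\psi_{K_{v'}})}{\lambda_{v'}^*(2\vartheta)}\equiv -(-1)^{\varepsilon_{v'}}\tau_{K_{v'}/F_{v'}}(\beta)\pmod{\fm_p},
\]
in which case $\beta$ literally belongs to the set over which the $v'$-infimum in $\mu_2$ is taken, and one reads off $\ord_p\alpha_\beta\ge\mu_2$ immediately; or every $v\in S_{\text{n-split}}$ has the opposite (``right'') sign, and then the case-(ii) hypothesis on $\ov\epsilon(\lambda)$ together with Lemma~\ref{pe5} at the unramified inert places forces some inert $v'\notin S_{\text{n-split}}$ to satisfy $\ord_{v'}(\beta\fc(1)_{v'})$ odd, whence $\ord_p\alpha_\beta\ge\mu_1$. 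Replacing your extremality split by this sign split closes the gap and recovers the paper's proof verbatim.
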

\begin{proof}
For each $\beta\in F_+^\times$, by $q$-expansion principle (cf.~Proposition~\ref{q-exp}), need to show the $\beta$-th Fourier coefficient $\alpha_\beta\left(p^{-\mu_p(\CX)}\BE_{\lambda,k},\begin{pmatrix}1 & 0 \\ 0 & \fc(a)^{-1} \end{pmatrix}\right)$ of $p^{-\mu_p(\CX)}\BE_{\lambda,k}$ at the cusp $\left(\infty,\begin{pmatrix} 1 & 0 \\0 & \fc(a)^{-1}\end{pmatrix}\right)$ lies in $\ov{\BZ}_p$.

If $\mu_p(\CX)=\sum_{v\in S_{\text{n-split}}}\mu_p(\lambda_v)$, the result follows from Corollary \ref{pe01}.
So now we assume the following
\begin{itemize}
\item [(1)] Any $v\in S_{\text{n-split}}$ such that inert in $K/F$ with $\mu_p(\lambda_v)>0$, we have\[\ord_p (\lambda_v^*(\varpi_v)+1)\neq\inf_{x\in\CO_{K,v}^\times}\ord_p(\lambda_v(x)-1);\]
\item[(2)] The residual root number equals to \[-\prod_{v\in S_{\text{n-split}}}(-1)^{\varepsilon_v},\] here $\varepsilon_v$ as in the definition of $\mu_p(\CX)$.
\end{itemize}
If exists a place $v'\in S_{\text{n-split}}$ such that $\frac{\epsilon(1/2,\lambda_{v'}^*,\psi_{K_{v'}})}{\lambda_v^*(2\vartheta)}\equiv -(-1)^{\varepsilon_{v'}}\tau_{K_{v'}/F_{v'}}(\beta)$, then \[\alpha_\beta\left(p^{-\mu_2}\BE_{\lambda,k},\begin{pmatrix}1 & 0 \\0 & \fc(a)^{-1}\end{pmatrix}\right)\in\ov{\BZ}_p.\]

If for each $v\in S_{\text{n-split}}$ \[\frac{\epsilon(1/2,\lambda_v^*,\psi_{K_v})}{\lambda_v^*(2\vartheta)}\equiv(-1)^{\varepsilon_v}\tau_{K_v/F_v}(\beta)\pmod{\fm_p},\] then by the assumption that the residual root number equals to \[-\prod_{v\in S_{\text{n-split}}}(-1)^{\varepsilon_v},\] there exists an inert place $v'$ prime to $S_{\text{n-split}}$ such that $\epsilon(1/2,\lambda_{v'}^*,\psi_{K_{v'}})\equiv-\lambda_v^*(2\vartheta)\tau_{K_{v'}/F_{v'}}(\beta)\pmod{\fm_p}$.
By Proposition \ref{pe5}, \[\epsilon(1/2,\lambda_{v'}^*,\psi_{K_{v'}})\equiv(-1)^{\ord_{v'} \fc(1)_{v'}}\lambda_{v'}^*(2\vartheta)\pmod\fm_p,\]we have $\ord_{v'}(\beta\fc(1)_{v'})$ is odd.
Thus \[\alpha_\beta\left(p^{-\mu_1}\BE_{\lambda,k},\begin{pmatrix} 1 & 0 \\0 & \fc(a)^{-1}\end{pmatrix}\right)\in\ov{\BZ}_p.\]
\end{proof}
\begin{proof}[Proof of Proposition \ref{mod}]Given $r>0$ and $u\in\CO_{F,\fl}$.
If we are in the case $(i)$ of the definition of $\mu_p(\CX)$. Let $\eta\in\BA_{F,f}^\times$ such that
\begin{enumerate}
\item [(a)] $p^{-\mu_p(\lambda_v)}A_{\eta_v N(u_v)}(\lambda_v^*,\vartheta)$ is $p$-adic unit for $v\in S_{\text{n-split}}$, where $u_v=\frac{2\vartheta}{\vartheta+\varpi_v^{-\delta_v}a_v}$ as in \S\ref{add};
\item [(b)] $\eta_\fl\equiv u\pmod{\fl^r}$;
\item [(c)] $\epsilon(1/2, \lambda_v^*,\psi_{K_v})\equiv(-1)^{\varepsilon_v}\tau_{K/F}(\eta_v)\lambda_v^*(2\vartheta)\pmod{\fm_p} $ for all $v$ finite and nonsplit;
\item [(d)] For $v\in S_{\text{split}}$, $\eta_v$ is a $v$-adic unit.
\end{enumerate}This is possible by Proposition \ref{pe4}. May further choose $\eta_v$ at those places prime to $\fl$, $ S_{\text{n-split}}$ to be $\fc(1)_v^{-1}$. Furthermore, we can choose $\eta$ such that $\tau_{K/F}(\eta)=1$. The left proof is the same as the self-dual case.

Assume we are in the case $(ii)$ of the definition of $\mu_p(\CX)$, we have $\tau_{K/F}(\eta)=-1$. If $\mu_p(\CX)=\mu_1$, let $v$ inert and $v_0$ prime to $S_{\text{n-split}}$ such that $\ord_p(\lambda_v^*(\varpi_{v_0})+1)=\inf_{\substack{\text{$v$ is inert in $K/F$}\\ \text{ and $\lambda_v$ is unramified }}}\ord_p(\lambda_v^*(\varpi_v)+1)$. Let $\eta$ be the same as in the case $(1)$ and $\eta'=\eta\varpi_v$, then $\tau_{K/F}(\eta')=1$, the left proof is the same as the self-dual case.

If $\mu_p(\CX)=\mu_2$, let $v_0\in S_{\text{n-split}}$ and $\eta_{v_0}\in F_{v_0}$ such that \[\varepsilon(1/2,\lambda_v^*,\psi_{K_v})\equiv(-1)^{\epsilon_v+1} \tau_{K_v/F_v}(\beta)\lambda_v^*(2\vartheta)\pmod{\fm_p} \]and \[\mu_2=\sum_{v'\in S_{\text{n-split}}, v'\neq v_0}\mu_p(\lambda_{v'})+\ord_p(A_{\eta_{v_0}}(\lambda_{v_0},\vartheta)).\]
By Proposition \ref{pe4}, we can extend $\eta_{v_0}$ to $\eta\in\BA_{F,f}^\times$ such that \begin{enumerate}
\item [(a)] $p^{-\mu_p(\lambda_v)}A_{\eta_v N(u_v)}(\lambda_v^*,\vartheta)$ is $p$-adic unit for $v\in S_{\text{n-split}}$ not equal to $v_0$, where $u_v=\frac{2\vartheta}{\vartheta+\varpi_v^{-\delta_v}a_v}$ as in \S\ref{add};
 \item [(b)] $\eta_\fl\equiv u\pmod{\fl^r}$;
\item [(c)] $\epsilon(1/2, \lambda_v^*,\psi_{K_v})\equiv(-1)^{\varepsilon_v}\tau_{K/F}(\eta_v)\lambda_v^*(2\vartheta)\pmod{\fm_p} $ for all $v$ finite and nonsplit;
\item [(d)] For $v\in S_{\text{split}}$, $\eta_v$ is a $v$-adic unit.
\end{enumerate}
We have $\tau_{K/F}(\eta)=1$. The left proof is the same as the self-dual case.
\end{proof}

\bigskip
{\bf The case $\fl$ is inert}
\bigskip

The conjectural $\mu_p(\CX^\pm)$ is given by the following:
\begin{itemize}
\item[(i)]\[\mu_p(\CX^+)=\mu_p(\CX^-)=\sum_{v\in S_{\text{n-split}}}\mu_p(\lambda_v)\] if exists a $v\in S_{\text{n-split}}$ inert in $K/F$ with $\mu_p(\lambda_v)>0$ and \[\inf_{x\in\CO_{K,v}^\times}\ord_p(\lambda_v(x)-1)=\ord_p (\lambda_v^*(\varpi_v)+1),\]here $\varpi_v$ is any uniformizer $F_v$;
\item[(ii)] Any $v\in S_{\text{n-split}}$ that is inert in $K/F$ with $\mu_p(\lambda_v)>0$, we have\[\ord_p (\lambda_v^*(\varpi_v)+1)\neq\inf_{x\in\CO_{K,v}^\times}\ord_p(\lambda_v(x)-1).\]
\item[] Let \[\mu_1=\sum_{\substack{\text{$v\in S_{\text{n-split}}$ and }\\ \text{$v$ is inert in $K/F$}}}\mu_p(\lambda_v)+\inf_{\substack{\text{$v\nmid\fl$ is inert in $K/F$}\\ \text{ and $\lambda_v$ is unramified }}}\ord_p(\lambda_v^*(\varpi_v)+1),\] and
\[\mu_2=\min_{v\in S_\text{n-split}}\left\{\sum_{v'\in S_\text{n-split}, v\neq v' }\mu_p(\lambda_{v'})+\inf_{\substack{\beta\in F_v,\\ \tau_{K_v/F_v}(\beta)\equiv\frac{\varepsilon(1/2,\lambda_v^*,\psi_{K_v})}{\lambda_v^*(2\theta)}\cdot (-1)^{\epsilon_v+1}}}\ord_p(A_\beta(\lambda_v^*,\theta))\right\},\] here $\theta\in K_v^\times$ is any element such that $\ov{\theta}=-\theta$.
\begin{itemize}
\item[(a)]\[\mu_p(\CX^+)=\sum_{v\in S_{\text{n-split}}}\mu_p(\lambda_v)\] and \[\mu_p(\CX^-)=\min\{\mu_1,\mu_2\}\] if any $v\in S_{\text{n-split}}$ that is inert in $K/F$ with $\mu_p(\lambda_v)>0$, we have\[\ord_p (\lambda_v^*(\varpi_v)+1)\neq\inf_{x\in\CO_{K,v}^\times}\ord_p(\lambda_v(x)-1);\]And the residual root number equals to \[(-1)^{\delta^+}\frac{\varepsilon(1/2,\lambda_\fl^*,\psi_{K_\fl})}{\lambda_\fl^*(2\vartheta)}\prod_{v\in S_{\text{n-split}}}(-1)^{\varepsilon_v},\]here $\delta^+\in\{0,1\}$ is such that for all $\lambda'\in\CX^+$, the conductor of $\lambda'_\fl$ is congruent to $\delta^+$ modulo $2$.
\item[(b)]\[\mu_p(\CX^-)=\sum_{v\in S_{\text{n-split}}}\mu_p(\lambda_v)\] and \[\mu_p(\CX^+)=\sum_{v\in S_{\text{n-split}}}\mu_p(\lambda_v)=\min\{\mu_1,\mu_2\}\] if any $v\in S_{\text{n-split}}$ that is inert in $K/F$ with $\mu_p(\lambda_v)>0$, we have\[\ord_p (\lambda_v^*(\varpi_v)+1)\neq\inf_{x\in\CO_{K,v}^\times}\ord_p(\lambda_v(x)-1);\]And the residual root number equals to \[(-1)^{\delta^-}\frac{\varepsilon(1/2,\lambda_\fl^*,\psi_{K_\fl})}{\lambda_\fl^*(2\vartheta)}\prod_{v\in S_{\text{n-split}}}(-1)^{\varepsilon_v},\]$\delta^-\in\{0,1\}$ is such that for all $\lambda'\in\CX^-$, the conductor of $\lambda'_\fl$ is congruent to $\delta^-$ modulo $2$. Here $\mu_1, \mu_2$ same as the split case.
\end{itemize}
\end{itemize}

\begin{proof}
Note that $\mu_p(\CX^m)=\min \{\mu_p(\CX^+),\mu_p(\CX^-)\}$ is $\sum_{v\in S_{\text{n-split}}}\mu_p(\lambda_v)$, thus by Corollary \ref{pe01} and $q$-expansion principle (cf.~Proposition~\ref{q-exp}), $p^{-\mu_p(\CX^m)}\BE_{\lambda,k}$ is defined over $\ov{\BZ}_p$.

Let $u\in\fl^{\delta^*}\CO_{F,\fl}^\times$ and $r>0$. Consider $\eta\in \BA_{F,f}^\times$ such that
\begin{enumerate}
\item [(a)] $p^{-\mu_p(\lambda_v)}A_{\eta_v N(u_v)}(\lambda_v^*,\vartheta)$ is $p$-adic unit for $v\in S_{\text{n-split}}$, where $u_v=\frac{2\vartheta}{\vartheta+\varpi_v^{-\delta_v}a_v}$ as in \S\ref{add};
\item[(b)] $\varpi_\fl^{-\delta^*}\eta_v\equiv \varpi_\fl^{-\delta^*}u\pmod{\fl^r}$;
\item [(c)] $\epsilon(1/2,\lambda_v^*,\psi_{K_v})\equiv(-1)^{\varepsilon_v}\tau_{K_v/F_v}(\eta_v)\lambda_v^*(2\vartheta)\pmod{\fm_p}$ for all $v\neq \fl$ and $v$ non split. Here if $v\notin S_{\text{n-split}}$, we define $\varepsilon_v=0$;
 \item [(d)] For $v\in S_{\text{split}}$, $\eta_v$ is a $v$-adic unit.
\end{enumerate}
By Proposition \ref{pe4}, this is possible, and we may further choose $\eta_v=\fc(1)_v^{-1}$ for $v$ prime to $\fl$ and $S_{\text{n-split}}$.

If in the case $(i)$ of the definition of $\mu_p(\CX^\pm)$, we can choose $\eta$ such that $\tau_{K/F}(\eta)=1$. If in the case $(ii)$ of the definition of $\mu_p(\CX^\pm)$, we have
\[\ov{\epsilon}(\lambda)=(-1)^{\delta^*}\cdot\frac{\varepsilon(1/2,\lambda_\fl^*,\psi_{K_\fl})}{\lambda_\fl^*(2\vartheta)}\prod_{v\in S_{\text{n-split}}}(-1)^{\varepsilon_v},\] by Proposition \ref{pe4}, we still have $\tau_{K/F}(\eta)=1$. The left proof is the same as the self-dual case.
\end{proof}
\section{$p$-stability of Hecke L-values}
\label{S6}
The goal of this section is to prove the following main result:
Let $\CX$ be an $\wh{\Gamma}$ orbit of Hecke characters with infinity type $k\Sigma+\kappa(1-c)$, $k\Sigma+\kappa\in\BZ_{>0}[\Sigma]$, $\kappa\in \BZ_{\geq 0}[\Sigma]$.
For $\lambda\in \CX$, let $\CL(\lambda)$ be the algebraic part of $L_f(0,\lambda)$ defined as in \S\ref{mainre}. 
\begin{thm}\label{main}
Suppose that
\begin{itemize}
\item[(i)] $p$ is ordinary, $(p,2 D_F)=1$,
\item[(ii)] $(\fl,p)=1$ and $\lambda|_{F_\fl^\times}$ is unramified for $\lambda\in \CX$,
\item[(iii)] $p$ is odd if $\CX$ is residually self-dual but not self-dual,
\item[(iv)] if $\lambda$ is residually self-dual, $\fl$ inert, then $\rank_{\BZ_\ell}\Gamma=1$.
\end{itemize} Then the following hold:
\item[(1)] If $\CX$ is not residually self-dual, then for Zariski dense $\lambda\in\CX$, \[\ord_p(\CL(\lambda))=\mu_p(\CX).\]\item[(2)] If $\CX$ is residually self-dual, then for \[\mu_p(\CX^m):=\min\{\mu_p(\CX^+),\mu_{p}(\CX^-)\},\] we have \[\ord_p(\CL(\lambda))=\mu_p(\CX^m)\]
 for for Zariski dense $\lambda\in \CX^m$.
\end{thm} 

The invariants $\mu_p(\CX), \mu_p(\CX^\pm)$ are defined in \S\ref{pprop}. 

In \S\ref{6.1}, we reduce the main theorem \ref{main} to the case $k\in \BZ_{> 0}$. In \S\ref{6.2}, we prove the $p$-indivisibility of the measure on a single character is preserved under Galois and Hecke twist. In \S\ref{6.3}, we finish the proof of the main result.
 \subsection{Reduction} \label{6.1}

let $\lambda^\vee(x)=\lambda(x^{c,-1})|x|_{\BA_K^\times}$. Then $\lambda$ is of infinity type $k\Sigma+\kappa(1-c)$ with $k\in \BZ_{> 0}$ and $\kappa\in \BZ_{\geq 0}[\Sigma]$ if and only if $\lambda^\vee$ is of infinity type $k'\Sigma+\kappa'(1-c)$ with $k'\in \BZ_{> 0}$ and $k'\Sigma+\kappa'\in \BZ_{> 0}[\Sigma]$. In fact, $k'=2-k$ and $\kappa'=1-k\Sigma-\kappa$. By the following Proposition \ref{vee}, it is sufficient to show the main theorem \ref{main} for $\lambda$ with infinity type $k\Sigma+\kappa(1-c)$ for $k\in \BZ_{> 0}$ and $\kappa\in\BZ_{\geq 0}[\Sigma]$.
\begin{lem}\label{mvee}For each $v\in S_{\text{n-split}}$, $\mu_p(\lambda_v)=\mu_p(\lambda^\vee_v)$.
\end{lem}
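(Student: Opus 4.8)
The plan is to unwind the definition $\mu_p(\lambda_v) = \inf_{x \in K_v^\times}\ord_p(\lambda_v(x) - 1)$ and observe that this quantity depends only on the restriction $\lambda_v|_{\CO_{K,v}^\times}$ together with $\ord_p(\lambda_v(\varpi_v) - 1)$ for a uniformizer $\varpi_v$ of $K_v$ (or $F_v$, after adjusting). First I would note that since $v$ is a finite place, $\ord_p$ is well-defined on $\ov{\BZ}_p^\times$ and $\mu_p(\lambda_v) > 0$ forces $\lambda_v$ to be residually trivial on $\CO_{K,v}^\times$; the content of $\mu_p$ lives in how deep into the filtration $1 + \varpi_v^n\CO_{K,v}$ the character $\lambda_v$ becomes trivial, plus the behavior on uniformizers. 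I would then compare $\lambda_v$ and $\lambda_v^\vee$ on these two pieces.

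The key computation is that $\lambda^\vee(x) = \lambda(x^{c,-1})|x|_{\BA_K^\times}$, so locally $\lambda_v^\vee(x) = \lambda_v(c(x)^{-1})|x|_{K_v}$. On units $u \in \CO_{K,v}^\times$ we have $|u|_{K_v} = 1$, hence $\lambda_v^\vee(u) = \lambda_v(c(u))^{-1}$. Since $v$ is nonsplit, $c$ is a continuous automorphism of $K_v$ preserving $\CO_{K,v}^\times$ and preserving the filtration $1 + \varpi_v^n\CO_{K,v}$, so $u \mapsto c(u)^{-1}$ is a bijection of $\CO_{K,v}^\times$ respecting this filtration; therefore $\inf_{u \in \CO_{K,v}^\times}\ord_p(\lambda_v^\vee(u) - 1) = \inf_{u \in \CO_{K,v}^\times}\ord_p(\lambda_v(u) - 1)$. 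For the uniformizer part, I would use that any $x \in K_v^\times$ decomposes as $\varpi_v^m u$ and that $\mu_p(\lambda_v)$ is the minimum of $\inf_u \ord_p(\lambda_v(u)-1)$ and the infimum over $m \neq 0$ of $\ord_p(\lambda_v(\varpi_v)^m \lambda_v(u) - 1)$; a short elementary argument (using $\ord_p(ab-1) \geq \min(\ord_p(a-1), \ord_p(b-1))$ with equality unless they coincide, together with $\ord_p(\lambda_v(\varpi_v)^m - 1) \geq \ord_p(\lambda_v(\varpi_v)-1)$) reduces the whole infimum to $\min\{\inf_u\ord_p(\lambda_v(u)-1),\ \ord_p(\lambda_v(\varpi_v)-1)\}$. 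The same formula holds for $\lambda_v^\vee$, and since $\lambda_v^\vee(\varpi_v) = \lambda_v(c(\varpi_v))^{-1}|\varpi_v|_{K_v}$, while $c(\varpi_v) = \varpi_v \cdot (\text{unit})$ and $|\varpi_v|_{K_v}$ is a power of the residue characteristic hence a $p$-adic unit congruent to a nonzero constant, I would check that $\ord_p(\lambda_v^\vee(\varpi_v) - 1)$ and $\ord_p(\lambda_v(\varpi_v)-1)$ enter the two minima symmetrically — more precisely, one can always choose the uniformizer so that $c(\varpi_v) = \varpi_v$ when $v$ is inert, or handle the ramified case by noting $c(\varpi_v)/\varpi_v \in \CO_{F,v}^\times$ lies in the unit group already accounted for.

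The main obstacle I anticipate is bookkeeping the factor $|x|_{K_v}$ in $\lambda^\vee$ and making sure it does not shift $\mu_p$: in the inert case $|\varpi_v|_{K_v} = q^{-2}$ where $q = |\varpi_v|_F^{-1}$, and one must verify $\ord_p(\lambda_v(\varpi_v)^{-1}q^{-2} - 1)$ relates correctly to $\ord_p(\lambda_v(\varpi_v) - 1)$; this is where I would be most careful, likely invoking that $\lambda$ being of the stated infinite type forces the central character behavior so that $\lambda_v(\varpi_v)$ and $q^{\pm 1}$ interact compatibly, or simply reducing to the observation that replacing $\lambda_v$ by $\lambda_v^\vee$ amounts to composing with the automorphism $x \mapsto c(x)^{-1}$ and multiplying by an unramified twist $|\cdot|_{K_v}$, and an unramified twist by a character whose value on $\varpi_v$ is a $p$-adic unit that is $\equiv 1 \pmod{\fm_p}$ does not change $\mu_p$ — whereas if $|\varpi_v|_{K_v} \not\equiv 1$, then $\mu_p(\lambda_v) = 0 = \mu_p(\lambda_v^\vee)$ trivially since neither character can be residually trivial on all of $K_v^\times$. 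Splitting into these two cases ($q \equiv 1 \pmod p$ versus not) closes the argument cleanly.
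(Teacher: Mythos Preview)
Your reduction of $\mu_p(\lambda_v)$ to $\min\{A,D\}$ with $A=\inf_{u\in\CO_{K,v}^\times}\ord_p(\lambda_v(u)-1)$ and $D=\ord_p(\lambda_v(\varpi_v)-1)$ is fine, and you correctly identify that $\lambda_v^\vee$ differs from $\lambda_v$ by an automorphism of $K_v^\times$ composed with the unramified twist $|\cdot|_{K_v}$. The gap is in your final case split, where you never use the hypothesis that $\lambda_v$ is \emph{ramified} (this is the content of $v\in S_{\text{n-split}}$), and without it the lemma is actually false: take $\lambda_v$ trivial, so $\mu_p(\lambda_v)=\infty$ but $\mu_p(\lambda_v^\vee)=\ord_p(|\varpi_v|_{K_v}-1)<\infty$.

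Concretely, your claim ``an unramified twist by a character with value $\equiv 1\pmod{\fm_p}$ on $\varpi_v$ does not change $\mu_p$'' is false in general; it needs the quantitative bound $E:=\ord_p(|\varpi_v|_{K_v}-1)\geq A$, not merely $E>0$. Likewise your claim that $|\varpi_v|_{K_v}\not\equiv 1$ forces $\mu_p(\lambda_v)=0$ is not ``trivial'': nothing prevents $\lambda_v(\varpi_v)\equiv 1$ and $\lambda_v$ residually trivial on units unless you argue further. The paper fills exactly this gap: since $\lambda_v$ is ramified and $A>0$ forces all values $\lambda_v(u)$ on units to be $p$-power roots of unity, the conductor is $\leq 1$ and $p\mid(|\kappa|-1)$ where $\kappa$ is the residue field of $K_v$; hence $A\leq\ord_p(1-\zeta_p)=\tfrac{1}{p-1}\leq 1\leq \ord_p(|\varpi_v|_{K_v}-1)=E$. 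Once $E\geq A$ is in hand, your min-comparison (or equivalently the paper's computation $\min\{A,\ord_p((|y|-1)+(1-\lambda_v(y)))\}=\min\{A,\ord_p(1-\lambda_v(y))\}$) goes through. So your outline is salvageable, but you must insert this ramification/root-of-unity step to get the inequality $A\leq E$.
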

\begin{proof}Let $w$ be the place of $K$ above $v$.
If exists $u\in \CO_{K,w}^\times$ such that $1-\lambda_w(u)$ is a $p$-adic unit, then $\mu_p(\lambda_w)=\mu_p(\lambda^\vee_w)=0$. Now suppose that $\inf_{x\in\CO_{K,w}^\times}\ord_{p}(1-\lambda_w(x))>0$, then we must have the conductor of $\lambda_w$ is equal to $1$, otherwise, there exists a unit $u_0\in \CO_{K,w}^\times$ such that $\lambda_w(u_0)$ is a $q$-th primitive root of unity, where $q$ is the rational prime blow $w$. As $q\neq p$, $1-\lambda_w(u_0)$ is a $p$-adic unit, contradiction.

Hence for $\inf_{x\in\CO_{K,w}^\times}\ord_{p}(1-\lambda_w(x))>0$, we have $p|q^n-1$, here $q^n$ is the cardinality of residue field of $K_w$. Thus \[\inf_{x\in\CO_{K,w}^\times}\ord_{p}(1-\lambda_w(x))\leq \ord_{p}(1-\zeta_p)=\frac{1}{p-1}\leq \ord_p(|y|^{-1}_w-1)\] for any $y\in K_{w}^\times\bs\CO_{K,w} ^\times$. Thus\[\mu_p(\lambda_w^\vee)=\min\left\{\inf_{x\in\CO_{K,w}^\times}\ord_p(1-\lambda_w(x)),\inf_{y\in K_v^\times\bs \CO_{K,w}^\times}\ord_p((|y|^{-1}_w-1)+(1-\lambda_w(y)))\right\}=\mu_p(\CX).\]
\end{proof}
Denote $\CX^\vee$ the $\wh{\Gamma}$ orbit of $\lambda^\vee$. Simply let \[X^\circ:=\begin{cases}
\CX,&\quad \text{if $\CX$ is not residually self-dual},\\
\CX^+,&\quad \text{if $\CX$ is self-dual},\\
 \CX^m,&\quad \text{if $\CX$ is residually self-dual}.
\end{cases}\] Combine the Lemma \ref{mvee}, Proposition \ref{pe1} and the definition of $\mu_p(\CX^\circ)$, we have
\begin{cor}\label{mu2}The following equality holds:
 \[ \mu_p(\CX^\circ)=\mu_p((\CX^\vee)^\circ).\]
\end{cor}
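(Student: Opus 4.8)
The plan is to unwind the formulas for $\mu_p(\CX^\circ)$ and $\mu_p((\CX^\vee)^\circ)$ given in Section \ref{pprop} and to check that they evaluate to the same element of $\BQ_{\geq 0}\sqcup\{\infty\}$, by showing that every local and global ingredient of those formulas is left unchanged when $\lambda$ is replaced by $\lambda^\vee$; this is exactly the strategy announced in the statement (``combine the above Lemma, Lemma \ref{pe1} and the definition of $\mu_p(\CX^\circ)$''). First I would record the structural features that are manifestly stable. Since $x\mapsto|x|_{K_v}$ is unramified and $x\mapsto x^{c,-1}$ stabilizes $\CO_{K,v}^\times$, the conductor of $\lambda^\vee_v$ equals that of $\lambda_v$ at every finite $v$; hence the sets $S$, $S_0=\{\fl\}$, $S_{\text{n-split}}$, $S_{\text{split}}$ (together with their inert, split and ramified members), and whether $\CX$ is self-dual or residually self-dual, are the same for $\CX$ and $\CX^\vee$. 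Moreover $(\lambda\varepsilon)^\vee=\lambda^\vee\varepsilon$ for anticyclotomic $\varepsilon$ (using $\varepsilon\circ c=\varepsilon^{-1}$), so $\vee$ is a $\wh{\Gamma}$-equivariant bijection $\CX\to\CX^\vee$, and $\lambda^{\vee\vee}=\lambda$.

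Next I would treat the local invariants. The equality $\mu_p(\lambda_v)=\mu_p(\lambda^\vee_v)$ for $v\in S_{\text{n-split}}$ is the Lemma just proved. For an inert $v$ one has $\lambda^\vee_v|_{\CO_{K,v}^\times}=\lambda_v\circ(u\mapsto u^{c,-1})$ and $\lambda_v^{\vee,*}(\varpi_v)=\lambda_v^*(\varpi_v)^{-1}$; since $v\nmid p$ forces $\lambda_v^*(\varpi_v)$ to be a $p$-adic unit, it follows that $\inf_x\ord_p(\lambda^\vee_v(x)-1)=\inf_x\ord_p(\lambda_v(x)-1)$ and $\ord_p(\lambda_v^{\vee,*}(\varpi_v)+1)=\ord_p(\lambda_v^*(\varpi_v)+1)$, so the auxiliary exponents $\epsilon_v$ of Section \ref{pprop} are unchanged. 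By Lemma \ref{pe1}, for $v\nmid p$ the ratio $\epsilon(1/2,\lambda_v^*,\psi_{K_v})/\lambda_v^*(2\beta\theta)$ is a $p$-adic unit, whence $\ord_p(A_\beta((\lambda^\vee_v)^*,\theta))=\ord_p(A_\beta(\lambda_v^*,\theta))$ for all $\beta$; this re-proves $\mu_p(\lambda_v)=\mu_p(\lambda^\vee_v)$ via Corollary \ref{pe01} and, more usefully, shows that the $\beta$-infima entering $\mu_2$ are the same for $\lambda$ and $\lambda^\vee$. Finally, applying Lemma \ref{pe1} together with $\lambda^{\vee\vee}=\lambda$ yields
\[
\frac{\epsilon(1/2,(\lambda^\vee_v)^*,\psi_{K_v})}{(\lambda^\vee_v)^*(2\vartheta)}=\left(\frac{\epsilon(1/2,\lambda_v^*,\psi_{K_v})}{\lambda_v^*(2\vartheta)}\right)^{-1};
\]
for $v$ nonsplit this quantity is $\pm1$ modulo $\fm_p$ (and is exactly $\pm1$ in the self-dual case by Lemma \ref{pe3}), so, $p$ being odd in the residually self-dual case, it equals its own inverse and all local (residual) root number factors are preserved; for $v$ nonsplit and unramified the factor depends only on the conductors of $\lambda_v$ and $\psi_v$ by Lemma \ref{pe5}, hence again is unchanged. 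Taking the product over finite nonsplit $v$ gives $\ov{\epsilon}(\lambda^\vee)=\ov{\epsilon}(\lambda)$, and $\epsilon(\lambda^\vee)=\epsilon(\lambda)$ in the self-dual case; therefore $\vee$ carries $\CX^\pm$ onto $(\CX^\vee)^\pm$ and preserves the conductor-at-$\fl$ parities $\delta^\pm$.

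It then remains to match the formulas case by case. If $\CX$ is not residually self-dual, or if $\CX$ is self-dual and $\circ=+$, then $\mu_p(\CX^\circ)=\sum_{v\in S_{\text{n-split}}}\mu_p(\lambda_v)$ and the claim is immediate from the Lemma above; if $\CX$ is self-dual and $\circ=-$, both sides are $+\infty$. If $\CX$ is residually self-dual but not self-dual (so $p>2$), one inspects the definition of $\mu_p(\CX^\circ)$ in Section \ref{pprop}: the dichotomy $\inf_x\ord_p(\lambda_v(x)-1)=\ord_p(\lambda_v^*(\varpi_v)+1)$ versus $\neq$, the exponents $\epsilon_v$, the quantities $\mu_1$ and $\mu_2$, the residual root number $\ov{\epsilon}(\lambda)$, and the parities $\delta^\pm$ are each assembled from the atoms shown above to be $\vee$-invariant, and $\vee$ matches $\CX^m$ with $(\CX^\vee)^m$ for the same $m$; hence the formula for $\mu_p((\CX^\vee)^\circ)$ evaluates to $\mu_p(\CX^\circ)$. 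I do not expect a genuine obstacle beyond the bookkeeping of the several cases of Section \ref{pprop}; the one point requiring care is that the local root number ratio transforms to its \emph{inverse} under $\vee$ and only becomes visibly invariant after reducing modulo $\fm_p$ with $p$ odd, together with the check that $\delta^\pm$ is intrinsic to the $\wh{\Gamma}$-orbit and is respected by $\vee$.
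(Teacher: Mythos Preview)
Your proposal is correct and is precisely the elaboration of what the paper leaves implicit: the paper's proof consists of the single sentence ``Combine the above Lemma, Lemma \ref{pe1} and the definition of $\mu_p(\CX^\circ)$'', and you have carried out exactly that combination, verifying term by term that every local ingredient ($\mu_p(\lambda_v)$, $\ord_p(\lambda_v^*(\varpi_v)+1)$, $\inf_x\ord_p(\lambda_v(x)-1)$, the $\epsilon_v$, the $\beta$-infima in $\mu_2$ via Lemma \ref{pe1}, and the (residual) root numbers) is $\vee$-invariant. The one subtlety you flagged---that the local root number ratio goes to its inverse and is only visibly fixed after reduction mod $\fm_p$ with $p$ odd---is exactly the point where Lemma \ref{pe1} is needed, matching the paper's citation of it.
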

\begin{lem}\label{mu1}For $\lambda\in \CX$, we have
\[\prod_{w\in \Sigma_p}G(\lambda_w)\frac{\pi^\kappa\Gamma_{\Sigma}(k\Sigma+\kappa)L_f(0,\lambda)}{\sqrt{|D_F|_\BR}\Im(\vartheta)^{\kappa}\Omega_\infty^{k\Sigma+2\kappa}}\sim_{\ov{\BZ}_p^\times}\prod_{w\in \Sigma_p}G(\lambda_w^\vee)\frac{\pi^{\kappa'} \Gamma_{\Sigma}(k' \Sigma+\kappa')L_f(0,\lambda^\vee)}{\sqrt{|D_F|_\BR}\Im(\vartheta)^{\kappa' }\Omega_\infty^{k'\Sigma+2\kappa' }},\] here $\vartheta\in K$ is any purely imaginary element such that $2\vartheta D_{F}^{-1}$ is prime to $p$ and $\sim_{\ov{\BZ}_p^\times}$ means up to a $p$-adic unit.
\end{lem}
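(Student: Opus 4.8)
The claim is exactly that $\CL(\lambda)\sim_{\ov{\BZ}_p^\times}\CL(\lambda^\vee)$, i.e.\ a $p$-integral refinement of the functional equation of the Hecke $L$-function (note that, by that functional equation, $\CL(\lambda)$ vanishes iff $\CL(\lambda^\vee)$ does, since the archimedean $\Gamma$-factors are nonzero at the relevant integer points, so we may assume both are nonzero). The plan is to unwind both sides into completed $L$-values, cancel the transcendental and period factors, and then check that the remaining discrepancy is a $p$-adic unit place by place. First I would record the functional equation in the form $\Lambda(0,\lambda)=\epsilon(0,\lambda)\,\Lambda(0,\lambda^\vee)$: since $c$ is an automorphism of $K$ one has $L(s,\lambda^{-1,c})=L(s,\lambda^{-1})$, hence $\Lambda(0,\lambda^\vee)=\Lambda(1,\lambda^{-1})$, to which the standard equation $\Lambda(s,\lambda)=\epsilon(s,\lambda)\Lambda(1-s,\lambda^{-1})$ applies; here $\epsilon(0,\lambda)=\prod_v\epsilon_v(0,\lambda_v,\psi_v)$ for any nontrivial $\psi\colon\BA_K/K\to\BC^\times$. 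Next I would separate the archimedean $\Gamma$-factors: writing $\Lambda(0,\lambda)=L_\infty(0,\lambda)L_f(0,\lambda)$ with $L_\infty(0,\lambda)=\prod_{\sigma\in\Sigma}\Gamma_{\BC}(k+\kappa_\sigma)$ and $\Gamma_{\BC}(s)=2(2\pi)^{-s}\Gamma(s)$, one gets the exact identity $\pi^\kappa\Gamma_{\Sigma}(k\Sigma+\kappa)L_f(0,\lambda)=2^{a}\,\pi^{b}\,\Lambda(0,\lambda)$ with $a=(k-1)[F:\BQ]+|\kappa|$ and $b=|k\Sigma+2\kappa|$ (where $|\,\cdot\,|$ denotes the sum of the $\Sigma$-components), together with the analogous identity for $\lambda^\vee$ with $(k,\kappa)$ replaced by $(k',\kappa')$.

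Now $k'=2-k$ and the period exponents on the two sides agree, $k'\Sigma+2\kappa'=k\Sigma+2\kappa$ (so $\kappa'=(k-1)\Sigma+\kappa$); consequently in the quotient $\CL(\lambda)/\CL(\lambda^\vee)$ the powers of $\pi$, of $\Omega_\infty$, and the factor $\sqrt{|D_F|_\BR}$ all cancel, and the problem is reduced to showing
\[2^{(k-1)[F:\BQ]}\;\Im(\vartheta)^{(k-1)\Sigma}\;\epsilon(0,\lambda)\;\prod_{w\in\Sigma_p}\frac{G(\lambda_w)}{G(\lambda^\vee_w)}\in\ov{\BZ}_p^\times.\]
Since $\vartheta$ is purely imaginary, $\Im(\sigma(\vartheta))=\sigma(2\vartheta)/(2i)$; because $(p,D_F)=1$ and $2\vartheta\CD_F^{-1}$ is prime to $p$, each $\iota_p(\sigma(2\vartheta))$ is a $p$-adic unit, so $2^{(k-1)[F:\BQ]}\Im(\vartheta)^{(k-1)\Sigma}$ differs from $i^{-(k-1)[F:\BQ]}$ by a $p$-adic unit — in particular the powers of $2$ cancel, even when $p=2$, and $i$ is a $p$-adic unit for every $p$. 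It then remains to show that $\epsilon(0,\lambda)\prod_{w\in\Sigma_p}G(\lambda_w)/G(\lambda^\vee_w)$ is a $p$-adic unit.

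This is a place-by-place count of $\ord_p$. At $v\mid\infty$ the factor $\epsilon_v$ is a power of $i$; at a finite $v\nmid p$ with $\lambda_v$ and $\psi_v$ unramified it is $1$; at the remaining finite $v\nmid p$ it is, up to a root of unity and a power of the residue cardinality $q_v$, a Gauss sum over a field of characteristic $\neq p$, hence a $p$-adic unit — for the nonsplit such $v$ this is elementary and refines to the formulas $\epsilon(1/2,\lambda_v^*,\psi_{K_v})/\lambda_v^*(2\vartheta)=\pm1$ of Lemmas \ref{pe3}, \ref{pe5} in the (residually) self-dual cases. Hence $\ord_p\epsilon(0,\lambda)$ is concentrated at the places $v\mid p$. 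Each such $v=w\ov w$ splits with $w\in\Sigma_p$, the additive character has trivial conductor at $v$, and $\lambda^\vee_w\cong(\lambda_{\ov w}^{-1}|\cdot|)\circ c$; using the Stickelberger-type identity $\ord_p g(\chi)+\ord_p g(\chi^{-1})=e(\chi)f_v$ for Gauss sums $g(\chi)=\sum_u\chi(u)\psi_w(u)$ (with $q_v=p^{f_v}$ and $e(\chi)$ the conductor exponent), one finds that the $q_v$-powers hidden in the normalizations $|\varpi_w|_w^{e_w}$, $|\varpi_w|_w^{e_{\ov w}}$ of $G(\lambda_w)$, $G(\lambda_w^\vee)$ and those appearing in the local $\epsilon$-factors cancel upon summing over $v\mid p$, so that $\ord_p\big(\epsilon(0,\lambda)\prod_{w\in\Sigma_p}G(\lambda_w)/G(\lambda^\vee_w)\big)=0$. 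I expect this $v\mid p$ bookkeeping — carefully matching the half-integral powers of $q_v$ between the norm-of-conductor contribution to $\epsilon(0,\lambda)$, the Gauss-sum normalizations, and the local root numbers, and doing so with powers of $2$ retained when $p=2$ rather than absorbed into $\ov{\BZ}_p^\times$ — to be the one delicate point; everything else is formal manipulation of the functional equation and the archimedean $\Gamma$-factors.
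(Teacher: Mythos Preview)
Your approach is correct and is exactly what the paper does: invoke the complex functional equation $L(0,\lambda)=\epsilon(0,\lambda)L(0,\lambda^\vee)$ and then verify that the remaining archimedean, period, and Gauss-sum factors combine to a $p$-adic unit. The paper's own proof is a single sentence citing this functional equation, leaving all of your careful bookkeeping (including the cancellation at places above $p$, which you rightly flag as the one delicate point) to the reader.
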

\begin{proof}
This follows from the complex functional equation \[L(0,\lambda)=\epsilon(0,\lambda)L(0,\lambda^\vee).\]
\end{proof}

 \begin{prop}\label{vee}
Under the same assumption in the main theorem, the $p$-stability property for $\CX^\circ$ is equivalent to the $p$-stability property for $(\CX^\vee)^\circ$.
\end{prop}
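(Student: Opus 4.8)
The plan is to derive the proposition formally from Lemma \ref{mu1} and Corollary \ref{mu2}, once the combinatorics of the involution $\lambda\mapsto\lambda^\vee$ have been sorted out. First I would record the elementary properties of $\vee$ that make it respect the parametrizing torus: it is an involution, and for an anticyclotomic finite order character $\varepsilon$ one has $\varepsilon^c=\varepsilon^{-1}$, whence $\varepsilon(x^{c,-1})=\varepsilon(x)$ and therefore $(\lambda\varepsilon)^\vee=\lambda^\vee\varepsilon$. Consequently $\vee$ restricts to a bijection $\CX\to\CX^\vee$ which, under the coordinates $\lambda\varepsilon\mapsto(\varepsilon(e_i))_i$ of the introduction that identify both $\CX$ and $\CX^\vee$ with $\BG_{m,\ov{\BQ}}^d[\ell^\infty]$, is literally the identity map. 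In particular $\vee$ carries Zariski-dense subsets to Zariski-dense subsets when $d>1$ and cofinite subsets to cofinite subsets when $d=1$.

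Next I would check that $\vee$ matches up the relevant subfamilies: $\lambda$ is self-dual (resp. residually self-dual) if and only if $\lambda^\vee$ is, and --- using the complex functional equation $L(0,\lambda)=\epsilon(0,\lambda)L(0,\lambda^\vee)$ together with the explicit archimedean factors $\epsilon(1/2,\lambda_\sigma^*,\psi_{K_v})=i^{1+2\kappa_\sigma}$ and the behaviour of the local root number at $\fl$ recorded in Section \ref{pprop} --- that $\vee$ sends $\CX^\circ$ onto $(\CX^\vee)^\circ$ for each $\circ\in\{\emptyset,+,-\}$. This is the step I expect to require the most care, since it is pure sign bookkeeping with the functional equation and the parametrization; everything after it is formal.

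Granting that, the proof concludes as follows. By Lemma \ref{mu1}, applied to every character in the orbit, $\CL(\lambda\varepsilon)\sim_{\ov{\BZ}_p^\times}\CL((\lambda\varepsilon)^\vee)=\CL(\lambda^\vee\varepsilon)$, so $\ord_p\CL$ is constant along the $\vee$-correspondence; by Corollary \ref{mu2}, $\mu_p(\CX^\circ)=\mu_p((\CX^\vee)^\circ)$. Hence $\vee$ carries
\[\{\mu\in\CX^\circ:\ \ord_p\CL(\mu)=\mu_p(\CX^\circ)\}\]
bijectively onto
\[\{\mu\in(\CX^\vee)^\circ:\ \ord_p\CL(\mu)=\mu_p((\CX^\vee)^\circ)\},\]
and since $\vee$ preserves Zariski density (resp. cofiniteness when $d=1$), the $p$-stability property holds for $\CX^\circ$ if and only if it holds for $(\CX^\vee)^\circ$. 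In particular, an orbit $\CX$ of infinite type with $k\le 1$ may be replaced by $\CX^\vee$, whose infinite type has $k'\ge 1$, which is exactly why it suffices to prove Theorem \ref{main} in the range $k\ge 1$.
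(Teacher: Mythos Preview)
Your proposal is correct and follows essentially the same approach as the paper: both derive the proposition directly from Lemma \ref{mu1} (giving $\CL(\lambda)\sim_{\ov{\BZ}_p^\times}\CL(\lambda^\vee)$) and Corollary \ref{mu2} (giving $\mu_p(\CX^\circ)=\mu_p((\CX^\vee)^\circ)$). The paper's proof is a single displayed formula combining these two ingredients, leaving implicit the combinatorial facts you spell out --- that $(\lambda\varepsilon)^\vee=\lambda^\vee\varepsilon$ for anticyclotomic $\varepsilon$, that $\vee$ identifies $\CX^\circ$ with $(\CX^\vee)^\circ$, and that the induced bijection is the identity in the torus coordinates and hence preserves Zariski density; your write-up is simply more explicit on these points.
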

\begin{proof}
By Corollary \ref{mu2} and Lemma \ref{mu1}, for $\lambda\in \CX^\circ$, \[p^{-\mu_p(\CX^\circ)}\prod_{w\in \Sigma_p}G(\lambda_w)\frac{\pi^\kappa\Gamma_{\Sigma}(k\Sigma+\kappa)L_f^{}(0,\lambda)}{\sqrt{|D_F|_\BR}\Im(\vartheta)^\kappa\Omega_\infty^{k\Sigma+2\kappa}}\sim_{\ov{\BZ}_p^\times}p^{-\mu_p((\CX^\vee)^\circ)}\prod_{w\in \Sigma_p}G(\lambda_w^\vee)\frac{\pi^{\kappa'}\Gamma_{\Sigma}(k'\Sigma+\kappa')L_f^{}(0,\lambda^\vee)}{\sqrt{|D_F|_\BR}\Im(\vartheta)^{\kappa'}\Omega_\infty^{k'\Sigma+2\kappa'}}.\]
\end{proof}
Recall that $\Gamma$ is the maximal $\BZ_\ell$-free quotient of $\Cl_\infty'$.
In the left of the section, let $\CX$ be an $\wh{\Gamma}$ orbit of Hecke characters with infinity type $k\Sigma+\kappa(1-c)$, $k\in \BZ_{> 0}$ and $\kappa\in \BZ_{\geq 0}[\Sigma]$. Let $\lambda\in \CX^\circ$ be fixed. 
\subsection{Modulo $p$ non-vanishing under Galois action}\label{6.2}
Fix a decomposition \[\Cl_\infty'=\Delta\times\Gamma\] of the group $\Cl_\infty'$, here $\Delta$ is the maximal torsion subgroup of $\Cl_\infty'$.

Let $E$ be a number field sufficiently large such that
 \begin{itemize}
\item [(i)] The geometric modular form $\BE_{\lambda,k}$ is defined over the localization of $\CO_E$ at the prime $\wp$ above $p$ induced by $\iota_p$;
\item [(ii)] $E$ contains the reflex field $K^*$\label{E};
\item [(iii)] $E_\wp$ contains images of $\wh{\chi}$ and $\wh{\lambda}$;
\item [(iv)] $E$ contains all $|\Delta|$-th root of unity.
\end{itemize}
Let $E'$ be the maximal abelian extension of $E$ that is unramified above $\wp$, and let $\wp'$ be the prime of $E'$ induced by $\iota_p$. Then the CM test objects $x_n(t)$ for $t\in \BA_{K}^{\times,(p)}\cdot\CO_{K,p}^\times$ are defined over $\CO_{E',(\wp')}$. Denote $\CW'$ be the completion of the ring of $\CO_{E',(\wp')}$. Let $\wp'$ be the maximal ideal of $\CW$. Denote $\BF=\CW/\wp'$ and $q=\# \CO_{E}/\wp$.

Denote $\varphi_\lambda$ to be the $\ov{\BZ}_p$ valued measure on $\Cl_\infty'$ given by
\[\int_{\Cl_\infty'}\varepsilon\varphi_\lambda:=\int_{\Cl_\infty'}\varepsilon d\phi_\lambda\cdot p^{-\mu_p(\CX^\circ)}\]
here $\CX^\circ$ is defined in \S\ref{6.1}.

For any $b\in \BA_{K,f}^{\times,(p)}\cdot\CO_{K,p}^\times$, denote $\phi_\lambda|[b]$ the $p$-adic measure on $\Cl_\infty'$ defined by \[\int_{\Cl_\infty'}\varepsilon d\varphi_\lambda|[b]:=p^{-\mu_p(\CX^\circ)}a_\fl^{-n}\sum_{[a]_n\in\Cl_n'}\varepsilon(a)\wh{\chi}^{(p)}(a)(\wh{\lambda}_{\Sigma_p^c}\wh{\lambda}_{\Sigma_p}^{-1})(a_{\Sigma_p^c})\theta^\kappa\wh{\BE}_{\lambda,k}(x_n(ab))\in\ov{\BZ}_p\]for $n\gg 0$, here $a\in \BA_{K,f}^{\times,(p)}\cdot\CO_{K,p}^\times$.

Let $D(\wp'|\wp)$ be the decomposition group of $\wp'$ in $\Gal(E'/E)$.
\begin{prop}\label{Gal}
If $\int_{\Cl_\infty'}\varepsilon d\varphi_\lambda\equiv 0 \pmod {\fm_p}$, then for all $\sigma\in D(\wp'|\wp)$ and all $b\in \BA_{K,f}^{\times,(p)}\cdot\CO_{K,p}^\times$,
\[\int_{\Cl_\infty'}\varepsilon^\sigma d\varphi_\lambda|[b]\equiv 0\pmod {\fm_p}.\]
\end{prop}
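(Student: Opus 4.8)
The plan is to exploit the algebraicity of the measure values together with the Galois action on CM points described in Theorem \ref{CMpt}. The key point is that $\varepsilon \mapsto \int_{\Cl_\infty'}\varepsilon\, d\varphi_\lambda$ is, after fixing $n$, a finite sum of values of the $p$-adic modular form $\theta^\kappa\wh{\BE}_{\lambda,k}$ at the CM points $x_n(a)$, each multiplied by the $p$-adic unit $\wh{\chi}^{(p)}(a)(\wh{\lambda}_{\Sigma_p^c}\wh{\lambda}_{\Sigma_p}^{-1})(a_{\Sigma_p^c})$ and by the root of unity $\varepsilon(a)$. All of these quantities lie in $\CW'$, and the points $x_n(a)$ are defined over $\CO_{E',(\wp')}$ by Proposition \ref{pintCM}. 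So the first step is to observe that $\Gal(E'/E)$ acts on everything in sight: it permutes the CM points $x_n(a)$ (via the reflex norm, by Theorem \ref{CMpt}), it acts on the roots of unity $\varepsilon(a)$ turning $\varepsilon$ into $\varepsilon^\sigma$, and it fixes the geometric modular form $\BE_{\lambda,k}$ itself since that is defined over $\CO_{E,(\wp)}$ (this is exactly why $E$ was chosen large in \ref{E}).

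Next I would make precise how $\sigma\in\Gal(E'/E)$ interacts with the Hecke translate $[b]$. Writing $s_\sigma\in\BA_{K^*}^\times$ for a Galois-theoretic parameter of $\sigma$ and using $\Sigma$-ordinariness so that $N_{\Sigma^*}(s_\sigma)$ decomposes at $p$ into its $\Sigma_p$- and $\Sigma_p^c$-components, the main theorem of complex multiplication gives $x_n(a)^\sigma = x_n(a\cdot b_\sigma)$ for a suitable $b_\sigma\in\BA_{K,f}^{\times,(p)}\cdot\CO_{K,p}^\times$ (absorbing the $p$-part via the canonical trivialization $j$, which is where the factor $(\wh{\lambda}_{\Sigma_p^c}\wh{\lambda}_{\Sigma_p}^{-1})(a_{\Sigma_p^c})$ is designed to make the whole summand Galois-equivariant). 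Applying $\sigma$ to the reduction mod $\fm_p$ of $\int_{\Cl_\infty'}\varepsilon\, d\varphi_\lambda$ — which is legitimate because reduction mod $\fm_p$ is compatible with the residual Galois action — I would get, after reindexing the finite sum by $a\mapsto a b_\sigma^{-1}$, precisely $\int_{\Cl_\infty'}\varepsilon^\sigma\, d\varphi_\lambda|[b_\sigma]$ mod $\fm_p$, up to a $p$-adic unit coming from the characters. Since $\sigma$ takes $0$ to $0$, this shows $\int_{\Cl_\infty'}\varepsilon^\sigma\, d\varphi_\lambda|[b_\sigma]\equiv 0\pmod{\fm_p}$ whenever $\int_{\Cl_\infty'}\varepsilon\, d\varphi_\lambda\equiv 0$.

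To get the statement for \emph{arbitrary} $b$ and arbitrary $\sigma$ (not just the $b_\sigma$ arising from Galois elements), I would run the same translation argument purely at the level of CM points: $x_n(ab)$ for a fixed $b$ is again a system of CM points defined over $\CO_{E',(\wp')}$ (translation by $b\in\BA_{K,f}^{\times,(p)}\cdot\CO_{K,p}^\times$ is a prime-to-$p$ isogeny, preserving good reduction and the $p$-level structure up to the explicit $\Sigma_p^c$-factor), so the measure $\varphi_\lambda|[b]$ has the same formal shape as $\varphi_\lambda$, and its values are permuted by $\Gal(E'/E)$ in the same way. Combining: for any $b$, the collection $\{\int\varepsilon^\sigma\,d\varphi_\lambda|[b]\bmod\fm_p : \sigma\in\Gal(E'/E)\}$ is a single $\Gal(E'/E)$-orbit (up to $p$-adic units) of the collection $\{\int\varepsilon^\sigma\,d\varphi_\lambda|[b\,b_\sigma^{-1}]\bmod\fm_p\}$, and vanishing of one member forces vanishing of all. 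The main obstacle is bookkeeping the $p$-part: one must check carefully that the auxiliary factor $(\wh{\lambda}_{\Sigma_p^c}\wh{\lambda}_{\Sigma_p}^{-1})(a_{\Sigma_p^c})$ together with the choice of periods $\Omega_p$ in Proposition \ref{p adic} makes each individual summand $\wh{\chi}^{(p)}(a)(\wh{\lambda}_{\Sigma_p^c}\wh{\lambda}_{\Sigma_p}^{-1})(a_{\Sigma_p^c})\theta^\kappa\wh{\BE}_{\lambda,k}(x_n(a))$ genuinely Galois-equivariant in the correct sense, so that the residual Galois conjugation really just permutes and rescales summands rather than mixing them; once that compatibility is in place, the rest is a reindexing of a finite sum.
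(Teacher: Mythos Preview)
Your proposal is correct and matches the paper's approach: apply $\sigma$ term-by-term, use the main theorem of complex multiplication (Theorem \ref{CMpt}) to identify $\sigma(x_n(a))$ with $x_n(c_\sigma a)$ up to a twist of the $\mu_{p^\infty}$-level structure $j_a$ by a unit $u\in\CO_{K,\Sigma_p}^\times$ independent of $a$ (this is exactly the ``$p$-part bookkeeping'' you flag, and it is the only nontrivial computation the paper actually carries out), then reindex the sum. One simplification over your third paragraph: the paper handles $b$ and $\sigma$ in a single stroke by computing $\sigma\bigl(\int\varepsilon\,d\varphi_\lambda|[b]\bigr)$ directly and reindexing $a\mapsto a(c_\sigma b)^{-1}$, which collapses the result to a $p$-adic unit times $\int\varepsilon^\sigma\,d\varphi_\lambda$; in particular $\int\varepsilon\,d\varphi_\lambda|[b]$ is already a $p$-adic unit multiple of $\int\varepsilon\,d\varphi_\lambda$ by the bare substitution $a\mapsto ab^{-1}$, so no separate argument for arbitrary $b$ is needed.
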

\begin{proof}
Let $s\in\BA_{E,f}^\times$ be correspond to $\sigma'$ under reciprocity map such that $s_v$ is unit at all $v|p$.
Write $x_n(a)=(y_n(a),j_{a})$ as in \S\ref{CMp}, we first consider the Galois action on $x_n(a)$. By the galois action on CM points, we have $y_n(a)^\sigma=(y_n(N_{\Sigma^*}\circ N_{E/K^*}(s)^{-1,c}a))$.

We claim that there exists $u\in\CO_{K,\Sigma_p}^\times$ does not depend on $a$ such that \[\sigma(y_n(a),j_{a})=(y_n(N_{\Sigma^*}\circ N_{E/K^*}(s)^{-1,c}a),uj_{N_{\Sigma^*}\circ N_{E/K^*}(s)^{-1,c}a}).\]
Denote $A_n(a)$ the abelian variety in $x_n(a)$ and $\BC^\Sigma/\CL_{n, a}$ with $\CL_{n, a}\subset K$ the complex abelian variety corresponding to $[\vartheta,\rho(a)\varsigma_n]$ under the complex uniformization. Fix an isomorphism $\alpha:A_{n}(a)_{\BC}\simeq\BC^\Sigma/\CL_{n, a}$. By the main theorem of complex multiplication (cf.~\cite[Thm.~6.1]{Lang}) there exists a unique isomorphism $\alpha':A_{n}(a)^{\sigma}\simeq \BC^\Sigma/N_{\Sigma^*}\circ N_{E/K^*}(s)^{-1,c}\CL_{n, a}$ over $\BC$ such that the following diagram commutes:
\[\xymatrix{
 K/\CL_{n, a}\ar[d]_{N_{\Sigma^*}\circ N_{E/K^*}(s)^{-1,c}}\ar[rr]^{\alpha}&&A_{n}(a)_{\text{tor}}\ar[d]^{\sigma(\cdot)}\\
 K/N_{\Sigma^*}\circ N_{E/K^*}(s)^{-1,c}\CL_{n, a}\ar[rr]^{\alpha'}&&A_{n}(a)_{\text{tor}}^{\sigma}.
 }\]
Now let $j_a$ be the $\mu_{p^\times}\otimes\CO_F^*$ level structure in $x_n(a)$ and $j_{N_{\Sigma^*}\circ N_{E/K^*}(s)^{-1,c}a}$ the $\mu_{p^\times}\otimes\CO_F^*$ level structure in $x_n(N_{\Sigma^*}\circ N_{E/K^*}(s)^{-1,c}a)$. By construction of CM points, there exists $u_1\in \CO_{K,\Sigma_p}^\times$ only depends on $s$ such that $\alpha^{-1}\circ j_a=u_1\cdot(\alpha')^{-1}\circ j_{N_{\Sigma^*}\circ N_{E/K^*}(s)^{-1,c}a}$. By definition, \[j_a^{\sigma}(x):=\sigma j_a(\sigma^{-1}(x))\\
=(N_{E/K^*}(s)^{-1,c})_{\Sigma_p}u_1\cdot j_{N_{\Sigma^*}\circ N_{E/K^*}(s)^{-1,c}a}(\sigma^{-1}x)=u\cdot j_{N_{\Sigma^*}\circ N_{E/K^*}(s)^{-1,c}a}(x)\] for some $u\in\CO_{K,\Sigma_p}^\times $ only depends on $\sigma$.
Combine with the fact that $\sigma$ fixes images of $\wh{\chi}$, $\wh{\lambda}$ and the definition field of $\wh{\BE}_{\lambda,k}$, we have
\[\begin{aligned}
\sigma\left(\int_{\Cl_\infty'}\varepsilon d\varphi_\lambda|[b]\right)&\equiv \sigma(p^{-\mu_p(\CX^\circ)})\sigma\left(a_\fl^{-n}\sum_{[a]_n\in\Cl_n'}\varepsilon(a)\wh{\chi}^{(p)}(a)(\wh{\lambda}_{\Sigma_p^c}\wh{\lambda}_{\Sigma_p}^{-1})(a_{\Sigma_p^c})\theta^\kappa\wh{\BE}_{\lambda,k}(x_n(ab))\right)\\
&\equiv \sigma(p^{-\mu_p(\CX^\circ)} a_\fl^{-n}\sum_{[a]_n\in\Cl_n'}\sigma(\varepsilon(a))\wh{\chi}^{(p)}(a)(\wh{\lambda}_{\Sigma_p^c}\wh{\lambda}_{\Sigma_p}^{-1})(a_{\Sigma_p^c})\sigma(\theta^\kappa\wh{\BE}_{\lambda,k}(x_n(ab)))\\
&\equiv \sigma(p^{-\mu_p(\CX^\circ)}) a_\fl^{-n}\sum_{[a]_n\in\Cl_n'}\sigma(\varepsilon(a))\wh{\chi}^{(p)}(a)(\wh{\lambda}_{\Sigma_p^c}\wh{\lambda}_{\Sigma_p}^{-1})(a_{\Sigma_p^c})\theta^\kappa\wh{\BE}_{\lambda,k}(\sigma(x_n(ab)))\\
&\equiv u'p^{-\mu_p(\CX^\circ)} a_\fl^{-n}\sum_{[a]_n\in\Cl_n'}\sigma(\varepsilon(a))\wh{\chi}^{(p)}(a)(\wh{\lambda}_{\Sigma_p^c}\wh{\lambda}_{\Sigma_p}^{-1})(a_{\Sigma_p^c})\theta^\kappa\wh{\BE}_{\lambda,k} (x_n(a))
\end{aligned}\] here denote $b'=N_{\Sigma^*}\circ N_{E/K^*}(s)^{-1,c}b$, then \[u'=\frac{\sigma(p^{-\mu_p(\CX^\circ)})}{p^{-\mu_p(\CX^\circ)}}\cdot u^{-(k\Sigma+2\kappa)}\cdot (\sigma(\varepsilon(b'))\wh{\chi}^{(p)}(b')(\wh{\lambda}_{\Sigma_p^c}\wh{\lambda}_{\Sigma_p}^{-1})(b'_{\Sigma_p^c}))^{-1}\]which is a $p$-adic unit.
\end{proof}
\subsection{Proof of main theorem}\label{6.3}
The goal of this subsection is to show that if the $p$-stability result does not hold, then there exists a certain linear combination of $p$-adic modular forms of some level that vanish on certain twist orbits of CM points. This contradicts with the density of CM points and the modulo $p$ non-vanishing of the non-constant Fourier coefficient of these $p$-adic modular forms.
\subsubsection{Preliminaries}
Exists $n_0$ such that $n\geq n_0$, the natural maps induce embedding from $\Gamma^{(n)}:=1+\varpi_\fl^n\CO_{K,\fl}/(1+\varpi_\fl^n\CO_{F,\fl})$ to $\Cl_\infty'$ and $\Gamma$. Let $\Gamma_n=\Gamma/\Gamma^{(n)}$, we have $\Cl_n'=\Delta\times\Gamma_n$, $n\gg0$.
Let $\varepsilon:\Cl_\infty'\ra \ov{\BQ}^\times$ be a finite order character, supposing that $\varepsilon=\nu\times\phi$, $\phi$ is a character on $\Gamma_n$ for some $n$ sufficiently large. We have

\[\begin{aligned}
\int_{\Cl_\infty'}\varepsilon \varphi_\lambda&=p^{-\mu_p(\CX^\circ)}a_\fl(0)^{-n}\sum_{[a]_n\in\Cl_n'}\varepsilon(a)\wh{\chi}^{(p)}(a)(\wh{\lambda}_{\Sigma_p^c}\wh{\lambda}_{\Sigma_p}^{-1})(a_{\Sigma_p^c})\theta^\kappa\wh{\BE}_{\lambda,k}(x_n(a)),
\end{aligned}\]here $a\in\BA_{K,f}^{\times,(p)}\cdot \CO_{K,p}^\times$ and $[a]_n$ its image in $\Cl_n$.

 Define $\Cl^{\text{alg}}$ the subgroup of $\Cl_\infty'$ generated by $\BA_{K,f}^{\times,(\fl)}$ and $\Delta^{\text{alg}}=\Delta\cap \Cl^{\text{alg}}$.

Recall that (cf.~\cite[Lem.~8.23]{Hia}):
 \begin{fact}\
\begin{enumerate}
\item Each nontrivial element of $\Delta^{\alg}$ can be represented by the square free product of primes of $K$ that are ramified in $K/F$ and prime to $\fl$, $S_{\text{n-split}}$. In particular, $\Delta^{\text{alg}}$ is an elementary abelian group of type $(2,\cdots,2)$.
\item Each nontrivial element of $\Cl_\infty'/\Delta^{\text{alg}}\Gamma$ is represented by a split prime of $K$ that is prime to $S$, $\fl\cdot 2\vartheta\CD_{F}^{-1}\CD_{K/F}^{-1}$.
\end{enumerate}
\end{fact}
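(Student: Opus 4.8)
The statement has two parts, both of which are class-field-theoretic and should be proved by the same kind of argument: analyze the structure of $\Cl_\infty' = \BA_K^\times/K^\times\BA_F^\times U_\fl$ (and its finite-level quotients $\Cl_n'$) via the exact sequence relating it to the ray class groups of $F$ and $K$, together with the norm map $\Nm_{K/F}$. The key structural input is that $\Cl_\infty'$ sits in an extension controlled by $\BA_K^\times/K^\times\Nm^{-1}(\text{something})$, and that the kernel of $\Nm_{K/F}$ on the idele class group is generated (modulo $K^\times$) by $\{x^{1-c}\}$, so that an element of $\Cl_\infty'$ killed by $\Nm$ is, up to $K^\times\BA_F^\times$, represented by a product of primes that are either ramified or appear with their conjugates.

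For part (1): an element $\delta \in \Delta^{\alg}$ is torsion and lies in $\Cl^{\alg}$, i.e. can be represented by a finite idele supported away from $\fl$. First I would use finiteness together with the fact that $\Cl^{\alg}$ is built from $\BA_{K,f}^{\times,(\fl)}$ to reduce to representatives supported on finitely many primes. Then the torsion condition forces, via the norm compatibility with $\Cl'_\fl$-structure at $\fl$ and the requirement that $\delta$ have finite order in a group whose free part is $\Gamma$, that the class of $\delta$ pairs trivially under $\Nm_{K/F}$ into $\Cl_F$ (up to $2$-torsion coming from the genus theory of $K/F$); that is exactly the assertion that $\delta$ is a square-free product of ramified primes (the inert primes contribute to $\Gamma$ or to the free part, the split primes $w\bar w$ are norms hence trivial in $\Cl_\infty'/(\text{torsion from }K)$). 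One can then avoid the primes in $\fl$, $S_{\text{n-split}}$ by moving within a class: each such ramified-prime class can be adjusted by a principal idele or by a split prime to dodge the finitely many excluded primes, using that $\Delta^{\alg}$ is finite so only finitely many classes need representatives. The statement that $\Delta^{\alg}$ is elementary abelian of exponent $2$ then follows since $\fp^2 = (\varpi_v)$ is a norm from $K$ (for $\fp$ ramified over $v$), hence trivial in $\Cl_\infty'$.

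For part (2): a nontrivial class in $\Cl_\infty'/\Delta^{\alg}\Gamma$ is, by definition of the quotient, not hit by ramified primes nor by the $\fl$-part, so after the reduction as above its norm to $\Cl_F$ is genuinely nontrivial, and I would invoke Chebotarev (applied to the appropriate ray class field of $K$, with conductor divisible by the primes in $S$ and by $\fl\cdot 2\vartheta\CD_F^{-1}\CD_{K/F}^{-1}$, together with the splitting condition in $K/F$) to produce a \emph{split} prime $w$ of $K$ lying in the prescribed class and avoiding all the finitely many bad primes. The only real subtlety is bookkeeping: one must check the chosen ray class field of $K$ is large enough that a prime with the right Frobenius both represents the class in $\Cl_\infty'$ and is split in $K/F$ and coprime to $S$, $\fl$, and the differents — this is where the assumption ``$2\vartheta\CD_F^{-1}\CD_{K/F}^{-1}$ prime to $S$'' (from section \ref{the}) is used. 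I expect the main obstacle to be not a deep difficulty but rather setting up the exact sequence for $\Cl_\infty'$ cleanly enough that the norm-kernel description and the genus-theoretic $2$-torsion statement both fall out; once that algebraic skeleton is in place, parts (1) and (2) are Chebotarev plus elementary manipulation of ideles.
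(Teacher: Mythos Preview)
For part~(2) your plan matches the paper exactly: the paper's entire proof of~(2) is the two words ``By Chebotarev's density.''

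For part~(1) you are headed in the right direction but over-engineering it, and there is one concrete error and one genuine gap. First, the error: you write that ``the inert primes contribute to $\Gamma$ or to the free part.'' This is false. For an inert place $v\neq\fl$ (and $v\notin S_{\text{n-split}}$) one has $K_v^\times = F_v^\times\cdot\CO_{K,v}^\times$, so the local contribution to $\Cl_\infty'$ is already killed by $\BA_F^\times$ and by the units in $U_{\fl^n}$; inert primes away from $\fl$ are simply trivial. Only $\fl$ itself feeds into $\Gamma$. This actually makes the problem \emph{easier}, not harder.

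Second, the gap: you note that $w\bar w$ is a norm, hence trivial, which gives $[\bar w]=[w]^{-1}$ in $\Cl_\infty'$; but that by itself does not explain why a single split prime $w$ cannot represent a torsion class outside the span of the ramified primes. This is precisely the content of the claim, and your sentence ``that is exactly the assertion that $\delta$ is a square-free product of ramified primes'' is asserting the conclusion rather than proving it. To close this you really do have to argue that if a product of split primes $\fa_s$ (one from each conjugate pair) has $[\fa_s]$ torsion in $\Cl_\infty'$, then $[\fa_s]$ already lies in the image of the ramified primes; your genus-theory remark is the right keyword but you have not carried it out.

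The paper's own proof is extremely terse: it represents $x\in\Delta^{\alg}$ by a product of primes, observes that torsion forces some power to be trivial ``thus fixed by conjugation,'' and concludes. The underlying mechanism is just that $c$ acts by inversion on $\Cl_\infty'$ (since $x\cdot c(x)=\Nm(x)\in\BA_F^\times$), inert primes vanish, split primes satisfy $c(w)=w^{-1}$, and ramified primes are the $c$-fixed prime ideals giving the $2$-torsion. So the paper's route is shorter and more direct than the exact-sequence/norm-map scaffolding you propose, though it leaves the same ``split primes give no extra torsion'' step to the reader.
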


Let $\CR$ be a set of representatives of $\Delta^{\text{alg}}$ and assume each $\fr\in\CR$ is a square free product of $\varpi_w$, where $\varpi_w$ is a uniformizer of $K_w$ with $K_w/F_v$ ramified and $(v,\fl)=1$. Let $\CS$ be a set of representatives of $\Cl_\infty'/\Delta^{\text{alg}}\Gamma$ and assume each $\fs\in \CS$ equals to some uniformizer $\varpi_w$ of $K_w$ for some split prime $v=w\ov{w}$ of $F$ satisfies the above condition. We may assume that each projection of class of $\fs$ in $\Cl_\infty'$ to $\Gamma$ is represented by element $\fs_{\Gamma}\in \Gamma^{(n_0)}$ for $n_0$ sufficiently large and fixed. (For example, we may choose for each class $\Cl_{n_0}'/\Delta^{\text{alg}}\times\Gamma_{n_0}$ a $\fs=\varpi_w$ as in the fact such that the class of $\fs$ in $\Cl_{n_0}'=\Delta\times \Gamma_{n_0}$ is in $\Delta$). Let $\fs_\Delta=\fs\fs_{\Gamma}^{-1}$.

Thus we know that $\fs_{\Gamma}$ are distinct in $\Gamma^{(n_0)}$ and pairwise independent module the rational subgroup $\Gamma^{(n_0)}\cap K_{(\fl)}^\times/F_{(\fl)}^\times$.

In the following, we write the measure as Hecke twist sum of $p$-integral $p$-adic modular forms (cf.~\eqref{measum}).
Suppose that $\varepsilon=\nu\times\phi$ with branch character $\nu$, then for any $a\in\BA_{K,f}^{\times,(p)}\cdot\CO_{K,p}^\times$ and $n\geq n_0$, \[\begin{aligned}
\int_{\Cl_\infty'}\varepsilon d\varphi_\lambda|[a]&=p^{-\mu_p(\CX^\circ)}a_\fl^{-n}\sum_{\fs\in \CS}\nu\wh{\chi}(\fs_\Delta)\sum_{\fr\in \CR}\nu\wh{\chi}(\fr)\sum_{[t]_n\in \Gamma_n}\phi(t\fs_{\Gamma})\wh{\chi}^{(p)}(t\fs_{\Gamma})(\wh{\lambda}_{\Sigma_p^c}\wh{\lambda}_{\Sigma_p}^{-1})(t_{\Sigma_p^c})\theta^{\kappa}\wh{\BE}_{\lambda,k}(x_n(\fs\fr ta))\\
 &=a_\fl^{-n}\sum_{\fs\in\CS}\nu\wh{\chi}(\fs_{\Delta})\sum_{[t]_n\in\Gamma_n}\phi\wh{\chi}(t)(\wh{\lambda})_{\Sigma_p^c}(\wh{\lambda}_{\Sigma_p}^{-1})(t_{\Sigma_p^c})\BH_\lambda|[\varsigma_{0,f}^{-1}\rho(\fs)\varsigma_{0,f}](x_n(t\fs_{\Gamma}^{-1}a))
 \end{aligned}\] here $t\in \BA_{K,f}^{\times,(p)}\cdot\CO_{K,p}^\times$,\[\BH_{\lambda}=\sum_{\fr\in\CR}\nu\wh{\chi}(\fr)(\theta^\kappa{\BE})|[\varsigma_{0,f}^{-1}\rho(\fr)\varsigma_{0,f}]\] with $\BE=p^{-\mu_p(\CX^\circ)}\wh{\BE}_{\lambda,k}$ $p$-integral.
For $v|\CD_{K/F}$, we may choose the uniformizer to be $a_v-\varpi^{\delta_v}\vartheta$ as in \S\ref{CM}, then
\begin{equation}\label{Hlam}\displaystyle \BH_{\lambda}=\sum_{\fr\in\CR}\nu\wh{\chi}(\fr)(\theta^\kappa{\BE})\left|\left[\begin{pmatrix}1 &0 \\0& N(\fr)\end{pmatrix}\right]\right.,\end{equation}
here $N$ is the norm from $\BA_{K,f}^\times$ to $\BA_{F,f}^\times$.
 Furthermore, by choice of $\fs$ and choice of $\varsigma_v$ in \S\ref{CM}, we have
\[\BH_\lambda|[\varsigma_{0,f}^{-1}\rho(\fs)\varsigma_{0,f}]]=\BH_\lambda\left|
 \left[\begin{pmatrix}1 & 0 \\ 0 & N(\fs)
\end{pmatrix}\right]\right..\]

Now we can write:
\begin{equation}\label{measum}
\int_{\Cl_\infty'}\varepsilon d\varphi_\lambda|[a]=a_\fl^{-n}\sum_{\fs\in\CS}\nu\wh{\chi}(\fs_{\Delta})\sum_{[t]_n\in\Gamma_n}\phi\wh{\chi}(t)(\wh{\lambda})_{\Sigma_p^c}(\wh{\lambda}_{\Sigma_p}^{-1})(t_{\Sigma_p^c})\BH_\lambda\left|
\left[\begin{pmatrix}1 & 0 \\ 0 & N(\fs)
\end{pmatrix}\right]\right.(x_n(t\fs_{\Gamma}^{-1}a)). \end{equation}
Let $g\in G(\BA_{F,f}^{(p)})$ and $U^{(p)}\subset G(\BA_{F,f}^{(p)})$ a compact open subgroup. Recall that $|[g]$ induces maps from $p$-adic modular form with prime to $p$ level $U^{(p)}$ to $gU^{(p)}g^{-1}$, thus for any $p$-adic modular form $f$ with prime to $p$-level $U^{(p)}$, denote $\alpha_\beta(f,g_f)$ its $\beta$-th Fourier coefficient at the cusp $[\infty, g_f]$, we have
 \[\alpha_\beta(f|[g],g_f)=\alpha_{\beta}(f,g_fg).\]

 We need the following fact for later argument.
By the same analysis as above and the construction of $\BE_{\lambda,k}$.
\begin{fact}\label{coeff} For any cusp of the form $\left[\infty,\begin{pmatrix} 1 & 0 \\0 & \fc^{-1}\end{pmatrix}\right]$ with $\fc\in\BA_{K,f}^{\times,(S)}$,
\begin{enumerate}
\item For each $\fr\in\CR$, the Fourier coefficients between $\BE_{\lambda,k}$ and $\BE_{\lambda,k}|[\varsigma_{0,f}^{-1}\rho(\fr)\varsigma_{0,f}]$ has the following relation:
\[\alpha_\beta\left( \BE_{\lambda,k}|[\varsigma_{0,f}^{-1}\rho(\fr)\varsigma_{0,f}],\begin{pmatrix}1 & 0 \\ 0 & \fc^{-1} \end{pmatrix}\right)=\alpha_\beta\left(\BE_{\lambda,k},\begin{pmatrix}1 & 0 \\ 0 & N(\fr)\fc^{-1}\end{pmatrix}\right).\]
\item For each $\fs=\varpi_w\in\CS$, the Fourier coefficients between $\BE_{\lambda,k}$ and $\BE_{\lambda,k}|[\varsigma_{0,f}^{-1}\rho(\fs)\varsigma_{0,f}]$ have the following relation:
\[\alpha_\beta\left(\BE_{\lambda,k}|[\varsigma_{0,f}^{-1}\rho(\fs)\varsigma_{0,f}],\begin{pmatrix} 1 & 0 \\ 0 & \fc^{-1} \end{pmatrix}\right)=\alpha_\beta\left(\BE_{\lambda,k},\begin{pmatrix} 1 & 0 \\ 0 & N(\fs)\fc^{-1} \end{pmatrix}\right).\]
\item For each $u\in \CO_{F,\fl}$, the Fourier coefficients between $\BE_{\lambda,k}$ and
 $\BE_{\lambda,k}\left|\left[\begin{pmatrix}
 1 & \frac{u\varpi_\fl^{\delta_\fl}}{\varpi_\fl^{mr}} \\
 0 & 1
 \end{pmatrix}\right]\right.$ have the following relation:
\[\alpha_\beta\left(\BE_{\lambda,k}\left|\left[\begin{pmatrix}
 1 & \frac{u\varpi_\fl^{\delta_\fl}}{\varpi_\fl^{mr}} \\
 0 & 1
\end{pmatrix}\right]\right.,\begin{pmatrix}
 1 & 0 \\
 0 & \fc^{-1}
 \end{pmatrix}\right)=\psi_\fl\left(\frac{u\varpi_\fl^{\delta_\fl}}{\varpi_\fl^{mr}}\beta\right)\alpha_\beta\left(\BE_{\lambda,k},\begin{pmatrix}
 1 & 0 \\
0 & \fc^{-1}
\end{pmatrix}\right).\] Furthermore, if $\ord_{\fl}(\beta)\geq 1$,
\[\displaystyle \begin{aligned}
&\alpha_\beta\left(\BE_{\lambda,k}\left|\left[\begin{pmatrix}
1 & \frac{u\varpi_\fl^{\delta_\fl}}{\varpi_\fl^{mr}} \\
0 & 1
\end{pmatrix}\begin{pmatrix}
1 & 0 \\
 0 & \varpi_\fl
 \end{pmatrix}\right]\right.,\begin{pmatrix}
 1 & 0 \\
0 & \fc^{-1}
 \end{pmatrix}\right)\\
=&\chi_{1,\fl}(\varpi_\fl)|\varpi_\fl^{-1}|_\fl\psi_\fl\left(\frac{u\varpi_\fl^{\delta_\fl}}{\varpi_\fl^{mr+1}}\beta\right)\alpha_\beta\left(\BE_{\lambda,k},\begin{pmatrix}
 1 & 0 \\
 0 & \fc^{-1}
 \end{pmatrix}\right).\end{aligned}\]
\end{enumerate}
\end{fact}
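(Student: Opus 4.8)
My plan is to deduce all three families of identities from one mechanism: the cocycle rule for Fourier coefficients under the prime-to-$p$ Hecke translations $|[g]$ recalled just above, namely $\alpha_\beta(f|[g],g_f)=\alpha_\beta(f,g_fg)$, combined with the right $U_\lambda$-invariance of $\BE_{\lambda,k}$ (it is a geometric modular form of level $U_\lambda$, see \ref{level}) and the elementary ``unipotent phase rule'': translating a cusp representative on the left by $n(x):=\begin{pmatrix}1 & x\\0 & 1\end{pmatrix}$ with $x\in\BA_{F,f}$ multiplies $\alpha_\beta(f,\cdot)$ by $\psi(\beta x)$ (equal to $\psi_\fl(\beta x_\fl)$ when $x$ is supported at $\fl$), which is immediate from the defining Whittaker integral $W_\beta(\phi,\cdot)=\int_{\BA_F}\phi(\mathbf{w}n(t)\,\cdot\,)\psi(-\beta t)\,dt$ after the substitution $t\mapsto t+x$. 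So in each case I would compute the finite-adelic matrix $\begin{pmatrix}1 & 0\\0 & \fc^{-1}\end{pmatrix}g$ for the relevant $g$, rewrite it as $n(x)\begin{pmatrix}1 & 0\\0 & (\fc')^{-1}\end{pmatrix}u$ with $u\in U_\lambda$, and read off the answer.

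For (1) and (2) the input is the explicit local geometry of Section \ref{CM}. For $v\mid\CD_{K/F}$ we arranged $\varsigma_v^{-1}\rho(\varpi_w)\varsigma_v\in\begin{pmatrix}1 & 0\\0 & N_{K_w/F_v}(\varpi_w)\end{pmatrix}U_{0,v}$ with $N_{K_w/F_v}(\varpi_w)$ a uniformizer of $F_v$, and for $v=w\ov w$ split we arranged $\varsigma_v^{-1}\rho(\varpi_w)\varsigma_v=\begin{pmatrix}1 & 0\\0 & \varpi_v\end{pmatrix}$, with $N_{K/F}(\varpi_w)$ equal to $\varpi_v$ up to a $v$-adic unit. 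Since each $\fr\in\CR$ (resp. $\fs\in\CS$) is a product of such uniformizers at places prime to $\fl$ at which $\BE_{\lambda,k}$ has spherical level $U_{0,v}$, conjugating by $\varsigma_{0,f}$ gives $\varsigma_{0,f}^{-1}\rho(\fr)\varsigma_{0,f}\in\begin{pmatrix}1 & 0\\0 & N(\fr)\end{pmatrix}U_\lambda$, whence $\begin{pmatrix}1 & 0\\0 & \fc^{-1}\end{pmatrix}\varsigma_{0,f}^{-1}\rho(\fr)\varsigma_{0,f}=\begin{pmatrix}1 & 0\\0 & N(\fr)\fc^{-1}\end{pmatrix}u$ with $u\in U_\lambda$; the cocycle rule together with $U_\lambda$-invariance yields the stated equality, and identically with $\fs$ in place of $\fr$.

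For (3), the first identity is pure unipotent phase: $\begin{pmatrix}1 & 0\\0 & \fc^{-1}\end{pmatrix}n(x)=n(x\fc)\begin{pmatrix}1 & 0\\0 & \fc^{-1}\end{pmatrix}$, and with $x=u\varpi_\fl^{\delta_\fl}\varpi_\fl^{-mr}$ supported at $\fl$ (where $\fc_\fl=1$ since $\fl\in S$) this gives the factor $\psi_\fl\bigl(u\varpi_\fl^{\delta_\fl}\varpi_\fl^{-mr}\beta\bigr)$. For the second identity I write $g=\begin{pmatrix}1 & 0\\0 & \varpi_\fl\end{pmatrix}n(x)$ and compute $\begin{pmatrix}1 & 0\\0 & \fc^{-1}\end{pmatrix}g=n(x\varpi_\fl^{-1}\fc)\begin{pmatrix}1 & 0\\0 & \varpi_\fl\fc^{-1}\end{pmatrix}$, which contributes the phase $\psi_\fl\bigl(u\varpi_\fl^{\delta_\fl}\varpi_\fl^{-mr-1}\beta\bigr)$ and reduces the claim to comparing $\alpha_\beta(\BE_{\lambda,k},\begin{pmatrix}1 & 0\\0 & \varpi_\fl\fc^{-1}\end{pmatrix})$ with $\alpha_\beta(\BE_{\lambda,k},\begin{pmatrix}1 & 0\\0 & \fc^{-1}\end{pmatrix})$; as $\fc$ is trivial at $\fl$, this is the purely local identity $W_\beta(\phi_{\lambda,\fl},\begin{pmatrix}1 & 0\\0 & \varpi_\fl\end{pmatrix})=\chi_{1,\fl}(\varpi_\fl)|\varpi_\fl|_\fl^{-1}\,W_\beta(\phi_{\lambda,\fl},1)$ for $\ord_\fl(\beta)\ge 1$. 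I would verify this from the explicit description of $\phi_{\lambda,\fl}$ in \ref{expli}(3) (the unique $U_0(\fl)$-invariant section supported on the big cell $B(F_\fl)\mathbf{w}N(\CO_{F,\fl}^*)$ with value $1$ at $\mathbf{w}$): tracking the Iwasawa decomposition of $\begin{pmatrix}0 & -\varpi_\fl\\1 & t\varpi_\fl\end{pmatrix}$ inside the Whittaker integral, the inserted $\begin{pmatrix}1 & 0\\0 & \varpi_\fl\end{pmatrix}$ rescales the integrand by its eigenvalue $\chi_{1,\fl}(\varpi_\fl)|\varpi_\fl|_\fl^{-1}$, while $\ord_\fl(\beta)\ge 1$ ensures the effective domain of integration is unchanged so no boundary term appears. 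This last local computation at $\fl$ is the only step that is not formal bookkeeping, and it is precisely where both the scalar $\chi_{1,\fl}(\varpi_\fl)|\varpi_\fl|_\fl^{-1}$ and the restriction $\ord_\fl(\beta)\ge 1$ genuinely originate; everything else is transport of structure via the cocycle rule, the phase rule, and $U_\lambda$-invariance.
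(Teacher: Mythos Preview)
Your proposal is correct and follows the same route the paper indicates: the paper does not give a detailed proof of this Fact, but reduces it to the sentence ``By the same analysis as above and the construction of $\BE_{\lambda,k}$,'' i.e., precisely the cocycle rule $\alpha_\beta(f|[g],g_f)=\alpha_\beta(f,g_fg)$ together with the explicit matrix identities for $\varsigma_{0,f}^{-1}\rho(\fr)\varsigma_{0,f}$ and $\varsigma_{0,f}^{-1}\rho(\fs)\varsigma_{0,f}$ from Section~\ref{CM}, and the unipotent phase rule at~$\fl$. Your additional local Whittaker computation at~$\fl$ for the second half of~(3) (factoring $\begin{pmatrix}0 & -\varpi_\fl\\1 & t\varpi_\fl\end{pmatrix}=\begin{pmatrix}\varpi_\fl & 0\\0 & 1\end{pmatrix}\mathbf{w}\,n(t\varpi_\fl)$ to extract the scalar $\chi_{1,\fl}(\varpi_\fl)|\varpi_\fl|_\fl^{-1}$, with the support condition forcing $\ord_\fl(\beta)\ge 1$) is exactly the missing detail and is correct.
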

Given a branch character $\nu:\Delta\ra \ov{\BQ}^\times$. Then $\varphi_\lambda$ induces a $p$-adic measure $\varphi^\nu_\lambda$ on $\Gamma$ by \[\int_{\Gamma}\phi d\varphi_\lambda^\nu:=\int_{\Cl_\infty'}\nu\phi d\varphi_\lambda.\]
From now on we consider $\nu=1$, we may also denote $\varphi_\lambda$ for $\varphi_\lambda^\nu$.

\subsubsection{Proof of the case $\rank_{\BZ_\ell}\Gamma=1$}
For each $\fs\in \CS$ and $\zeta\in\mu_{\ell^r}$ a $\ell^r$-th primitive root of unity, let
 \[\BH_{\fs}=\sum_{u\in\CO_{F,\fl}/(\varpi_\fl^r)}\zeta^u \BH_\lambda\left|\left[\begin{pmatrix}
1 & 0 \\
0 & N(\fs)
\end{pmatrix}\begin{pmatrix}
1 & \frac{u\varpi_\fl^{\delta_\fl}}{\varpi_\fl^r} \\
0 & 1
\end{pmatrix}\right]\right..\] 
 \begin{prop}\label{inf}Assume for almost all $\varepsilon\in \wh{\Gamma}$ with $\lambda\varepsilon\in \CX^\circ$ one has $\int_{\Gamma}\varepsilon d\varphi_\lambda\equiv 0\pmod{\fm_p}$. Then there exists a positive integer $r$ and $\ell^{r}$-th primitive roots of unity $\zeta$ such that exists an arithmetic progression $\{n_i\}_i$ such that, \[\sum_{\fs\in\CS}\wh{\chi}(\fs_{\Delta})\BH_{\fs}(x_{n_i}(\fs_{\Gamma}^{-1}a))\equiv 0\pmod{\fm_p}\] for all $a\in\BA_{K,f}^{\times,(p)}\cdot\CO_{K,p}^\times$ and all $i$.
\end{prop}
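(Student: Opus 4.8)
The plan is to extract, from the hypothesis that $\int_\Gamma \varepsilon\, d\varphi_\lambda \equiv 0 \pmod{\fm_p}$ for almost all $\varepsilon$, a single mod-$p$ vanishing statement along a Zariski-dense (hence, in the rank-one case, cofinite) family of CM points, and then to interpolate this $\ell$-adically. First I would recall from the computation preceding the statement that, for $n \ge n_0$ and any $a \in \BA_{K,f}^{\times,(p)}\cdot\CO_{K,p}^\times$,
\[
\int_{\Cl_\infty'}\varepsilon\, d\varphi_\lambda|[a] = a_\fl^{-n}\sum_{\fs\in\CS}\wh{\chi}(\fs_\Delta)\sum_{[t]_n\in\Gamma_n}\phi\wh\chi(t)(\wh\lambda_{\Sigma_p^c}\wh\lambda_{\Sigma_p}^{-1})(t_{\Sigma_p^c})\,\BH_\lambda\Big|\Big[\begin{pmatrix}1&0\\0&N(\fs)\end{pmatrix}\Big](x_n(t\fs_\Gamma^{-1}a)),
\]
where $\varepsilon = \nu\times\phi$ with $\nu = 1$ and $\phi$ a character of $\Gamma_n$. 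So the congruence $\int_\Gamma \varepsilon\, d\varphi_\lambda \equiv 0$ for all $\phi$ of conductor $\le \ell^r$ is equivalent, by finite Fourier inversion on the group $\Gamma_n/\Gamma^{(n)}$ (or rather on the quotient by which the summand factors), to the vanishing mod $\fm_p$ of each ``$\phi$-coefficient''; dualizing back, it says that for each fixed $t_0 \in \Gamma_n$ the weighted sum $\sum_{\fs} \wh\chi(\fs_\Delta)\,\BH_\lambda|[\cdots](x_n(t_0\fs_\Gamma^{-1}a))$ is $\equiv 0\pmod{\fm_p}$. Introducing the additive twist by $\zeta^u$, $\zeta$ a primitive $\ell^r$-th root of unity, and using the third part of the Fact on Fourier coefficients of $\BE_{\lambda,k}|[\,\begin{smallmatrix}1 & u\varpi_\fl^{\delta_\fl}/\varpi_\fl^r\\ 0 & 1\end{smallmatrix}\,]$, this translates precisely into $\sum_{\fs\in\CS}\wh\chi(\fs_\Delta)\,\BH_\fs(x_n(\fs_\Gamma^{-1}a)) \equiv 0 \pmod{\fm_p}$, where $\BH_\fs$ is the $p$-adic modular form defined just above the statement.

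The second step is to arrange that the congruence holds for infinitely many $n$ in an arithmetic progression rather than just one. Here the point is that ``almost all $\varepsilon$'' in the Zariski-dense sense, combined with $\rank_{\BZ_\ell}\Gamma = 1$ (so that Zariski-dense means cofinite on $\wh\Gamma$), forces the vanishing for all $\varepsilon$ of sufficiently large conductor; since the measure-theoretic identity above holds for every $n \gg 0$ with the same $\varphi_\lambda$, and changing $n$ only refines the level $\Gamma^{(n)}$, the cofinite vanishing of the $\phi$-coefficients propagates to all large $n$. Concretely I would fix $r$ large enough that every $\varepsilon$ of conductor $\ge \ell^r$ satisfies the hypothesis, then for each such $n$ run the finite Fourier inversion argument of the first step; this yields the displayed congruence for all $n$ in the tail $\{n \ge N_0\}$, which certainly contains an arithmetic progression (and we may choose the progression to meet the parity constraint on conductors needed when $\fl$ is inert, by restricting to even or odd $n$ as dictated by $\delta^*$).

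The main obstacle — and the place requiring genuine care rather than bookkeeping — is ensuring that the finite Fourier inversion is \emph{clean}: the summand $\phi\wh\chi(t)(\wh\lambda_{\Sigma_p^c}\wh\lambda_{\Sigma_p}^{-1})(t_{\Sigma_p^c})\,(\cdots)$ does not factor through $\Gamma_n$ on the nose, because of the $(\wh\lambda_{\Sigma_p^c}\wh\lambda_{\Sigma_p}^{-1})(t_{\Sigma_p^c})$ factor and the dependence on the choice of lift of $t$ to $\BA_{K,f}^{\times,(p)}\cdot\CO_{K,p}^\times$. One has to check that these ambiguities are absorbed by the $T_2$-type rigidity built into the CM points (cf.\ Remark after Theorem \ref{measure}, item (2), and the construction in Section \ref{CM def}), so that the value $\BH_\lambda|[\cdots](x_n(t\fs_\Gamma^{-1}a))$ genuinely depends only on $[t]_n \in \Gamma_n$ once the branch data is fixed; this is exactly the statement needed for the inversion over the finite abelian group $\Gamma_n$ to be valid. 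A secondary subtlety is that the $U(\fl)$-eigenvalue factor $a_\fl^{-n}$ must be a $p$-adic unit for the congruence to be preserved under clearing it — this follows from $\lambda|_{F_\fl^\times}$ unramified and the explicit shape of $\phi_{\lambda,\fl}$ in Section \ref{expli}, giving $a_\fl = \chi_2(\varpi_\fl)|\varpi_\fl|_\fl^{-1}$ up to units — and must be invoked before stating the final congruence with the level-raised forms $\BH_\fs$. Once these points are settled, the proposition follows, and it is this output that will be fed into Hida's density theorem (Theorem \ref{de}) in the next step to derive the contradiction.
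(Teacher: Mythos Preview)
Your proposal has a genuine gap: it entirely omits the role of Proposition~\ref{Gal} (the Galois stability of the vanishing), and without it the argument does not go through.

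First, the finite Fourier inversion you invoke is not available. The hypothesis gives $\int_\Gamma\phi\,d\varphi_\lambda\equiv 0\pmod{\fm_p}$ only for $\phi$ of \emph{large} conductor (cofinitely many, in rank one), not for all $\phi\in\wh{\Gamma_n}$. The finitely many small-conductor $\phi$ with nonzero integral contribute to the inverse transform, and since $|\Gamma_n|$ is a power of $\ell$ (hence a $p$-adic unit), these contributions do not vanish $p$-adically; so you cannot conclude $g_n(t_0)\equiv 0$ for each $t_0$. Your first paragraph also slips between ``conductor $\le\ell^r$'' and ``conductor $\ge\ell^r$'', which are opposite conditions. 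More seriously, the hypothesis concerns $\varphi_\lambda=\varphi_\lambda|[1]$, whereas your displayed formula and conclusion involve $\varphi_\lambda|[a]$ for \emph{arbitrary} $a$; nothing in a Fourier-inversion argument over $\Gamma_n$ produces the freedom in $a$.

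The paper's mechanism is different. One fixes $r$ maximal with $\mu_{\ell^r}\subset\BF_q$ and picks a single $\varepsilon_i$ of conductor $n_i$. Proposition~\ref{Gal} then upgrades the single congruence $\int\varepsilon_i\,d\varphi_\lambda\equiv 0$ to $\int\varepsilon_i^\sigma\,d\varphi_\lambda|[b]\equiv 0$ for all $\sigma\in\Gal(E'/E)$ and all $b$; this is where the arbitrary $a$ in the conclusion comes from. Summing over $\sigma$ and using $\tr_{\BF_q(\mu_{\ell^{n_i}})/\BF_q}(\zeta)=0$ for $\zeta\notin\mu_{\ell^r}$ kills all terms with $\varepsilon_i(t)\notin\mu_{\ell^r}$, so the full sum over $\Gamma_{n_i}$ collapses to a sum over the small subgroup $\Gamma^{(n_i-r)}/\Gamma^{(n_i)}\simeq\CO_{F,\fl}/\varpi_\fl^r$, $u\mapsto t_{i,u}=1+\varpi_\fl^{n_i-r}ue_\fl$. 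The identity $\varsigma_{\fl,n}^{-1}\rho(t_{i,u})\varsigma_{\fl,n}\in\begin{pmatrix}1&u\varpi_\fl^{\delta_\fl}/\varpi_\fl^r\\0&1\end{pmatrix}U_0(\fl)$ then converts $x_{n_i}(t_{i,u}\fs_\Gamma^{-1}a)$ into the unipotent twist of $x_{n_i}(\fs_\Gamma^{-1}a)$, and passing to a subsequence so that $\varepsilon_i(t_{i,u})=\zeta^u$ is independent of $i$ yields exactly the form $\BH_\fs$. The primitive $\ell^r$-th root $\zeta$ thus arises from the Galois trace, not from an ad hoc additive twist; in your sketch there is no reason for $\zeta$ to appear at all, since pointwise vanishing of $g_n$ would already be stronger than the claimed congruence.
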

\begin{proof}
Let $r$ be the maximal integer such that $\mu_{\ell^r}\subset \BF_q$. We may assume $r\geq 1$ by enlarging $E$ in \S\ref{E}. Note that for any $n\geq r$ and $\zeta\in \mu_{\ell^n}$,
\[\tr_{\BF_q(\mu_{\ell^n})/\BF_q}(\zeta)=\begin{cases}
0, & \text{if $\zeta\notin \BF_q$},\\
[\BF_q(\mu_{\ell^n}):\BF_q]\zeta,& \text{if $\zeta\in\BF_q$}.
 \end{cases}\]
Let $\{n_i\}_i$ be an arithmetic progression such that for $\varepsilon_i\in \wh{\Gamma}$ with conductor $n_i$ at $\fl$, $\int_{\Gamma}\varepsilon_i d\varphi_\lambda\equiv 0\pmod{\fm_p}$. We may assume $\chi$ is trivial on $\Gamma^{n_i-r}$ and $n_i-r\geq n_0$.
Together with Proposition \ref{Gal}, \eqref{measum} and the fact that $a_\fl$ is a $p$-adic unit, for any $a\in\BA_{K,f}^{\times,(p)}\cdot\CO_{K,p}^\times$,
 \begin{equation}\label{aver}\sum_{\fs\in\CS}\wh{\chi}(\fs_{\Delta})\sum_{t\in\Gamma^{n_i-r}/\Gamma^{n_i}}\varepsilon_i(t)
\BH_\lambda\left|\left[\begin{pmatrix}
1 & 0 \\
0 & N(\fs)
\end{pmatrix}\right]\right.(x_n(t\fs_{\Gamma}^{-1}a))\equiv0 \pmod{\fm_p}.\end{equation}
For $n_i$ sufficiently large, we have an isomorphism
\[t:\CO_{F,\fl}/\varpi_\fl^r\simeq\Gamma^{n_i-r}/\Gamma^{n_i},\quad u\mapsto t_{i,u}:=1+\varpi_{\fl}^{n_i-r}ue_\fl,\] where $1,e_\fl\in\CO_{K,\fl}$ such that $1$, $e_\fl$ ia a relative integral basis of $\CO_{K,\fl}$. Furthermore, we take $e_\fl=\vartheta\varpi_\fl^{\delta_\fl}+a_\fl$ as in \S\ref{el}, then
\begin{equation}\label{unip}\BH_\lambda\left|
\left[\begin{pmatrix}
1 & 0 \\
0 & N(\fs)
\end{pmatrix}\right]\right.(x_{n_i}(t_{i,u}\fs_{\Gamma}^{-1}a))=\BH_\lambda\left|
\left[\begin{pmatrix}
1 & 0 \\
0 & N(\fs)
\end{pmatrix}\begin{pmatrix}
1 & \frac{u\varpi_\fl^{\delta_\fl}}{\varpi_\fl^r} \\
0 & 0
\end{pmatrix}\right]\right.(x_{n_i}(\fs_{\Gamma}^{-1}a)).\end{equation}
By replacing $\{\varepsilon_i\}$ by a smaller subset, we can further assume that $\varepsilon_i(t_u)$ does not depend on $i$, only depends on $u$. Thus it follows from \eqref{aver} and \eqref{unip} that there exists a primitive $\ell^r$-th root of unity $\zeta$ such that for any $a\in\BA_{K,f}^{\times,(p)}\cdot\CO_{K,p}^\times$ and any $n_i$
\[\sum_{\fs\in\CS}\wh{\chi}(\fs_{\Delta})\sum_{u\in\CO_{F,\fl}/(\varpi_\fl^r)}\zeta^u
\BH_\lambda\left|
\left[\begin{pmatrix}
1 & 0 \\
0 & N(\fs)
\end{pmatrix}\begin{pmatrix}
1 & \frac{u\varpi_\fl^{\delta_\fl}}{\varpi_\fl^r} \\
0 & 1
\end{pmatrix}\right]\right.(x_{n_i}(\fs_{\Gamma}^{-1}a))\quad \equiv0 \pmod{\fm_p}.\]
\end{proof}
\begin{proof}[Proof of Theorem \ref{main}]
Assume the condition of Proposition \ref{inf} holds and may assume all $n_i$ have the same parity. In the case $\CX$ is residually self-dual and $\fl$ inert, note that we must have all $n_i$ have the same parity since the root numbers of $\lambda\varepsilon_i$ are constant in $\CX^\circ$. By density of CM point (cf. ~Theorem~\ref{de}), we know $\Xi_V$ is dense in $V^{\# \CS}$. For fixed $a\in\BA_{K,f}^{\times,(S\cup \CD_{K/F})}$, by translating $\Xi_V$ by $\varsigma_{0,f}^{-1}\rho(a)\varsigma_{0,f}$, the CM points are still dense in the $\# \CS$-th power of the geometric irreducible component that contains $\CX_{n_1}(a)$. Note that if $\fl$ is not inert or $\fl$ is inert and all $n_i$ are even, then $X_{n_1}(a)$ and $x_0(a)$ lie in the same geometric irreducible component. If $\fl$ is inert and all $n_i$ are odd, then $X_{n_1}(a)$ and $x_1(a)$ lie in the same geometric irreducible component.
Thus for any $a\in\BA_{K,f}^{\times,(S)}$:
\begin{itemize}
\item [(a)] If $\fl$ is not inert, or $\fl$ is inert and all $n_i$ are even, we have $\BH_{1}$ is zero modulo $\fm_p$ at the irreducible component contains $x_0(a)$, in particular is zero modulo $\fm_p$ at the cusp \[\left[\infty,\begin{pmatrix}
1 & 0 \\
0 & \fc^{-1}(a)
\end{pmatrix}\right];\]
\item [(b)] If $\fl$ is inert, and all $n_i$ are odd. As $\wh{\BE}_{\lambda,k}(\CX_{n_1}(a))=\wh{\BE}_{\lambda,k}\left|\left[\begin{pmatrix}
 1 & 0 \\
 0 & \varpi_\fl
\end{pmatrix}\right]\right.(\CX_{n_1+1}(a))$ and $\CX_{n_1+1}$ lies in the irreducible component that contains the cusp \[\left[\infty,\begin{pmatrix}
 1 & 0 \\
 0 & \fc(a)^{-1}
 \end{pmatrix}\right],\] we have \[\BH_{1}':= \sum_{u\in\CO_{F,\fl}/(\varpi_\fl^r)}\zeta^u\sum_{\fr\in\CR}\wh{\chi}(\fr)\theta^{\kappa}\BE\left|\left[\begin{pmatrix}
1 & 0 \\
0 & N(\fr)
\end{pmatrix}\begin{pmatrix}
1 & \frac{u\varpi_\fl^{\delta_\fl}}{\varpi_\fl^r} \\
0 & 1
\end{pmatrix}\begin{pmatrix}
1 &0 \\
0& \varpi_\fl
\end{pmatrix}\right]\right.\] is zero modulo $\fm_p$ at the cusp \[\left[\infty,\begin{pmatrix}
 1 & 0 \\
 0 & \fc(a)^{-1}
 \end{pmatrix}\right].\]
\end{itemize}
On the other hand, if we are in the first case, note that when $\beta$ prime to $\{v|\CD_{K/F}, v\nmid \fl, \text{and}\ v\notin S_{\text{n-split}}\}$, it follows from Fact \ref{coeff} that the $\beta$-th Fourier coefficient of $\BH_1$ at the cusp $\left[\infty,\begin{pmatrix}
1& 0 \\
0 & \fc(a)^{-1}
\end{pmatrix}\right]$ is given by \[\sum_{u\in\CO_{F,\fl}/(\varpi_\fl^r)}\zeta^u\psi_\fl\left(\frac{\beta u\varpi_\fl^{\delta_\fl}}{\varpi_\fl^r}\right)\beta^\kappa \alpha_\beta\left(\BE,\begin{pmatrix}
1& 0 \\
0 & \fc(a)^{-1}
\end{pmatrix}\right).\] Now choose $\beta\in\CO_{F,(p\fl)}^\times$ such that $\zeta^u\psi_\fl\left(\frac{\beta u\varpi_\fl^{\delta_\fl}}{\varpi_\fl^r}\right)=1$ for all $u$, then the $\beta$-th Fourier coefficient of $\BH_{1}$ at the cusp $\left[\infty,\begin{pmatrix}
 1& 0 \\
 0 & \fc(a)^{-1}
 \end{pmatrix}\right]$ is \[\ell^r\beta^\kappa \alpha_\beta\left(\BE,\begin{pmatrix}
1& 0 \\
0 & \fc(a)^{-1}
\end{pmatrix}\right).\] 

If we are in the second case, when $\beta$ prime to $\{v|\CD_{K/F}, v\nmid \fl, \text{and}\ v\notin S_{\text{n-split}}\}$ and $\ord_\fl(\beta)\geq 1$, from Fact \ref{coeff} we know that the $\beta$-th Fourier coefficient of $\BH_1'$ at the cusp $\left[\infty,\begin{pmatrix}
1& 0 \\
0 & \fc(a)^{-1}
\end{pmatrix}\right]$ is given by
\[\chi_{1,\fl}(\varpi_\fl)|\varpi_\fl^{-1}|_\fl\sum_{u\in\CO_{F,\fl}/(\varpi_\fl^r)}\zeta^u\psi_\fl\left(\frac{\beta u\varpi_\fl^{\delta_\fl}}{\varpi_\fl^{r+1}}\right)\beta^\kappa \alpha_\beta\left(\BE,\begin{pmatrix}
 1& 0 \\
 0 & \fc(a)^{-1}
 \end{pmatrix}\right).\]We may further choose $\beta$ such that $\zeta^u\psi_\fl\left(\frac{\beta u\varpi_\fl^{\delta_\fl}}{\varpi_\fl^{r+1}}\right)=1$ for all $u$. 

 However, by Proposition \ref{mod}, we can further choose $a, \beta$ such that $(\beta,p)=1$ and \[\alpha_\beta\left(\BE,\begin{pmatrix}
1& 0 \\
0 & \fc(a)^{-1}
\end{pmatrix}\right)\nequiv 0\pmod{\fm_p},\] contradiction. 
\end{proof}
\begin{remark}\label{sden}
If without the condition that $\{n_i\}_i$ contain an arithmetic progression in the Theorem \ref{de},
we may call strong density. If the strong density holds, from the same analysis, one could replace the
\enquote{Zariski dense} by \enquote{except finitely many} in the case $\rank_{\BZ_\ell}\Gamma=1$.
\end{remark}
\subsubsection{The case $\rank_{\BZ_\ell}\Gamma>1$}\ \label{rg}
 Identify $\Gamma$ with $\BZ_\ell^{g}$ by choosing a $\BZ_\ell$ basis $\{e_i\}$ of $\Gamma$. Then a finite order character $\varepsilon$ of $\Gamma$ can be viewed as a point in $\mu_{\ell^\infty}^{g}\subset \BG_{m,\ov{\BQ}_\ell}^{g}$ via $\varepsilon\mapsto (\varepsilon(e_i))_i$. Denote $X$ the Zariski closure of those points corresponding to $\varepsilon\in \wh{\Gamma}$ such that \[\int_{\Gamma}\varepsilon d\varphi_\lambda\nequiv 0\pmod {\fm_p}.\] The following Manin-Mumford conjecture for torus over a field of characteristic $0$ is known (cf.~\cite{CMM} \cite{La}).
 \begin{prop}
Let $\BG_m^n$ be a torus over a field of characteristic $0$ and $Y$ a subset of torsion points. Then the Zariski closure of $Y$ is a finite union of torsion cosets. Here a torsion coset means that an algebraic subgroup translated by a torsion point.
\end{prop}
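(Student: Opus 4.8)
The plan is to reduce the statement to the classical Manin--Mumford theorem for $\BG_m^n$ over $\ov{\BQ}$ and then prove the latter by induction. First I would note that every torsion point of $\BG_m^n$ has coordinates that are roots of unity, hence algebraic over the prime field; so the Zariski closure $\ov{Y}$ is the base change of a closed subvariety defined over $\ov{\BQ}$, and it suffices to work over $\ov{\BQ}$ (or $\BC$). Decomposing $\ov{Y}$ into irreducible components and distributing the points of $Y$ among them, the task becomes: \emph{if an irreducible closed subvariety $V\subseteq\BG_m^n$ over $\ov{\BQ}$ contains a Zariski-dense set of torsion points, then $V$ is a translate of a subtorus by a torsion point.}

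For this I would argue by induction on the ambient dimension $n$ (with $\dim V$ as a secondary parameter). If $H=\mathrm{Stab}(V)^\circ$ is positive-dimensional, then $V$ is $H$-stable, so $V$ is the preimage of its image $V'\subseteq\BG_m^n/H$, and $V$ is a torsion coset iff $V'$ is; thus I may assume $\mathrm{Stab}(V)$ is finite. Now I would use Galois rigidity together with a combinatorial input on vanishing sums of roots of unity: writing $V$ set-theoretically as a component of $\{f_1=\dots=f_r=0\}$, a torsion point $\zeta$ on $\{f_i=0\}$ yields a vanishing $\ov{\BQ}$-linear relation among the roots of unity $\zeta^{u}$, $u$ in the support of $f_i$; by Mann's theorem (with the Conway--Jones refinement) any minimal such relation involves roots of unity of bounded order, which forces $\zeta$ to satisfy a monomial identity $\zeta^{u-u'}=c$ for two exponents $u,u'$ in the support and a fixed root of unity $c$. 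Hence the torsion of $V$ is contained in finitely many proper torsion cosets $C_j$; since this torsion is Zariski-dense and $V$ is irreducible, $V\subseteq C_j$ for some $j$, i.e.\ (after translating by a torsion point) $V$ lies in a proper subtorus, so the inductive hypothesis applies and $V$ is a torsion coset $yT$. Finally $T\subseteq\mathrm{Stab}(V)$ is finite, so $T$ is trivial and $V$ is a torsion point, while if $\mathrm{Stab}(V)^\circ$ was positive-dimensional we already saw $V$ is a coset of it; in all cases $V$ is a torsion coset, and taking the finite union over the components of $\ov{Y}$ gives the proposition.

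A cleaner alternative, which I would mention, is the Pila--Zannier method: pulling $V$ back along $\exp\colon\BC^n\to\BG_m^n$, $(z_j)\mapsto(e^{2\pi i z_j})$, torsion points of order dividing $N$ on $V$ correspond to rational points of height $\lesssim N$ lying in $\widetilde V\cap[0,1)^n$, a set definable in the o-minimal structure $\BR_{\mathrm{an},\exp}$; the Pila--Wilkie counting theorem confines all but $O_\epsilon(N^{\epsilon})$ of these to finitely many connected semialgebraic — hence, by the shape of $\exp$, affine-linear — subsets of $\widetilde V$, whose images are torsion cosets, while the lower bound $[\BQ(\zeta_N):\BQ]=\varphi(N)\gg_\epsilon N^{1-\epsilon}$ applied to Galois orbits of torsion points on $V$ supplies enough points to force positive-dimensional such cosets. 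Either way, the main obstacle is uniformity: one needs that a \emph{proper} subvariety of $\BG_m^n$ carries only $O_V(N^{o(1)})$ torsion points of order $N$ unless it contains a positive-dimensional torsion coset, which is precisely where Mann's theorem (resp.\ Pila--Wilkie together with the cyclotomic degree bound) is indispensable; organising the induction so that the stabiliser reduction and the coset bookkeeping fit together is the only remaining subtlety. Since for our purposes the statement is classical and fully documented, it is enough to invoke \cite{CMM} and \cite{La}.
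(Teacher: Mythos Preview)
The paper does not prove this proposition at all: it merely states it as the known Manin--Mumford conjecture for tori and cites \cite{CMM} and \cite{La}. Your proposal is therefore strictly more than what the paper does --- you sketch two standard and correct approaches (the Laurent-style induction via Mann's theorem on vanishing sums of roots of unity, and the Pila--Zannier o-minimal method), and then conclude, just as the paper does, that citing \cite{CMM} and \cite{La} suffices. Both you and the paper ultimately take the same route: invoke the literature. Your additional sketches are sound, though for the paper's purposes they are unnecessary; if you want to align with the paper, a one-line citation is all that is expected here.
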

For a torus $\BG_{m,\ov{\BQ}_\ell}^n$, denote $T_\ell(\BG_{m,\ov{\BQ}_\ell}^{n})$ its $\ell$-adic Tate module $\varprojlim_{k}\BG_{m,\ov{\BQ}_\ell}^n[\ell^k]$ and identify $\BG_{m,\ov{\BQ}_\ell}^n[\ell^\infty]$ with $T_\ell(\BG_{m,\ov{\BQ}_\ell}^{n})\otimes\BQ_\ell/T_\ell(\BG_{m,\ov{\BQ}_\ell}^{n})$ via \[\BG_{m,\ov{\BQ}_\ell}^n[\ell^k]\simeq T_\ell(\BG_{m,\ov{\BQ}_\ell}^{n})/\ell^kT_\ell(\BG_{m,\ov{\BQ}_\ell}^{n})\xrightarrow{\cdot\frac{1}{\ell^k}}\frac{1}{\ell^k}T_\ell(\BG_{m,\ov{\BQ}_\ell}^{n})/T_\ell(\BG_{m,\ov{\BQ}_\ell}^{n}),\quad k\in\BZ_{\geq 1}.\]
Let $p$ be a prime distinct from $\ell$ and $q$ is a power of $p$.
\begin{cor}\label{4.3}
Let $X\subset \BG_{m,\ov{\BQ}_\ell}^{n}$ be the Zariski closure of a subset of torsion points such that $X$ is a proper subset, then there exists a basis $e_1,\cdots,e_n$ of $T_\ell(\BG_{m,\ov{\BQ}_\ell}^{n})$, a large power $P$ of $q$, an infinite sequence $0\ll n_1<n_2<\cdots<n_k<\cdots$ containing an arithmetic progression such that the following torsion points are disjoint with $X$:
\[\left\{\sum_{i}P^{m_i}\frac{e_i}{\ell^{n_k}}\in T_\ell(\BG_{m,\ov{\BQ}_\ell}^{n})\otimes \BQ_\ell/T_\ell(\BG_{m,\ov{\BQ}_\ell}^{n})\ \Big|\ m_i, k\in \BZ\right\}.\]
\end{cor}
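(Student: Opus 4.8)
\textbf{Proof proposal for Corollary \ref{4.3}.}

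The plan is to deduce the corollary from the Manin--Mumford proposition for tori by a purely combinatorial/$\ell$-adic analysis of the complement of a finite union of torsion cosets. First I would apply the preceding proposition to the Zariski closure $X\subset\BG_{m,\ov{\BQ}_\ell}^{n}$ of the given set of torsion points: since $X$ is proper, it is a finite union $X=\bigcup_{j=1}^{N}(t_j H_j)$ of torsion cosets with each $H_j\subsetneq\BG_{m,\ov{\BQ}_\ell}^{n}$ a proper subtorus and $t_j$ a torsion point. Passing to Tate modules, each $H_j$ corresponds to a proper saturated $\BZ_\ell$-submodule $L_j:=T_\ell(H_j)\subset T_\ell(\BG_{m,\ov{\BQ}_\ell}^{n})=:L\cong\BZ_\ell^{n}$, and each $t_j$ has an order that is a finite power of $\ell$ times a power of $p$ (the latter coming from the fact that the torsion points we started with are of $\ell$-power and $p$-power order, i.e. $\mu_{\ell^\infty}$-valued twisted by a fixed branch contribution encoded in $q$); in any case the $p$-part of the order of each $t_j$ divides some fixed power $q^{a}$.

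Next I would choose the basis and the parameters. For a single proper saturated submodule $L'\subsetneq L$, pick a $\BZ_\ell$-basis $e_1,\dots,e_n$ of $L$ adapted so that $L'\subset\mathrm{span}(e_1,\dots,e_{n-1})\oplus\ell^{c}\BZ_\ell e_n$ for some $c\ge 1$ is impossible to avoid only if $L'$ has full rank; since $L'$ is proper and saturated we may instead arrange that $e_n\notin L'+\ell L$, i.e. the image of $e_n$ in $L/\ell L$ avoids the image of $L'$. When there are several cosets $t_jH_j$ we must handle them simultaneously: the set of hyperplanes (images of the $L_j$ of corank $\ge 1$) in the $\BF_\ell$-vector space $L/\ell L$ is finite, so as long as $\ell^{n}>N$ — or, after replacing $L$ by a finite-index sublattice which only changes the basis, in general — there is a vector in $L/\ell L$ outside all of them; lift it to $e_n$ and complete to a basis. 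The point $\sum_i P^{m_i}e_i/\ell^{n_k}$ reduces mod $L$ to an element whose "$e_n$-coordinate" is $P^{m_n}/\ell^{n_k}$; for this to lie in $t_jH_j$ one needs $P^{m_n}e_n/\ell^{n_k}\equiv(\text{element of }L_j\otimes\BQ_\ell/L_j)+(\text{$e_n$-part of }t_j)$ modulo the other coordinates, and because $e_n$ was chosen outside $L_j+\ell L$, the valuation bookkeeping forces $\ell^{n_k}$ to divide a bounded quantity depending only on $t_j$ and on $P$ (the $p$-part $q^a$ being an $\ell$-adic unit plays no role). Concretely: if $P$ is a fixed power of $q$, then $P$ is an $\ell$-adic unit, so $P^{m_i}/\ell^{n_k}$ has exact $\ell$-valuation $-n_k$; matching this against the $\ell$-valuations available inside $t_jH_j$ (bounded by $\max_j \ell\text{-}\mathrm{ord}(t_j)$ plus a constant coming from the elementary divisors of $L_j\subset L$) shows that for all $n_k$ larger than some explicit $n_0$ the point is disjoint from every $t_jH_j$, hence from $X$. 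Finally, take $\{n_k\}$ to be any sequence with $n_k>n_0$ containing an arithmetic progression (e.g. $n_k=n_0+k$).

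The main obstacle I expect is the bookkeeping in the multi-coset case: making sure a \emph{single} choice of basis $e_1,\dots,e_n$ (and a single $P$) works against \emph{all} the finitely many torsion cosets $t_jH_j$ at once, including the cosets $t_jH_j$ for which $H_j$ has small corank, where one must argue that the $e_n$-direction genuinely escapes the coset for large $n_k$. This is where the saturatedness of the $L_j$ and the finiteness of the family are used, and where one might need to replace $\BG_m^n$ by an isogenous torus (equivalently $L$ by a commensurable lattice) so that the adapted basis exists — a harmless operation since the statement only asserts the existence of \emph{some} basis of $T_\ell(\BG_{m,\ov{\BQ}_\ell}^{n})$ and \emph{some} large power $P$ of $q$. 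Everything else — the reduction to Manin--Mumford, the passage to Tate modules, and the choice of the arithmetic progression $\{n_k\}$ — is routine.
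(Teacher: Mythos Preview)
Your reduction via Manin--Mumford to a finite union of torsion cosets is the right starting point, but the core of your argument has a genuine gap. You choose a single direction $e_n$ whose image in $L/\ell L$ avoids all the $L_j$, and then assert that the ``$e_n$-coordinate'' of $\sum_i P^{m_i}e_i/\ell^{n_k}$, namely $P^{m_n}/\ell^{n_k}$, has exact denominator $\ell^{n_k}$ and therefore escapes each coset. The problem is that the $e_n$-coordinate \emph{in the basis} $e_1,\dots,e_n$ is not the same thing as the projection to $L/L_j$ unless $L_j=\mathrm{span}(e_1,\dots,e_{n-1})$, which you cannot arrange for several $L_j$ simultaneously. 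The images of $e_1,\dots,e_{n-1}$ in $L/L_j$ are uncontrolled and can cancel the contribution of $e_n$. Concretely: take $n=2$, $L_1=\BZ_\ell\cdot(1,1)$, $e_1=(0,1)$, $e_2=(1,0)$; then $e_2\notin L_1+\ell L$, yet for $m_1=m_2$ the point $(P^{m_1}e_1+P^{m_2}e_2)/\ell^{n_k}$ lies in $G_1$ for every $n_k$. Your valuation bookkeeping never rules this out, because it tracks the wrong coordinate.

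The paper avoids this by two moves you are missing. First, it multiplies $X$ by an integer $N$ so that $NX=\bigcup_j G_j$ is a union of \emph{connected} subtori (no coset shifts to worry about); this is what lets one work purely inside the Tate module and recover the statement for $X$ at the end by rescaling the basis and shifting the $n_k$. Second, and this is the crucial point, the basis is chosen so that the \emph{sum} $\sum_i e_i$ (not merely one $e_i$) avoids every $M_j=T_\ell(G_j)$, and then $P$ is taken to be a power of $q$ so close to $1$ in $\BZ_\ell$ that every perturbation $\sum_i P^{m_i}e_i$ remains outside $\bigcup_j M_j$. Only this control on the full sum, together with the saturatedness of each $M_j$, guarantees that $\sum_i P^{m_i}e_i/\ell^{n_k}$ misses each $G_j$. (Your remark that the $t_j$ have order a power of $\ell$ times a power of $p$ is also confused; the relevant torsion is purely $\ell$-power, and the role of $q$ is only as the base of the exponent $P$.)
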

\begin{proof}By above proposition, there exists $N\in\BZ_{>0}$ such that $NX=\cup_{j} G_j$, where $G_j$ is a proper sub-torus of $\BG_{m,\ov{\BQ}}^n$, we may assume each $G_j$ is connected.
By counting points modulo $\ell^k$, we have the following fact:
\begin{fact}
Let $M_j\subset \BZ_{\ell}^n, j=1,\cdots s$ be sub-$\BZ_\ell$-modules with rank less than $n$. Then there exists a basis $e_1,\cdots, e_n$ of $\BZ_\ell^n$ such that all $e_i$ and $\sum e_i$ are not in $\cup M_j$.
\end{fact} Now let $M_j=T_\ell(G_j)$, we thus know there exists basis $e_i',\cdots, e_n'$ of $T_\ell(\BG_{m,\ov{\BQ}_\ell}^{n})$ such that $\sum e_i'$ is not in $\cup M_j$.

Take $P$ to be a large power of $q$ that is close enough to $1$ in $\BZ_\ell$ such that $\{\sum_{i}P^{m_i}e_i'\ |\ m_i\in\BZ_\ell\}\subset T_\ell(\BG_{m,\ov{\BQ}_\ell}^{n})$ disjoint with $\cup M_j$. As each $G_j$ is a direct factor of $\BG_{m,\ov{\BQ}_\ell}^n$, we have for any $k> 0$, the torsion points $\{\sum_{i}P^{m_i}\frac{e_i'}{\ell^k}\ |\ m_i\in\BZ_\ell\}$ are disjoint with $NX$. Write $N=\ell^{n_0}\cdot a$ with $(a,\ell)=1$ and let $e_i=a^{-1}e_i'$, thus for $k> n_0$, $\{\sum_{i}P^{m_i}\frac{e_i'}{\ell^k}\ |\ m_i\in\BZ_\ell\}$ are disjoint with $X$.
\end{proof}
Now we complete proof in the case $g:=\rank_{\BZ_\ell}\Gamma>1$. Identify $\Hom(\Gamma,\mu_{\ell^\infty})$ with $\BG_{m,\ov{\BQ}_\ell}^{g}[\ell^\infty]$. Recall that $X\subset \BG^{g}_{m,\ov{\BQ}_\ell}$ is the Zariski closure of those images of $\varepsilon$ such that $\int_{\Gamma}\varepsilon\varphi_{\lambda}\nequiv 0\pmod {\fm_p}$.

 \begin{prop}\label{propg}If $X$ is a proper closed subset of $\BG_{m,\ov{\BQ}_\ell}^{g}$, then there exists an infinite sequence of $d$-tuple of characters $(\varepsilon_{1,k},\cdots,\varepsilon_{d,k})_{k}$ on $\Gamma$ , a large power $P$ of $q$ and $r$ such that
\begin{itemize}\item[(1)] $\cap \ker \varepsilon_{i,k}=\Gamma^{\ell^{n_k}}$ for some increasing sequence $\{n_k\}_k$ contains an arithmetic progression; 
 \item[(2)] We have\[\sum_{\fs\in\CS}\wh{\chi}(\fs_{\Delta})\sum_{\substack{[t]_{c_k}\in\Gamma_{c_k}\\ \varepsilon_{i,k}(t)\in \mu_{\ell^r}}}\prod_{i}\varepsilon_{i,k}(t)\wh{\chi}(t)(\wh{\lambda}_{\Sigma_p^c}\wh{\lambda}_{\Sigma_p}^{-1})(t_{\Sigma_p^c})\BH_\lambda\left|\left[\begin{pmatrix}1 & 0 \\ 0 & N(\fs)\end{pmatrix}\right]\right.(x_{c_k}(t\fs_{\Gamma}^{-1}a))\equiv 0\pmod{\fm_p}\] for all $a\in\BA_{K,f}^{\times,(p)}\cdot\CO_{K,p}^\times$. Here $t$ is chosen in $\BA_{K,f}^{\times, (p)}\cdot\CO_{K,p}^\times$ and $c_k$ is maximal such that $\Gamma^{(c_k)}\subset \Gamma^{\ell^{n_k}}$.
\end{itemize}
\end{prop}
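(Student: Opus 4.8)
The proposition is the rank $>1$ counterpart of Proposition \ref{inf}, and the argument follows the same pattern; the only new ingredient is that the ad hoc choice of an arithmetic progression of conductors is now produced by the Manin--Mumford theorem for tori in the form of Corollary \ref{4.3}. Assume $X$ is a proper closed subset of $\BG_{m,\ov{\BQ}_\ell}^{g}$. Applying Corollary \ref{4.3} to $X$ yields a basis $e_1,\dots,e_g$ of $T_\ell(\BG_{m,\ov{\BQ}_\ell}^{g})$, a large power $P$ of $q$ (with $P\equiv 1$ modulo a high power of $\ell$), and an infinite sequence $0\ll n_1<n_2<\cdots$ containing an arithmetic progression, such that every torsion point $\sum_i P^{m_i}e_i/\ell^{n_k}$ lies outside $X$. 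For each $k$ let $\varepsilon_{i,k}\in\wh{\Gamma}$ be the character of conductor $\ell^{n_k}$ dual to $e_i$; since $\{e_i\}_i$ is a $\BZ_\ell$-basis, $\bigcap_i\ker\varepsilon_{i,k}=\Gamma^{\ell^{n_k}}$, which is assertion (1), with $d=g$. Fix $r$ maximal with $\mu_{\ell^r}\subset\BF_q$ (enlarging $E$ in \ref{E} so that $r\geq 1$), and, after passing to a subsequence of $\{n_k\}_k$, assume $\wh{\chi}$ is trivial on $\Gamma^{\ell^{n_k-r}}$ and $n_k-r\geq n_0$ for all $k$.

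Since each of the torsion points above avoids $X$, it is a point where $\int_\Gamma(\cdot)\,\varphi_\lambda^\nu$ reduces to $0$, i.e.\ $\int_\Gamma\bigl(\prod_i\varepsilon_{i,k}^{\,P^{m_i}}\bigr)\varphi_\lambda^\nu\equiv 0\pmod{\fm_p}$ for every $\vec m$. Feeding this into the branch-character unfolding already established in the text — the splitting $\Cl_\infty'=\Delta\times\Gamma$, $\Delta=\Delta^{\mathrm{alg}}\times(\cdots)$, the representatives $\CS,\CR$, and the operator $\BH_\lambda$ — together with the Galois transformation rule of Proposition \ref{Gal} (and the fact that $a_\fl$ is a $p$-adic unit), we obtain, for every $\sigma\in\Gal(E'/E)$, every $b\in\BA_{K,f}^{\times,(p)}\cdot\CO_{K,p}^\times$, and every $\vec m$,
\[
\sum_{\fs\in\CS}\wh{\chi}(\fs_\Delta)\sum_{[t]_{c_k}\in\Gamma_{c_k}}\bigl(\textstyle\prod_i\varepsilon_{i,k}(t)^{P^{m_i}}\bigr)^{\sigma}\wh{\chi}(t)\bigl(\wh{\lambda}_{\Sigma_p^c}\wh{\lambda}_{\Sigma_p}^{-1}\bigr)(t_{\Sigma_p^c})\,\BH_\lambda\Big|\left[\begin{pmatrix}1&0\\0&N(\fs)\end{pmatrix}\right]\!\bigl(x_{c_k}(t\fs_\Gamma^{-1}a)\bigr)\equiv 0\pmod{\fm_p},
\]
with $c_k$ chosen so that $\Gamma^{(c_k)}\subset\Gamma^{\ell^{n_k}}$ and $a$ ranging over $\BA_{K,f}^{\times,(p)}\cdot\CO_{K,p}^\times$ (the twist by $b$ being absorbed into $a$).

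It remains to take, for fixed $k$, the appropriate finite linear combination of these congruences: average $\sigma$ over the powers of the Frobenius $\Frob_\wp$ (which acts on $\ell$-power roots of unity by $\zeta\mapsto\zeta^{q}$), and average over the exponent vectors $\vec m$ in a suitable finite range. On one hand the Frobenius average replaces $\prod_i\varepsilon_{i,k}(t)^{P^{m_i}}$ by its trace $\tr_{\BF_q(\mu_{\ell^{n_k}})/\BF_q}$, which annihilates every $t$ with $\sum_i P^{m_i}t_i\not\equiv 0\pmod{\ell^{n_k-r}}$; on the other hand, since $P\equiv 1$ modulo a power of $\ell$ exceeding $r$, a Fourier inversion in the $P^{m_i}$-twists — one coordinate at a time — cuts the surviving sum down to those $t$ with $\varepsilon_{i,k}(t)\in\mu_{\ell^r}$ for all $i$, on which $\prod_i\varepsilon_{i,k}(t)^{P^{m_i}}=\prod_i\varepsilon_{i,k}(t)$; the resulting overall constant is a nonzero power of $\ell$. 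Finally, passing once more to a subsequence of $\{n_k\}_k$ — still containing an arithmetic progression, as in Proposition \ref{inf} — so that the induced character of $\Gamma^{\ell^{n_k-r}}/\Gamma^{\ell^{n_k}}\cong(\BZ/\ell^r\BZ)^{g}$ is the same for every $k$ (there are only finitely many such), we arrive at (2).

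The crux is the double averaging in the last paragraph: one must run the Galois/Frobenius summation and the multivariable Fourier inversion over the $P^{m_i}$ simultaneously so that exactly the $g$ conditions $\varepsilon_{i,k}(t)\in\mu_{\ell^r}$ come out, with a single exponent $r$ and a single power $P$ that is effective in all $g$ coordinate directions and along the whole subsequence $\{n_k\}_k$. That uniformity is precisely what the common basis $\{e_i\}_i$ and the common $P$ furnished by Corollary \ref{4.3} (hence by Manin--Mumford for tori) provide; when $g=1$ this collapses to the single trace identity used in Proposition \ref{inf}.
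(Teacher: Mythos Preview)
Your sketch has the right ingredients (Corollary~\ref{4.3}, Proposition~\ref{Gal}, the unfolding into the $\CS$--$\CR$ sum) and the right overall strategy. The gap is in the ``double averaging'' paragraph, which is where the actual content lies.

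The paper does \emph{not} first trace over powers of $\Frob_\wp=\Frob_q$ and then perform a separate ``Fourier inversion'' in $\vec m$. Instead it takes a single linear combination: for each $i$ let $S_{i,k}\subset P^{\BZ}$ be a set of representatives of the $\Frob_P$-orbit of $\varepsilon_{i,k}$ (so $S_{i,k}=\{1,P,P^2,\ldots\}\bmod\ell^{n_k}$), and sum over the box $\vec s\in\prod_i S_{i,k}$. The point is that this sum \emph{factorizes}:
\[
\sum_{\vec s\in\prod_i S_{i,k}}\prod_i\varepsilon_{i,k}^{s_i}(t)=\prod_i\Bigl(\sum_{s\in S_{i,k}}\varepsilon_{i,k}^{s}(t)\Bigr)=\prod_i\tr_{\BF_P(\mu_{\ell^{n_k}})/\BF_P}\bigl(\varepsilon_{i,k}(t)\bigr),
\]
and each factor vanishes unless $\varepsilon_{i,k}(t)\in\BF_P^\times[\ell^\infty]=\mu_{\ell^{r'}}$, in which case it equals $|S_{i,k}|\cdot\varepsilon_{i,k}(t)$ (a $p$-unit times $\varepsilon_{i,k}(t)$). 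This immediately yields the coordinate-wise condition $\varepsilon_{i,k}(t)\in\mu_{\ell^{r'}}$ for every $i$, and the correct $r$ in the statement is $r'$ determined by $\BF_P$, not your $r$ determined by $\BF_q$.

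Your two-step procedure does not give this. Tracing $\prod_i\varepsilon_{i,k}(t)^{P^{m_i}}$ down to $\BF_q$ first yields the condition $\prod_i\varepsilon_{i,k}(t)^{P^{m_i}}\in\mu_{\ell^r}$, a constraint on the \emph{product}, not on each factor; a $t$ with $\varepsilon_{1,k}(t)$ and $\varepsilon_{2,k}(t)$ both primitive $\ell^{n_k}$-th roots but nearly inverse to each other survives. Your subsequent ``Fourier inversion in the $P^{m_i}$-twists, one coordinate at a time'' is not specified, and once the $\BF_q$-trace has collapsed the product there is no obvious way to disentangle the coordinates. The clean route is to skip the $\Frob_q$-average entirely and sum directly over $\vec m$ in the box as above; Proposition~\ref{Gal} with $\sigma=1$ already furnishes the $|[a]$-version you need. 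Finally, the ``pass to a subsequence so the induced character is independent of $k$'' belongs to the proof of Theorem~\ref{main} after this proposition, not to the proposition itself.
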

\begin{proof}
By Corollary \ref{4.3}, there exists a large power $P$ of $q$, an infinite sequence of $g$-tuple characters $(\varepsilon_{1,k},\cdots,\varepsilon_{g,k})_{k}$ on $\Gamma$ such that $\cap \ker \chi_{i,k}=\Gamma^{\ell^{n_k}}$ for some sequence $\{n_k\}$ that containing an arithmetic progression and \[\int_{\Gamma}\prod_i \varepsilon_{i,k}^{P^{m_i}}d\varphi_\lambda\equiv 0\pmod {\fm_p}\] for all $(m_1,m_2,\cdots,m_g)\in \BZ^{g}$. Let $S_{i,k}\subset P^{\BZ_\ell}$ be a finite subset such that $\varepsilon_{i,k}^{S_{i,k}}$ is the set of all Galois conjugates of $\Frob_P$ of $\varepsilon_{i,k}$. Let $c_k$ such that $\Gamma^{(c_k)}\subset \Gamma^{\ell^{n_k}}$, we have
\[0\equiv \sum_{\fs\in\CS}\wh{\chi}(\fs_{\Delta})\sum_{[t]_{c_k}\in\Gamma_{c_k}}\prod_{i}\sum_{s\in S_{i,k}}\varepsilon_{i,k}^{s}(t) \wh{\chi}(t)(\wh{\lambda}_{\Sigma_p^c}\wh{\lambda}_{\Sigma_p}^{-1})(t_{\Sigma_p^c})\BH_\lambda\left|\left[\begin{pmatrix}1 & 0 \\ 0 & N(\fs)\end{pmatrix}\right]\right.(x_{c_k}(t\fs_{\Gamma}^{-1}a)).\] Now let $r'$ be the integer such that $\BF_P^\times[\ell^\infty]=\mu_{\ell^{r'}}$.
\end{proof}
Suppose $(\ell)=\fl^m$ in $F_\fl$. There exists $n_0$ such that for $n$ sufficiently large, $\Gamma^{(mn+n_0)}\subset \Gamma^{n}\subset \Gamma^{(mn-n_0)}$. Then for $k$ sufficiently large, the set $\{[t]\in\Gamma_{mn_k+n_0}|\ \varepsilon_{i,k}(t)\in \mu_\ell^{r'}\}$ is the same as the set $\Gamma^{\ell^{n_k -r'}}/\Gamma^{(mn_k+n_0)}$. Let $r$ be an integer such that for $k$ sufficiently large, $\Gamma^{\ell^{n_k -r'}}/\Gamma^{(mn_k+n_0)}\hookrightarrow \Gamma^{(mn_k+n_0-mr)}/\Gamma^{(mn_k+n_0)}$.
Note that for $n$ sufficiently large, the logarithm map induces an isomorphism of abelian groups \[1+\varpi_\fl^n\CO_{K,\fl}\simeq \varpi_\fl^n\CO_{K,\fl}.\] Recall that $\CO_{K,n}=\CO_{F,\fl}\oplus\CO_{F,\fl}\varpi_{\fl}^ne_{\fl}$, $e_{\fl}=\vartheta\varpi_{\fl}^{\delta_\fl}+\fa_\fl$ as in \ref{el}. Fix a $\BZ_\ell$-basis $e_1,\cdots,e_g$ of $\CO_{F,\fl}e_{\fl}$.
\begin{fact}
For $n$ sufficiently large, the following is a complete set of representatives of $\Gamma^{(n-rm)}/\Gamma^{(n)}$ :\[t_{n,i,j}=1+\varpi_\fl^{n-mr}je_i, 0\leq j\leq \ell^{r}-1\] and the logarithm map induces an isomorphism of group:\[\Gamma^{(n-mr)}/\Gamma^{(n)}\simeq \frac{\varpi_\fl^{n-mr}\CO_{F,\fl}e_{\fl}}{\varpi_\fl^{n}\CO_{F,\fl}e_{\fl}},\quad t_{n,i,j}\mapsto 1-t_{n,i,j} .\]
\end{fact}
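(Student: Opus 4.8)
The statement to prove is the final displayed \textbf{Fact}: for $n$ sufficiently large, the elements $t_{n,i,j} = 1 + \varpi_\fl^{n-mr} j e_i$ with $0 \le j \le \ell^r - 1$ form a complete set of representatives of $\Gamma^{(n-rm)}/\Gamma^{(n)}$, and the logarithm map induces an isomorphism $\Gamma^{(n-mr)}/\Gamma^{(n)} \simeq \varpi_\fl^{n-mr}\CO_{F,\fl}e_\fl / \varpi_\fl^n \CO_{F,\fl}e_\fl$ via $t_{n,i,j} \mapsto 1 - t_{n,i,j}$.

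\textbf{Approach.} The plan is to reduce everything to the structure of the group $\Gamma^{(n)} = (1+\varpi_\fl^n\CO_{K,\fl})/(1+\varpi_\fl^n\CO_{F,\fl})$ for large $n$, where the $\fl$-adic logarithm and exponential give mutually inverse isomorphisms between the principal unit group $1+\varpi_\fl^n\CO_{K,\fl}$ and the additive group $\varpi_\fl^n\CO_{K,\fl}$ (this is exactly the "logarithm map induces an isomorphism of abelian groups $1+\varpi_\fl^n\CO_{K,\fl}\simeq \varpi_\fl^n\CO_{K,\fl}$" recalled just before the Fact; it holds for $n$ larger than $e/(\ell-1)$ where $e$ is the ramification index of $\fl$ over $\ell$, so that the usual convergence/bijectivity estimates apply). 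First I would use this to identify $\Gamma^{(n)}$ additively: since $\CO_{K,\fl} = \CO_{F,\fl} \oplus \CO_{F,\fl}e_\fl$ with $e_\fl = \vartheta\varpi_\fl^{\delta_\fl} + a_\fl$ a complement of $1$ in a relative integral basis, the logarithm carries $1+\varpi_\fl^n\CO_{K,\fl}$ to $\varpi_\fl^n\CO_{F,\fl} \oplus \varpi_\fl^n\CO_{F,\fl}e_\fl$ and $1+\varpi_\fl^n\CO_{F,\fl}$ to $\varpi_\fl^n\CO_{F,\fl}$, so the quotient $\Gamma^{(n)}$ is identified with $\varpi_\fl^n\CO_{F,\fl}e_\fl$. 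Then $\Gamma^{(n-mr)}/\Gamma^{(n)}$ is identified with $\varpi_\fl^{n-mr}\CO_{F,\fl}e_\fl/\varpi_\fl^n\CO_{F,\fl}e_\fl$, which is visibly a free $\CO_{F,\fl}/\varpi_\fl^{mr}$-module of rank $1$ (equivalently $(\BZ/\ell^r)^s$ after picking the $\BZ_\ell$-basis $e_1,\dots,e_s$ of $\CO_{F,\fl}e_\fl$, using $(\ell) = \fl^m$).

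\textbf{Key steps, in order.} (1) Fix $n_1$ large enough that $\log$ and $\exp$ converge and are bijective between $1+\varpi_\fl^n\CO_{K,\fl}$ and $\varpi_\fl^n\CO_{K,\fl}$ for all $n \ge n_1 - mr$; this is the only place "$n$ sufficiently large" is used, and it depends only on $\fl, \ell$. (2) Check $\log$ restricts to a group isomorphism $1+\varpi_\fl^k\CO_{K,\fl} \xrightarrow{\sim} \varpi_\fl^k\CO_{K,\fl}$ for each $k \ge n_1 - mr$ and is compatible with the $\CO_{F,\fl}$-submodule decomposition $\CO_{K,\fl} = \CO_{F,\fl} \oplus \CO_{F,\fl}e_\fl$, so passing to the quotient gives $\Gamma^{(k)} \xrightarrow{\sim} \varpi_\fl^k\CO_{F,\fl}e_\fl$, natural in $k$; taking the quotient by $\Gamma^{(n)}$ yields the claimed isomorphism $\Gamma^{(n-mr)}/\Gamma^{(n)} \simeq \varpi_\fl^{n-mr}\CO_{F,\fl}e_\fl/\varpi_\fl^n\CO_{F,\fl}e_\fl$. (3) Note $\log(1+x) = x - x^2/2 + \cdots \equiv x \pmod{\varpi_\fl^{2(n-mr)}}$, and since we only care about the image in $\varpi_\fl^{n-mr}\CO_{F,\fl}e_\fl/\varpi_\fl^n\CO_{F,\fl}e_\fl$ and $2(n-mr) \ge n$ for $n \ge 2mr$, the logarithm of $t_{n,i,j} = 1 + \varpi_\fl^{n-mr}je_i$ is congruent modulo $\varpi_\fl^n\CO_{F,\fl}e_\fl$ to $\varpi_\fl^{n-mr}je_i = -(1 - t_{n,i,j})$ — wait, more precisely $1 - t_{n,i,j} = -\varpi_\fl^{n-mr}je_i$, so $\log t_{n,i,j} \equiv -(1-t_{n,i,j}) \pmod{\varpi_\fl^n}$; the sign is absorbed into the isomorphism and one records the map as $t_{n,i,j}\mapsto 1-t_{n,i,j}$ up to this harmless sign (as the surrounding text does). (4) Finally, the $\varpi_\fl^{n-mr}je_i$ with $1\le i\le s$, $0\le j\le \ell^r-1$ run over a complete set of coset representatives of $\varpi_\fl^n\CO_{F,\fl}e_\fl$ inside $\varpi_\fl^{n-mr}\CO_{F,\fl}e_\fl$, because $\{e_i\}$ is a $\BZ_\ell$-basis and $\varpi_\fl^{n-mr}\CO_{F,\fl}/\varpi_\fl^n\CO_{F,\fl} \cong \CO_{F,\fl}/\varpi_\fl^{mr}$ has, after the basis, the representatives $\{j : 0\le j\le \ell^r-1\}$ in each coordinate (using $\fl^m = (\ell)$, so $\CO_{F,\fl}/\varpi_\fl^{mr}$ is killed by $\ell^r$ and each $\BZ_\ell$-coordinate contributes $\ell^r$ classes). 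Transporting back through $\exp$ gives that the $t_{n,i,j}$ exhaust $\Gamma^{(n-mr)}/\Gamma^{(n)}$.

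\textbf{Main obstacle.} There is no deep obstacle here — this is a bookkeeping lemma about $\fl$-adic logarithms of principal units. The one point requiring a little care is the bound "$n$ sufficiently large": one must make the convergence threshold uniform, i.e. independent of $n$ (it should only involve the ramification of $\fl/\ell$ and the fixed integers $m, r$), so that the statement genuinely holds for all large $n$ and the earlier uses of the Fact in Proposition \ref{propg} and its consequences are legitimate. I would also double-check that the logarithm is $\CO_{F,\fl}$-linear — it is, since $\log$ commutes with the $\CO_{F,\fl}$-module structure on $1+\varpi_\fl^n\CO_{K,\fl}$ coming from multiplication, which is immediate from the power series — so that the decomposition $\CO_{K,\fl} = \CO_{F,\fl}\oplus\CO_{F,\fl}e_\fl$ is respected and step (2) goes through cleanly.
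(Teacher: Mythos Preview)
The paper states this Fact without proof, treating it as an immediate consequence of the sentence preceding it (that for $n$ large the logarithm gives $1+\varpi_\fl^n\CO_{K,\fl}\simeq \varpi_\fl^n\CO_{K,\fl}$) together with the relative integral basis $\{1,e_\fl\}$. Your proposal supplies exactly those routine details and is correct; this is the same approach the paper implicitly has in mind.

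One small remark on your phrasing: the justification in your final paragraph that ``$\log$ is $\CO_{F,\fl}$-linear'' is not quite the right thing to say (the multiplicative group $1+\varpi_\fl^n\CO_{K,\fl}$ carries no natural $\CO_{F,\fl}$-module structure beyond the $\BZ_\ell$-action by exponentiation). What you actually need, and what makes step~(2) go through, is simply that $\log$ maps $1+\varpi_\fl^n\CO_{F,\fl}$ bijectively onto $\varpi_\fl^n\CO_{F,\fl}$, which is immediate since the power series for $\log$ has coefficients in $\BQ\subset F_\fl$; the quotient isomorphism $\Gamma^{(n)}\simeq \varpi_\fl^n\CO_{K,\fl}/\varpi_\fl^n\CO_{F,\fl}\simeq \varpi_\fl^n\CO_{F,\fl}e_\fl$ then follows by the snake lemma, without any claim that $\log$ respects the direct-sum splitting on the nose.
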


Now we know that there exists a subgroup $U\subset \frac{\CO_{F_\fl}}{(\varpi_\fl^{mr})}$ does not depend on $k$ represented by elements in $\CO_{F,\fl}$ such that $\Gamma^{\ell^{n_k -r'}}/\Gamma^{(mn_k+n_0)}$ can be identified with $\{t_{k,u}:=1+\varpi_\fl^{mn_k+n_0-mr}ue_{\fl}\pmod {1+\varpi_\fl^{mn_k+n_0}}|\ u\in U\}$.
\begin{proof}[Proof of Theorem \ref{main}]
If $X$ is a proper closed subset of $\BG_{m,\ov{\BQ}_\ell}^{g}$.

We may suppose further that
for any $u\in U$, $\prod_{i=1}^g \varepsilon_{i,k}(t_{k,u})$ does not depend on $k$. Denote it by $\zeta_u\in \mu_{\ell^{r'}}$.

We thus have for $k\geq 0$,
\[\displaystyle \begin{aligned}
&\sum_{\fs\in\CS}\wh{\chi}(\fs_\Delta)\sum_{u\in U}\zeta_u\wh{\chi}(t_{k,u})\BH_\lambda\left|\left[\begin{pmatrix}1 & 0 \\ 0 & N(\fs)\end{pmatrix}\right]\right.(x_{mn_k+n_0}(at_{k,u}\fs_{\Gamma}^{-1}))\\
&\equiv \sum_{\fs\in\CS}\wh{\chi}(\fs_\Delta)\sum_{u\in U}\zeta_u\BH_\lambda\left|\left[\begin{pmatrix} 1 & 0 \\0 & N(\fs)\end{pmatrix}\begin{pmatrix}1 & \frac{u\varpi_\fl^{\delta_\fl}}{\varpi_\fl^{mr}} \\ 0 & 1\end{pmatrix}\right]\right.(x_{mn_k+n_0}(a\fs_{\Gamma}^{-1}))\\
&\equiv 0\pmod{\fm_p}
\end{aligned}\] for all $a\in \BA_{K,f}^{\times,(p)}\cdot\CO_{K,p}^\times$.

Here we take $k$ sufficiently large such that $\wh{\chi}(t_{k,u})=1$ for all $u$. The first equality follows from the fact that $n$ sufficiently large, $\varsigma_{\fl,n}^{-1}\rho(1+u\varpi_\fl^{n-mr}e_{\fl})\varsigma_{\fl,n}\in\begin{pmatrix}
1 & \frac{u\varpi_\fl^{\delta_\fl}}{\varpi_\fl^{mr}} \\0 & 1\end{pmatrix}\Gamma_0(\fl)$. Recall here that $\Gamma_0(\fl)=\begin{pmatrix}\CO_{F,\fl}^\times & \fp_\fl^{\delta_\fl} \\ \fp_\fl^{1-\delta_\fl}& \CO_{F,\fl}^\times\end{pmatrix}$. As \[\Gamma(\fl^{2mr}):=\begin{pmatrix}1+\fp_\fl^{2mr} & \fp_\fl^{\delta_\fl+2mr} \\\fp_\fl^{-\delta_\fl+2mr} & 1+\fp_\fl^{2mr} \end{pmatrix}\subset \begin{pmatrix} 1 & \frac{u\varpi_\fl^{\delta_\fl}}{\varpi_\fl^{mr}} \\ 0 & 1 \end{pmatrix}\Gamma_0(\fl)\begin{pmatrix}1 & -\frac{u\varpi_\fl^{\delta_\fl}}{\varpi_\fl^{mr}} \\ 0 & 1 \end{pmatrix},\] we may view the level of $\BH_\lambda\left|\left[\begin{pmatrix} 1 & 0 \\0 & N(\fs) \end{pmatrix}\begin{pmatrix}1 & \frac{u\varpi_\fl^{\delta_\fl}}{\varpi_\fl^{mr}} \\0 & 1\end{pmatrix}\right]\right.$ at $\fl$ is $\Gamma(\fl^{2mr})$.

Applying density of CM points (cf.~Theorem~\ref{de}), we have
\[\displaystyle\BH_{\fs}:=\sum_{u\in U}\zeta_u\BH_\lambda\left|\left[\begin{pmatrix}1 & 0 \\0 & N(\fs) \end{pmatrix}\begin{pmatrix}1 &\frac{u\varpi_\fl^{\delta_\fl}}{\varpi_\fl^{mr}} \\ 0 & 1\end{pmatrix}\right]\right.\] has zero Fourier coefficient at the cusps \[\left[\infty,\begin{pmatrix} 1 & 0 \\ 0 & \fc^{-1}(\fa)\end{pmatrix}\right],\] $a\in\BA_{K,f}^{\times,(S)}$.

On the other hand, note that for any cusp of the form $\left[\infty,\begin{pmatrix} 1 & 0 \\ 0 & \fc^{-1}\end{pmatrix}\right]$ for $\fc\in\BA_{K,f}^{\times,(S)}$ \[
\begin{aligned}
&\alpha_\beta\left(\BH_\lambda\left|\left[\begin{pmatrix} 1 & 0 \\ 0 & N(\fs)\end{pmatrix}\begin{pmatrix} 1 &\frac{u\varpi_\fl^{\delta_\fl}}{\varpi_\fl^{mr}} \\ 0 & 1 \end{pmatrix}\right]\right.,\begin{pmatrix} 1 & 0 \\ 0 & \fc^{-1} \end{pmatrix}\right)\\
&=\psi_\fl\left(\frac{u}{\varpi_\fl^{mr}}\beta\varpi_\fl^{\delta_\fl}\right)\alpha_\beta\left(\BH_\lambda,\begin{pmatrix} 1 & 0 \\0 & N(\fs) \fc^{-1}\end{pmatrix}\right).\\
\end{aligned}\]
 Thus the Fourier expansion of $\BH_{\fs}$ at the cusp $\begin{pmatrix}1 & 0 \\0 & \fc^{-1}\end{pmatrix}$ is equal to
\[\begin{aligned}
&\sum_{\beta}\sum_{u\in U}\zeta_u\psi_\fl\left(\frac{u}{\varpi_\fl^{mr}}\beta\varpi_\fl^{\delta_\fl}\right)\alpha_{\beta}\left(\BH_\lambda,\begin{pmatrix}
 1 & 0 \\ 0 & N(\fs)\fc^{-1} \end{pmatrix}\right)q^\beta\\
&=\sum_{\beta}\beta^\kappa\sum_{u\in U}\zeta_u\psi_\fl\left(\frac{u}{\varpi_\fl^{mr}}\beta\varpi_\fl^{\delta_\fl}\right)\sum_{\fr\in\CR}\wh{\chi}(\fr)\alpha_{\beta}(\BE,\begin{pmatrix}1 &0 \\0& N(\fr\fs)\fc^{-1}
\end{pmatrix})q^\beta.
\end{aligned}\]
By the Proposition \ref{mod}, we can choose $\beta\in F$ such that
\begin{enumerate}
\item [(i)] $\beta$ is prime to $p$ and $\{v|\CD_{K/F}, v\nmid \fl, \text{and}\ v\notin S_{\text{n-split}}\}$;
\item [(ii)] $\zeta_u\psi_\fl\left(\frac{u}{\varpi_\fl^{mr}}\beta\varpi_\fl^{\delta_\fl}\right)=1$ for all $u\in U$;
\item [(iii)] There exists a cusp of the form $\fc:=\fc(\fa)$, $a\in\BA_{K,f}^{\times,(S)}$ such that $\alpha_\beta\left(\BE,\begin{pmatrix} 1 & 0 \\ 0 & \fc^{-1} \end{pmatrix}\right)\neq 0$ modulo $\fm_p$.
\end{enumerate}
For the reason of (ii), since $\zeta_{\cdot}:U\ra \mu_{\ell^{r'}}, u\mapsto \zeta_u$ is a group homomorphism, we can lift it to a group homomorphism $\CO_{F,\fl}/\varpi_\fl^{mr}\ra \mu_{\ell^r}$. On the other hand, $\psi_\fl(\frac{\cdot \beta}{\varpi_\fl^{mr}})$ runs over all characters on $\CO_{F,\fl}/\varpi_\fl^{rm}$ as $\beta$ runs over a set of representatives of $\CO_{F,\fl}/\varpi_\fl^{mr}$. Thus (ii) holds.

Thus for $\fc=\fc(\fa)$, \[\begin{aligned}\alpha_\beta\left(\BH_{1},\begin{pmatrix} 1 & 0 \\ 0 & \fc^{-1} \end{pmatrix}\right)=\beta^\kappa\#U\cdot \alpha_{\beta}\left(\BE,\begin{pmatrix} 1 &0 \\1& \fc^{-1} \end{pmatrix}\right)\neq 0 \pmod{\fm_p}\end{aligned}.\]
Here the first equality follows from property (2) of $\beta$ and definition of $\BH_{\lambda,1}$, the second inequality follows from property (1), (3) of $\beta$ and the fact that $\# U$ is a power of $\ell$. Thus we get contradiction.
\end{proof}
 \begin{remark}\label{ul}
This remark talks about necessity for choice of $U(\fl)$-eigen vector at $\fl$. As seen in the argument, we need some $\beta\in \CO_{F,\fl}$ such that $\psi\left(\frac{\cdot \beta}{\varpi_\fl^{mr-\delta_\fl}}\right)$ could be any given character on $\CO_{F,\fl}/(\varpi_\fl^{mr})$ that surjectivly maps to $\mu_{\ell^r}$ and $\beta$-th Fourier coefficients of the Eisenstein series to be a $p$-adic unit. Such a $\beta$ need not be a unit in $\CO_{F,\fl}$ whenever $\rank_{\BZ_\ell}\Gamma>1$. It seems that $U(\fl)$-eigen test vector is needed compared with spherical vector if $\rank_{\BZ_\ell}\Gamma>1$ and $\lambda_\fl$ unramified. The local Fourier coefficient of $U(\fl)$-eigen vector at $\fl$ is of the form $1_{\CO_{F,\fl}}$, however, the spherical vector at $\fl$ is of the form $1_{\CO_{F,\fl}^\times}+h$, where $h$ is some $p$-integral function supported on $\CO_{F,\fl}\bs\CO_{F,\fl}^\times$ not necessary to take $p$-primitive value.
\end{remark}
\begin{remark}The assumption $\rank_{\BZ_\ell}\Gamma=1$ in the residually self-dual and $\fl$ inert case is due to the fact that in general, the parity of conductor does not behave well for primitive characters on $\Gamma/\Gamma^{\ell^n}$.

\end{remark}

{\bf Data Availability:}

Data sharing not applicable to this article as no datasets were generated or analysed during the current study.

{\bf Conflict of interest statement:}

The author states that there is no conflict of interest.

\end{document}